\newcounter{theorem}
\newtheorem{thm}[theorem]{Theorem}
\newtheorem{lemma}[theorem]{Lemma}
\newtheorem{prop}[theorem]{Proposition}
\newtheorem{cor}[theorem]{Corollary}
\newtheorem{defn}[theorem]{Definition}
\theoremstyle{remark}
\newtheorem*{remark*}{Remark}
\newtheorem{remark}[theorem]{Remark}
\newtheorem{example}[theorem]{Example}
\newtheorem{question}[theorem]{Question}
\numberwithin{equation}{section}
\numberwithin{theorem}{section}
\newcommand{\e}{\epsilon}
\newcommand{\dl}{\delta}
\newcommand{\R}{\mathbb{R}}
\newcommand{\N}{\mathbb{N}}
\renewcommand{\setminus}{\backslash}
\renewcommand{\emptyset}{\varnothing}
\newcommand{\her}{\mathrm{her}}
\newcommand{\id}{\mathrm{id}}
\newcommand{\diag}{\mathrm{diag}}
\newcommand{\ev}{\mathrm{ev}}
\newcommand{\Hom}{\mathrm{Hom}}
\newcommand{\Prim}{\mathrm{Prim}}
\newcommand{\Ad}{\mathrm{Ad}}
\newcommand{\dr}{\mathrm{dr}}
\newcommand{\el}{l}
\newcommand{\labelledthing}[2]{\hspace{4pt}\buildrel {#2} \over #1 \hspace{3pt}} 
\newcommand{\labelledrightarrow}{\labelledthing{\longrightarrow}}
\newcommand{\mathrmm}{\text}
\begin{document}

\title[Decomposition rank of ASH algebras]{Decomposition rank of approximately subhomogeneous $\mathrmm C^*$-algebras}

\author[G.\ A.\ Elliott]{George A.\ Elliott}
\author[Z.\ Niu]{Zhuang Niu}
\author[L.\ Santiago]{Luis Santiago}
\author[A.\ Tikuisis]{Aaron Tikuisis}

\address{\hskip-\parindent
George A.\ Elliott, Department of Mathematics, University of Toronto, Toronto, Ontario, Canada, M5S 2E4}
\email{elliott@math.toronto.edu}
\address{\hskip-\parindent
Zhuang Niu, Department of Mathematics, University of Wyoming, Laramie, WY, 82071, USA.}
\email{zniu@uwyo.edu}
\address{\hskip-\parindent
Luis Santiago, Department of Mathematical Sciences, Lakehead University, Thunderbay, Ontario, Canada, 	P7B 5E1.}
\email{luissant@gmail.com}
\address{\hskip-\parindent Aaron Tikuisis, Department of Mathematics and Statistics, University of Ottawa, Ottawa, Canada, K1N 6N5.}
\email{aaron.tikuisis@uottawa.ca}

\begin{abstract}
It is shown that every Jiang-Su stable approximately subhomogeneous $\mathrmm C^*$-algebra has finite decomposition rank.
This settles a key direction of the Toms--Winter conjecture for simple approximately subhomogeneous $\mathrmm C^*$-algebras.
A key step in the proof is that subhomogeneous $\mathrmm C^*$-algebras are locally approximated by a certain class of more tractable subhomogeneous algebras, namely, a non-commutative generalization of the class of cell complexes.
The result is applied, in combination with other recent results, to show classifiability of crossed product $\mathrmm C^*$-algebras associated to minimal homeomorphisms with mean dimension zero.
\end{abstract}

\subjclass[2010]{46L35, 46L05, (46L06, 46L85)}

\keywords{Nuclear $\mathrm C^*$-algebras; decomposition rank; nuclear dimension; approximately subhomogeneous $\mathrmm C^*$-algebras; Jiang-Su algebra; $\mathcal Z$-stability; Toms--Winter conjecture; non-commutative cell complexes; semiprojectivity}

\maketitle

\renewcommand*{\thetheorem}{\Alph{theorem}}

\section{Introduction}

Decomposition rank is a concept which has proven crucial in recent developments in the structure and classification of $\mathrmm C^*$-algebras.
Defined by Kirchberg and Winter, it is a marriage of the topological concept of Lebesgue covering dimension with the functional-analytic ideas of nuclearity and quasidiagonality.
This notion gained currency in the programme of classifying $\mathrmm C^*$-algebras, where finite decomposition rank has been shown to imply strong structural conditions on a $\mathrmm C^*$-algebra \cite{BrownWinter:quasitraces,NgWinter:CFP,TomsWinter:V1,Winter:RR0drClass,Winter:drZstable,Winter:CrProducts}.
A spectacular classification result of Gong, Lin, and two of us (G.A.E.\ and Z.N.) uses finite decomposition rank as a key hypothesis \cite{EGLN:Class2}.

The concept of decomposition rank is enshrined in the so-called Toms-Winter conjecture about simple, separable, unital, nuclear, finite $\mathrmm C^*$-algebras, which says, in particular, that finite decomposition rank and tensorial absorption of the Jiang-Su algebra ($\mathcal Z$) should be equivalent for such $\mathrmm C^*$-algebras.
The component ``$\mathcal Z$-absorption implies finite decomposition rank'' is a pivotal problem, which has been solved using classification in certain limited cases (using tracial approximation hypotheses \cite{Lin:AsympClassification,GongLinNiu}).
In this article we prove this direction of the Toms--Winter conjecture for $\mathrmm C^*$-algebras that are locally subhomogeneous.

The class of simple locally subhomogeneous $\mathrmm C^*$-algebras is very broad: it is known to include many natural examples (see the introduction to \cite{Toms:rigidity}), and it exhausts the range of the Elliott invariant \cite{Elliott:ashrange}.
As a consequence of our main result (discussed below), the class includes crossed product $\mathrmm C^*$-algebras associated to minimal homeomorphisms of mean dimension zero (and more generally, the $\mathcal Z$-stabilization of a crossed product given by a minimal homeomorphism).
There are no simple, separable, nuclear, stably finite $\mathrmm C^*$-algebras which are known to lie outside the class of locally subhomogeneous $\mathrmm C^*$-algebras.

Our main result is as follows.

\begin{thm}\label{thm:MainThmA}
Let $A$ be a locally subhomogeneous, $\mathcal Z$-stable $\mathrmm C^*$-algebra.
Then $\dr A \leq 2$.
\end{thm}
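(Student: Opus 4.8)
The overall plan is to strip away the structural content of $A$ by means of permanence properties of the decomposition rank, arriving at a concrete computation for a tractable class of subhomogeneous algebras, and then to carry out that computation by induction, using the commutative $\mathcal Z$-stable case treated in \cite{TW:Zdr} as the base input. First I would use that the decomposition rank is a local invariant: if every finite subset of a $C^*$-algebra is, for every $\e>0$, within $\e$ of a sub-$C^*$-algebra of decomposition rank at most $n$, then the whole algebra has decomposition rank at most $n$ (given the approximating completely positive maps on the subalgebra, extend the first of them to the ambient algebra by Arveson's theorem). Since $A$ is $\mathcal Z$-stable we identify $A$ with $A\otimes\mathcal Z$; as $A$ is locally subhomogeneous, $A$ is then locally approximated by algebras $S\otimes\mathcal Z$ with $S$ subhomogeneous. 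Invoking the structural theorem that every subhomogeneous $C^*$-algebra is itself locally approximated by non-commutative cell complexes --- and noting that local approximation is preserved under tensoring with the nuclear algebra $\mathcal Z$ --- we conclude that $A$ is locally approximated by algebras $C\otimes\mathcal Z$ with $C$ a non-commutative cell complex. It therefore suffices to prove $\dr(C\otimes\mathcal Z)\le 2$ for every such $C$.

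A non-commutative cell complex is an iterated pullback
\[
C_k \;=\; C_{k-1}\oplus_{C(\mathbb{S}^{d_k-1})\otimes F_k}\bigl(C(\mathbb{D}^{d_k})\otimes F_k\bigr),\qquad C_0\text{ finite-dimensional},
\]
where $F_k$ is finite-dimensional and the second coordinate map is restriction to the boundary sphere. Tensoring with $\mathcal Z$ preserves such pullbacks (it is nuclear, hence exact, and the restriction maps are surjective), so one may prove $\dr(C_k\otimes\mathcal Z)\le 2$ by induction on $k$: the base case holds because $C_0\otimes\mathcal Z$ is a finite direct sum of matrix algebras over $\mathcal Z$ and $\dr\mathcal Z\le 1$. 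For the inductive step, write $E=C_k\otimes\mathcal Z$ and $Q=C_{k-1}\otimes\mathcal Z$, giving an extension
\[
0\;\longrightarrow\; I\;\longrightarrow\; E\;\longrightarrow\; Q\;\longrightarrow\; 0,\qquad I\iso C_0(\R^{d_k})\otimes F_k\otimes\mathcal Z .
\]
Here $\dr Q\le 2$ by the inductive hypothesis, while $\dr I\le 2$ because $I$ is an ideal of $C(\mathbb{S}^{d_k})\otimes F_k\otimes\mathcal Z$ --- a finite direct sum of matrix algebras over $C(\mathbb{S}^{d_k})\otimes\mathcal Z$ --- so that the estimate $\dr(C(X)\otimes\mathcal Z)\le 2$ of \cite{TW:Zdr}, together with the Morita invariance of $\dr$ and the fact that $\dr$ does not increase on passing to ideals, applies.

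The crux of the inductive step is to pass from $\dr I\le 2$ and $\dr Q\le 2$ to $\dr E\le 2$: the generic estimate for extensions would give only $\dr E\le 5$. The plan is to exploit the very particular form of the extension. The restriction map $C(\mathbb{D}^{d_k})\otimes F_k\to C(\mathbb{S}^{d_k-1})\otimes F_k$ has a completely positive contractive splitting (Tietze extension), which, together with the $\mathcal Z$-stability of $I$, $E$, and $Q$, should provide enough ``room'' to lift the order-zero colours witnessing $\dr Q\le 2$ into $E$ and to absorb the complementary correction, which lies in $I$, into colours overlapping those used for $I$ --- keeping the total number of colours at $3$. To obtain a decomposition-rank bound rather than merely a nuclear-dimension bound --- that is, to keep the total completely positive maps contractive and quasidiagonal --- one would use, exactly as in \cite{TW:Zdr}, the quasidiagonality of the cone $C_0((0,1])\otimes\mathcal O_2$ and the one-dimensional approximation of $C(Y)\otimes\mathcal O_2$ due to Kirchberg and R\o rdam \cite{KirchbergRordam:pi3}.

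The genuinely hard step, however, is not this bookkeeping but the structural theorem invoked at the start: that an arbitrary subhomogeneous $C^*$-algebra is locally approximated by non-commutative cell complexes. A subhomogeneous algebra carries a finite filtration by ideals whose subquotients are homogeneous --- sections of matrix bundles over locally compact spaces --- and the plan would be to approximate each such base space from within by a finite CW complex and each bundle by one trivial over cells, then reassemble; the reassembly forces one to perturb the boundary-attaching $*$-homomorphisms into the approximating subalgebras, which is where the semiprojectivity of the building blocks of non-commutative cell complexes is essential. Arranging all of these approximations to be simultaneously compatible across the filtration is the real technical heart of the argument.
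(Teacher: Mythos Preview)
Your reduction to $\dr(C\otimes\mathcal Z)\le 2$ for $C$ an NC cell complex is correct and matches the paper. The gap is in your inductive step. You write down the extension $0\to I\to E\to Q\to 0$ and observe that the generic bound gives only $\dr E\le 5$; you then propose to merge the three colours for $I$ with the three for $Q$ using the Tietze splitting and $\mathcal Z$-stability, calling this ``bookkeeping''. It is not. The obstruction is that the gluing map $\phi\colon C_{k-1}\to C(S^{d_k-1})\otimes F_k$ can have arbitrarily complicated eigenvalue structure: a single irreducible representation of the boundary algebra pulls back to a direct sum of irreducibles of $C_{k-1}$ whose multiplicities and positions vary over $S^{d_k-1}$, so a colour that is order zero on $Q$ need not remain order zero---or even compatible with any fixed colour from $I$---once it is pushed into the boundary of the new cell. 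No argument is offered for why the colours should merge, and none is known; the paper remarks explicitly that the approach of \cite{Winter:drSH}, which is closer in spirit to what you sketch, ``probably cannot be adapted'' here.

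What the paper does instead is global rather than inductive on extensions. Out of the entire cell decomposition of $C$ it builds a single commutative space $Z$ (the space $Z_{k_0,\bar\eta}$ of Section~\ref{sec:ZY}), an iterated mapping-cone construction that encodes the eigenvalue branching of all the attaching maps simultaneously. It then constructs ${}^*$-homomorphisms $\xi_{k,\bar\eta,k'}$ carrying $C_0(U,M_{m_{k'}})$, for suitable open $U\subset Z$, into $C$, and proves (Lemmas~\ref{lem:CpcFacts} and~\ref{lem:CpcApproximation}) that an $(n{+}1)$-colourable approximate partition of unity in $C(Z)\otimes D$ assembles, via these maps, into an $(n{+}1)$-colourable c.p.c.\ approximation of $C\otimes 1_D\subset C\otimes D$. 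This is Theorem~\ref{thm:drBound}: it converts the NC cell complex problem directly into the commutative $\mathcal Z$-stable problem of \cite{TW:Zdr}, never passing through an extension estimate. Your proposal contains no analogue of this construction, and without it the inductive step does not close. Incidentally, you also have the difficulty reversed: the local approximation by NC cell complexes (Theorem~\ref{thm:ApproxCell}) is technical but conceptually clear, whereas the step you dismiss as bookkeeping is the genuine heart of the paper.
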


This result is novel even in the case of simple, unital, separable, locally subhomogeneous, $\mathcal Z$-stable $\mathrmm C^*$-algebras.

The problem of showing directly that $\mathcal Z$-absorption implies finite decomposition rank, as is done here, (and in particular, avoiding appealing to simplicity or the Universal Coefficient Theorem) has proven pivotal, and has spurred the introduction of novel techniques in two very different streams of attack.
On the one hand, for non-simple $\mathrmm C^*$-algebras of a very particular form---namely, algebras $\mathrmm C(X) \otimes \mathcal Z$ (and by permanence properties, also limits of $\mathrmm C^*$-algebras Morita equivalent to these)---a direct computation, giving decomposition rank at most two, was obtained by one of us (A.T.) and Winter \cite{TW:Zdr}, using quasidiagonality of the cone over $\mathcal O_2$, together with a one-dimensional approximation result inside $\mathrmm C(Y) \otimes \mathcal O_2$ due to Kirchberg and R\o rdam \cite{KirchbergRordam:pi3}.
(The present paper uses this result.)
On the other hand, for simple, quasidiagonal $\mathrmm C^*$-algebras, techniques inspired by---and building on---Connes's proof that injective von Neumann algebras are hyperfinite have been developed by Matui and Sato in the unique trace case \cite{MatuiSato:dr}, and subsequently by Bosa, Brown, Sato, White, Winter, and one of us (A.T.) in the case of compact extreme trace space (where the hypothesis of quasidiagonality needs to be strengthened slightly) \cite{BBSTWW} (see also \cite{SWW:Znucdim}).
For the class considered in \cite{BBSTWW}, the optimal decomposition rank estimate of one was proven.

The present article builds on the $\mathcal Z$-stabilized commutative case dealt with in \cite{TW:Zdr}, showing how to replace the algebra $\mathrmm C(X)$ by an arbitrary subhomogeneous algebra, i.e., a $\mathrmm C^*$-algebra for which there is a finite bound on the (Hilbert space) dimension of its irreducible representations.
While there is some overlap between the class of $\mathrmm C^*$-algebras in Theorem \ref{thm:MainThmA} and in \cite{BBSTWW} (namely, all simple, locally subhomogeneous algebras with compact extreme trace space), the two results mostly complement each other.
The present theorem allows non-simple $\mathrmm C^*$-algebras, and even in the simple case, allows $\mathrmm C^*$-algebras with non-compact extreme trace space.
On the other hand, the result of \cite{BBSTWW}, where it applies, gives an optimal bound of one instead of two for the decomposition rank.
(Also, the result in \cite{BBSTWW} holds under (potentially) much weaker hypotheses than local subhomogeneity---not even the Universal Coefficient Theorem is required.)

In an application of our main result, we show in Section \ref{sec:CrossedProd} that it applies to the UHF-stabilization of a minimal $\mathbb Z$-crossed product, by using an argument involving Berg's technique.
It follows that $\mathcal Z$-stable minimal $\mathbb Z$-crossed products satisfy the hypotheses of a far-reaching classification result due to Elliott, Gong, Lin, and Niu \cite{EGLN:Class2}.
We summarize this consequence in the following statement (the proof can be found in Section \ref{sec:CrossedProd}).

\begin{cor}
\label{cor:CrProdClass}
Let $X$ be a compact metrizable space and let $\alpha:X \to X$ be a minimal homeomorphism.
Set $A := C(X) \rtimes_\alpha \mathbb Z$, if $\alpha$ has mean dimension zero, or $(C(X) \rtimes_\alpha \mathbb Z) \otimes \mathcal Z$, otherwise.
Then $A$ is an inductive limit of subhomogeneous $\mathrmm C^*$-algebras, and is classifiable in the following sense.

Let $B$ be another $\mathrmm C^*$-algebra of the same form, or a simple separable unital $\mathrmm C^*$-algebra with finite decomposition rank and which satisfies the Universal Coefficient Theorem.
Then $A \cong B$ if and only if $\mathrm{Ell}(A) \cong \mathrm{Ell}(B)$ (where $\mathrm{Ell}(\cdot)$ denotes the Elliott invariant, consisting of $K$-theory paired with traces).
\end{cor}

In fact, the abstract classification in \cite{EGLN:Class2} applies to all $\mathcal Z$-stable locally subhomogeneous $\mathrmm C^*$-algebras, again using Theorem \ref{thm:MainThmA} to verify a key hypothesis (see also \cite{EGLN:ASH}).

Our proof of Theorem \ref{thm:MainThmA} consists of two major steps.
The first step consists in proving the following result, which is perhaps of independent interest.

\begin{thm}\label{thm:ApproxCellA}
Let $A$ be a unital subhomogeneous $\mathrmm C^*$-algebra.
Then $A$ is locally approximated by non-commutative cell complexes.
Moreover, when $A$ is separable, the topological dimension of the approximating non-commutative cell complexes can be made equal to the topological dimension of $A$.
\end{thm}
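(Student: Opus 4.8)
The plan is to combine the known structure of separable subhomogeneous $\mathrmm C^*$-algebras as recursive subhomogeneous algebras with a Freudenthal-type inverse-limit approximation of compact metric spaces by finite simplicial complexes, and then to rigidify the resulting approximations into genuine non-commutative cell complexes by means of semiprojectivity.

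First I would reduce to the separable case. Given a finite subset $F$ of a unital subhomogeneous $A$, the sub-$\mathrmm C^*$-algebra generated by $F$ and $1_A$ is again unital and subhomogeneous --- pure states of a sub-$\mathrmm C^*$-algebra extend to pure states of the ambient algebra, so irreducible representations of the subalgebra do not grow in Hilbert-space dimension --- and it is separable. Since for a separable $A$ one works with $A$ itself, the topological dimension is not lost, and so it suffices to treat separable unital subhomogeneous $A$. For such $A$ I would invoke Phillips's theorem that $A$ is a recursive subhomogeneous algebra: an iterated pullback built from blocks $C(X_k, M_{n_k})$ along restriction maps $C(X_k, M_{n_k}) \to C(X_k^{(0)}, M_{n_k})$, with $X_k^{(0)} \subseteq X_k$ closed, and attaching $*$-homomorphisms $\phi_k$ from the previous stage into $C(X_k^{(0)}, M_{n_k})$; here the bundles are trivial and one can arrange $\dim X_k \leq \dim A$ for all $k$. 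Now induct on the length $\el$ of this decomposition. The base case $A = C(X_0, M_{n_0})$ follows because every compact metric space of dimension $\leq d$ is an inverse limit of finite simplicial complexes of dimension $\leq d$, and $C(P, M_{n_0})$ for $P$ a finite complex is evidently a non-commutative cell complex (build it skeleton by skeleton, attaching the non-commutative cells $C(D^j, M_{n_0})$ dual to the cells of $P$); cofinally many of the approximating complexes have dimension exactly $d$.

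For the inductive step, write $A = B \oplus_{C(\Omega, M_n)} C(X, M_n)$, with $B$ of shorter length --- hence, by induction, locally approximated by non-commutative cell complexes of dimension $\leq \dim A$ --- with $\Omega \subseteq X$ closed and $\phi \colon B \to C(\Omega, M_n)$ the attaching map. Using a pair version of the Freudenthal approximation, write $(X,\Omega) = \varprojlim (P_i, Q_i)$ with each $(P_i,Q_i)$ a pair of finite simplicial complexes, $Q_i \subseteq P_i$ a subcomplex, $\dim P_i \leq \dim A$, so that $C(X, M_n) = \varinjlim C(P_i, M_n)$ and $C(\Omega, M_n) = \varinjlim C(Q_i, M_n)$ compatibly. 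Given $F \subseteq A$ and $\e > 0$, choose (inductively) a non-commutative cell complex $C$ and a $*$-homomorphism $C \to B$ whose image $\e$-contains the images of $F$ under the canonical map $A \to B$. The composite $C \to B \to C(\Omega, M_n) = \varinjlim C(Q_i, M_n)$ is approximately a map into some $C(Q_i, M_n)$; applying semiprojectivity of $C$ (as a non-commutative cell complex) yields an honest $*$-homomorphism $\tilde\phi \colon C \to C(Q_j, M_n)$ close to it. The pullback $D := C \oplus_{C(Q_j, M_n)} C(P_j, M_n)$ is then obtained from $C$ by attaching, via $\tilde\phi$ and the attaching data of $P_j$, the non-commutative cells dual to the cells of $P_j$ not in $Q_j$ (a pasting law for pullbacks), so $D$ is a non-commutative cell complex of dimension $\leq \max(\dim C, \dim P_j) \leq \dim A$, and one can arrange equality using that $\dim X = \dim A$ forces cofinally many $P_j$ to have dimension $\dim A$. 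Finally, the map $C \to B$ together with $C(P_j, M_n) \to C(X, M_n)$ assembles into an approximate $*$-homomorphism $D \to A$ whose image $\e'$-contains $F$; a further appeal to semiprojectivity of $D$ perturbs it to a genuine embedding, exhibiting a copy of $D$ inside $A$ close to $F$.

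The main obstacle is the semiprojectivity input: one needs (a suitable form of) semiprojectivity for non-commutative cell complexes in arbitrary dimension --- or at least for the relative building blocks, attaching $C(D^j, M_r)$ along $C(S^{j-1}, M_r)$ --- which is far more delicate than the classical one-dimensional case and is where the bulk of the technical work lies. Making this go through also requires keeping the inverse systems of complexes rigid, for example arranging the bonding maps to be simplicial and the subcomplex inclusions to be honest, via mapping-cylinder reductions. A secondary point requiring care is ensuring the approximants can be taken of dimension exactly $\dim A$ rather than merely at most $\dim A$, which rests on the fact that an inverse limit of complexes of dimension $\leq d$ again has dimension $\leq d$, so dimension $d$ persists cofinally through each Freudenthal approximation used above.
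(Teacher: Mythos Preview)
Your overall strategy --- reduce to the separable case, invoke Phillips's recursive subhomogeneous decomposition, induct on the length, combine simplicial approximation of the base spaces with a semiprojectivity property of NC cell complexes --- matches the paper's. Your first appeal to semiprojectivity (lifting $C \to C(\Omega,M_n)=\varinjlim C(Q_i,M_n)$ to an honest $\tilde\phi\colon C\to C(Q_j,M_n)$) needs only semiprojectivity with respect to algebras of the form $C(X,M_m)$ for fixed $m$, which is precisely the restricted form the paper establishes (Theorem~\ref{thm:CellRSHSemiproj}); that step is sound.

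The gap is your final step. Having formed $D=C\oplus_{C(Q_j,M_n)}C(P_j,M_n)$, the obvious map $D\to B\oplus C(X,M_n)$ only lands \emph{approximately} in $A$, since the composite $C\xrightarrow{\tilde\phi}C(Q_j,M_n)\to C(\Omega,M_n)$ is merely close to $\phi|_C$. You then write ``a further appeal to semiprojectivity of $D$ perturbs it to a genuine embedding.'' But this requires $D$ to have stable relations with respect to the target $A$, and $A$ is not of the form $C(X,M_m)$. NC cell complexes of topological dimension $\geq 2$ are \emph{not} semiprojective in general --- already $C(S^2)$ fails --- so only the restricted form is available, and it does not apply here. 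The paper is organized so as never to need this: rather than perturbing a map into $A$, it uses the restricted semiprojectivity (and its consequences, Corollaries~\ref{cor:CellRSHStableRelations} and~\ref{cor:CellRSHCloseRepsHomotopic}, always with target $C(\cdot,M_k)$) to modify the attaching map on a collar neighbourhood $Y$ of $X^{(0)}$, producing $\psi\colon B\to C(Y,M_k)$ which \emph{exactly} equals $\phi$ on $X^{(0)}$ while factoring through the simplicial approximation on $\partial Y$. The resulting pullback $A'$ then maps into $A$ by an explicit, honest, injective $*$-homomorphism --- no post-hoc perturbation is required. This collar-and-homotopy manoeuvre is the missing idea in your sketch; without it, the argument does not close.
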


The non-commutative cell complexes in this result, defined formally in Section \ref{sec:NCcellDef}, are special subhomogeneous algebras, akin to the non-commutative CW complexes of Eilers-Loring-Pedersen, though slightly more general for technical reasons (see Remark \ref{rmk:NCcellDef} (ii)).
As the name suggests, they arise by gluing together matrix algebras over spaces $D^n$ along the boundaries $S^{n-1}$.
What is most important here is that the boundary $S^{n-1}$ is a neighbourhood retract in $D^n$.

The topological dimension of a separable subhomogeneous $\mathrmm C^*$-algebra measures a sort of dimension of its primitive ideal space, and agrees with the decomposition rank of the $\mathrmm C^*$-algebra; see Definition \ref{def:TopDim}.

Theorem \ref{thm:ApproxCellA} is a non-commutative analogue of the result that $n$-di\-men\-sion\-al compact metrizable spaces are inverse limits of finite $n$-dimensional CW complexes.
Just as CW complexes form a class of spaces that are amenable to deep analysis, so too are the non-com\-mu\-ta\-tive cell complexes of Theorem \ref{thm:ApproxCellA}.
Their tractable structure allows them to be used to prove Theorem \ref{thm:MainThmA}.
The author A.T.\ has also made use of Theorem \ref{thm:ApproxCellA}, in an argument that shows that $C(X,\mathcal Q)$ is not locally approximated by subhomogeneous $\mathrmm C^*$-algebras of topological dimension less than the dimension of $X$, where $\mathcal Q$ is the universal UHF algebra \cite{CXDecomp}.

Generalizing the result that cell complexes are absolute neighbourhood retracts, we demonstrate that our non-commutative cell complexes satisfy a restricted semiprojectivity condition (Theorem \ref{thm:CellRSHSemiproj}); this condition then plays a key role in the proof of Theorem \ref{thm:ApproxCellA}.
From the proof that a subhomogeneous $\mathrmm C^*$-algebra is locally approximated by non-commutative cell complexes, ideas are borrowed to also prove that any $\mathcal W$-stabilized, locally subhomogeneous $\mathrmm C^*$-algebra is an inductive limit of point--line algebras (Theorem \ref{thm:WASH}), where $\mathcal W$ is the stably projectionless $\mathrmm C^*$-algebra studied in \cite{Jacelon:R}; this result builds on the case of $\mathrmm C(X) \otimes \mathcal W$, which was handled by one of us (L.S.) in \cite{Santiago:DimRed}.

The second step in proving Theorem \ref{thm:MainThmA} is to show that the $\mathcal Z$-stabilization of a non-commutative cell complex has decomposition rank at most $2$.
In fact, in this part, we are able to directly reduce to the problem of the decomposition rank of $\mathcal Z$-stabilized commutative $\mathrmm C^*$-algebras (which was solved in \cite{TW:Zdr}), by turning coloured c.p.c.\ approximations (as in the definition of decomposition rank) of certain commutative $\mathrmm C^*$-algebras into coloured c.p.c.\ approximations of non-commutative cell complexes (Theorem \ref{thm:drBound}).
The commutative $\mathrmm C^*$-algebras arise in a sort of mapping cone construction involving the eigenvalue patterns of the gluing maps.
The cell structure is absolutely essential, as it allows us to relate these non-commutative $\mathrmm C^*$-algebras to such commutative algebras.

The latest results concerning particular values of the decomposition rank suggest that the only possible values in the simple case are $0,1,$ and $\infty$ (to be precise, this is what is shown in \cite{BBSTWW} in the case of compact extreme tracial boundary).
It is unclear whether the upper bound of two, proven here, is optimal for non-simple, $\mathcal Z$-stable, locally subhomogeneous $\mathrmm C^*$-algebras.
Our result shows, however, that this depends purely on whether a better bound can be found for $\mathrmm C(X) \otimes \mathcal Z$.
That is, if the bound of two found in \cite{TW:Zdr} is improved to one (which would be optimal, since decomposition rank zero implies AF), then the same estimate applies to $\mathcal Z$-stable, locally subhomogeneous $\mathrmm C^*$-algebras.

Building on the results here, the authors G.A.E.\ and Z.N., in collaboration with Guihua Gong and Huaxin Lin, have gone on to prove that simple, separable, unital, $\mathcal Z$-stable, locally subhomogeneous $\mathrmm C^*$-algebras are rationally tracially approximated by point--line algebras \cite{EGLN:ASH}, and are thereby classified by the results of \cite{GongLinNiu}.
Their result depends on both Theorems \ref{thm:MainThmA} and \ref{thm:ApproxCellA}.
It follows that every simple, separable, unital, $\mathcal Z$-stable, locally subhomogeneous $\mathrmm C^*$-algebra is an inductive limit of subhomogeneous $\mathrmm C^*$-algebras of topological dimension at most two (this does not constitute an alternative proof of Theorem \ref{thm:MainThmA} since it uses said theorem).

\subsection{Posterior results}
Since this paper was originally written, there have been substantial developments in the structure and classification of C*-algebras, some of which partially generalize the results of this paper.
We summarize the state-of-the art in relation to these results.

A more general classification result than \cite{EGLN:ASH} is contained in \cite{EGLN:Class2} (still using \cite{GongLinNiu}), where they 
prove that simple, separable, stably finite, unital $\mathrmm C^*$-algebras with finite decomposition rank are rationally tracially approximated by point--line algebras (and finite decomposition rank can be further weakened to finite nuclear dimension using \cite{TWW:qd}).
Improvements have also been made in establishing finite nuclear dimension and decomposition rank; in \cite{CETWW,CastillejosEvington}, it is shown that every simple nuclear $\mathcal Z$-stable C*-algebra has finite nuclear dimension, and finite decomposition rank provided the algebra is finite and all traces on it are quasidiagonal.
The results in this paper are still the only way to see that \emph{nonsimple} approximately subhomogeneous $\mathcal Z$-stable C*-algebras have finite decomposition rank (or even finite nuclear dimension).

\subsection{Acknowledgements}

A.T.\ had numerous long discussions with Wilhelm Winter about the main problem solved in this article.
We would like to thank Wilhelm Winter for these, and for comments on early versions of the article.
We would also like to acknowledge Rob Archbold, Etienne Blanchard, Huaxin Lin, Chris Phillips, and Stuart White for discussions and comments that helped to shape this paper.
Finally, we would like to thank the referees for helpful comments and suggestions.

G.A.E.\ has been supported by NSERC.
Z.N.\ has been supported by NSERC, a start-up grant from the University of Wyoming, and a Simons Foundation collaboration grant.
L.S.\ has been supported by the University of Toronto and a start-up grant from the University of Aberdeen.
A.T.\ has been supported by NSERC and a start-up grant from the University of Aberdeen.
All authors were supported by the Fields Institute through the ``Thematic program on abstract harmonic analysis, Banach and operator algebras.''
Work on this article advanced perceptibly at the EPSRC- and LMS-funded conference ``Classification, structure, amenability, and regularity'' in Glasgow, and at the BIRS workshop ``Dynamics and $\mathrmm C^*$-algebras: amenability and soficity.''

\renewcommand*{\thetheorem}{\roman{theorem}}
\setcounter{theorem}{0}
\numberwithin{theorem}{section}

\subsection{Preliminaries and notation}
\label{sec:Notation}

Let $A$ be a $\mathrmm C^*$-algebra.
Denote the positive cone of $A$ by $A_+$.
For $a,b \in A$ and $\e > 0$, write $a \approx_\e b$ to mean $\|a-b\|<\e$.
If $\mathcal F,B\subset A$ with $\mathcal F$ finite, and $\e > 0$, write $\mathcal F \subset_\e B$ to mean that for every $a \in \mathcal F$,
\begin{equation} \mathrm{dist}(a,B) < \e. \end{equation}

For a unital $\mathrmm C^*$-algebra $A$, let $U(A)$ denote the unitary group.
Write $M_m$ for $M_m(\mathbb C)$ and $U_m$ for $U(M_m(\mathbb C))$.

If $A,B$ are unital $\mathrmm C^*$-algebras, let us consider $\Hom(A,B)$, the space of \textit{unital} ${}^*$-homomorphisms from $A$ to $B$, with the point-norm topology.
Provided that $A$ is separable, this topology on $\Hom(A,B)$ is metrizable.

For a compact Hausdorff space $X$, there is a one-to-one correspondence between $\Hom(A,\mathrmm C(X,B))$ and $\mathrmm C(X,\Hom(A,B))$, defined as follows.
For $\phi \in \Hom(A,\mathrmm C(X,B))$, define $\hat\phi \in \mathrmm C(X,\Hom(A,B))$ by
\begin{equation} \hat\phi(x)(a) := \phi(a)(x),\quad x\in X,\ a \in A. \end{equation}
Thus, if $Y$ is a closed subspace of $X$ and $r_Y\colon \mathrmm C(X,B) \to \mathrmm C(Y,B)$ denotes the restriction map, then for $\phi \in \Hom(A,\mathrmm C(X,B))$,
\begin{equation} (r_Y \circ \phi)\hspace*{-3pt}\hat{\phantom{)}} = \hat\phi|_Y. \end{equation}

\begin{defn}[\cite{KirchbergWinter:CovDim}]
Let $A$ be a $\mathrmm C^*$-algebra and let $n \in \N$.
One says that the \textbf{decomposition rank} of $A$ is at most $n$ (in abbreviated form, $\dr A \leq n$) if, for every finite subset $\mathcal F$ of $A$ and every $\e > 0$, there exist finite dimensional $\mathrmm C^*$-algebras $F_0,\dots,F_n$ and c.p.c.\ maps
\begin{equation} A \labelledrightarrow{\psi} F_0 \oplus \cdots \oplus F_n \labelledrightarrow{\phi} A \end{equation}
such that $\phi|_{F_i}$ is orthogonality preserving (also called order zero), and $\phi(\psi(a)) \approx_\e a$ for every $a \in \mathcal F$.
\end{defn}

\begin{defn}
A $\mathrmm C^*$-algebra $A$ is \textbf{subhomogeneous} if there is a finite upper bound on the dimension of the irreducible representations of $A$.
\end{defn}

Let $A$ be a subhomogeneous $\mathrmm C^*$-algebra and denote by $\Prim(A)$ the space of primitive ideals of $A$ with the hull-kernel topology (see \cite[Chapter 3]{Pedersen:CstarBook}).
Then the subset
\begin{equation} \Prim_{\leq k}(A) := \{\ker\pi \mid \pi \in \Hom(A,M_{k'})\text{ is irreducible, }k' \leq k\} \end{equation}
is a closed subspace of $\Prim(A)$.
Moreover,
\begin{equation} \Prim_k(A) = \Prim_{\leq k}(A) \setminus \Prim_{\leq k-1}(A) \end{equation}
is a Hausdorff subspace of $\Prim(A)$, and the subquotient
\begin{equation} A|_{\Prim_k(A)} \end{equation}
is a $k$-homogeneous $\mathrmm C^*$-algebra (i.e., every irreducible representation has dimension exactly $k$).
(To define $A|_{\Prim_k(A)}$, recall that closed sets in $\Prim(A)$ correspond to ideals, in a containment-reversing fashion; thus, $A|_{\Prim_k(A)}$ is the quotient of the ideal corresponding to $\Prim_{\leq k-1}(A)$ by the ideal corresponding to $\Prim_{\leq k}(A)$.)

\begin{defn}
\label{def:TopDim}
If $A$ is a separable subhomogeneous $\mathrmm C^*$-algebra then the \textbf{topological dimension} of $A$ refers to the value
\begin{equation} \max_k \dim \Prim_k(A). \end{equation}
\end{defn}

We stick to the separable case in the above definition, due to the following subtlety in the non-separable case:
If a locally compact Hausdorff space $X$ is not second countable, the nuclear dimension of $C_0(X)$ is not equal to the covering dimension of $X$.
If the definition of Lebesgue covering dimension is modified by replacing ``an open set of $X$'' by ``a set of the form $f^{-1}(U)$ where $f\in C_0(X,\R)$ and $U\subseteq \R$ is open,'' then the resulting notion of ``dimension'' of $X$ agrees with the nuclear dimension of $C_0(X)$.
This notion of dimension would also be the correct one to use in Definition \ref{def:TopDim}, in order for, say Theorem \ref{thm:drSHOrig} to generalize to non-separable algebras.

Rob Archbold pointed out to us the following connection to the primitive ideal space (as a topological space without additional structure).
Brown and Pedersen gave another definition of topological dimension in \cite[2.2 (v)]{BrownPedersen:Limits}, valid for any $\mathrm C^*$-algebra, which agrees with the definition just given in the subhomogeneous case, by \cite[Proposition 2.4]{BrownPedersen:Limits}.
Incidentally, this shows that the topological dimension is a function of the primitive ideal space---although it is not the covering dimension of the primitive ideal space.

\begin{thm}[\cite{Winter:drSH}]
\label{thm:drSHOrig}
If $A$ is a separable subhomogeneous $\mathrmm C^*$-algebra then the topological dimension, nuclear dimension, and decomposition rank of $A$ coincide.
\end{thm}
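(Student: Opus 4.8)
The plan is to establish the chain of inequalities
\begin{equation}
\operatorname{topdim}(A) \;\le\; \dn A \;\le\; \dr A \;\le\; \operatorname{topdim}(A),
\end{equation}
where $\operatorname{topdim}(A) := \max_k \dim\Prim_k(A)$, so that all three quantities coincide. The middle inequality is automatic, since a coloured c.p.c.\ approximation $(F_0\oplus\cdots\oplus F_n,\psi,\phi)$ witnessing $\dr A \le n$ is in particular one witnessing $\dn A \le n$. So the work is in the two outer inequalities. Throughout one may assume $A$ unital, passing to the minimal unitization (again separable subhomogeneous), as this affects none of the three invariants in an essential way.

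For $\operatorname{topdim}(A) \le \dn A$, I would argue one homogeneity level at a time. Fix $k$ and set $Y := \Prim_k(A)$; since $A$ is separable, $Y$ is a second-countable locally compact Hausdorff space, hence metrizable. The subquotient $A|_Y$ is $k$-homogeneous, so by Fell's theorem it is the algebra of $\mathrm C_0$-sections of a locally trivial bundle over $Y$ with fibre $M_k$. Cover $Y$ by countably many open sets $U$ over which this bundle trivializes; for each, the sections of $A|_Y$ vanishing off $U$ form an ideal isomorphic to $\mathrm C_0(U)\otimes M_k$. Using that nuclear dimension does not increase under passage to ideals or subquotients, together with $\dn(\mathrm C_0(U)\otimes M_k) = \dn\,\mathrm C_0(U) = \dim U$ (the first equality because $\mathrm C_0(U)$ is the hereditary corner $\mathrm C_0(U)\otimes e_{11}$, the second being the Kirchberg--Winter computation of the nuclear dimension of commutative $\mathrm C^*$-algebras), one gets $\dim U \le \dn(A|_Y) \le \dn A$. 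Since the covering dimension of a metrizable space is the supremum of the dimensions of the members of an open cover, this yields $\dim Y \le \dn A$; maximizing over $k$ gives the inequality.

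The substantive inequality is $\dr A \le d := \operatorname{topdim}(A)$. Here I would invoke Phillips's structure theorem: a separable unital subhomogeneous $\mathrm C^*$-algebra is a recursive subhomogeneous algebra, built from finitely many iterated pullbacks
\begin{equation}
A_i = A_{i-1} \oplus_{\mathrm C(\Omega_i, M_{n_i})} \mathrm C(X_i, M_{n_i}),\qquad A_0 = \mathrm C(X_0, M_{n_0}),
\end{equation}
with $X_i$ compact metrizable and $\Omega_i\subseteq X_i$ closed. One first checks the decomposition may be chosen with $\dim X_i \le d$ for all $i$: indeed $X_i\setminus\Omega_i$ is an open subset of $\Prim_{n_i}(A)$, hence of dimension $\le d$, while $\Omega_i$ is covered by closed subsets of the base spaces occurring at earlier stages and so also has dimension $\le d$ by induction, whence $\dim X_i \le d$ by the countable sum theorem. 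The estimate $\dr A \le d$ is then proved by induction on the length of the decomposition. The inductive step must turn a $(d+1)$-coloured c.p.c.\ approximation of $A_{i-1}$ (from the inductive hypothesis), together with the commutative bound $\dr\,\mathrm C(X_i) = \dim X_i \le d$ of Kirchberg--Winter, into a $(d+1)$-coloured approximation of the pullback $A_i$.

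I expect the main obstacle to be precisely this amalgamation: one must carry it out \emph{without spending an extra colour at the gluing locus} $\Omega_i$, which forces the approximation coming from $\mathrm C(X_i,M_{n_i})$ and the one restricted from $A_{i-1}$ to be arranged to agree approximately along $\Omega_i$. The mechanism is to work in a collar neighbourhood of $\Omega_i$ in $X_i$ and homotope one approximation into agreement with the other there — a neighbourhood-retract argument in the same spirit as the one underlying the restricted semiprojectivity of Theorem \ref{thm:CellRSHSemiproj} — all while tracking norm estimates and preserving the order-zero property of each $\phi|_{F_j}$ through the homotopy. This reconciliation is where essentially all the technical effort lies; granting it, the induction closes, and combining with the previous paragraph we conclude $\operatorname{topdim}(A) = \dn A = \dr A$.
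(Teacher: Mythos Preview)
Your outline is essentially Winter's original argument from \cite{Winter:drSH}, and the overall strategy is sound: the lower bound via homogeneous subquotients is fine, and the upper bound $\dr A \le d$ does proceed by induction on an RSH decomposition with $\dim X_i \le d$. One caution about your sketch of the inductive step: the phrase ``collar neighbourhood of $\Omega_i$'' and ``neighbourhood-retract argument'' is not available as stated, since in a general RSH decomposition $\Omega_i$ is an \emph{arbitrary} closed subset of $X_i$ and need not be a neighbourhood retract (let alone admit a collar). Winter's actual inductive step does not use a homotopy/retraction mechanism; it builds the coloured c.p.c.\ approximation directly from partitions of unity on $X_i$ adapted to the gluing. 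So the strategy is right, but the mechanism you name would need to be replaced.

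The paper's alternative proof (Corollary~\ref{cor:drSH}) takes a genuinely different route. Rather than inducting on a general RSH decomposition, it first uses Theorem~\ref{thm:ApproxCell} to locally approximate $A$ by NC cell complexes --- where the gluing pairs are $(D^n,S^{n-1})$, so the boundary \emph{is} a neighbourhood retract --- and then applies Theorem~\ref{thm:drBound} with $D=\mathbb C$: this reduces the decomposition-rank estimate for an NC cell complex to that of a single commutative algebra $\mathrm C(Z_{k,\bar\eta})$, and one checks $\dim Z_{k,\bar\eta} \le d$ using finite-to-one maps to the $X_i$. What your approach buys is directness for this particular statement. What the paper's approach buys is that the same machinery (Theorem~\ref{thm:drBound}) simultaneously yields the $\mathcal Z$-stable bound of Theorem~\ref{thm:MainThmA}; the paper explicitly remarks that Winter's inductive-amalgamation argument ``probably cannot be adapted'' to give that.
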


As pointed out above, separability is not needed, provided that topological dimension is defined appropriately in the non-separable case.
In Corollary \ref{cor:drSH}, we give an alternative proof of this theorem.

\begin{defn}
Let $\mathcal C$ be a class of $\mathrmm C^*$-algebras and let $A$ be a $\mathrmm C^*$-algebra.
Let us say that $A$ is \textbf{locally approximated} by (algebras in) $\mathcal C$ (or simply, locally $\mathcal C$) if, for every finite subset $\mathcal F$ of $A$ and every $\e > 0$, there exists a subalgebra $C \subseteq A$ such that $C \in \mathcal C$ and $\mathcal F \subset_\e C$.
In particular, \textbf{locally subhomogeneous} means locally approximated by the class of subhomogeneous $\mathrmm C^*$-algebras.
\end{defn}

Here is a well-known result which is clear from the definition of decomposition rank.

\begin{prop}
\label{prop:drLocal}
If $A$ is locally approximated by a class $\mathcal C$ then
\begin{equation} \dr(A) \leq \sup_{C \in \mathcal C} \dr(C). \end{equation}
\end{prop}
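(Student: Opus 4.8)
The plan is to fix a finite set $\mathcal F \subset A$ and $\e > 0$, locate a single subalgebra $C \in \mathcal C$ that is $\e/3$-close to $\mathcal F$, import a coloured c.p.c.\ approximation of $C$ (which exists since $\dr(C) \le n := \sup_{C' \in \mathcal C}\dr(C')$, which we may assume finite), and then transport this approximation to $A$: the downward map simply by post-composing with the inclusion $C \hookrightarrow A$, and the upward map by extending it off $C$ using injectivity of finite-dimensional $\mathrmm C^*$-algebras.

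In detail: if $n = \infty$ there is nothing to prove, so assume $n < \infty$. Given $\mathcal F$ and $\e$, use local approximation to choose $C \in \mathcal C$ with $C \subseteq A$ and $\mathcal F \subset_{\e/3} C$; for each $a \in \mathcal F$ fix $b_a \in C$ with $a \approx_{\e/3} b_a$, and set $\mathcal G := \{b_a : a \in \mathcal F\} \subset C$. Since $\dr(C) \le n$, there are finite-dimensional $\mathrmm C^*$-algebras $F_0,\dots,F_n$ and c.p.c.\ maps
\[ C \labelledrightarrow{\psi} F := F_0 \oplus \cdots \oplus F_n \labelledrightarrow{\phi_0} C \]
with each $\phi_0|_{F_i}$ order zero and $\phi_0(\psi(b)) \approx_{\e/3} b$ for all $b \in \mathcal G$.

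Now let $\iota \colon C \hookrightarrow A$ be the inclusion and put $\phi := \iota \circ \phi_0 \colon F \to A$. This is c.p.c., and since $\iota$ is a ${}^*$-homomorphism (in particular orthogonality preserving), each $\phi|_{F_i} = \iota \circ (\phi_0|_{F_i})$ remains order zero. For the upward map, note that $F$, being finite-dimensional, is injective, so the c.p.c.\ map $\psi \colon C \to F$ extends to a c.p.c.\ map $\tilde\psi \colon A \to F$ (Arveson's extension theorem, followed by a conditional expectation onto $F$). Then for $a \in \mathcal F$, using that $\phi_0$ and $\tilde\psi$ are contractive and $\tilde\psi|_C = \psi$,
\[ \phi(\tilde\psi(a)) = \phi_0(\tilde\psi(a)) \approx_{\e/3} \phi_0(\tilde\psi(b_a)) = \phi_0(\psi(b_a)) \approx_{\e/3} b_a \approx_{\e/3} a, \]
so that $\phi(\tilde\psi(a)) \approx_\e a$. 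Hence $A \labelledrightarrow{\tilde\psi} F \labelledrightarrow{\phi} A$ witnesses $\dr(A) \le n$, as desired.

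The only step that is not pure bookkeeping is the extension of $\psi$ from $C$ to $A$, and that is immediate from injectivity of finite-dimensional $\mathrmm C^*$-algebras (equivalently, Arveson's extension theorem together with a conditional expectation onto $F$); everything else is three applications of the triangle inequality together with the observation that order zero maps are preserved under composition with a ${}^*$-homomorphism.
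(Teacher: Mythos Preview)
Your proof is correct and is precisely the standard argument; the paper does not actually give a proof, merely remarking that the result is ``well-known'' and ``clear from the definition of decomposition rank.'' You have simply written out the details that the paper leaves implicit, and your use of Arveson extension (injectivity of $F$) together with composition by the inclusion is exactly what one does here.
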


Pull-backs exist in the category of $\mathrmm C^*$-algebras, and in many places we shall use an explicit realization of them, which we describe now.
Let $A,B,C$ be $\mathrmm C^*$-algebras and let $\alpha\colon A \to C, \beta\colon B \to C$ be ${}^*$-ho\-mo\-mor\-phisms.
Set
\begin{equation}
D := \{(a,b) \in A \oplus B \mid \alpha(a) = \beta(b)\},
\end{equation}
and let $\pi_A\colon D \to A$ and $\pi_B\colon D \to B$ denote the first and second coordinate projections.
Then
\begin{equation}
\xymatrix{
D \ar[r]^-{\pi_A} \ar[d]_{\pi_B} & A \ar[d]^-{\alpha} \\
B \ar[r]_-{\beta} & C}
\end{equation}
is a pull-back diagram.

\section{Non-commutative cell complexes}
Non-commutative cell complexes are introduced in this section, and it is shown that all subhomogeneous algebras can be locally approximated by these.
This result can be viewed as a (non-commutatively) generalized---though weakened---version of the result that $n$-di\-men\-sion\-al compact metrizable spaces are inverse limits of $n$-dimensional simplicial complexes.
In particular, as in this topological result, our result has the feature that the topological dimension of the approximating subalgebras is controlled by the dimension of the given subhomogeneous algebra.

With regard to how it is weakened, the (non-commutative) result is more like saying that $n$-dimensional compact metrizable spaces are inverse limits of $n$-dimensional cell complexes.
As with the commutative case, the result provides building blocks for (approximately) subhomogeneous algebras that are much more tractable and amenable to further analysis (e.g., Theorem \ref{thm:MainThmA}, \cite{EGLN:ASH,Robert:NCCW,CXDecomp}), compared to general subhomogeneous algebras---or even to Phillips's recursive subhomogeneous algebras.

In proving this approximation theorem, a key result is that non-commutative cell complexes are semiprojective with respect to $\mathrmm C^*$-algebras of the form $\mathrmm C(X,M_m)$, for any fixed $m\in\N$.
This result is a generalization of the classical fact that finite cell complexes are absolute neighbourhood retracts; recalling the fact that a compact Hausdorff space $X$ is an absolute neighbourhood retract if and only if $\mathrmm C(X)$ is semiprojective with respect to the class of unital commutative $\mathrmm C^*$-algebras, we see that this classical result is the special case that the non-commutative cell complex is commutative and $m=1$.
In fact, the proof of the new semiprojectivity result is inspired by the proof in the classical case (see \cite[Appendix A]{Hatcher}, particularly Corollary A.10, which deals with CW complexes, although the arguments work for finite cell complexes).
The classical proof entails showing (i) that cell complexes embed into Euclidean space (equivalently, they have finite topological dimension) and (ii) that cell complexes are locally contractible.
Then \cite[Theorem A.7]{Hatcher} implies that they embed into Euclidean space as neighbourhood retracts, which implies that they are absolute neighbourhood retracts.

Here, using the correspondence between ${}^*$-homomorphisms $A\to \mathrmm C(X,M_m)$ and continuous maps $X \to \Hom(A,M_m)$, the semiprojectivity result is equivalent to showing that $\Hom(A,M_m)$ is an absolute neighbourhood retract, and as in the classical case, we appeal to \cite[Theorem A.7]{Hatcher}, requiring us to show that $\Hom(A,M_m)$ has finite topological dimension and is locally contractible.
Showing that $\Hom(A,M_m)$ is finite dimensional is fairly straightforward.
Showing that it is locally contractible is somewhat more involved, although the idea behind the commutative case underpins even that argument.

\subsection{Definition of non-commutative cell complexes}
\label{sec:NCcellDef}

Non-com\-mu\-ta\-tive cell complexes are defined as recursive subhomogeneous algebras (as in \cite[Definition 1.1]{Phillips:RSH}) for which the gluing pairs are always of the form $S^{n-1} \subset D^n$.
Here is a formal definition.

\begin{defn}
The class of (unital) \textbf{non-commutative (NC) cell complexes} is the smallest class $\mathcal C$ of $\mathrmm C^*$-algebras such that:

(i) every finite dimensional algebra is in $\mathcal C$; and

(ii) if $B \in \mathcal C$, $k,n \in \N$, $\phi\colon B \to \mathrmm C(S^{n-1},M_k)$ is a unital ${}^*$-ho\-mo\-mor\-phism, and $A$ is given by the pull-back diagram
\begin{equation}
\label{eq:NCcellDefpb}
\xymatrix{
A \ar[d] \ar[r] & \mathrmm C(D^n,M_k) \ar[d]^{f \mapsto f|_{S^{n-1}}} \\
B \ar[r]_-{\phi} & \mathrmm C(S^{n-1}, M_k),
}
\end{equation}
then $A \in \mathcal C$.
\end{defn}

\begin{remark}
\label{rmk:NCcellDef}
(i)
Of course, every NC cell complex can be constructed by finitely many iterated pull-backs as in \eqref{eq:NCcellDefpb}.
The topological dimension of an NC cell complex is precisely the largest value of $n$ such that $D^n$ appears in one of these pull-backs.
(Note that one need not take the minimum over all possible decompositions, since if at one stage, $D^n$ appears, then the algebra at this stage is a quotient of the final algebra, and its topological dimension is at least $n$.)

(ii)
A commutative $\mathrmm C^*$-algebra is an NC cell complex if and only if it is isomorphic to $C(X)$ for some finite cell complex $X$.

(iii)
The definition of NC cell complexes is closely related to the definition, due to Eilers-Loring-Pedersen, of NCCW complexes \cite[Section 2.4]{EilersLoringPedersen:NCCW}: all NCCW complexes are NC cell complexes, although the converse does not hold (see example below).

It is not hard to see that all NC cell complexes of topological dimension at most one are, in fact, one-dimensional NCCW complexes (these have elsewhere been called point--line algebras and occur prominently in the classification results of \cite{GongLinNiu} and \cite{Robert:NCCW}).
This fails, even in the commutative case, in dimension two.

For example, let $\alpha\colon S^1 \to D^2$ be a space-filling curve, and then define $X$ by the push-forward diagram
\begin{equation}
\xymatrix{
X & D^2 \ar[l] \\
D^2 \ar[u] & S^1, \ar[l]_-{\alpha} \ar@{^{(}->}[u]
}
\end{equation}
so that $X$ is a (finite) cell complex (i.e., $\mathrmm C(X)$ is an NC cell complex) but not a CW complex (i.e., $\mathrmm C(X)$ is not an NCCW complex---since every commutative NCCW complex is equal to $\mathrmm C(X)$ for a CW complex $X$).
\end{remark}

\subsection{Basic facts about $\Hom(A,M_m)$}
In preparation for the proof of our semiprojectivity result for NC cell complexes, we will prove a number of results about $\Hom(A,M_m)$, particularly applicable when $A$ is subhomogeneous.

A finite dimensional representation of a $\mathrmm C^*$-algebra decomposes as a direct sum of irreducible representations; we introduce notation to keep combinatorial track of such decompositions.

Fix $m \in \N$.
Define
\begin{align}
\notag T_m &:= \{\alpha = (k_1,\alpha_1,\dots,k_p,\alpha_p) \in \mathbb N_{>0}^{2p} \mid \\
&\qquad p \in \N\text{ and }k_1\alpha_1 + \cdots + k_p\alpha_p = m\}.
\end{align}
Let $A$ be a unital $\mathrmm C^*$-algebra.
Then for $\alpha=(k_1,\alpha_1,\dots,k_p,\alpha_p) \in T_m, u \in U_m$, and $\pi_i \in \Hom(A,M_{k_i})$ for $i=1,\dots,p$, define
\begin{align}
\notag \sigma_{\alpha,u,(\pi_1,\dots,\pi_p)} &:= \Ad(u) \circ \diag(\pi_1 \otimes 1_{\alpha_1},\dots,\pi_p \otimes 1_{\alpha_p}) \\
\label{eq:sigmaDef}
&\qquad \in \Hom(A,M_m).
\end{align}
For $\alpha=(k_1,\alpha_1,\dots,k_p,\alpha_p) \in T_m$, define
\begin{align}
\notag
H_\alpha := \{\sigma_{\alpha,u,(\pi_1,\dots,\pi_p)} \mid & u \in U_m\text{ and } \\
&\pi_i \in \Hom(A,M_{k_i})\text{ for }i=1,\dots,p\},
\end{align}
and let $H_\alpha^{\mathrm{pure}}$ denote the subset of $H_\alpha$ consisting of those $\sigma_{\alpha,u,(\pi_1,\dots,\pi_p)}$ for which $\pi_1,\dots,\pi_p$ are mutually inequivalent, irreducible representations.
For a $\mathrmm C^*$-algebra $A$ which is not assumed to be unital, use $\Hom_1(A,M_m)$ to denote the set of ${}^*$-homomorphisms whose image contains the unit (this coincides with $\Hom(A,M_m)$ if $A$ is unital), and define $\sigma_{\alpha,u,(\pi_1,\dots,\pi_p)}$, $H_\alpha$, and $H^{\mathrm{pure}}_\alpha$ as in the unital case, with $\Hom_1(A,M_m)$ in place of $\Hom(A,M_m)$.

\begin{prop}
\label{prop:sigmaCts}
Fix $\alpha = (k_1,\alpha_1,\dots,k_p,\alpha_p) \in T_m$ and a unital $\mathrmm C^*$-algebra $A$.
Then the map $U_m \times \Hom(A,M_{k_1}) \times \cdots \times \Hom(A,M_{k_p}) \to \Hom(A,M_m)$ given by
\begin{equation} (u,\pi_1,\dots,\pi_p) \mapsto \sigma_{\alpha,u,(\pi_1,\dots,\pi_p)} \end{equation}
is continuous.
\end{prop}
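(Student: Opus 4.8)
The plan is to verify continuity directly from the definition of $\sigma_{\alpha,u,(\pi_1,\dots,\pi_p)}$, checking convergence on a single element $a\in A$ and exploiting the fact that a $*$-homomorphism into a finite-dimensional algebra has norm at most $1$, so all the maps involved are equicontinuous and uniformly bounded. Concretely, suppose $(u^{(\lambda)},\pi_1^{(\lambda)},\dots,\pi_p^{(\lambda)})\to (u,\pi_1,\dots,\pi_p)$ in the product topology; I want to show $\sigma_{\alpha,u^{(\lambda)},(\pi_1^{(\lambda)},\dots,\pi_p^{(\lambda)})}(a)\to\sigma_{\alpha,u,(\pi_1,\dots,\pi_p)}(a)$ in norm for each fixed $a\in A$. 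Since the point-norm topology on $\Hom(A,M_{k_i})$ means $\pi_i^{(\lambda)}(a)\to\pi_i(a)$ in $M_{k_i}$, the block-diagonal map $a\mapsto\diag(\pi_1(a)\otimes 1_{\alpha_1},\dots,\pi_p(a)\otimes 1_{\alpha_p})$ is continuous in $(\pi_1,\dots,\pi_p)$ by inspection: ampliation $x\mapsto x\otimes 1_{\alpha_i}$ is an isometry on $M_{k_i}$, and forming a block-diagonal matrix is continuous.

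The only remaining point is that $\Ad(u)$ depends continuously on $u$, jointly with its argument. This is the short computation
\begin{equation}
\|u^{(\lambda)} x^{(\lambda)} (u^{(\lambda)})^* - u x u^*\|
\leq \|u^{(\lambda)} - u\|\,\|x^{(\lambda)}\| + \|x^{(\lambda)} - x\| + \|x\|\,\|(u^{(\lambda)})^* - u^*\|,
\end{equation}
which tends to $0$ because $\|u^{(\lambda)}\|=1$ and $x^{(\lambda)}=\diag(\pi_1^{(\lambda)}(a)\otimes 1_{\alpha_1},\dots)\to x=\diag(\pi_1(a)\otimes 1_{\alpha_1},\dots)$ in norm (in particular $\|x^{(\lambda)}\|$ is bounded), while $u^{(\lambda)}\to u$ in $U_m\subseteq M_m$ in the norm topology. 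Combining, $\sigma_{\alpha,u^{(\lambda)},(\pi_1^{(\lambda)},\dots,\pi_p^{(\lambda)})}(a)\to\sigma_{\alpha,u,(\pi_1,\dots,\pi_p)}(a)$, which is exactly the statement that $(u,\pi_1,\dots,\pi_p)\mapsto\sigma_{\alpha,u,(\pi_1,\dots,\pi_p)}$ is point-norm continuous, i.e.\ continuous as a map into $\Hom(A,M_m)$.

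There is essentially no obstacle here: the statement is a routine continuity check, and the only mild subtlety is keeping track of which topology sits on which space (the norm topology on $U_m$, the point-norm topology on each $\Hom(A,M_{k_i})$ and on $\Hom(A,M_m)$) and noting the uniform bound $\|\pi_i^{(\lambda)}(a)\|\le\|a\|$ that makes the product-rule estimate go through. If $A$ is separable one could equally phrase the argument with sequences rather than nets, but nothing changes.
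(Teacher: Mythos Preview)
Your proof is correct and is precisely the routine verification the paper has in mind; the paper's own proof consists of the single word ``Obvious.'' You have simply spelled out the details behind that word.
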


\begin{proof} Obvious. \end{proof}

\begin{lemma}
\label{lem:sigmaUnitaryKernel}
Let $A$ be a $\mathrmm C^*$-algebra.
Fix $\alpha = (k_1,\alpha_1,\dots,k_p,\alpha_p) \in T_m$, and fix pairwise inequivalent irreducible representations $\pi_i \in \Hom_1(A,M_{k_i})$ for $i=1,\dots,p$.
Then for $u,v \in U_m$,
\begin{equation} \sigma_{\alpha,u,(\pi_1,\dots,\pi_p)} \approx \sigma_{\alpha,v,(\pi_1,\dots,\pi_p)} \end{equation}
if and only if $uv^*$ is approximately contained in
\begin{equation} U(1_{k_1} \otimes M_{\alpha_1} \oplus \cdots \oplus 1_{k_p} \otimes M_{\alpha_p}) = 1_{k_1} \otimes U_{\alpha_1} \oplus \cdots \oplus 1_{k_p} \otimes U_{\alpha_p}. \end{equation}
To be more precise:

(i) Given $\mathcal F \subset A$ finite and $\e > 0$, there exists $\dl > 0$ such that if $u,v \in U_m$ and
\begin{equation} d(uv^*,1_{k_1} \otimes U_{\alpha_1} \oplus \cdots \oplus 1_{k_p} \otimes U_{\alpha_p}) < \dl \end{equation}
then
\begin{equation} \sigma_{\alpha,u,(\pi_1,\dots,\pi_p)}(a) \approx_\e \sigma_{\alpha,v,(\pi_1,\dots,\pi_p)}(a) \end{equation}
for all $a \in \mathcal F$; and

(ii)
Given $\e > 0$, there exists $\mathcal F \subset A$ finite and $\dl > 0$, such that if $u,v \in U_m$ and
\begin{equation} \sigma_{\alpha,u,(\pi_1,\dots,\pi_p)}(a) \approx_\dl \sigma_{\alpha,v,(\pi_1,\dots,\pi_p)}(a) \end{equation}
for all $a \in \mathcal F$, then
\begin{equation} 
\label{eq:sigmaUnitaryKernelEq}
d(uv^*,1_{k_1} \otimes U_{\alpha_1} \oplus \cdots \oplus 1_{k_p} \otimes U_{\alpha_p}) < \e. \end{equation}
\end{lemma}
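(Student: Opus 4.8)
The plan is to prove both directions via the basic fact about when two finite-dimensional representations are (approximately) unitarily equivalent. Write $\sigma_u := \sigma_{\alpha,u,(\pi_1,\dots,\pi_p)}$ for short, and let $N := 1_{k_1} \otimes M_{\alpha_1} \oplus \cdots \oplus 1_{k_p} \otimes M_{\alpha_p} \subseteq M_m$, which is precisely the commutant of the image of $\rho := \diag(\pi_1 \otimes 1_{\alpha_1},\dots,\pi_p\otimes 1_{\alpha_p})$ inside $M_m$, by Schur's lemma (here the hypothesis that the $\pi_i$ are pairwise inequivalent irreducibles is essential, as it pins down the commutant exactly). Note $\sigma_u = \Ad(u)\circ\rho$, so $\sigma_u(a) = \sigma_v(a)$ for all $a$ exactly when $\Ad(uv^*)\circ\rho = \rho$, i.e.\ $uv^* \in U(N)$; the lemma is the quantitative ($\e$-$\dl$) refinement of this exact statement.

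For part (i): given $\mathcal F$ and $\e$, set $\dl := \e / (2\sup_{a\in\mathcal F}\|a\| + 1)$ (and $\dl := \e$ if $\mathcal F = \emptyset$). If $d(uv^*, U(N)) < \dl$, pick $w \in U(N)$ with $\|uv^* - w\| < \dl$. Then for $a \in \mathcal F$, since $w$ commutes with $\rho(a)$,
\begin{equation}
\sigma_u(a) = \Ad(u)\circ\rho(a) = \Ad(uv^*)\big(\Ad(v)\rho(a)\big) = \Ad(uv^*)\big(\sigma_v(a)\big),
\end{equation}
and $\|\Ad(uv^*)(\sigma_v(a)) - \Ad(w)(\sigma_v(a))\| \le 2\|uv^* - w\|\,\|\sigma_v(a)\| < 2\dl\|a\| \le \e$, while $\Ad(w)(\sigma_v(a)) = \Ad(w)\Ad(v)\rho(a) = \Ad(v)\Ad(w)\rho(a) = \Ad(v)\rho(a) = \sigma_v(a)$ because $w \in U(N)$ commutes with $\rho(a)$. (One should also observe $U(N) = 1_{k_1}\otimes U_{\alpha_1} \oplus \cdots \oplus 1_{k_p}\otimes U_{\alpha_p}$, which is elementary from the block structure.) This direction is routine.

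For part (ii), which I expect to be the main obstacle, the point is a compactness/continuity argument converting a norm-closeness hypothesis on $uv^*$ back from finitely many test elements. Suppose not: then there is $\e_0 > 0$ and sequences $\mathcal F_n \nearrow$ dense in $A$, $\dl_n \to 0$, and $u_n, v_n \in U_m$ with $\sigma_{u_n}(a) \approx_{\dl_n} \sigma_{v_n}(a)$ for $a \in \mathcal F_n$ but $d(u_n v_n^*, U(N)) \ge \e_0$. Put $w_n := u_n v_n^*$; by compactness of $U_m$, pass to a subsequence so $w_n \to w \in U_m$ with $d(w, U(N)) \ge \e_0$, in particular $w \notin U(N)$. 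For any fixed $a \in A$ and any $n$ large enough that $a$ is within (say) $\dl_n$ of $\mathcal F_n$, one gets $\Ad(w_n)(\sigma_{v_n}(a)) \approx \sigma_{v_n}(a)$ up to an error going to $0$; a short estimate (using $\|\sigma_{v_n}(a)\| \le \|a\|$ and $w_n \to w$) then yields $\Ad(w)(\rho(a)) = \rho(a)$ — wait, one must be careful: the $v_n$ themselves need not converge, but one can either pass to a further subsequence so $v_n \to v$, or better, rephrase using $\sigma_{u_n} = \Ad(w_n)\circ\Ad(v_n)\circ\rho = \Ad(w_n v_n)\circ\rho$ and $\sigma_{v_n} = \Ad(v_n)\circ\rho$; passing to a subsequence with $v_n \to v$ (again by compactness of $U_m$), we get $\Ad(w v)\rho(a) = \Ad(v)\rho(a)$ for all $a \in A$, hence $\Ad(w)\rho(a) = \rho(a)$ for all $a$, so $w$ commutes with the image of $\rho$, i.e.\ $w \in U(N)$ — contradiction. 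The subtlety to handle carefully is exactly this passage to convergent subsequences of the auxiliary unitaries and making sure the density of $\bigcup \mathcal F_n$ lets us reach every $a \in A$ in the limit; this is where the statement's asymmetry (fixed $\e$ on one side, $\mathcal F$ and $\dl$ produced) is used.
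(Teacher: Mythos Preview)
There is a recurring computational error. In (i) you write $\Ad(w)\Ad(v)\rho(a) = \Ad(v)\Ad(w)\rho(a)$, but $\Ad(w)\Ad(v) = \Ad(wv)$ whereas $\Ad(v)\Ad(w) = \Ad(vw)$, and nothing makes $w$ and $v$ commute. What you would actually need is that $w$ commute with $\sigma_v(a) = v\rho(a)v^*$; membership $w\in U(N)$ only gives commutation with $\rho(a)$. The same slip reappears in (ii): from $\Ad(wv)\rho = \Ad(v)\rho$ you infer $\Ad(w)\rho = \rho$, but the correct deduction is only that $v^*wv \in N$, not that $w \in N$. The clean way out is to track $v^*u$ (equivalently $u^*v$) rather than $uv^*$: one has the exact identity $\|\sigma_u(a)-\sigma_v(a)\| = \|[v^*u,\rho(a)]\|$ (conjugate the difference by $v^*$ on the left and $v$ on the right), and with this both of your arguments run cleanly. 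The paper's proof in fact opens by passing to exactly this commutator form and then argues essentially as you do for (i); note that its $uv^*$ should likewise be read as $v^*u$ for the identity to hold.

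For (ii) your compactness--contradiction route is genuinely different from the paper's. The paper chooses a finite $\mathcal F$ so that $\{\rho(a):a\in\mathcal F\}$ generates $D:=M_{k_1}\otimes 1_{\alpha_1}\oplus\cdots\oplus M_{k_p}\otimes 1_{\alpha_p}$ as a $\mathrm C^*$-algebra (possible because the $\pi_i$ are irreducible and pairwise inequivalent); small commutators with this generating set force small commutators with every contraction in $D$ (finite-dimensionality), and applying the conditional expectation onto $D'\cap M_m = N$ yields the distance estimate directly, after which stability of the unitary relation upgrades closeness to $N$ to closeness to $U(N)$. Your approach is valid in principle (once the $v^*u$ correction above is made), but has a second gap: writing ``$\mathcal F_n \nearrow$ dense in $A$'' tacitly assumes $A$ is separable, which is not hypothesised. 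The fix is immediate---$\rho(A)\subseteq M_m$ is finite-dimensional, so a single finite $\mathcal F$ with $\rho(\mathcal F)$ spanning $\rho(A)$ already suffices for the limit to pin down $w$---but it should be said. The paper's argument is constructive and avoids subsequence bookkeeping; yours is softer but needs both repairs.
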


\begin{proof}
Note that, by \eqref{eq:sigmaDef},
\begin{equation} \sigma_{\alpha,u,(\pi_1,\dots,\pi_p)}(a) \approx_\e \sigma_{\alpha,v,(\pi_1,\dots,\pi_p)}(a) \end{equation}
is equivalent to saying that
\begin{equation} \|[uv^*, \diag(\pi_1(a) \otimes 1_{\alpha_1},\dots,\pi_p(a) \otimes 1_{\alpha_p})]\| < \e. \end{equation}

(i): Given $\mathcal F$ finite and $\e > 0$, set $M:= \max_{a \in \mathcal F} \|a\|$ and then
\begin{equation} \dl := \frac\e{2M}. \end{equation}
Then if $u,v \in U_m$ and
\begin{equation} uv^* \approx_\dl w \in 1_{k_1} \otimes U_{\alpha_1} \oplus \cdots \oplus 1_{k_p} \otimes U_{\alpha_p}, \end{equation}
then for $a \in A$,
\begin{align}
\notag
& \hspace*{-10mm} uv^* \diag(\pi_1(a) \otimes 1_{\alpha_1},\dots,\pi_p(a) \otimes 1_{\alpha_p}) \\
\notag
&\qquad \approx_{\e/2} w \diag(\pi_1(a) \otimes 1_{\alpha_1},\dots,\pi_p(a) \otimes 1_{\alpha_p}) \\
\notag
&\qquad = \diag(\pi_1(a) \otimes 1_{\alpha_1},\dots,\pi_p(a) \otimes 1_{\alpha_p})w \\
&\qquad \approx_{\e/2} \diag(\pi_1(a) \otimes 1_{\alpha_1},\dots,\pi_p(a) \otimes 1_{\alpha_p})uv^*,
\end{align}
as required.

(ii): On the other hand, suppose we are given $\e > 0$.
Since being a unitary is a stable relation, it suffices to find $\mathcal F$ and $\dl>0$ such that, instead of \eqref{eq:sigmaUnitaryKernelEq}, we have
\begin{equation} 
d(uv^*,1_{k_1} \otimes M_{\alpha_1} \oplus \cdots \oplus 1_{k_p} \otimes M_{\alpha_p}) < \e. \end{equation}

By our hypothesis on $\pi_1,\dots,\pi_p$, we may choose $\mathcal F \subset A$ finite such that the set
\begin{equation} \{\diag(\pi_1(a) \otimes 1_{\alpha_1},\dots,\pi_p(a) \otimes 1_{\alpha_p}) \mid a \in \mathcal F\} \end{equation}
generates
\begin{equation}
D:= M_{k_1} \otimes 1_{\alpha_1} \oplus \cdots \oplus M_{k_p} \otimes 1_{\alpha_p}
\end{equation}
as a $\mathrmm C^*$-algebra.
Therefore (and using the fact that $D$ is finite dimensional), we may pick $\dl>0$ so that if $w$ is unitary and
\begin{equation} \|[w, \diag(\pi_1(a) \otimes 1_{\alpha_1},\dots,\pi_p(a) \otimes 1_{\alpha_p})]\| < \dl \end{equation}
for every $a \in \mathcal F$ then
\begin{equation} \|[w,x]\| < \e \end{equation}
for every contraction $x \in D$.
By applying the conditional expectation onto
\begin{equation} M_m \cap D' = 1_{k_1} \otimes M_{\alpha_1} \oplus \cdots \oplus 1_{k_p} \otimes M_{\alpha_p}, \end{equation}
we see that this implies that
\begin{equation} d(w,M_m \cap D') < \e, \end{equation}
as required.
\end{proof}

\begin{prop}
\label{prop:TOrder}
Let $\alpha = (k_1,\alpha_1,\dots,k_p,\alpha_p) ,\beta = (\el_1,\beta_1,\dots,\el_q,\beta_q) \in T_m$.
The following statements are equivalent:
\begin{enumerate}
\item $H_\alpha \subseteq H_\beta$;
\item There exists $S \in M_{p \times q}(\mathbb N)$ such that
\begin{equation} \begin{pmatrix} k_1 \\ \vdots \\ k_p \end{pmatrix} = S\begin{pmatrix} \el_1 \\ \vdots \\ \el_q \end{pmatrix}\quad\text{and}\quad
\begin{pmatrix} \beta_1 \\ \vdots \\ \beta_q \end{pmatrix} = S^{\text{tr}} \begin{pmatrix} \alpha_1 \\ \vdots \\ \alpha_p \end{pmatrix}. \end{equation}
\end{enumerate}
If $H^{\text{pure}}_\alpha$ is non-empty, then these are also equivalent to
\begin{enumerate}
\item[(iii)] $H^{\text{pure}}_\alpha \cap H_\beta \neq \emptyset$.
\end{enumerate}
Also, $H_\alpha = H_\beta$ if and only if $p=q$ and there exists a permutation $\rho$ on $\{1,\dots,p\}$ such that
\begin{equation} k_i = \el_{\rho(i)} \quad\text{and}\quad \alpha_i=\beta_{\rho(i)} \end{equation}
for all $i=1,\dots,p$.
\end{prop}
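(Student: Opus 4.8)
The plan is to establish $(ii)\Rightarrow(i)$ and $(iii)\Rightarrow(ii)$ directly, to observe that $(i)\Rightarrow(iii)$ is trivial once $H^{\mathrm{pure}}_\alpha\neq\emptyset$, thereby closing the cycle, and then to read off the final clause by applying the resulting characterisation symmetrically in $\alpha$ and $\beta$. The only structural input used throughout is that a finite-dimensional representation $A\to M_m$ decomposes, uniquely up to unitary equivalence, as a direct sum of irreducibles; consequently such a representation is determined, up to conjugation by a unitary in $U_m$, by the multiplicities with which the various irreducible representations occur in it, and a shape $(k_1,\alpha_1,\dots,k_p,\alpha_p)\in T_m$ merely records a way of grouping this multiset of irreducibles into $p$ homogeneous blocks. (The clean form of the statement tacitly presumes that $A$ has enough irreducible representations for the relevant sets $H^{\mathrm{pure}}_\gamma$ to be inhabited, as will be the case in the applications; in degenerate cases one works directly with arbitrary elements of the $H_\gamma$, with no change to the bookkeeping.)

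The implication $(i)\Rightarrow(iii)$ is immediate: if $H^{\mathrm{pure}}_\alpha\neq\emptyset$ then $\emptyset\neq H^{\mathrm{pure}}_\alpha\subseteq H_\alpha\subseteq H_\beta$. For $(iii)\Rightarrow(ii)$ I would take $\sigma=\sigma_{\alpha,u,(\pi_1,\dots,\pi_p)}\in H^{\mathrm{pure}}_\alpha\cap H_\beta$ and also write $\sigma=\sigma_{\beta,v,(\rho_1,\dots,\rho_q)}$ with $v\in U_m$ and $\rho_j\in\Hom(A,M_{\el_j})$. Since the $\pi_i$ are pairwise inequivalent irreducibles, $\sigma\cong\bigoplus_i\pi_i^{\oplus\alpha_i}$ is the full irreducible decomposition of $\sigma$, so each irreducible summand of each $\rho_j$ is one of the $\pi_i$, whence $\rho_j\cong\bigoplus_i\pi_i^{\oplus s_{ij}}$ for suitable $s_{ij}\in\N$. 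Putting $S=(s_{ij})\in M_{p\times q}(\N)$, and comparing $\dim\rho_j=\el_j$ with $\sum_i s_{ij}k_i$, and the multiplicity $\alpha_i$ of $\pi_i$ in $\sigma$ with its multiplicity $\sum_j s_{ij}\beta_j$ in $\bigoplus_j\rho_j^{\oplus\beta_j}$, yields exactly the two matrix identities of (ii). Conversely, for $(ii)\Rightarrow(i)$, given such an $S$ and an arbitrary $\sigma_{\alpha,u,(\pi_1,\dots,\pi_p)}\in H_\alpha$ (the $\pi_i$ no longer assumed irreducible), I would put $\rho_j:=\bigoplus_i\pi_i\otimes 1_{s_{ij}}$; the first identity gives $\dim\rho_j=\el_j$, so $\rho_j\in\Hom(A,M_{\el_j})$, and after regrouping summands the second identity gives $\bigoplus_j\rho_j\otimes 1_{\beta_j}\cong\bigoplus_i\pi_i\otimes 1_{\alpha_i}$; conjugating by the permutation unitary implementing this equivalence and absorbing it into $u$ exhibits $\sigma_{\alpha,u,(\pi_1,\dots,\pi_p)}$ as an element of $H_\beta$. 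Thus, when $H^{\mathrm{pure}}_\alpha\neq\emptyset$, the cycle $(ii)\Rightarrow(i)\Rightarrow(iii)\Rightarrow(ii)$ closes and all three statements are equivalent.

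For the final clause, $(\Leftarrow)$ is the special case of the construction above in which $S$ is a permutation matrix, applied in both directions. For $(\Rightarrow)$, from $H_\alpha=H_\beta$ I would apply $(i)\Leftrightarrow(iii)$ both ways: from a witness in $H^{\mathrm{pure}}_\alpha\cap H_\beta$ one gets $\rho_j\cong\bigoplus_i\pi_i^{\oplus s_{ij}}$ with $\alpha_i=\sum_j s_{ij}\beta_j$, and symmetrically from a witness in $H^{\mathrm{pure}}_\beta\cap H_\alpha$ one gets $\pi'_i\cong\bigoplus_j(\rho'_j)^{\oplus t_{ij}}$ with $\beta_j=\sum_i t_{ij}\alpha_i$. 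Summing the first family of identities over $i$ and the second over $j$ produces $\sum_i\alpha_i=\sum_j\beta_j(\sum_i s_{ij})$ and $\sum_j\beta_j=\sum_i\alpha_i(\sum_j t_{ij})$, in which every inner sum is at least $1$ (each $\rho_j$, each $\pi'_i$ being a nonzero representation); hence $\sum_i\alpha_i=\sum_j\beta_j$ and every inner sum equals $1$. Therefore each $\rho_j$ is a single $\pi_{i(j)}$, and comparing the sets of irreducibles occurring in $\sigma\cong\bigoplus_j\pi_{i(j)}^{\oplus\beta_j}$ and in $\sigma\cong\bigoplus_i\pi_i^{\oplus\alpha_i}$ (together with the symmetric statement) forces $p=q$ and $j\mapsto i(j)$ to be a bijection of $\{1,\dots,p\}$; reading off $\el_j=k_{i(j)}$ and $\alpha_i=\beta_{i^{-1}(i)}$ then yields the required permutation (namely $i^{-1}$, which is the $\rho$ of the statement).

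I expect the crux to be the extraction of the matrix $S$ in $(iii)\Rightarrow(ii)$: the substance there is the rigidity of a pure witness — since all of its blocks are irreducible and pairwise inequivalent, there is essentially no freedom in how it can be reorganised into shape $\beta$, and so a single element of $H^{\mathrm{pure}}_\alpha\cap H_\beta$ already carries a matrix $S$ strong enough to force the entire set $H_\alpha$ into $H_\beta$. Everything else — the regrouping in $(ii)\Rightarrow(i)$ and the counting argument for the equality clause — is bookkeeping with multiplicities once this point is in hand.
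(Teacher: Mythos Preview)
The paper's proof reads, in full, ``Straightforward.'' Your write-up is the natural elaboration and follows exactly the intended line: pick a pure witness, decompose the $\beta$-blocks into the irreducible $\alpha$-blocks, record the multiplicities in a matrix, and reverse the construction for the converse. The counting argument for the equality clause is also fine.

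There is, however, one point you should not paper over. The identities you actually derive in $(iii)\Rightarrow(ii)$ are $\ell_j=\sum_i s_{ij}k_i$ and $\alpha_i=\sum_j s_{ij}\beta_j$, i.e.\ $\ell=S^{\mathrm{tr}}k$ and $\alpha=S\beta$ for your $S=(s_{ij})\in M_{p\times q}(\mathbb N)$. These are \emph{not} ``exactly the two matrix identities of (ii)'' as printed, which read $k=S\ell$ and $\beta=S^{\mathrm{tr}}\alpha$. A two-line sanity check confirms the mismatch: with $A=\mathrmm C(X)$ for $|X|\geq 2$, $m=2$, $\alpha=(1,1,1,1)$ and $\beta=(2,1)$, one plainly has $H_\alpha\subseteq H_\beta$; your equations are solved by $s_{11}=s_{21}=1$, while the printed equations would force $1=2S_{1j}$, impossible over $\mathbb N$. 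So your equations are the ones that genuinely characterise $H_\alpha\subseteq H_\beta$, and the displayed (ii) in the statement has the roles of $(\alpha,k,p)$ and $(\beta,\ell,q)$ interchanged. Your $(ii)\Rightarrow(i)$ step silently uses \emph{your} version of (ii) as well (``the first identity gives $\dim\rho_j=\ell_j$'' only follows from $\ell_j=\sum_i s_{ij}k_i$, not from $k_i=\sum_j S_{ij}\ell_j$). Thus the cycle you close is correct, but it proves the corrected statement rather than the printed one; you should flag the discrepancy rather than assert a match.

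Your caveat about the tacit non-emptiness of $H^{\mathrm{pure}}_\alpha$ (and, for the equality clause, of $H^{\mathrm{pure}}_\beta$ too) is appropriate; the paper is equally silent on this, and in the applications the algebra $A$ always has enough irreducibles for these sets to be inhabited.
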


\begin{proof}
Straightforward.
\end{proof}

\begin{prop}
\label{prop:AllRepsPure}
Let $A$ be a $\mathrmm C^*$-algebra.
Then
$\Hom_1(A,M_m) = \bigcup_{\alpha \in T_m} H^{\mathrm{pure}}_\alpha$.
\end{prop}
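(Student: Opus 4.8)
The plan is to prove the two inclusions separately, with essentially all of the content lying in the inclusion ``$\subseteq$''.

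For ``$\supseteq$'', fix $\alpha = (k_1,\alpha_1,\dots,k_p,\alpha_p) \in T_m$, a unitary $u \in U_m$, and $\pi_i \in \Hom_1(A,M_{k_i})$ for $i = 1,\dots,p$. Each block $\pi_i \otimes 1_{\alpha_i}$ has image containing $1_{k_i} \otimes 1_{\alpha_i}$, so $\diag(\pi_1\otimes 1_{\alpha_1},\dots,\pi_p \otimes 1_{\alpha_p})$ has image containing $1_m$, and this property is preserved by $\Ad(u)$; hence $\sigma_{\alpha,u,(\pi_1,\dots,\pi_p)} \in \Hom_1(A,M_m)$. Since $H^{\mathrm{pure}}_\alpha \subseteq H_\alpha$, this gives $\bigcup_{\alpha \in T_m} H^{\mathrm{pure}}_\alpha \subseteq \Hom_1(A,M_m)$. (Irreducibility of the $\pi_i$ is not needed here.)

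For ``$\subseteq$'', fix $\pi \in \Hom_1(A,M_m)$ and set $B := \pi(A) \subseteq M_m$. Since $1_m \in B$, this is a unital finite-dimensional $\mathrmm C^*$-subalgebra of $M_m$, so $B \cong \bigoplus_{j=1}^p M_{k_j}$ as unital $\mathrmm C^*$-algebras, for some $p \in \N$ and $k_1,\dots,k_p \in \N_{>0}$. Decomposing $\mathbb{C}^m$ into its isotypic components as a $B$-module (i.e.\ invoking the representation theory of finite-dimensional $\mathrmm C^*$-algebras), I obtain multiplicities $\alpha_1,\dots,\alpha_p$ with $\sum_j k_j\alpha_j = m$, and a unitary $u \in U_m$, such that conjugation by $u$ carries the inclusion $B \hookrightarrow M_m$ to the map $x \mapsto \diag(x_1 \otimes 1_{\alpha_1},\dots,x_p \otimes 1_{\alpha_p})$, where $x_j$ denotes the $j$-th coordinate of $x \in B$ under the above isomorphism; moreover each $\alpha_j \geq 1$, since the central summand of $B$ corresponding to the $j$-th factor is non-zero in $M_m$. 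Letting $\pi_j \in \Hom_1(A,M_{k_j})$ be $\pi$ followed by the $j$-th coordinate projection $B \to M_{k_j}$, each $\pi_j$ is a composition of surjections, hence surjective, hence irreducible; and $\pi = \sigma_{\alpha,u^*,(\pi_1,\dots,\pi_p)}$ with $\alpha = (k_1,\alpha_1,\dots,k_p,\alpha_p) \in T_m$.

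It then remains to verify that $\pi_1,\dots,\pi_p$ are mutually inequivalent, so that in fact $\pi \in H^{\mathrm{pure}}_\alpha$. For $i \neq j$, I would choose $a \in A$ with $\pi(a)$ equal to the central projection of $B$ supported on the $i$-th summand; then $\pi_i(a) = 1_{k_i}$ while $\pi_j(a) = 0$, and since $w\,0\,w^* = 0 \neq 1_{k_i}$ for every unitary $w$, the representations $\pi_i$ and $\pi_j$ cannot be unitarily equivalent. There is no genuine obstacle in this argument; the only points needing a little care are the bookkeeping forced by the non-unital convention $\Hom_1$ and this last verification of mutual inequivalence, both of which are routine.
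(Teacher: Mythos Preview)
Your proof is correct. The paper's own proof of this proposition is simply the word ``Obvious,'' and you have correctly spelled out the standard argument (decompose the image $\pi(A)$ as a finite-dimensional $\mathrmm C^*$-algebra and read off the irreducible summands with their multiplicities) that justifies this.
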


\begin{proof}
Obvious.
\end{proof}

\subsection{Dimension of $\Hom(A,M_m)$}

\begin{lemma}
\label{lem:HomDecomp}
Let $A$ be a unital separable subhomogeneous $\mathrmm C^*$-algebra such that $A|_{\Prim_k(A)}$ is locally trivial for each $k$.
Let $m\in \N$.
Then there exist $q$ and a finite increasing sequence of sets,
\begin{equation} \emptyset = Z_0 \subset Z_1 \subset \cdots \subset Z_q = \Hom(A,M_m) \end{equation}
such that:

(i)
$Z_i$ is closed for each $i$; and

(ii)
For each $\sigma \in Z_i \setminus Z_{i-1}$, there exists a neighbourhood $W$ of $\sigma$ in $Z_i \setminus Z_{i-1}$ (that is, $W$ is relatively open in $Z_i$) which is homeomorphic to
\begin{equation} V \times W_1 \times \cdots \times W_p \end{equation}
where

(a) $V = 
U_m/G$ where $G$ is a Lie subgroup of $U_m$;

(b) $p \leq m$; and

(c) For each $j=1,\dots,m'$, $W_j$ is a relatively open subset of $\Prim_k(A)$ for some $k$.
\end{lemma}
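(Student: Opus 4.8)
The plan is to stratify $\Hom(A,M_m)$ according to the combinatorial type of the decomposition of a representation into irreducibles. By Proposition \ref{prop:AllRepsPure}, $\Hom(A,M_m)=\bigcup_{\alpha\in T_m}H^{\mathrm{pure}}_\alpha$; moreover, the type of $\sigma\in H^{\mathrm{pure}}_\alpha$ --- the multiset of pairs (dimension, multiplicity) of its irreducible constituents --- is an invariant of $\sigma$, so the sets $H^{\mathrm{pure}}_\alpha$ are pairwise equal or disjoint, and there are only finitely many distinct ones (this being a combinatorial datum bounded in terms of $m$). Enumerate the distinct ones as $H^{\mathrm{pure}}_{\alpha^{(1)}},\dots,H^{\mathrm{pure}}_{\alpha^{(q)}}$ so that $H_{\alpha^{(j)}}\subseteq H_{\alpha^{(i)}}$ implies $j\le i$; by the final assertion of Proposition \ref{prop:TOrder}, $\subseteq$ is a partial order on the finite set of types, so such an enumeration exists. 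Put $Z_i:=\bigcup_{j\le i}H^{\mathrm{pure}}_{\alpha^{(j)}}$; then $Z_0=\emptyset$, $Z_q=\Hom(A,M_m)$, and $Z_i\setminus Z_{i-1}=H^{\mathrm{pure}}_{\alpha^{(i)}}$.

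For (i), I would prove the following degeneration principle: if $\sigma_n\in H^{\mathrm{pure}}_\alpha$ and $\sigma_n\to\sigma\in H^{\mathrm{pure}}_\beta$, then $H_\beta\subseteq H_\alpha$. Write $\sigma_n=\sigma_{\alpha,u_n,(\pi^n_1,\dots,\pi^n_p)}$ and pass to a subsequence with $u_n\to u$ in the compact group $U_m$; then $\Ad(u^*)\sigma=\lim_n\diag(\pi^n_1\otimes 1_{\alpha_1},\dots,\pi^n_p\otimes 1_{\alpha_p})$. Since the block sizes are fixed, the limit is again block diagonal, and comparing the top $k_i\times k_i$ corner of the $i$-th block shows $\pi^n_i$ converges to some $\rho_i\in\Hom(A,M_{k_i})$, whence $\Ad(u^*)\sigma=\sigma_{\alpha,1,(\rho_1,\dots,\rho_p)}\in H_\alpha$. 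Thus $\sigma\in H^{\mathrm{pure}}_\beta\cap H_\alpha$, so $H_\beta\subseteq H_\alpha$ by Proposition \ref{prop:TOrder}. By the chosen enumeration, and since $\Hom(A,M_m)$ is metrizable (as $A$ is separable), this gives $\overline{H^{\mathrm{pure}}_{\alpha^{(i)}}}\subseteq Z_i$; as $Z_i$ is a finite union of such sets, $\overline{Z_i}\subseteq Z_i$, i.e.\ $Z_i$ is closed.

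For (ii), fix $\sigma=\sigma_{\alpha,u,(\pi_1,\dots,\pi_p)}\in H^{\mathrm{pure}}_{\alpha^{(i)}}$ with $\alpha=(k_1,\alpha_1,\dots,k_p,\alpha_p)$; since $m=\sum_i k_i\alpha_i\ge p$, we have $p\le m$. As the $\pi_i$ are mutually inequivalent irreducibles, the classes $[\pi_i]$ are distinct points of the Hausdorff spaces $\Prim_{k_i}(A)$; using that $A|_{\Prim_{k_i}(A)}$ is locally trivial, choose relatively open $W_i\ni[\pi_i]$ in $\Prim_{k_i}(A)$ over which this subquotient trivializes, with $W_i\cap W_j=\emptyset$ whenever $i\neq j$ and $k_i=k_j$ (possible since $[\pi_i]\neq[\pi_j]$). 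The trivializations yield continuous maps $x\mapsto\eta^x_i\in\Hom(A,M_{k_i})$, $x\in W_i$, with $\eta^x_i$ irreducible of class $x$; absorbing unitaries, we may assume $\pi_i=\eta^{[\pi_i]}_i$. Define $\Psi\colon W_1\times\dots\times W_p\times U_m\to H^{\mathrm{pure}}_{\alpha^{(i)}}$ by $\Psi(x_1,\dots,x_p,v):=\sigma_{\alpha,v,(\eta^{x_1}_1,\dots,\eta^{x_p}_p)}$; this is well defined (the $\eta^{x_j}_j$ are mutually inequivalent, as the relevant $W_j$ are disjoint) and continuous (Proposition \ref{prop:sigmaCts}), with $\Psi([\pi_1],\dots,[\pi_p],u)=\sigma$. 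As the multiset of irreducible constituents is an invariant and the $W_j$ are disjoint, $\Psi(x,v)=\Psi(x',v')$ forces $x=x'$; and then Lemma \ref{lem:sigmaUnitaryKernel} forces $v(v')^*$ into the closed --- hence Lie --- subgroup $G:=1_{k_1}\otimes U_{\alpha_1}\oplus\dots\oplus 1_{k_p}\otimes U_{\alpha_p}$ of $U_m$. So $\Psi$ descends to a continuous bijection $\overline\Psi$ from $W_1\times\dots\times W_p\times(U_m/G)$ onto $\mathrm{im}\,\Psi$, which is a homeomorphism by the quantitative form of Lemma \ref{lem:sigmaUnitaryKernel} together with the continuity of the trivializations. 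Finally, $\mathrm{im}\,\Psi$ is a neighbourhood of $\sigma$ in $H^{\mathrm{pure}}_{\alpha^{(i)}}=Z_i\setminus Z_{i-1}$: any $\sigma'\in H^{\mathrm{pure}}_{\alpha^{(i)}}$ sufficiently close to $\sigma$ has each irreducible constituent lying in the corresponding $W_j$, so after conjugating the $j$-th constituent into the section $\eta^\bullet_j$ one sees $\sigma'\in\mathrm{im}\,\Psi$. With $V=U_m/G$, this neighbourhood and these $W_j$ witness (ii).

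I expect the crux to be the final claim of the third paragraph: that $\mathrm{im}\,\Psi$ is a \emph{full} neighbourhood of $\sigma$, equivalently, that the decomposition of a representation into its (mutually inequivalent) irreducible constituents varies continuously near $\sigma$. This is where the Hausdorffness of the $\Prim_{k_i}(A)$, the mutual inequivalence of the $\pi_i$, local triviality, and the precise ($\e$-$\dl$) content of Lemma \ref{lem:sigmaUnitaryKernel} must all be combined with care; the degeneration principle in (i), by contrast, is comparatively routine.
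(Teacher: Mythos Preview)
Your proposal is correct and follows essentially the same stratification as the paper: partition $\Hom(A,M_m)$ into the strata $H^{\mathrm{pure}}_\alpha$, order them by the partial order $H_\alpha\subseteq H_\beta$, and identify each stratum locally with $(U_m/G)\times W_1\times\cdots\times W_p$ using local triviality and Lemma~\ref{lem:sigmaUnitaryKernel}. Two minor points of comparison: for (i), the paper simply notes that each $H_\alpha$ is the continuous image (Proposition~\ref{prop:sigmaCts}) of the compact set $U_m\times\prod_i\Hom(A,M_{k_i})$, hence closed, which is quicker than your degeneration argument; and for (ii), the paper makes the ``neighbourhood'' step explicit by using separability to choose $a_i\in A$ with $\pi(a_i)=0$ iff $[\pi]\in\Prim_{\le k_i}(A)\setminus W_i$, so that $\sigma'(a_i)\ne 0$ forces some irreducible constituent of $\sigma'$ into $W_i$, after which a pigeonhole argument gives the matching you assert.
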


\begin{proof}
Consider the equivalence relation $\sim$ on $T_m$ given by $\alpha \sim \beta$ if $H_\alpha = H_\beta$.
Enumerate one representative for each equivalence class as $\alpha^1,\dots,\alpha^q \in T_m$, in such a way that for each $i,j$, if $H_{\alpha^i} \subseteq H_{\alpha^j}$ then $i \leq j$.
Then, define
\begin{equation} Z_i := \bigcup_{j \leq i} H_{\alpha_j}. \end{equation}

Since $U_m$ and $\Hom(A,M_{k})$ are compact spaces, Proposition \ref{prop:sigmaCts} implies that $H_\alpha$ is closed for each $\alpha$.
Consequently, $Z_i$ is closed.

By Propositions \ref{prop:TOrder} and \ref{prop:AllRepsPure},
\begin{equation} Z_i \setminus Z_{i-1} = H^{\text{pure}}_{\alpha^i}. \end{equation}
So, in order to establish (ii), we must show that every element of $H^{\text{pure}}_\alpha$ has a neighbourhood in $H^{\text{pure}}_\alpha$ of the form described in (ii).

For this, fix $\alpha = (k_1,\alpha_1,\dots,k_p,\alpha_p)$ and let $\sigma_{\alpha,u,(\pi_1,\dots,\pi_p)} \in H^{\text{pure}}_\alpha$.
For each $i$, let $W_i$ be a neighbourhood of $[\pi_i]$ in $\Prim_{k_i}(A)$ such that $A|_{W_i}$ is trivial, and such that $W_i \cap W_j = \emptyset$ whenever $i \neq j$.
Since $A|_{W_i}$ is trivial, let $\dot W_i \subset \Hom(A,M_{k_i})$ be such that $\pi_i\in \dot W_i$ and $\pi \mapsto [\pi]$ is a homeomorphism from $\dot W_i$ onto $W_i$.

$W_i$ being open in $\Prim_{k_i}(A)$ means that $W_i \cup \Prim_{>k_i}(A)$ is open in $\Prim(A)$.
Using the fact that $A$ is separable, we may therefore find $a_i \in A$ such that, for each irreducible representation $\pi$ of $A$, $\pi(a_i) = 0$ if and only if $[\pi] \in \Prim_{\leq k_i}(A) \setminus W_i$.
Set
\begin{equation} V:=U_m/(1_{k_1} \otimes U_{\alpha_1} \oplus \cdots \oplus 1_{k_p} \otimes U_{\alpha_p}). \end{equation}

Define
\begin{equation} W := \{\sigma_{\alpha,v,(\pi_1',\dots,\pi_p')} \mid [v] \in V, \pi_i' \in \dot W_i\text{ for all }i=1,\dots,p\}. \end{equation}

Let us first show that $W$ is a neighbourhood of $\sigma_{\alpha,u,(\pi_1,\dots,\pi_p)}$ in $H^{\text{pure}}_\alpha$.
Let $\sigma_{\alpha,v,(\pi_1',\dots,\pi_p')} \in H^{\text{pure}}_\alpha$ be close to $\sigma_{\alpha,u,(\pi_1,\dots,\pi_p)}$.
If it is close enough, then
\begin{equation} \sigma_{\alpha,v,(\pi_1',\dots,\pi_p')}(a_i) \neq 0, \end{equation}
and therefore, for some $j$, $[\pi_j'] \in W_i$.
By the pigeonhole principle, permuting indices, we may assume that $[\pi_i'] \in W_i$ for each $i$.
Replacing $\pi_i'$ by a unitarily equivalent homomorphism (altering $v$ in the process), we may then assume that $\pi_i' \in \dot W_i$ for each $i$.

This concludes the proof that $W$ is a neighbourhood of $\sigma_{\alpha,u,(\pi_1,\dots,\pi_p)}$.

Now define $\Phi\colon V \times \dot W_1 \times \cdots \times \dot W_p \to W$ in the obvious way, by
\begin{equation} \Phi([v],\pi_1',\dots,\pi_p') = \sigma_{\alpha,v,(\pi_1',\dots,\pi_p')}. \end{equation}
This is continuous by Proposition \ref{prop:sigmaCts}, and it is clearly surjective.
Since the sets $W_1,\dots,W_p$ are disjoint, and by Lemma \ref{lem:sigmaUnitaryKernel}, $\Phi$ is also injective.

Let us show that that $\Phi$ is open.
Suppose that $\sigma_{\alpha,v,(\pi_1',\dots,\pi_p')} \approx \sigma_{\alpha,w,(\pi_1'',\dots,\pi_p'')}$.
Then the argument for why $W$ is a neighbourhood of $\sigma_{\alpha,u,(\pi_1,\dots,\pi_p)}$ shows that, up to a permutation, we have $\pi_i' \approx \pi_i''$ for all $i$.
Thus
\begin{equation}
\sigma_{\alpha,v,(\pi_1',\dots,\pi_p')} \approx \sigma_{\alpha,w,(\pi_1'',\dots,\pi_p'')} \approx \sigma_{\alpha,w,(\pi_1',\dots,\pi_p')},
\end{equation}
so by Lemma \ref{lem:sigmaUnitaryKernel}, $vw^*$ is approximately contained in $1_{k_1} \otimes U_{\alpha_1} \oplus \cdots \oplus 1_{k_p} \otimes U_{\alpha_p}$, i.e., $[v] \approx [w]$ in $V$.
This shows that $\Phi$ is open.

Since $W_i$ is homeomorphic to $\dot W_i$, we conclude that $W$ is homeomorphic to $V \times W_1 \times \cdots \times W_p$, as required.
\end{proof}

\begin{cor}
\label{cor:HomndDim}
Let $A$ be a unital separable subhomogeneous $\mathrmm C^*$-algebra of finite topological dimension, and let $m\in \N$.
Then $\Hom(A,M_m)$ has finite dimension.
\end{cor}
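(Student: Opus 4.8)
The plan is to derive the corollary from Lemma \ref{lem:HomDecomp}; the only real work is that this lemma requires each subquotient $A|_{\Prim_k(A)}$ to be locally trivial, which must first be deduced from the finite-topological-dimension hypothesis. To this end, write $d$ for the topological dimension of $A$, finite by hypothesis, so that $\dim \Prim_k(A) \leq d$ for every $k$. Since $A$ is separable, $\Prim(A)$ is second countable; and since $\Prim_k(A) = \Prim_{\leq k}(A) \setminus \Prim_{\leq k-1}(A)$ is an open subset of the closed (hence locally compact) subspace $\Prim_{\leq k}(A)$ of $\Prim(A)$, it is a second-countable, locally compact, Hausdorff space, and so in particular metrizable and paracompact. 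I would then invoke the classical fibre-bundle description of homogeneous $C^*$-algebras (going back to Fell and to Tomiyama--Takesaki): a $k$-homogeneous $C^*$-algebra whose spectrum is paracompact, Hausdorff, and of finite covering dimension is the section algebra of a locally trivial $M_k$-bundle over its spectrum, so that $A|_{\Prim_k(A)}$ is locally trivial for every $k$. Lemma \ref{lem:HomDecomp} then applies and produces $q$ together with closed sets $\emptyset = Z_0 \subset Z_1 \subset \cdots \subset Z_q = \Hom(A,M_m)$ enjoying properties (i) and (ii) of that lemma.

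Next I would check that each stratum is finite-dimensional, with a bound independent of the stratum. Note first that $\Hom(A,M_m)$ is metrizable (as $A$ is separable) and compact (by a routine diagonal argument over a countable dense subset of $A$, using that unital ${}^*$-homomorphisms into $M_m$ are contractive), hence separable metric; consequently so is each of its subspaces, and the classical dimension theorems for separable metric spaces are available. Fix $i$. By property (ii) of Lemma \ref{lem:HomDecomp}, every $\sigma \in Z_i \setminus Z_{i-1}$ has a neighbourhood in $Z_i \setminus Z_{i-1}$ homeomorphic to $V \times W_1 \times \cdots \times W_p$, where $V = U_m/G$ for a closed subgroup $G$ of $U_m$, where $p \leq m$, and where each $W_j$ is relatively open in some $\Prim_{k_j}(A)$. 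Since $V$ is a smooth compact manifold, $\dim V \leq \dim U_m = m^2$; and by the subspace theorem $\dim W_j \leq \dim \Prim_{k_j}(A) \leq d$. By the product theorem for covering dimension of separable metric spaces, $\dim(V \times W_1 \times \cdots \times W_p) \leq m^2 + pd \leq m^2 + md$. Thus every point of the separable metric space $Z_i \setminus Z_{i-1}$ has an open neighbourhood of dimension at most $m^2 + md$; covering by countably many such neighbourhoods, each of which is an $F_\sigma$ in $Z_i \setminus Z_{i-1}$, and applying the countable closed sum theorem, I get $\dim(Z_i \setminus Z_{i-1}) \leq m^2 + md$.

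Finally $\Hom(A,M_m) = \bigcup_{i=1}^q (Z_i \setminus Z_{i-1})$ is a finite union of separable metric subspaces each of dimension at most $m^2 + md$, so iterating the addition theorem $\dim(Y \cup Z) \leq \dim Y + \dim Z + 1$ gives $\dim \Hom(A,M_m) \leq q(m^2 + md + 1) < \infty$. The main obstacle is the local-triviality reduction in the first paragraph: the dimension-theoretic bookkeeping of the remaining steps is routine, but passing from finite topological dimension to local triviality of the homogeneous subquotients genuinely relies on the structure theory of homogeneous $C^*$-algebras as matrix bundles over finite-dimensional bases.
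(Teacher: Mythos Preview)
Your proof is correct and follows essentially the same route as the paper: verify the local-triviality hypothesis of Lemma~\ref{lem:HomDecomp}, then use standard dimension-theoretic permanence properties on the resulting stratification. The only differences are cosmetic: the paper cites \cite[Theorem~2.16]{Phillips:RSH} for local triviality where you invoke the Fell/Tomiyama--Takesaki bundle description (either works, and indeed your citation does not actually need the finite-dimension hypothesis you attach to it), and the paper compresses your explicit product/sum/addition bookkeeping into the phrase ``standard permanence properties for dimension'', arriving at a cruder but equally finite bound.
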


\begin{proof}
Suppose that $A$ has topological dimension at most $n$, so that $\Prim_k(A)$ has dimension at most $n$ for each $k$, and hence by \cite[Theorem 2.16]{Phillips:RSH}, it satisfies the hypotheses of Lemma \ref{lem:HomDecomp}.
Note also that any quotient of $U_m$ by a Lie subgroup has dimension at most $m^2$.
From Proposition \ref{lem:HomDecomp} and standard permanence properties for dimension, it follows directly that
\begin{equation} \dim \Hom(A,M_m) \leq m^2n^m. \end{equation}
\end{proof}

\subsection{Local contractibility of $\Hom(A,M_m)$}

To outline the argument here, consider the purely commutative case: $\Hom(A,\mathbb C)$ where $A$ is commutative (note that $\Hom(C(X),\mathbb C) \cong X$).
In order to show that a cell complex is locally contractible, one may use induction, showing that any point on the boundary of a newly-attached cell has small neighbourhoods, each of which can be retracted to a contractible neighbourhood of the old complex (see \cite[Proposition A.4]{Hatcher} and its proof).
This is the idea behind showing that $\Hom(A,M_m)$ is locally contractible when $A$ is a non-commutative cell complex, although even in the case that $A$ is commutative, the argument is more complicated: if $A=\mathrmm C(Y)$, then homomorphisms from $A$ to $M_m$ are parametrized by an $m$-tuple of points in $Y$ together with a unitary $u\in M_m$ (each homomorphism is the direct sum of point-evaluations, conjugated by a unitary), so part of the argument is that the space of $m$-multisets of points in $Y$ is locally contractible.
For more general non-commutative cell complexes, we have a similar picture, but with ``points in $Y$'' replaced by irreducible representations.
These need to be handled carefully, chiefly because of the non-Hausdorff nature of the space of irreducible representations.

The result, in the case that $A$ is commutative (or a matrix algebra over such), turns out to be a necessary stepping stone to the overall result, and we prove this special case now.
Recall that $\Hom_1(A,M_m)$ denotes the space of ${}^*$-homomorphisms whose image contains the unit.
We recall from topology that a \textbf{deformation retraction} of a space $X$ onto a subspace $Y$ is a continuous function
\begin{equation} r = (r_t)_{t\in [0,1]}\colon [0,1] \times X \to X
\end{equation}
such that $r_0 = \mathrm{id}_X$, $r_1(X) = Y$, and $r_1|_Y = \mathrm{id}_Y$.
Thus, a space $X$ is contractible to a point $x_0$ exactly when there exists a deformation retraction from $X$ to $\{x_0\}$.

\begin{lemma}
\label{lem:HomXMkLocContr}
Let $X$ be a locally compact Hausdorff space.
If $X$ is locally contractible then so also is $\Hom_1(\mathrmm C_0(X,M_k),M_{sk})$ for any $k,s \in \N$.
\end{lemma}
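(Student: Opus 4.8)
The plan is to reduce the statement to the structure of $\Hom_1(\mathrm C_0(X,M_k),M_{sk})$ as a parametrized object over multisets of points of $X$, and then to exploit local contractibility of $X$ pointwise. First I would recall that every $*$-homomorphism $\mathrm C_0(X,M_k)\to M_{sk}$ whose image contains the unit is, up to conjugation by a unitary $u\in U_{sk}$, a direct sum $\mathrm{ev}_{x_1}\otimes 1 \oplus \cdots \oplus \mathrm{ev}_{x_r}\otimes 1$ of evaluations at points $x_1,\dots,x_r\in X$ tensored with identity matrices, since the only irreducible representations of $\mathrm C_0(X,M_k)$ are $\mathrm{ev}_x\otimes(\text{irreducible rep of }M_k)$ and $M_k$ has a unique irreducible representation. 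In the notation of the paper this says $\Hom_1(\mathrm C_0(X,M_k),M_{sk}) = \bigcup_{\alpha\in T_{sk}} H^{\mathrm{pure}}_\alpha$, where only multiplicities of $k$ occur, so the relevant $\alpha$ are of the form $(k,\alpha_1,\dots,k,\alpha_p)$ with $\alpha_1+\cdots+\alpha_p=s$ and the $\pi_i$ are evaluations at distinct points. Fix $\sigma=\sigma_{\alpha,u,(\mathrm{ev}_{x_1},\dots,\mathrm{ev}_{x_p})}$ and a basic neighbourhood of it; following the proof of Lemma \ref{lem:HomDecomp}, a neighbourhood $W$ of $\sigma$ is homeomorphic to $(U_{sk}/G)\times W_1\times\cdots\times W_p$ where $G = 1_k\otimes U_{\alpha_1}\oplus\cdots\oplus 1_k\otimes U_{\alpha_p}$ and each $W_i$ is an open neighbourhood of $x_i$ in $X$ (the homogeneous stratum $\Prim_k$ of $\mathrm C_0(X,M_k)$ being just $X$). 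Shrinking, by local contractibility of $X$ I may take each $W_i$ to be contractible and the $W_i$ pairwise disjoint.

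Next I would contract each factor. The factor $U_{sk}/G$ is a homogeneous space of a compact Lie group by a closed subgroup; it need not be contractible, so I cannot simply contract it. Instead the key is that a small enough neighbourhood $W$ of $\sigma$ deformation retracts onto the fiber over the given point-configuration, i.e. onto $\{\sigma_{\alpha,u,(\mathrm{ev}_{y_1},\dots,\mathrm{ev}_{y_p})} : y_i\in W_i\}\cong W_1\times\cdots\times W_p$ with $[u]\in U_{sk}/G$ held fixed: one retracts the $U_{sk}/G$ coordinate to its basepoint $[u]$ provided the neighbourhood in the $U_{sk}/G$ direction is taken inside a contractible (e.g. geodesically convex) chart around $[u]$, which is possible since $U_{sk}/G$ is a manifold. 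Having done that, the remaining space $W_1\times\cdots\times W_p$ is a product of contractible spaces, hence contractible, and I can contract it to the point $(x_1,\dots,x_p)$, which recovers $\sigma$. Concatenating the two deformation retractions (the chart-contraction in the unitary variable, then the product-contraction in the point variables) yields a contraction of a neighbourhood of $\sigma$ to $\sigma$ within $\Hom_1(\mathrm C_0(X,M_k),M_{sk})$. Continuity of the homotopy throughout is exactly continuity of $(u,\pi_1,\dots,\pi_p)\mapsto\sigma_{\alpha,u,(\pi_1,\dots,\pi_p)}$, which is Proposition \ref{prop:sigmaCts}, together with continuity of the evaluation map $X\to\Hom(M_k,\ldots)$, $y\mapsto \mathrm{ev}_y$.

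The main obstacle is the bookkeeping at the boundary between strata: a sequence of homomorphisms in $H^{\mathrm{pure}}_\alpha$ can converge to one in $H^{\mathrm{pure}}_\beta$ for a strictly coarser pattern $\beta$ (two evaluation points colliding, so two blocks merge), so the ``neighbourhood'' of $\sigma$ in the whole space is genuinely larger than the chart $W$ described above and one must check the constructed homotopy extends continuously, or rather that $W$ as constructed is already a neighbourhood in $\Hom_1(\mathrm C_0(X,M_k),M_{sk})$ and not merely in $H^{\mathrm{pure}}_\alpha$. This is handled exactly as in Lemma \ref{lem:HomDecomp}: using separability of $\mathrm C_0(X,M_k)$ one picks elements $a_i$ detecting the open sets $W_i$, so that any homomorphism close enough to $\sigma$ must have its constituent evaluation points lying (after a permutation, by the pigeonhole principle) one in each $W_i$, and hence belongs to $W$; since $\alpha$ records distinct points separated by the disjoint $W_i$, no collision can occur nearby and no coarser pattern intrudes. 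Thus $W$ is an honest neighbourhood, the homotopy stays inside $\Hom_1$, and local contractibility follows. The only genuinely non-formal input is the existence of a contractible neighbourhood of $[u]$ in the Lie homogeneous space $U_{sk}/G$, which is standard (smooth manifolds are locally contractible).
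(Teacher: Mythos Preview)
There is a genuine gap when some multiplicity $\alpha_i>1$. The chart $(U_{sk}/G)\times W_1\times\cdots\times W_p$ coming from Lemma~\ref{lem:HomDecomp} is a neighbourhood of $\sigma$ only inside the stratum $H^{\mathrm{pure}}_\alpha$, not in the full space $\Hom_1(\mathrm C_0(X,M_k),M_{sk})$. You worried about coarser patterns (collisions), but the actual obstruction is \emph{finer} patterns: a nearby homomorphism can have its $\alpha_i$ evaluation points in $W_i$ split into several distinct points, and this lands in a strictly finer stratum $H^{\mathrm{pure}}_\beta$ which your chart does not see. Concretely, take $k=1$, $s=2$, $X=\R$, and $\sigma(f)=\diag(f(0),f(0))$. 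Then $p=1$, $G=U_2$, $U_2/G$ is a point, and your $W\cong W_1$ is one\nobreakdash-dimensional; but $\sigma_\e(f)=\diag(f(\e),f(-\e))$ converges to $\sigma$ as $\e\to 0$ and is never of the form $\diag(f(y),f(y))$, so it lies outside your $W$. Your pigeonhole argument only shows that the $s$ evaluation points of a nearby homomorphism fall $\alpha_i$ at a time into each $W_i$, not that they coincide there.

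The paper's proof avoids this by \emph{not} passing to the pure decomposition. It parametrizes everything by the full $s$\nobreakdash-tuple $(y_1,\dots,y_s)\in X^s$ (repetitions allowed), builds the neighbourhood upstairs in $U_{sk}\times X^s$, and---this is the key point---chooses one contraction $\alpha_x$ for each element $x$ of the \emph{set} $\{x_1,\dots,x_s\}$, so that if $x_i=x_j$ then $y_i$ and $y_j$ are dragged by the same homotopy. This is exactly what makes the space\nobreakdash-variable contraction descend through the (non\nobreakdash-injective) surjection $U_{sk}\times X^s\to\Hom_1$. Only after all $y_i$ have been contracted to the fixed tuple $(x_1,\dots,x_s)$ does one contract the unitary, now via a straight\nobreakdash-line path in $U_{sk}$; the order matters because at that stage the remaining ambiguity in the unitary is the fixed stabilizer $G$, so the second step is tractable. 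Your argument can be repaired along these lines, but as written the product chart is too small.
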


\begin{proof}
Using $\alpha := (1,1,\dots,1,1) \in \N^{2s}$, note that $\Hom_1(\mathrmm C_0(X,M_k),M_{sk}) = H_\alpha$.
To keep notation more concise, write
\begin{align}
\check\sigma_{u,(x_1,\dots,x_s)} &:= \sigma_{\alpha,u,(\ev_{x_1},\dots,\ev_{x_s})}
\in \Hom_1(\mathrmm C_0(X,M_k),M_{ks})
\end{align}
for $x_1,\dots,x_s \in X$ and $u \in U_{sk}$.

Consider a point $\sigma = \check\sigma_{u,(x_1,\dots,x_s)} \in \Hom_1(\mathrmm C_0(X,M_k),M_{sk})$.
Choose, for each $x \in \{x_1,\dots,x_s\}$, an arbitrarily small contractible neighbourhood $V_x$ of $x$, such that $V_x \cap V_y = \emptyset$ whenever $x \neq y$.
Fix a deformation retraction $\alpha_x\colon [0,1] \times V_x \to V_x$ of $V_x$ onto $\{x\}$, 
(Note that, by our choice of notation, if $x_i=x_j$ then $V_{x_i}=V_{x_j}$ and $\alpha_{x_i}=\alpha_{x_j}$.)
Let $\e > 0$ be small (to be determined).

Define $\tilde U \subset U_{sk} \times X^s$ to consist of those points $(v,(y_1,\dots,y_s))$ for which there exist a unitary $u' \in U_{sk}$ such that
\begin{enumerate}
\item $\sigma = \check\sigma_{u',(x_1,\dots,x_s)}$;
\item $\|v-u'\| < \e$; and
\item $y_i \in V_{x_i}$ for each $i$.
\end{enumerate}
The set
\begin{equation} U:= \{\check\sigma_{v,(y_1,\dots,y_s)} \in \Hom_1(\mathrmm C_0(X,M_k),M_{sk}) \mid (v,(y_1,\dots,y_s)) \in \tilde U\} \end{equation}
is an open neighbourhood of $\alpha$, and by making $V_1,\dots,V_s$ and $\e$ small enough, we make this neighbourhood arbitrarily small.

Define $\widetilde W \subset U_{sk} \times X^s$ to consist of those points $(v,(x_1,\dots,x_s))$ for which there exist a unitary $u' \in U_{sk}$ such that
\begin{enumerate}
\item $\sigma = \check\sigma_{u',(x_1,\dots,x_s)}$; and
\item $\|v-u'\| < \e$,
\end{enumerate}
and set 
\begin{equation} W:= \{\check\sigma_{v,(x_1,\dots,x_s)} \in \Hom_1(\mathrmm C_0(X,M_k),M_{sk}) \mid (v,(x_1,\dots,x_s)) \in \widetilde W\}. \end{equation}

Define $\tilde\beta = (\tilde\beta_t)\colon [0,1] \times \tilde U \to \tilde U$ as follows.
Given $(v,(y_1,\dots,y_s)) \in \tilde U$, 
define
\begin{equation} \tilde\beta_t(v,(y_1,\dots,y_s)) = (v,(\alpha_{x_1}(t, y_1),\dots,\alpha_{x_s}(t,y_s))), \end{equation}
where we recall that $\alpha_x$ is a deformation retract of $V_x$ onto $\{x\}$.
It is not hard to see that $\tilde\beta$ is a deformation retraction of $\tilde U$ onto $\widetilde W$.
It is also not hard to see that, via the surjection $\tilde U \to U$, $\tilde\beta$ induces a continuous map $\beta:[0,1] \times U \to U$, which is therefore a deformation retraction of $U$ onto $W$.

Next, define $\dot W \subset U_{sk} \times X^s$ to consist of those points $(v,(x_1,\dots,x_s))$ such that $\|v-u\| < \e$.
Note that $\check\sigma$ induces a homeomorphism of $\dot W$ onto $W$.

For unitaries $u,v$ and $t \in [0,1]$, define
\begin{equation} \lambda_t(u,v) := tu + (1-t)v. \end{equation}
If $u$ and $v$ are sufficiently close then $\lambda_t(u,v)$ is invertible, and we may define
\begin{equation} \theta_t(u,v) := \lambda_t(u,v)(\lambda_t(u,v)^*\lambda_t(u,v))^{-1/2}, \end{equation}
which is unitary.
Thus $(\theta(u,v))_{t\in [0,1]}$ is a path of unitaries from $u$ to $v$.

For $(v,(x_1,\dots,x_s)) \in \dot W$, define $\dot\gamma = (\dot\gamma_t)\colon [0,1] \times \dot W \to \dot W$ by
\begin{equation} \dot\gamma_t(v,(x_1,\dots,x_s)) = (\theta_t(u,v),(x_1,\dots,x_s)) \end{equation}
(assume $\e$ is sufficiently small so that $\theta_t(u,v)$ is always defined.)
This is a deformation retraction of $\dot W$ onto $\{(u,(x_1,\dots,x_s))\}$, so by the homeomorphism $\dot W \to W$, we obtain a deformation retraction $\gamma$ of $W$ onto $\{\sigma\}$.

Combining the deformation retracts $\beta$ and $\gamma$ provides a deformation retraction of $U$ onto $\{a\}$, as required.
\end{proof}

The next lemma generalizes the commutative argument that, for points on the boundary of a cell attached to a cell complex, every relative neighbourhood in the old cell complex is a deformation retract of a neighbourhood in the enlarged cell complex.
This lemma does not, in itself, imply that $\Hom(A,M_m)$ is locally contractible when $A$ is an NC cell complex, since a ${}^*$-homomorphism $A \to M_m$ can contain a mix of irreducible representations from the boundary of the new cell, the interior of the new cell, and purely from the old NC cell complex.

We continue to use $\Hom_1(A,M_m)$ to denote the set of ${}^*$-homomorphisms $A \to M_m$ whose images contain the unit of $M_m$.

\begin{lemma}
\label{lem:GluingDefRetract}
Let $A$ be given by the pull-back diagram
\begin{equation}
\xymatrix{
A \ar[d]_{\lambda} \ar[r]^-{\rho} & \mathrmm C_0(X \times [0,1),M_k) \ar[d]^{f \mapsto f|_{X \times \{0\}}} \\
B \ar[r]_-{\phi} & \mathrmm C(X,M_k),
}
\end{equation}
where $B$ is a unital $\mathrmm C^*$-algebra and $\phi$ is a unital ${}^*$-homomorphism.
Let $U$ be an open set in $\Hom(B,M_m)$.
Then there exists an open set $V$ in $\Hom_1(A,M_m)$ such that
\begin{equation}
\label{eq:GluingDefRetractURestricts}
 U \circ \lambda = V \cap (\Hom(B,M_m) \circ \lambda),
\end{equation}
and this set is a deformation retract of $V$.
\end{lemma}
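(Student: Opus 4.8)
The plan is to build the deformation retraction explicitly, by realizing the newly glued ``collar'' $X\times[0,1)$ sitting over $B$ and then continuously sliding it down onto the gluing locus $X\times\{0\}$. The one technical device needed is to compactify the interval. Let $A'$ be the pull-back of $\phi$ along the restriction map $\mathrm C(X\times[0,1],M_k)\to\mathrm C(X,M_k)$, $f\mapsto f|_{X\times\{0\}}$, so
\[ A'=\{(b,f)\in B\oplus \mathrm C(X\times[0,1],M_k)\mid \phi(b)=f|_{X\times\{0\}}\}; \]
here $X$ is necessarily compact since $\phi$ is unital. Then $A'$ is unital, and $A$ sits inside $A'$ as the ideal of those $(b,f)$ with $f|_{X\times\{1\}}=0$; hence each $\psi\in\Hom_1(A,M_m)$, being nondegenerate, extends uniquely to a unital $\tilde\psi\in\Hom(A',M_m)$. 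The first thing I would verify is that $\psi\mapsto\tilde\psi$ is continuous: fixing an approximate unit $(e_\nu)$ for $A$, one has $\tilde\psi(a')=\lim_\nu\psi(a'e_\nu)$ with $\|\tilde\psi(a')-\psi(a'e_\nu)\|\le\|a'\|\,\|1_m-\psi(e_\nu)\|$, and since $\psi_0(e_\nu)\to1_m$ in norm, a single index $\nu_0$ serves for all $\psi$ in a point-norm neighbourhood of a given $\psi_0$; this reduces the continuity of $\psi\mapsto\tilde\psi(a')$ to the tautological continuity of $\psi\mapsto\psi(a'e_{\nu_0})$.

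Next, for $t\in[0,1]$ define the ${}^*$-endomorphism $\kappa_t\colon A'\to A'$ by $\kappa_t(b,f)=(b,f_t)$, where $f_t(x,s):=f(x,(1-t)s)$. This is well-defined (the constraint at $s=0$ is left intact), satisfies $\kappa_0=\id$, depends point-norm-continuously on $t$ (uniform continuity of $f$ on the compact $X\times[0,1]$), obeys $\kappa_t\circ\kappa_1=\kappa_1$, and factors as $\kappa_1=\iota\circ\lambda'$, where $\lambda'\colon A'\to B$ is the first-coordinate projection and $\iota\colon B\to A'$ sends $b$ to the pair $(b,f)$ with $f(x,s)=\phi(b)(x)$; in particular $\kappa_1$ annihilates the ideal $\ker\lambda\subseteq A$ (and $\lambda'|_A=\lambda$). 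Now put
\[ r_t(\psi):=(\tilde\psi\circ\kappa_t)|_A\qquad(t\in[0,1],\ \psi\in\Hom_1(A,M_m)). \]
The one step requiring an actual computation is that $r_t(\psi)$ remains nondegenerate: choosing the concrete approximate unit $e_\nu=(1_B,g_\nu)$ with $g_\nu(x,s)=\chi_\nu(s)1_k$, where $\chi_\nu\colon[0,1]\to[0,1]$ equals $1$ on $[0,1-1/\nu]$ and $0$ at $1$, one checks that $\kappa_t(e_\nu)=1_{A'}$ once $\nu\ge1/t$ (for $t>0$), so $r_t(\psi)(e_\nu)=1_m$ eventually and $r_t(\psi)$ has support projection $1_m$; and $r_0(\psi)=\psi$. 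Joint continuity of $(t,\psi)\mapsto r_t(\psi)$ then follows from the continuity of $\psi\mapsto\tilde\psi$ and of $t\mapsto\kappa_t$ via
\[ \|\tilde\psi(\kappa_t(a))-\tilde\psi_0(\kappa_{t_0}(a))\|\le\|\kappa_t(a)-\kappa_{t_0}(a)\|+\|\tilde\psi(\kappa_{t_0}(a))-\tilde\psi_0(\kappa_{t_0}(a))\|. \]

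Finally I would read off the statement. Since $\kappa_t$ leaves the $B$-coordinate untouched, $r_t$ fixes $\Hom(B,M_m)\circ\lambda$ pointwise for every $t$ (for $\psi=\bar\psi\circ\lambda$ one has $\tilde\psi=\bar\psi\circ\lambda'$ and $\lambda'\circ\kappa_t=\lambda'$), while $\kappa_1$ annihilating $\ker\lambda$ forces $\mathrm{range}(r_1)\subseteq\Hom(B,M_m)\circ\lambda$, and $r_1\circ r_t=r_1$ because $\kappa_t\circ\kappa_1=\kappa_1$ (the extension of $r_t(\psi)$ being $\tilde\psi\circ\kappa_t$). Set $V:=r_1^{-1}(U\circ\lambda)$. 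The map $\bar\psi\mapsto\bar\psi\circ\lambda$ is a homeomorphism of $\Hom(B,M_m)$ onto the closed subspace $\Hom(B,M_m)\circ\lambda$ of $\Hom_1(A,M_m)$, so $U\circ\lambda$ is relatively open there; picking $W$ open in $\Hom_1(A,M_m)$ with $W\cap(\Hom(B,M_m)\circ\lambda)=U\circ\lambda$ and using $\mathrm{range}(r_1)\subseteq\Hom(B,M_m)\circ\lambda$ gives $V=r_1^{-1}(W)$, so $V$ is open. Then the identity $U\circ\lambda=V\cap(\Hom(B,M_m)\circ\lambda)$ and the assertion that $(r_t)_{t\in[0,1]}$ restricts to a deformation retraction of $V$ onto $U\circ\lambda$ are formal: $r_t(V)\subseteq V$ because $r_1(r_t(\psi))=r_1(\psi)$, $r_0=\id$, $r_1|_{U\circ\lambda}=\id$, and hence $r_1(V)=U\circ\lambda$. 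The genuine content of the proof — and the place I expect to need the most care — is the continuity of the extension map $\psi\mapsto\tilde\psi$ and the verification that each $r_t$ preserves nondegeneracy; the rest is bookkeeping with the pull-backs and the endomorphisms $\kappa_t$.
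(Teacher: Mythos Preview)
Your proof is correct, and it takes a genuinely different route from the paper's.

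The paper works on the \emph{representation side}: every $\psi\in\Hom_1(A,M_m)$ is decomposed explicitly as
\[
\check\sigma_{u,((x_1,s_1),\dots,(x_\ell,s_\ell)),\pi_B}
:= \Ad(u)\circ\diag(\ev_{(x_1,s_1)}\circ\rho,\dots,\ev_{(x_\ell,s_\ell)}\circ\rho,\pi_B\circ\lambda),
\]
the homotopy is defined on the parameter space by $s_i\mapsto ts_i$, and then a good part of the argument is spent checking (i) that the resulting $V$ is open (a careful case analysis, since the decomposition of a nearby $\psi'$ may have a different $\ell$), and (ii) that the homotopy is well defined on $V$ despite the non-uniqueness of these decompositions.

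You instead work on the \emph{algebra side}: you compactify to $A'$, extend nondegenerate representations, and define the homotopy by precomposing with the endomorphisms $\kappa_t(b,f)=(b,f(\,\cdot\,,(1-t)\cdot))$. This packages the same geometric move (sliding the collar to $s=0$) but makes the well-definedness automatic and reduces openness of $V=r_1^{-1}(U\circ\lambda)$ to continuity of $r_1$. The price you pay is the extension step $\psi\mapsto\tilde\psi$ and its continuity, plus the check that $r_t(\psi)$ stays in $\Hom_1(A,M_m)$; you handle both cleanly. Note also that your $V$ coincides with the paper's (modulo reversing the time parameter), since in both cases $V$ is exactly the set of $\psi$ whose ``$s\to 0$'' limit lands in $U\circ\lambda$.

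One small remark: to conclude that the extension $\tilde\psi$ is unique you are implicitly using that $A$ is an \emph{essential} ideal in $A'$; it is (any $(b,f)\in A'$ annihilated by the approximate unit $(1_B,(1-s)1_k)$ must vanish), but it is worth saying so.
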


\begin{proof}
To keep notation more concise, for $\el \in \N$ such that $\el k \leq m$, denote
\begin{equation} \alpha_\el := (1,1,\dots,1,1,m-k\el,1) \in T_m \end{equation}
(where $1,1$ appears $\el$ times), and for $v_1,\dots,v_\el \in X \times [0,1)$, $\pi_B \in \Hom(B,M_{m-k\el})$, and $u \in U_m$, denote
\begin{align}
\notag
\check\sigma_{u,(v_1,\dots,v_\el),\pi_B} &:= \sigma_{\alpha_\el,u,(\ev_{v_1} \circ \rho,\dots,\ev_{v_\el} \circ \rho,\pi_B \circ \lambda)} \\
\notag
&= \Ad(u) \circ \diag(\ev_{v_1} \circ \rho,\dots,\ev_{v_\el} \circ \rho,\pi_B \circ \lambda) \\
&\in \Hom_1(A,M_m).
\end{align}

Define
\begin{align}
\notag
 \hspace*{-10mm} \dot U := \{&(u,(x_1,\dots,x_\el), \pi_B) \mid \el \in \mathbb N, \pi_B \in \Hom(B,M_{m-k\el}), \\
&\quad x_1,\dots,x_\el \in X, u \in U_m,\check\sigma_{u,((x_1,0),\dots,(x_\el,0)),\pi_B} \in U \circ \lambda\}.
\end{align}
Note that
\begin{equation} \check\sigma_{u,((x_1,0),\dots,(x_\el,0)),\pi_B} = \sigma_{\alpha_\el,u,(\ev_{x_1} \circ \phi,\dots,\ev_{x_\el} \circ \phi,\pi_B)} \circ \lambda, \end{equation}
where $\sigma_{\alpha_\el,u,(\ev_{x_1}\circ \phi,\dots, \ev_{x_\el} \circ \phi, \pi_B)}$ is in $\Hom(B,M_m)$.

Set
\begin{align}
\notag
\dot V := \{&(u,((x_1,s_1),\dots,(x_\el,s_\el)),\pi_B) \mid \el\in \mathbb N, \\
&\qquad (u,(x_1,\dots,x_\el),\pi_B) \in \dot U, s_1,\dots,s_\el \in [0,1)\}.
\end{align}
Define $\dot\theta=(\dot\theta_t)\colon [0,1] \times \dot V \to \dot V$ by
\begin{align}
\notag
&\vspace*{-10mm} \dot\theta_t(u,((x_1,s_1),\dots,(x_\el,s_\el)),\pi_B) \\
&\qquad := (u,((x_1,ts_1),\dots,(x_\el,ts_\el)),\pi_B).
\end{align}
Define
\begin{align}
\notag
V := \{&\check\sigma_{u,((x_1,s_1),\dots,(x_\el,s_\el)),\pi_B} \mid \\
&\qquad (u,((x_1,s_1),\dots,(x_\el,s_\el)),\pi_B) \in \dot V\}.
\end{align}
We shall check two things: (i) that $V$ is open, and (ii) that $\dot\theta$ induces a homotopy $\theta$ on $V$, by
\begin{equation} \theta_t(\check\sigma_{u,((x_1,s_1),\dots,(x_\el,s_\el)),\pi_B}) = \check\sigma_{\dot\theta_t(u,((x_1,s_1),\dots,(x_\el,s_\el)),\pi_B)}, \quad t \in [0,1]. \end{equation}
It is obvious from the definition that \eqref{eq:GluingDefRetractURestricts} holds and that $\theta$ is a deformation retraction onto $U$.

(i):
To see that $V$ is open, let $\check\sigma_{u,((x_1,s_1),\dots,(x_\el,s_\el)),\pi_B} \in V$ for some $(u,((x_1,s_1),\dots,(x_\el,s_\el)),\pi_B) \in \dot V$.
By possibly modifying the choice of lift in $\dot V$, we may arrange that $\pi_B$ has no subrepresentation that factors through $\phi$, that $s_1,\dots,s_{\el_1}>0$, and that $s_{\el_1+1}=\dots=s_{\el}=0$.
Suppose that $\sigma$ is a representation near to $\check\sigma_{u,((x_1,s_1),\dots,(x_\el,s_\el)),\pi_B}$.
Let us write $\sigma$ as
\begin{equation} \check\sigma_{v,((y_1,t_1),\dots,(y_p,t_p)),\pi_B'}. \end{equation}
Assuming that $\sigma$ is near enough, taking into account how far each subrepresentation of $\pi_B$ is from a representation that factors through $\phi$, and by rearranging, we may assume that $\el_1 \leq p \leq \el$ and 
\begin{equation}
\label{eq:GluingDefRetractApprox0}
(y_i,t_i) \approx (x_i,s_i), \quad i \leq p.
\end{equation}
It follows that, for all $i,j$, if $(x_i,s_i) \neq (x_j,s_j)$ then $(y_i,t_i) \neq (y_j,t_j)$, and that
\begin{equation}
\label{eq:GluingDefRetractApprox1}
 \check\sigma_{v,((x_1,s_1),\dots,(x_p,s_p)),\pi_B'} \approx \check\sigma_{u,((x_1,s_1),\dots,(x_\el,s_\el)),\pi_B}. \end{equation}

By the same argument as used to prove Lemma \ref{lem:sigmaUnitaryKernel}, there exists a unitary $u' \in U_m$ such that
\begin{align}
\notag
\check\sigma_{u,((x_1,s_1),\dots,(x_\el,s_\el)),\pi_B} &=
\check\sigma_{u',((x_1,s_1),\dots,(x_\el,s_\el)),\pi_B} \quad \text{and} \\
\label{eq:GluingDefRetractApprox2}
(u')^*v &\approx \left( \begin{array}{cc} 1_{kp}&0 \\ 0&w_2\end{array} \right) =: w
 \end{align}
for some $w_2 \in U_{m-kp}$.
Thus,
\begin{eqnarray}
\notag
\check\sigma_{u',((x_1,s_1),\dots,(x_\el,s_\el)),\pi_B}
&\stackrel{\eqref{eq:GluingDefRetractApprox1}}\approx& \check\sigma_{v,((x_1,s_1),\dots,(x_p,s_p)),\pi_B'} \\
&\stackrel{\eqref{eq:GluingDefRetractApprox2}}\approx& \check\sigma_{u'w,((x_1,s_1),\dots,(x_p,s_p)),\pi_B'},
\end{eqnarray}
so conjugating by $(u')^*$, and looking at $(m-kl) \times (m-kl)$ block on the bottom-right, one obtains
\begin{equation}
\label{eq:GluingDefRetractApprox3}
 \diag(\ev_{x_{p+1}} \circ \phi,\dots,\ev_{x_\el} \circ \phi,\pi_B) \approx \Ad(w_2) \circ \pi_B'. \end{equation}
It follows that
\begin{eqnarray}
\notag
\sigma_{\alpha_p,v,(\ev_{y_1} \circ \phi, \dots, \ev_{y_p} \circ \phi),\pi_B'} 
&\stackrel{\eqref{eq:GluingDefRetractApprox2}}\approx& \sigma_{\alpha_p,u',(\ev_{y_1} \circ \phi, \dots, \ev_{y_p} \circ \phi),\Ad(w_2) \circ \pi_B'} \\
\notag
&\stackrel{\eqref{eq:GluingDefRetractApprox0}}\approx& \sigma_{\alpha_p,u',(\ev_{x_1} \circ \phi, \dots, \ev_{x_p} \circ \phi),\Ad(w_2) \circ \pi_B'} \\
&\stackrel{\eqref{eq:GluingDefRetractApprox3}}\approx& \sigma_{\alpha_l,u',(\ev_{x_1} \circ \phi, \dots, \ev_{x_l} \circ \phi),\pi_B} \in U.
\end{eqnarray}
These approximations can be made arbitrarily close by asking at the outset that $\sigma$ is sufficiently close to $\check\sigma_{u,((x_1,s_1),\dots,(x_l,s_l)),\pi_B}$.
It then follows that $\sigma_{\alpha_p,v,(\ev_{y_1} \circ \phi, \dots, \ev_{y_p} \circ \phi),\pi_B'} \in U$, i.e., $(v,(y_1,\dots,y_p),\pi_B') \in \dot U$.
Hence,
\begin{equation}
(v,((y_1,t_1),\dots,(y_p,t_p)),\pi_B') \in \dot V, \quad \text{i.e.,} \quad \sigma \in V.
\end{equation}
This establishes that $V$ is open.

(ii):
To check that $\dot\theta$ induces a homotopy $\theta=(\theta_t)_{t\in[0,1]}$ on $V$, suppose that 
\begin{equation} (u,((x_1,s_1),\dots,(x_\el,s_\el)),\pi_B),(v,((y_1,t_1),\dots,(y_p,t_p)),\pi_B') \in \dot V \end{equation}
give rise to the same element of $V$, i.e., that
\begin{align}
\check\sigma_{u,((x_1,s_1),\dots,(x_\el,s_\el)),\pi_B} = \check\sigma_{v,((y_1,t_1),\dots,(y_p,t_p)),\pi_B'}.
\end{align}
It is not hard to see that we may assume that $s_i,t_j > 0$ for all $i,j$.
Then it follows that 
\begin{equation} \{(x_1,s_1),\dots,(x_\el,s_\el)\} = \{(y_1,t_1),\dots,(y_p,t_p)\} \end{equation}
as multisets, and so $\el=p$.
Replacing a unitary by its composition with a permutation, if necessary, we may assume that $(x_i,s_i)=(y_i,t_i)$ for all $i$.
When this is true, it is clear from the formula for $\dot\theta_t$ that
\begin{equation} \dot\theta_t(u,(x_1,s_1),\dots,(x_\el,s_\el),\pi_B)) \end{equation}
and
\begin{equation} \theta_t(v,(y_1,t_1),\dots,(y_p,t_p),\pi_P)) \end{equation}
induce the same representation of $A$.
\end{proof}

Finally, we prove the local contractibility result.

\begin{prop}
\label{prop:NCCellLocContr}
Let $A$ be an NC cell complex.
Then $\Hom(A,M_m)$ is locally contractible.
\end{prop}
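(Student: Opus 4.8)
The plan is to prove the statement by induction on the number of pull-backs used to build $A$ as an NC cell complex, establishing simultaneously that $\Hom(A,M_m)$ is locally contractible for every $m$. When $A$ is finite dimensional, $\Hom(A,M_m)$ is a finite disjoint union of compact homogeneous spaces $U_m/G$ (one for each admissible multiplicity pattern, as in Lemma \ref{lem:HomDecomp} with each $\Prim_k(A)$ finite and discrete), hence a manifold, hence locally contractible. For the inductive step, write $A$ as the pull-back of $\phi\colon B\to\mathrmm C(S^{n-1},M_k)$ along the restriction $\mathrmm C(D^n,M_k)\to\mathrmm C(S^{n-1},M_k)$, with $B$ an NC cell complex built from fewer pull-backs, so that by the inductive hypothesis $\Hom(B,M_\el)$ is locally contractible for every $\el$. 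Let $\lambda\colon A\to B$ (surjective) and $\rho\colon A\to\mathrmm C(D^n,M_k)$ be the coordinate maps, and set $I:=\ker\lambda\cong\mathrmm C_0(D^n\setminus S^{n-1},M_k)\cong\mathrmm C_0(\R^n,M_k)$.

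Fix $\sigma\in\Hom(A,M_m)$; I must produce arbitrarily small contractible neighbourhoods of $\sigma$. The heart of the argument is to separate $\sigma$ into an ``interior'' part and a ``$B$'' part. Let $e\in M_m$ be the support projection of $\sigma(I)$; since $I$ is an ideal, $e$ commutes with $\sigma(A)$, and after conjugating $\sigma$ by a unitary we may write $M_m=M_{m_0}\oplus M_{m_1}$, $e=1_{m_0}\oplus 0$, and $\sigma=\sigma_0\oplus\sigma_1$, where $\sigma_0$ restricts to a nondegenerate representation of $I$ and $\sigma_1$ annihilates $I$, so that $\sigma_1=\tau\circ\lambda$ for a unique unital $\tau\in\Hom(B,M_{m_1})$. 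The irreducible representations of $A$ are precisely the point evaluations $\ev_y\circ\rho$ at interior points $y\in D^n\setminus S^{n-1}$ (the unique extensions of irreducible representations of $I$) together with those factoring through $\lambda$. Using this, every representation of $A$ close enough to $\sigma_0$ again restricts nondegenerately to $I$ (its support projection is a projection close to $1_{m_0}$, hence equal to it), so a neighbourhood of $\sigma_0$ in $\Hom_1(A,M_{m_0})$ is homeomorphic to a neighbourhood of $\sigma_0|_I$ in $\Hom_1(\mathrmm C_0(\R^n,M_k),M_{m_0})$. Since $\R^n$ is locally contractible, Lemma \ref{lem:HomXMkLocContr} supplies an arbitrarily small contractible neighbourhood $V_0$ of $\sigma_0$.

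For the $B$ part, fix a collar, i.e.\ an identification of a neighbourhood of $S^{n-1}$ in $D^n$ with $S^{n-1}\times[0,1)$ carrying $S^{n-1}$ to $S^{n-1}\times\{0\}$ (so $S^{n-1}$ is a neighbourhood retract in $D^n$), and let $A'$ be the pull-back of $\phi$ along $\mathrmm C_0(S^{n-1}\times[0,1),M_k)\to\mathrmm C(S^{n-1},M_k)$. Any interior point occurring in a representation of $A$ close to $\sigma_1=\tau\circ\lambda$ lies arbitrarily near $S^{n-1}$, so the local model of $\Hom_1(A,M_{m_1})$ about $\tau\circ\lambda$ agrees with that of $\Hom_1(A',M_{m_1})$; this is the step where the collar structure is used decisively. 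Applying Lemma \ref{lem:GluingDefRetract} to $A'$ and a small contractible neighbourhood $U$ of $\tau$ (which exists by the inductive hypothesis) then produces an open neighbourhood $V_1$ of $\sigma_1$ in $\Hom_1(A,M_{m_1})$ that deformation retracts onto $U\circ\lambda$; since $\lambda$ is surjective, $U\circ\lambda$ is homeomorphic to $U$ and hence contractible, so $V_1$ is contractible, and it can be taken arbitrarily small.

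It remains to glue. A representation of $A$ near $\sigma$ decomposes, after conjugating by a unitary near $1_m$, as a direct sum of a representation on $M_{m_0}$ near $\sigma_0$ --- collecting the irreducible summands near the fixed interior points of $\sigma_0$, whose total multiplicity is locally constant because those points lie in the open set $D^n\setminus S^{n-1}$, bounded away from one another and from $S^{n-1}$ --- and a representation on $M_{m_1}$ near $\sigma_1$, collecting all remaining summands (the $B$-type ones together with any interior summands near $S^{n-1}$, which are exactly those accounted for by $V_1$). Hence, writing $\Sigma$ for the stabiliser of $\sigma_0\oplus\sigma_1$ in $U_m$,
\begin{equation}
V:=\{\Ad(w)\circ(\rho_0\oplus\rho_1)\mid \rho_0\in V_0,\ \rho_1\in V_1,\ w\in U_m,\ d(w,\Sigma)<\e\}
\end{equation}
is an open neighbourhood of $\sigma$ (openness uses, as in Lemma \ref{lem:HomDecomp}, that interior-type and $B$-type summands cannot degenerate into one another), and it is arbitrarily small once $V_0,V_1,\e$ are small. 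Retracting $\rho_0$ to $\sigma_0$ inside $V_0$, $\rho_1$ to $\sigma_1$ inside $V_1$, and $w$ to $1_m$ inside the $\e$-ball, carried out compatibly with $\Sigma$ exactly as in the proof of Lemma \ref{lem:HomXMkLocContr}, deformation retracts $V$ onto $\{\sigma\}$, closing the induction. The main obstacle is the reduction in the ``$B$ part'' from the pull-back over $\mathrmm C(D^n,M_k)$ to the collar pull-back $A'$ over $\mathrmm C_0(S^{n-1}\times[0,1),M_k)$, to which Lemma \ref{lem:GluingDefRetract} literally applies: one must check that the two algebras have homeomorphic local models of $\Hom(\,\cdot\,,M_{m_1})$ near $\tau\circ\lambda$, and this is precisely where the neighbourhood-retract structure of $S^{n-1}\subset D^n$ enters. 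A secondary difficulty is the multiplicity bookkeeping for the gluing, carried out through the non-Hausdorff spectrum of $A$ and modelled on Lemmas \ref{lem:sigmaUnitaryKernel} and \ref{lem:HomDecomp}.
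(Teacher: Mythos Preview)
Your proof is correct and follows essentially the same route as the paper's: induction on the cell decomposition, splitting $\sigma$ into an interior part handled by Lemma~\ref{lem:HomXMkLocContr} and a $B$-part handled by Lemma~\ref{lem:GluingDefRetract} (after passing to a collar neighbourhood of $S^{n-1}$), then gluing via unitaries close to the stabiliser $\Sigma$. The paper's write-up differs only in that it performs the final contraction in two explicit stages (first retract $\rho_0,\rho_1$ to $\sigma_0,\sigma_1$, landing in a set $W$ parametrized only by unitaries near $\Sigma$, then contract $W$), and it records the technical condition needed for well-definedness of the retraction---namely that $V$ is chosen small enough that any two lifts $(w,\rho_0,\rho_1)$, $(w',\rho_0',\rho_1')$ of the same point satisfy $\Ad(w)\circ\sigma=\Ad(w')\circ\sigma$---which you gesture at via the reference to Lemma~\ref{lem:HomXMkLocContr}.
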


\begin{proof}
Using induction, we need only prove that if $\Hom(B,M_m)$ is locally contractible for every $m$ and $A$ is given by the pull-back diagram
\begin{equation}
\xymatrix{
A \ar[d]_\lambda \ar[r]^-{\rho} & \mathrmm C(D^n,M_k) \ar[d]^{f \mapsto f|_{S^n \times \{0\}}} \\
B \ar[r]_-{\phi} & \mathrmm C(S^{n-1},M_k)
}
\end{equation}
then $\Hom(A,M_m)$ is also locally contractible for every $m$.

Therefore, let us take a point $\pi \in \Hom(A,M_m)$.
Up to unitary equivalence, $\pi$ decomposes as
\begin{equation} \pi = \diag(\pi_B \circ \lambda, \pi_D \circ \rho), \end{equation}
where $\pi_B \in \Hom(B,M_{m_B})$ and $\pi_D \in \Hom_1(\mathrmm C_0(D^n \setminus S^{n-1},M_k),M_{m_D})$, where $m_B + m_D=m$.
Since $\pi_D$ is a direct sum of finitely many irreducible representations, it actually comes from $\Hom_1(\mathrmm C_0(Y_D,M_k),M_{m_D})$, for some compactly-contained open subset $Y_D$ of $D^n \setminus S^{n-1}$.
Let $Y_B$ be an open neighbourhood of $S^{n-1}$ in $D^n \setminus \overline{Y}$, such that $Y_B$ can be identified with $S^{n-1} \times [0,1)$, compatible with the identification of $S^{n-1}$ with $S^{n-1} \times \{0\}$.

By Lemma \ref{lem:HomXMkLocContr}, there exists a contractible neighbourhood $U_D$ of $\pi_D$ in 
\begin{equation} \Hom_1(\mathrmm C_0(Y_D,M_k),M_{m_D}); \end{equation}
let $\theta^D = (\theta^D_t)\colon [0,1] \times U_D \to U_D$ be a deformation retraction of $U_D$ onto $\{\pi_D\}$.
By the inductive hypothesis, $\pi_B$ has a neighbourhood in $\Hom(B,M_{m_B})$ which is contractible to $\pi_B$.
Combining the deformation retraction provided by Lemma \ref{lem:GluingDefRetract} with this contraction, we see that there exists a neighbourhood $U_B$ of $\pi_B \circ \lambda$ in 
\begin{equation} \Hom_1(\mathrmm C_0(Y_B,M_k) \oplus_{\mathrmm C(S^{n-1},M_k)} B,M_{m_B}) \end{equation}
and a deformation retraction $\theta^B = (\theta^B_t)\colon [0,1] \times U_B \to U_B$ of $U_B$ onto $\{\pi_B \circ \lambda\}$.

Define $U$ to be the set of ${}^*$-homomorphisms $A \to M_m$ of the form
\begin{equation} \Ad(u) \circ \diag(\pi_B', \pi_D' \circ \rho), \end{equation}
where $\pi_B' \in U_B$, $\pi_D' \in U_D$, and $u \in U_m$ such that $\|u-v\|<\e$ for some $v \in U_m$ such that
\begin{equation} \Ad(v) \circ \pi = \pi. \end{equation}

It is not hard to see that $U$ is an open neighbourhood of $\pi$, and by making $U_D, Y_B,$ and $\e$ sufficiently small, we turn $U$ into an arbitrarily small neighbourhood of $\sigma$.
In particular, by making $U$ small enough, we ensure that if
\begin{equation} \Ad(u) \circ \diag(\pi_B', \pi_D' \circ \rho) = \Ad(v) \circ \diag(\pi'_B, \pi_D \circ \rho) \in U \end{equation}
where $\pi_B' \in U_B$ and $\pi_D' \in U_D$ then
\begin{equation} \Ad(u) \circ \pi = \Ad(v) \circ \pi. \end{equation}

To show that $U$ is a contractible neighbourhood of $\pi$, we imitate the steps of the proof of Lemma \ref{lem:HomXMkLocContr}: first, we produce a deformation retraction of $U$ onto the set $W$ of all ${}^*$-homomorphisms $A \to M_m$ of the form
\begin{equation} \Ad(u) \circ \diag(\pi_B \circ \lambda, \sigma_D \circ \rho), \end{equation}
where $u \in U_m$ is such that $\|u-v\|<\e$ for some $v \in U_m$ such that
\begin{equation} \Ad(v) \circ \sigma = \sigma, \end{equation}
and then we contract $W$ onto $\sigma$.
\end{proof}

\subsection{Restricted semiprojectivity of non-commutative cell complexes}

We are now prepared to show the following, which says that NC cell complexes are semiprojective with respect to the class of $\mathrmm C^*$-algebras of the form $\mathrmm C(X,M_m)$, for any fixed $m$.
(Note, of course, that most NC cell complexes do not belong to this class.)

\begin{thm}
\label{thm:CellRSHSemiproj}
Let $A$ be an NC cell complex.
Let $X$ be a compact Hausdorff space, and $Y$ a closed subspace.
Let $\phi\colon A \to \mathrmm C(Y,M_m)$ be a unital ${}^*$-homomorphism.
Then there exists a closed neighbourhood $Z$ of $Y$ in $X$ such that $\phi$ lifts to a $^*$-homomorphism $\tilde\phi\colon A \to \mathrmm C(Z,M_m)$, where by ``lifts,'' it is meant that $\phi = \tilde\phi(\cdot)|_Y$.
\end{thm}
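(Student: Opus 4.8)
The plan is to derive the theorem from the classical principle that a compact metrizable space which is finite-dimensional and locally contractible is an absolute neighbourhood retract, applied to the space $\Hom(A,M_m)$. First I would reformulate the lifting problem topologically, using the correspondence $\Hom(A,\mathrmm C(X,M_m)) \cong \mathrmm C(X,\Hom(A,M_m))$ recalled in Section \ref{sec:Notation}: the homomorphism $\phi$ corresponds to a continuous map $\hat\phi\colon Y \to \Hom(A,M_m)$, and, for a closed set $Z$ with $Y \subseteq Z \subseteq X$, a unital homomorphism $\tilde\phi\colon A \to \mathrmm C(Z,M_m)$ satisfies $\tilde\phi(\cdot)|_Y = \phi$ precisely when the associated continuous map $Z \to \Hom(A,M_m)$ restricts to $\hat\phi$ on $Y$ (by the compatibility of the correspondence with restriction recorded in Section \ref{sec:Notation}). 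Thus it suffices to extend the continuous map $\hat\phi$ from $Y$ to some closed neighbourhood $Z$ of $Y$ in $X$.

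Second I would assemble the required properties of $\Hom(A,M_m)$. Being an NC cell complex, $A$ is unital, separable, and subhomogeneous, and has finite topological dimension by Remark \ref{rmk:NCcellDef}(i); in particular each homogeneous subquotient $A|_{\Prim_k(A)}$ is locally trivial by \cite[Theorem 2.16]{Phillips:RSH}, so Corollary \ref{cor:HomndDim} applies and $\Hom(A,M_m)$ is finite-dimensional. It is also compact, being a closed subset of $\prod_{a\in A}\{x\in M_m : \|x\|\le\|a\|\}$ and hence compact by Tychonoff, and it is metrizable since $A$ is separable; finally it is locally contractible by Proposition \ref{prop:NCCellLocContr}.

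Third, with these properties in hand, \cite[Theorem A.7]{Hatcher} shows that $\Hom(A,M_m)$ embeds into some $\R^N$ as a neighbourhood retract; fix such an embedding $\iota$ and a retraction $r\colon O \to \Hom(A,M_m)$, where $O\subseteq\R^N$ is open and contains $\iota(\Hom(A,M_m))$. Extend the continuous map $\iota\circ\hat\phi\colon Y\to\R^N$ coordinatewise, via the Tietze extension theorem (applicable since the compact Hausdorff space $X$ is normal), to a continuous map $g\colon X\to\R^N$. Then $g^{-1}(O)$ is an open neighbourhood of $Y$, and by normality there is a closed neighbourhood $Z$ of $Y$ with $Z\subseteq g^{-1}(O)$. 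The composite $r\circ g|_Z\colon Z\to\Hom(A,M_m)$ is continuous and restricts to $\hat\phi$ on $Y$, and translating it back through the correspondence of the first step produces the required lift $\tilde\phi\colon A\to\mathrmm C(Z,M_m)$.

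Since the substantive inputs---finite-dimensionality of $\Hom(A,M_m)$ (Corollary \ref{cor:HomndDim}, ultimately Lemma \ref{lem:HomDecomp}) and its local contractibility (Proposition \ref{prop:NCCellLocContr})---have already been established, I do not expect a genuine obstacle in the present argument; the only points needing care are the bookkeeping in the first step (especially the basepoint/unitality aspect of the $\Hom$-correspondence when passing between $\mathrmm C(X,M_m)$ and continuous maps into $\Hom(A,M_m)$) and checking compactness and metrizability of $\Hom(A,M_m)$, so that the classical ANR machinery of \cite[Appendix A]{Hatcher} applies verbatim.
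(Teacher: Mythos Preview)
Your proposal is correct and follows essentially the same approach as the paper: reformulate the lifting problem as an extension problem for the map $\hat\phi\colon Y\to\Hom(A,M_m)$, verify that $\Hom(A,M_m)$ is compact, metrizable, finite-dimensional (Corollary \ref{cor:HomndDim}) and locally contractible (Proposition \ref{prop:NCCellLocContr}), and then invoke \cite[Theorem A.7]{Hatcher} to conclude it is an absolute neighbourhood retract. The only cosmetic difference is that you spell out the Tietze-plus-retraction step explicitly, whereas the paper simply cites the equivalence of ANR with absolute neighbourhood extensor.
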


\begin{remark*}
Of course, the conclusion of this theorem, in the case that $X$ is metrizable, is the same as saying that if $B:=\mathrmm C(X,M_m)$, and $I_n$ is an increasing sequence of ideals of $B$, then any unital ${}^*$-homomorphism $A \to B/\overline{\bigcup I_n}$ lifts to a $^*$-homomorphism $A \to B/I_n$ for some $n$.
(It is stronger in the non-metrizable case.)
\end{remark*}

\begin{proof}[Proof of Theorem \ref{thm:CellRSHSemiproj}]
Recall the definition of $\hat\phi \in \mathrmm C(Y,\Hom(A,M_m))$ from Section \ref{sec:Notation}, and that the existence of the lift $\tilde\phi$ corresponds to extending $\hat\phi$ to a continuous map $\tilde{\hat\phi}\colon Z \to \Hom(A,M_m)$.
To prove the theorem, we must therefore show that $\Hom(A,M_m)$ is an absolute neighbourhood extensor with respect to the class of compact Hausdorff spaces (as defined in \cite[II.2]{Hu:RetractBook}), equivalently, an absolute neighbourhood retract \cite[Lemma 5.1]{Hu:RetractBook}.

Since $\Hom(A,M_m)$ is finite-dimensional (Corollary \ref{cor:HomndDim}) and metrizable, it embeds into $\R^n$ for some $n$ \cite[Theorem 1.11.4]{Engelking:DimTheory}.
It is, in addition, compact, since it can be identified with a weak$^*$-closed, bounded subset of $(A \otimes M_m)^*$ (by identifying a ${}^*$-homomorphism $\phi\colon A \to M_m$ with the functional that maps $a \otimes e_{i,j}$ to the $(i,j)$-th entry of the matrix $\phi(a)$).
Also, it is locally contractible by Proposition \ref{prop:NCCellLocContr}.
Therefore, by \cite[Theorem A.7]{Hatcher}, it is an absolute neighbourhood retract. 
\end{proof}

The following corollary says that the relations defining an NC cell complex $A$ are stable in the class of $\mathrmm C^*$-algebras of the form $\mathrmm C(X,M_m)$ (for fixed $m$).

\begin{cor}
\label{cor:CellRSHStableRelations}
Given an NC cell complex $A$, a finite set $\mathcal F \subset A$, a tolerance $\e > 0$ and an integer $m \in \N$, there exists a finite set $\mathcal G \subset A$ and $\dl > 0$ such that the following holds: if $X$ is a compact Hausdorff space and $\phi\colon A \to \mathrmm C(X,M_m)$ is a ${}^*$-linear map that is $(\mathcal G,\dl)$-multiplicative, then there is a ${}^*$-homomorphism $\bar\phi\colon A \to \mathrmm C(X,M_m)$ such that
\begin{equation} \phi(a) \approx_\e \bar\phi(a) \end{equation}
for all $a \in \mathcal F$.
\end{cor}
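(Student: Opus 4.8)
The plan is to deduce this from the restricted semiprojectivity of Theorem~\ref{thm:CellRSHSemiproj} by the standard ``telescope/product'' argument, run by contradiction. Suppose the statement fails for some NC cell complex $A$, finite $\mathcal F \subset A$, $\e > 0$ and $m \in \N$. Since $A$ is separable, fix finite subsets $\mathcal G_1 \subseteq \mathcal G_2 \subseteq \cdots$ of $A$ with dense union, and for each $k$ pick a compact Hausdorff space $X_k$ and a $(\mathcal G_k, 1/k)$-multiplicative ${}^*$-linear map $\phi_k \colon A \to \mathrmm C(X_k, M_m)$ which is \emph{not} within $\e$ on $\mathcal F$ of any ${}^*$-homomorphism. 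As is standard, we take the $\phi_k$ to be unital and contractive (this is a routine reduction; contractivity is used only so that the $\phi_k$ assemble to a bounded map).

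First I would package the $\phi_k$ into a single algebra of the right shape. The $\mathrmm C^*$-algebra $B := \prod_k \mathrmm C(X_k, M_m)$ is of the form $\mathrmm C(Y, M_m)$ for a compact Hausdorff space $Y$ (take $Y$ to be the spectrum of the unital commutative algebra $\prod_k \mathrmm C(X_k)$, and use $M_m(\prod_k \mathrmm C(X_k)) \cong \prod_k M_m(\mathrmm C(X_k))$), with each $X_k$ appearing as a clopen subset of $Y$. Put $J_N := \bigoplus_{k\le N}\mathrmm C(X_k, M_m)$, an increasing sequence of ideals of $B$, and $J := \overline{\bigcup_N J_N}$, the restricted direct sum $\bigoplus_k \mathrmm C(X_k, M_m)$. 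Let $\Phi := (\phi_k)_k \colon A \to B$, which is bounded and ${}^*$-linear. Because $\bigcup_k \mathcal G_k$ is dense, the $\phi_k$ are uniformly contractive, and $1/k \to 0$, the induced map $\bar\Phi \colon A \to B/J$ is a genuine unital ${}^*$-homomorphism.

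Next I would invoke restricted semiprojectivity in the following ideal-lifting form, which Theorem~\ref{thm:CellRSHSemiproj} yields even when $B = \mathrmm C(Y, M_m)$ is non-metrizable: any unital ${}^*$-homomorphism $A \to B/\overline{\bigcup_N I_N}$, for an increasing sequence of ideals $I_N$ of $B$, lifts to a ${}^*$-homomorphism $A \to B/I_N$ for some $N$. Indeed, ideals of $\mathrmm C(Y, M_m)$ correspond to open subsets of $Y$, so $B/I_N \cong \mathrmm C(F_N, M_m)$ and $B/\overline{\bigcup_N I_N} \cong \mathrmm C(F, M_m)$ for decreasing closed sets $F_N$ with $F = \bigcap_N F_N$ (that $\overline{\bigcup_N I_N}$ corresponds to $\bigcap_N F_N$ uses normality of $Y$ plus compactness); applying Theorem~\ref{thm:CellRSHSemiproj} with ambient space $F_1$ gives a lift over a closed neighbourhood $Z$ of $F$ in $F_1$, and since the decreasing closed sets $F_N$ have intersection $F$ disjoint from the compact set $F_1 \setminus \mathrm{int}_{F_1} Z$, some $F_N$ lies inside $Z$, so the lift restricts to $\mathrmm C(F_N, M_m)$. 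Applying this to $\bar\Phi$, we obtain $N$ and a ${}^*$-homomorphism $\Psi \colon A \to B/J_N \cong \prod_{k>N}\mathrmm C(X_k, M_m)$ lifting $\bar\Phi$. Writing $\Psi = (\psi_k)_{k>N}$, each $\psi_k \colon A \to \mathrmm C(X_k, M_m)$ is a genuine ${}^*$-homomorphism, and the lifting condition forces $(\psi_k(a) - \phi_k(a))_{k>N}$ to lie in the restricted direct sum, i.e.\ $\|\psi_k(a) - \phi_k(a)\| \to 0$ as $k \to \infty$, for every $a \in A$. As $\mathcal F$ is finite, there is $k^* > N$ with $\psi_{k^*}(a) \approx_\e \phi_{k^*}(a)$ for all $a \in \mathcal F$, contradicting the choice of $\phi_{k^*}$.

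The hard part is the reduction just described, from the topological form of Theorem~\ref{thm:CellRSHSemiproj} to the ideal-lifting form in the possibly non-metrizable algebra $\mathrmm C(Y, M_m)$: one must pass from a lift over a closed \emph{neighbourhood} $Z$ of $F$ to a single index $N$ with $F_N \subseteq Z$, and this is exactly where the compactness argument with the decreasing closed sets $F_N$ enters (and the non-metrizability is unavoidable, since the bounded product $\prod_k \mathrmm C(X_k)$ is non-metrizable even when the $X_k$ are metrizable). Everything else — the packaging of the $\phi_k$ into $\Phi$, the verification that $\bar\Phi$ is a ${}^*$-homomorphism, and the final extraction of $k^*$ — is routine.
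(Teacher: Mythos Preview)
Your proof is correct and is precisely the standard argument the paper alludes to (lifting through $\prod_n \mathrmm C(X_n,M_m)/\bigoplus_n \mathrmm C(X_n,M_m)$, cf.\ \cite[Theorem 4.1.4]{Loring:book}); you have simply written out the details, including the non-metrizable ideal-lifting reduction that the paper's remark after Theorem~\ref{thm:CellRSHSemiproj} already flags as available.
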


\begin{proof}
This is a standard argument, involving lifting a $^*$-homomorphism
\begin{equation} A \to \prod_n \mathrmm C(X_n,M_m)/\bigoplus_n \mathrmm C(X_n,M_m). \end{equation}
See \cite[Theorem 4.1.4]{Loring:book}.
\end{proof}

\begin{cor}
\label{cor:CellRSHCloseRepsHomotopic}
Let $A$ be an NC cell complex and let $m\in\N$.
For any finite subset $\mathcal F \subset A$ and $\e > 0$ there exists a finite subset $\mathcal G \subset A$ and $\dl > 0$ such that, if $X$ is a compact Hausdorff space and 
\begin{equation} \phi_0,\phi_1\colon A \to \mathrmm C(X,M_m) \end{equation}
are ${}^*$-homomorphisms such that
\begin{equation} \phi_0(a) \approx_\dl \phi_1(a) \end{equation}
for all $a \in \mathcal G$, then there exists a homotopy of ${}*$-homomorphisms, $\theta = (\theta_t)\colon A \to \mathrmm C([0,1]) \otimes \mathrmm C(X,M_m)$, such that $\theta_i = \phi_i$ for $i=0,1$ and
\begin{equation} \theta_t(a) \approx_\e \phi_0(a) \end{equation}
for all $a \in \mathcal F$ and $t\in[0,1]$.
\end{cor}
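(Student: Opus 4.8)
The plan is to argue on the topological side, using the correspondence of Section~\ref{sec:Notation} between unital ${}^*$-homomorphisms $A\to C(Z,M_m)$ and continuous maps $Z\to\Hom(A,M_m)$, and exploiting the key structural fact established inside the proof of Theorem~\ref{thm:CellRSHSemiproj}: since $A$ is an NC cell complex, $\Hom(A,M_m)$ is a compact metrizable space which is finite-dimensional (Corollary~\ref{cor:HomndDim}) and locally contractible (Proposition~\ref{prop:NCCellLocContr}), hence an absolute neighbourhood retract. Concretely, I would fix once and for all: a metric $d$ on $\Hom(A,M_m)$ compatible with the point-norm topology; an embedding $\Hom(A,M_m)\hookrightarrow\R^N$ (possible as it is compact, metrizable and finite-dimensional); an open neighbourhood $\mathcal N$ of the (closed) image in $\R^N$; and a retraction $r\colon\mathcal N\to\Hom(A,M_m)$. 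Since $\Hom(A,M_m)$ is compact and $\R^N\setminus\mathcal N$ is closed and disjoint from it, $\eta_0:=\mathrm{dist}(\Hom(A,M_m),\R^N\setminus\mathcal N)>0$, and $r$ is uniformly continuous on the compact set $K_0:=\{y\in\R^N\mid \mathrm{dist}(y,\Hom(A,M_m))\le\eta_0/2\}\subseteq\mathcal N$, as a map into $(\Hom(A,M_m),d)$.

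Next I would set up the dictionary between ${}^*$-algebraic tolerances and the metric $d$; this rests only on the compactness of $\Hom(A,M_m)$ (equivalently, the uniqueness of the compatible uniformity). Namely: for every finite $\mathcal E\subset A$ and $\eta>0$ there is $\rho>0$ with $d(\sigma,\sigma')<\rho\ \Rightarrow\ \sigma(a)\approx_\eta\sigma'(a)$ for all $a\in\mathcal E$ (continuity of evaluation, made uniform by compactness); and conversely, for every $\rho>0$ there are a finite $\mathcal E\subset A$ and $\eta>0$ with $\bigl(\sigma(a)\approx_\eta\sigma'(a)\ \forall a\in\mathcal E\bigr)\ \Rightarrow\ d(\sigma,\sigma')<\rho$ (the sets $\{(\sigma,\sigma')\mid\sigma(a)\approx_\eta\sigma'(a)\ \forall a\in\mathcal E\}$ form a neighbourhood basis of the diagonal, and compactness forces one of them into $\{d<\rho\}$). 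Using these, together with the equivalence of $d$ and the $\R^N$-metric on the compact set $\Hom(A,M_m)$ and the uniform continuity of $r|_{K_0}$: first choose $\rho>0$ so that $d(\sigma,\sigma')<\rho$ implies $\sigma(a)\approx_\e\sigma'(a)$ for $a\in\mathcal F$; then choose $\rho'\in(0,\eta_0/2]$ so that whenever $\sigma\in\Hom(A,M_m)$, $y\in K_0$ and $\|y-\sigma\|_{\R^N}<\rho'$, one has $d(r(y),\sigma)<\rho$; finally take for the output $\mathcal G\subset A$ finite and $\dl>0$ such that $\bigl(\sigma(a)\approx_\dl\sigma'(a)\ \forall a\in\mathcal G\bigr)$ implies $\|\sigma-\sigma'\|_{\R^N}<\rho'$ (viewing $\sigma,\sigma'$ inside $\R^N$).

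With these choices made, given ${}^*$-homomorphisms $\phi_0,\phi_1\colon A\to C(X,M_m)$ with $\phi_0(a)\approx_\dl\phi_1(a)$ for all $a\in\mathcal G$, I pass to the continuous maps $\hat\phi_0,\hat\phi_1\colon X\to\Hom(A,M_m)$; by the choice of $(\mathcal G,\dl)$ we get $\|\hat\phi_0(x)-\hat\phi_1(x)\|_{\R^N}<\rho'\le\eta_0/2$ for every $x\in X$, so the straight-line segment $\ell_{t,x}:=(1-t)\hat\phi_0(x)+t\hat\phi_1(x)$ stays within $\rho'\le\eta_0/2$ of $\hat\phi_0(x)$, hence inside $K_0\subseteq\mathcal N$, for all $(t,x)\in[0,1]\times X$. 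Define $\hat\theta\colon[0,1]\times X\to\Hom(A,M_m)$ by $\hat\theta(t,x):=r(\ell_{t,x})$; this is continuous, and since $r$ fixes $\Hom(A,M_m)$ pointwise we get $\hat\theta(0,x)=\hat\phi_0(x)$ and $\hat\theta(1,x)=\hat\phi_1(x)$. Moreover $\|\ell_{t,x}-\hat\phi_0(x)\|_{\R^N}\le\|\hat\phi_1(x)-\hat\phi_0(x)\|_{\R^N}<\rho'$, so $d(\hat\theta(t,x),\hat\phi_0(x))=d(r(\ell_{t,x}),\hat\phi_0(x))<\rho$, whence $\hat\theta(t,x)(a)\approx_\e\hat\phi_0(x)(a)$ for all $a\in\mathcal F$, $t\in[0,1]$, $x\in X$. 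Transporting $\hat\theta\in C([0,1]\times X,\Hom(A,M_m))$ back across the correspondence of Section~\ref{sec:Notation} produces a homotopy of ${}^*$-homomorphisms $\theta=(\theta_t)\colon A\to C([0,1])\otimes C(X,M_m)$ with $\theta_i=\phi_i$ for $i=0,1$ and $\theta_t(a)\approx_\e\phi_0(a)$ for all $a\in\mathcal F$ and $t\in[0,1]$, as required.

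The only genuinely non-trivial ingredient, namely that $\Hom(A,M_m)$ is an absolute neighbourhood retract (equivalently, is a neighbourhood retract once embedded as a closed subset of $\R^N$), has already been proved in Theorem~\ref{thm:CellRSHSemiproj}; everything after that is soft. Accordingly, the main obstacle is purely one of bookkeeping: making precise the dictionary of the second paragraph — in particular the assertion that approximate agreement on a sufficiently large finite subset of $A$, within a sufficiently small tolerance, forces metric closeness in $\Hom(A,M_m)$ — which is where compactness of $\Hom(A,M_m)$ is essential. (An alternative to the above would be a contradiction argument passing to a sequence of counterexamples $\phi_i^{(n)}\colon A\to C(X_n,M_m)$, forming ${}^*$-homomorphisms into $\prod_n C(X_n,M_m)/\bigoplus_n C(X_n,M_m)\cong C(Y_\infty,M_m)$, and applying the lifting in Theorem~\ref{thm:CellRSHSemiproj} to a cylinder over $\beta(\bigsqcup_n X_n)$; but the ANR route above is cleaner and keeps the constants explicit.)
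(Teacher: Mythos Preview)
Your argument is correct but proceeds differently from the paper. The paper argues by contradiction: assuming the conclusion fails for $(\mathcal F,\e)$, it fixes a dense sequence $(a_i)$ in $A$ and, for each $n$, a counterexample pair $\phi_0^{(n)},\phi_1^{(n)}\colon A\to C(X_n,M_m)$ agreeing to within $1/n$ on $a_1,\dots,a_n$; the linear interpolations $\Phi^{(n)}:=(1-t)\,\phi_0^{(n)}+t\,\phi_1^{(n)}$ then assemble into a genuine ${}^*$-homomorphism from $A$ into $\prod_n C([0,1]\times X_n,M_m)\big/\bigoplus_n C_0((0,1)\times X_n,M_m)$, which Theorem~\ref{thm:CellRSHSemiproj} (in its lifting-through-increasing-ideals reformulation) lifts to $\prod_{n\ge n_0}C([0,1]\times X_n,M_m)$. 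For $n\ge n_0$ this gives a homotopy with the correct endpoints (the ideal vanishes at $t\in\{0,1\}$) and the required closeness --- a contradiction. Thus both proofs rest on Theorem~\ref{thm:CellRSHSemiproj}, but the paper invokes it as semiprojectivity inside a product-over-counterexamples argument, while you unwrap the ANR property directly (embed $\Hom(A,M_m)$ in $\R^N$ and retract a straight-line homotopy) to build $\theta$ by hand. Your approach is constructive and makes the dependence of $(\mathcal G,\dl)$ on $(\mathcal F,\e)$ explicit; the paper's is terser but leaves the quantifiers implicit in the contradiction. Amusingly, the alternative you sketch in your closing parenthetical is essentially the paper's proof.
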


\begin{proof}
This argument is probably well-known, although we were unable to find a reference.
It is a proof by contradiction.
Suppose that the result is false for $\mathcal F$ and $\e >0$.
Let $(a_i)$ be a dense sequence in $A$.

Then for every $n$ we may find a compact space $X_n$ and ${}^*$-homomorphisms
\begin{equation} \phi_0^{(n)},\phi_1^{(n)}\colon A \to \mathrmm C(X_n,M_k) \end{equation}
such that
\begin{equation}
\label{eq:CellRSHCloseRepsHomotopicPhiClose}
\phi_0^{(n)}(a_i) \approx_{\frac1n} \phi_1^{(n)}(a_i), \quad i=1,\dots,n \end{equation}
yet there is no $\theta=(\theta_t)\colon A \to \mathrmm C([0,1]) \otimes \mathrmm C(X_n,M_k)$ such that $\theta_i=\phi_i^{(n)}$ for $i=0,1$ and
\begin{equation} \theta_t(a) \approx_\e \phi_0^{(n)}(a) \end{equation}
for all $x \in \mathcal F$ and $t\in[0,1]$.

Define a c.p.c.\ map $\Phi^{(n)}\colon A \to \mathrmm C([0,1]) \otimes \mathrmm C(X_n,M_k)$ by
\begin{equation}
\Phi^{(n)} := \id_{[0,1]} \otimes \phi_1^{(n)} + (1-\id_{[0,1]}) \otimes \phi_0^{(n)}.
\end{equation}
This is obviously not multiplicative, but since the $\phi_i^{(n)}$ are multiplicative and by \eqref{eq:CellRSHCloseRepsHomotopicPhiClose}, it induces a ${}^*$-homomorphism
\begin{equation} \Phi\colon A \to \prod_n \mathrmm C([0,1]) \otimes \mathrmm C(X_n,M_k) / \bigoplus_n \mathrmm C_0((0,1)) \otimes \mathrmm C(X_n,M_k) \end{equation}
This $^*$-homomorphism lifts to a ${}^*$-homomorphism 
\[ \tilde\Phi\colon A \to \prod_{n \geq n_0} \mathrmm C([0,1]) \otimes \mathrmm C(X_n,M_k), \]
for some $n_0 \in \mathbb N$, giving a contradiction.
\end{proof}

\subsection{Approximating arbitrary subhomogeneous algebras}

In this section, we prove the following approximation result.

\begin{thm}[Theorem \ref{thm:ApproxCellA}]
\label{thm:ApproxCell}
Let $A$ be a unital, separable subhomogeneous algebra of topological dimension at most $n$.
Then $A$ is locally approximated by NC cell complexes of topological dimension at most $n$.
\end{thm}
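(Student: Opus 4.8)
The plan is to argue by induction on the length of a recursive subhomogeneous decomposition of $A$, which exists by Phillips's theorem (\cite[Definition 1.1 and Theorem 2.16]{Phillips:RSH}) that every separable subhomogeneous $C^*$-algebra is recursive subhomogeneous. The first preliminary point is that, using the hypothesis $\dim_{\mathrm{top}} A \le n$ together with the stratification of $\Prim(A)$ by the pieces $\Prim_k(A)$, one can select the decomposition so that every base space occurring in it has covering dimension at most $n$ (cf. \cite{Winter:drSH}); this is where the hypothesis on topological dimension is spent. When the length is $0$, $A$ is finite dimensional, hence an NC cell complex, and there is nothing to do. For the inductive step, realise $A$ as a pull-back
\[
A = B \oplus_{C(X^{(0)},M_k)} C(X,M_k),
\]
where $X$ is compact metrizable with $\dim X \le n$, $X^{(0)} \subseteq X$ is closed, $\phi\colon B \to C(X^{(0)},M_k)$ is a unital ${}^*$-homomorphism, and $B$ — which is a quotient of $A$, of shorter recursive length and hence also of topological dimension at most $n$ — is, by the inductive hypothesis, locally approximated by NC cell complexes of topological dimension at most $n$. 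Fix a finite set $\mathcal F \subseteq A$ and $\e > 0$, and write the elements of $\mathcal F$ as pairs $(f_i,b_i)$ with $f_i \in C(X,M_k)$, $b_i \in B$, and $f_i|_{X^{(0)}} = \phi(b_i)$.

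First I would reduce to a more rigid situation. Pick an NC cell complex $B_0 \subseteq B$ of topological dimension at most $n$ together with $b_i' \in B_0$ with $b_i' \approx b_i$ to a small, later-to-be-chosen tolerance. Since $B_0$ is an NC cell complex, Theorem \ref{thm:CellRSHSemiproj} furnishes a closed neighbourhood $Z$ of $X^{(0)}$ in $X$ and a unital ${}^*$-homomorphism $\tilde\phi\colon B_0 \to C(Z,M_k)$ extending $\phi|_{B_0}$. Shrinking $Z$ so that $\tilde\phi(b_i')$ is close to $f_i|_Z$ for every $i$ (possible since $\tilde\phi(b_i')|_{X^{(0)}} = \phi(b_i') \approx f_i|_{X^{(0)}}$, with both sides varying continuously), the pull-back $A_Z := B_0 \oplus_{C(Z,M_k)} C(X,M_k)$ embeds into $A$ via $(b,g) \mapsto (b,g)$, and $\mathcal F \subset_\e A_Z$. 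So it suffices to approximate $A_Z$ by NC cell complexes of topological dimension at most $n$.

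Next comes the cellularisation, modelled on the proof of the nerve theorem. Choose a finite open cover $\mathcal U$ of $X$ of order at most $n+1$ (possible as $\dim X \le n$), sufficiently fine that the canonical map $q\colon X \to X' := \mathcal N(\mathcal U)$ into the nerve, built from a subordinate partition of unity, satisfies: (a) each $f_i$ is within $\e$ of $g_i \circ q$ for some $g_i \in C(X',M_k)$; and (b) $z \mapsto \widehat{\tilde\phi}(z) \in \Hom(B_0,M_k)$ oscillates by an arbitrarily small prescribed amount over each fibre of $q|_Z$. Then $Z' := \mathcal N(\{U \cap Z\}_{U \in \mathcal U})$ is a subcomplex of $X'$ with $q(Z) \subseteq Z'$, and $\dim X' \le n$. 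Using that $\Hom(B_0,M_k)$ is locally contractible (Proposition \ref{prop:NCCellLocContr}), build a unital ${}^*$-homomorphism $\phi'\colon B_0 \to C(Z',M_k)$ by extending $\widehat{\phi'}$ cell-by-cell over $Z'$: on each cell the already-defined boundary values lie — by keeping everything close to $\widehat{\tilde\phi}$ evaluated at a chosen point of the corresponding intersection from $\mathcal U$ — in a small, hence contractible, subset of $\Hom(B_0,M_k)$, so the extension exists; the outcome is $\widehat{\phi'}(q(z)) \approx \widehat{\tilde\phi}(z)$ for all $z \in Z$. The candidate approximant is the pull-back $A' := B_0 \oplus_{\phi',\, C(Z',M_k)} C(X',M_k)$. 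Ordering the cells of $X' \setminus Z'$ by non-decreasing dimension and attaching them one at a time exhibits $A'$ as an iterated pull-back, starting from $B_0$, of exactly the shape defining NC cell complexes, with each attached disc of dimension $\le \dim X' \le n$; hence $A'$ is an NC cell complex of topological dimension at most $n$.

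The hard part is that $q$ only \emph{approximately} intertwines $\phi|_{B_0}$ with $\phi'$, so the obvious assignment $A' \to A$, $(b,g) \mapsto (b, q^*g)$, is merely approximately multiplicative into $A$ — the constraint $(q^*g)|_{X^{(0)}} = \phi'(b) \circ q|_{X^{(0)}}$ only approximately equals $\phi(b)$ — whereas local approximation demands a genuine subalgebra. To repair this I would invoke Corollary \ref{cor:CellRSHCloseRepsHomotopic}: applied to the NC cell complex $B_0$, to $M_k$, and to the compact space $q^{-1}(Z')$, it produces, from the uniform closeness of the two ${}^*$-homomorphisms $b \mapsto \phi'(b) \circ q|_{q^{-1}(Z')}$ and $b \mapsto \tilde\phi(b)|_{q^{-1}(Z')}$ on a prescribed finite set, a ${}^*$-homomorphism homotopy $\theta = (\theta_t)\colon B_0 \to C([0,1]) \otimes C(q^{-1}(Z'),M_k)$ between them (say $\theta_0 = \phi' \circ q^*$ and $\theta_1 = \tilde\phi$, through unital maps) whose tracks move the relevant elements only slightly. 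Choosing a Urysohn function $\mu\colon X \to [0,1]$ equal to $1$ on $X^{(0)}$ and supported in the interior of $q^{-1}(Z')$, one then defines a unital ${}^*$-homomorphism $\Psi\colon A' \to B \oplus C(X,M_k)$ by $\Psi(b,g) = (b,h)$, where $h(x) = \theta_{\mu(x)}(b)(x)$ for $x \in q^{-1}(Z')$ and $h(x) = (q^*g)(x)$ for $x$ outside the interior of $q^{-1}(Z')$: the two formulas agree on the overlap (there $\mu = 0$ and $g|_{Z'} = \phi'(b)$), $b \mapsto h$ is a pasting of two ${}^*$-homomorphisms into $C(X,M_k)$, and $h|_{X^{(0)}} = \theta_1(b)|_{X^{(0)}} = \phi(b)$, so $\Psi$ embeds the NC cell complex $A'$ into $A_Z \subseteq A$. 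A final bookkeeping step — fixing the successive tolerances (how closely $B_0$ must approximate $B$, how small $Z$, how fine $\mathcal U$, how tight the homotopy) in the correct dependency order — then yields $\mathcal F \subset_\e \Psi(A')$, completing the induction. I expect the two genuinely delicate points to be this splicing argument and the initial claim that the recursive subhomogeneous decomposition may be chosen with all base spaces of dimension at most $n$.
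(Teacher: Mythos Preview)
Your strategy mirrors the paper's closely: both argue by induction on the recursive subhomogeneous length (Phillips's \cite[Theorem~2.16]{Phillips:RSH} indeed furnishes a decomposition with base spaces of dimension $\le n$, so that step is fine, though the attribution should be to Phillips rather than Winter), reduce to $B$ being a single NC cell complex, extend $\phi$ to a closed neighbourhood via Theorem~\ref{thm:CellRSHSemiproj}, approximate the remaining space by a simplicial complex, and splice with a homotopy from Corollary~\ref{cor:CellRSHCloseRepsHomotopic}. The paper differs in two technical choices: it approximates $X\setminus Y^\circ$ via Engelking's inverse-limit theorem \cite[Theorem~1.10.16]{Engelking:DimTheory}, which yields an \emph{injective} ${}^*$-homomorphism $\sigma\colon C(\Gamma,M_k)\hookrightarrow C(X\setminus Y^\circ,M_k)$ (equivalently, a \emph{surjective} continuous map onto the polyhedron $\Gamma$); and it produces the analogue of your $\phi'$ not by cell-by-cell extension but by the stable-relations Corollary~\ref{cor:CellRSHStableRelations}.

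The substantive gap in your argument is the claimed injectivity of $\Psi$. Suppose $(b,g)\in A'$ with $b=0$; then $g|_{Z'}=\phi'(0)=0$, and on $X\setminus(q^{-1}(Z'))^\circ$ your formula gives $h=g\circ q$. Hence $\Psi(b,g)=0$ only forces $g$ to vanish on $q(X)\cup Z'$, not on all of $X'$. A nerve map built from a partition of unity is typically \emph{not} surjective (two overlapping open sets covering a single point already give nerve $[0,1]$ but image a point), so $\Psi$ can have nontrivial kernel and $\Psi(A')$, being a proper quotient of the NC cell complex $A'$, need not itself be an NC cell complex. The paper's use of a surjective polyhedral approximation is exactly what secures injectivity of the analogous map $\pi$ (the paper notes this explicitly: ``Since $\sigma$ is injective, \dots\ $\pi$ is injective''). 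Replacing your nerve by such a surjective approximation fixes this. A secondary domain issue: you invoke $\tilde\phi(b)|_{q^{-1}(Z')}$, but $\tilde\phi$ is only defined on $Z$, and in general $Z\subsetneq q^{-1}(Z')$. This is easily repaired either by performing the splice over $Z$ rather than $q^{-1}(Z')$ (the boundary-matching still works since $q(\partial Z)\subseteq Z'$), or by a second application of Theorem~\ref{thm:CellRSHSemiproj} to enlarge the domain of $\tilde\phi$---the paper does the latter, extending its $\theta_1$ to a neighbourhood $W$ of $\partial Y$ before constructing the homotopy.
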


\begin{remark*}
Note that NC cell complexes are, by definition, unital, and so we could not expect the result above to generalize to non-unital subhomogeneous algebras.
The result can, nonetheless, be applied to studying these non-unital algebras, by applying it to their unitizations.
\end{remark*}

We begin with a few technical lemmas.

\begin{lemma}
\label{lem:ApproxCellB}
Let $A$ be given by the pull-back diagram
\begin{equation}
\xymatrix{
A \ar[d]_\lambda \ar[r]^-{\rho} & \mathrmm C(X,M_k) \ar[d]^{f \mapsto f|_{X^{(0)}}} \\
B \ar[r]_-{\phi} & \mathrmm C(X^{(0)},M_k)
}
\end{equation}
where $X$ is a compact Hausdorff space and $X^{(0)}$ is a closed subspace.
Suppose that $B$ is locally approximated by subalgebras $B_n$.
Define $A_n$ by the pull-back diagram
\begin{equation}
\xymatrix{
A_n \ar[d] \ar[r] & \mathrmm C(X,M_k) \ar[d]^{f \mapsto f|_{X^{(0)}}} \\
B_n \ar[r]^-{\phi|_{B_n}} & \mathrmm C(X^{(0)},M_k).
}
\end{equation}
Then $A$ is locally approximated by the $A_n$.
\end{lemma}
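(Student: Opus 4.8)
The plan is to first observe that, for each $n$, the pull-back $A_n$ is naturally a $C^*$-subalgebra of $A$: an element of $A_n$ is a pair $(b,f)$ with $b \in B_n$, $f \in \mathrmm C(X,M_k)$ and $\phi(b) = f|_{X^{(0)}}$, and since $B_n \subseteq B$ every such pair lies in $A$ as well. Recall also that the norm on $A$, as a $C^*$-subalgebra of $B \oplus \mathrmm C(X,M_k)$, is the maximum of the two coordinate norms. With this identification, proving that $A$ is locally approximated by the $A_n$ becomes a direct verification.

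So I would fix a finite set $\mathcal F = \{(b_1,f_1),\dots,(b_N,f_N)\} \subseteq A$ and $\e > 0$, where $b_i \in B$, $f_i \in \mathrmm C(X,M_k)$ and $\phi(b_i) = f_i|_{X^{(0)}}$. Using that $B$ is locally approximated by the $B_n$, applied to the finite set $\{b_1,\dots,b_N\}$ and tolerance $\e$, I would choose a single index $n$ and elements $b_i' \in B_n$ with $\|b_i' - b_i\| < \e$. The one real idea in the proof is that one should not try to extend $\phi(b_i')$ to $X$, but rather the \emph{correction term} $g_i := \phi(b_i') - f_i|_{X^{(0)}} \in \mathrmm C(X^{(0)},M_k)$, which is small: since $\phi$ is contractive, $\|g_i\| = \|\phi(b_i') - \phi(b_i)\| < \e$. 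By the norm-controlled Tietze extension theorem (for $M_k$-valued functions, and using that $X^{(0)}$ is a closed subspace of $X$ as throughout), $g_i$ extends to $\tilde g_i \in \mathrmm C(X,M_k)$ with $\|\tilde g_i\| < \e$. Setting $f_i' := f_i + \tilde g_i$, one gets $f_i'|_{X^{(0)}} = f_i|_{X^{(0)}} + g_i = \phi(b_i') = (\phi|_{B_n})(b_i')$, so $(b_i', f_i') \in A_n$, while $\|(b_i', f_i') - (b_i, f_i)\| = \max\{\|b_i' - b_i\|,\, \|\tilde g_i\|\} < \e$. Hence $\mathcal F \subseteq_\e A_n$, which is exactly what is required.

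I do not expect a genuine obstacle here. The only ingredient that is not pure bookkeeping is the norm-controlled extension of a small $M_k$-valued function, and this is routine: extend each matrix entry of $g_i$ by the classical Tietze theorem, then post-compose with the radial retraction $m \mapsto m \cdot \min\{1,\, \e'/\|m\|\}$ of $M_k$ onto the ball of radius $\e'$ (for any $\|g_i\| < \e' < \e$), which is continuous, non-norm-increasing, and fixes that ball pointwise, hence leaves the values on $X^{(0)}$ unchanged. The content of the lemma is precisely that the extra coordinate $\mathrmm C(X,M_k)$ of the pull-back, being free to move within its sup-norm subject only to its boundary value, does not obstruct the approximation inherited from $B$.
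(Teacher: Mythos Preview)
Your proof is correct and essentially identical to the paper's. The paper phrases the key extension step more tersely---it invokes the definition of the quotient norm on $\mathrmm C(X^{(0)},M_k) \cong \mathrmm C(X,M_k)/C_0(X\setminus X^{(0)},M_k)$ to produce $g_a \in \mathrmm C(X,M_k)$ with $g_a|_{X^{(0)}} = \phi(b_a)$ and $g_a \approx_\e f$---but this is exactly your Tietze-plus-radial-retraction argument in disguise.
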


\begin{proof}
Let $\mathcal F \subset A$ be a finite subset and let $\e >0$.
There exists $n \in \N$ such that, for each $a=(b,f) \in \mathcal F$, there exists $b_a \in B_n$ such that
\begin{equation} b \approx_\e b_a. \end{equation}
Then, since $\phi(b_a) \approx_\e f|_{X^{(0)}}$, using the definition of the quotient norm, there exists $g_a \in \mathrmm C(X,M_k)$ such that 
\begin{equation} g_a|_{X^{(0)}} = \phi(b_a) \quad \text{and}\quad g_a \approx_\e f. \end{equation}
Then, $(b_a,g_a) \in A_n$ and $a \approx_\e (b_a,g_a)$.
\end{proof}

\begin{lemma}
\label{lem:CellRSHChar}
Let $A$ be given by the pull-back diagram
\begin{equation}
\xymatrix{
A \ar[d]_\lambda \ar[r]^-{\rho} & \mathrmm C(X,M_k) \ar[d]^{f \mapsto f|_{X^{(0)}}} \\
B \ar[r]_-{\phi} & \mathrmm C(X^{(0)},M_k),
}
\end{equation}
where $B$ is an NC cell complex of topological dimension at most $n$, $X$ is a CW complex of dimension at most $n$, and $X^{(0)}$ is a subcomplex.
Then $A$ is an NC cell complex of topological dimension at most $n$.
\end{lemma}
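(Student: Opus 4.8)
The plan is to induct on the cells of $X$ lying outside $X^{(0)}$. Since $X$ is a finite CW complex of dimension at most $n$ and $X^{(0)}$ is a subcomplex, one may enumerate the cells of $X \setminus X^{(0)}$ in order of non-decreasing dimension, obtaining a chain of subcomplexes
\[
X^{(0)} = X_0 \subseteq X_1 \subseteq \cdots \subseteq X_N = X,
\]
in which $X_{i+1}$ is obtained from $X_i$ by attaching a single cell of some dimension $j_i \leq n$ along an attaching map $\psi_i\colon S^{j_i-1} \to X_i$ (with the convention $S^{-1} = \emptyset$ for a $0$-cell). For each $i$, let $A_i$ be the pull-back of $\phi\colon B \to \mathrmm C(X^{(0)},M_k)$ along the restriction map $\mathrmm C(X_i,M_k) \to \mathrmm C(X^{(0)},M_k)$; then $A_0 \cong B$ (the restriction map $\mathrmm C(X^{(0)},M_k)\to\mathrmm C(X^{(0)},M_k)$ being the identity) and $A_N = A$. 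I would prove by induction on $i$ that each $A_i$ is an NC cell complex of topological dimension at most $n$, the base case $i=0$ being exactly the hypothesis on $B$.

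For the inductive step, the key input is the standard identification
\[
\mathrmm C(X_{i+1},M_k) \;\cong\; \mathrmm C(X_i,M_k) \times_{\mathrmm C(S^{j_i-1},M_k)} \mathrmm C(D^{j_i},M_k),
\]
where $\mathrmm C(X_i,M_k)\to\mathrmm C(S^{j_i-1},M_k)$ is $g \mapsto g\circ\psi_i$ and $\mathrmm C(D^{j_i},M_k)\to\mathrmm C(S^{j_i-1},M_k)$ is restriction; this simply records that a continuous $M_k$-valued function on $X_{i+1} = X_i \cup_{\psi_i} D^{j_i}$ is precisely a compatible pair of such functions on $X_i$ and on $D^{j_i}$. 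Substituting this into the description $A_{i+1} = \{(b,f) \in B \oplus \mathrmm C(X_{i+1},M_k) \mid \phi(b) = f|_{X^{(0)}}\}$ and reparenthesizing the resulting triple (i.e.\ pasting pull-backs), one sees directly that $A_{i+1}$ is the pull-back of $A_i$ along a map to $\mathrmm C(S^{j_i-1},M_k)$ and of the restriction map $\mathrmm C(D^{j_i},M_k) \to \mathrmm C(S^{j_i-1},M_k)$, where $A_i \to \mathrmm C(S^{j_i-1},M_k)$ is the second-coordinate projection $A_i \to \mathrmm C(X_i,M_k)$ followed by $g \mapsto g\circ\psi_i$. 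The projection $A_i \to \mathrmm C(X_i,M_k)$ is a unital ${}^*$-homomorphism ($A_i$ being unital with unit $(1_B,1)$, since $\phi$ is unital), and $g\mapsto g\circ\psi_i$ is unital, so the composite is a unital ${}^*$-homomorphism; hence this pull-back square is exactly of the form in clause (ii) of the definition of NC cell complexes, with the disc $D^{j_i}$, $j_i\le n$. Since $A_i$ is an NC cell complex of topological dimension at most $n$ by the inductive hypothesis, $A_{i+1}$ is an NC cell complex, and by Remark \ref{rmk:NCcellDef}(i) its topological dimension is the largest dimension of a disc occurring in its construction, namely the maximum of that quantity for $A_i$ (at most $n$) and of $j_i$ (at most $n$); so it is at most $n$. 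This completes the induction and gives the claim for $A = A_N$.

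I do not anticipate a substantive obstacle, as these steps are essentially formal once the cell-attachment pull-back identity is in hand; the points needing the most care are the reassociation of the nested pull-backs and the verification that the induced map $A_i \to \mathrmm C(S^{j_i-1},M_k)$ is genuinely a unital ${}^*$-homomorphism of the precise shape demanded by the definition of NC cell complexes. The one other thing to watch is the degenerate case of a $0$-cell, where $S^{-1}=\emptyset$, $\mathrmm C(S^{-1},M_k)=0$ and $\mathrmm C(D^0,M_k)=M_k$, so that the corresponding step merely passes from $A_i$ to $A_i \oplus M_k$; this is the instance $n=0$ of the pull-back in the definition and causes no difficulty.
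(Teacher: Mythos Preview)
Your proposal is correct and follows essentially the same approach as the paper: both arguments reduce to the fact that attaching a single $j$-cell to $X_i$ corresponds, via pasting of pull-backs, to a single NC-cell-complex pull-back step $A_{i+1} = A_i \times_{\mathrmm C(S^{j-1},M_k)} \mathrmm C(D^j,M_k)$. The only difference is organizational---the paper runs a double induction (outer on $\dim X$, inner on the number of top-dimensional cells) whereas you run one induction by enumerating all cells of $X\setminus X^{(0)}$ in non-decreasing dimension; your version is slightly more streamlined but the substantive content is identical.
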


\begin{proof}
We prove this by induction on the dimension, $m$, of $X$.

Removing the interior of $X^{(0)}$ from $X$ (and from $X^{(0)}$) does not change $A$ at all, and allows $X^{(0)}$ to be replaced by its boundary (in $X$).
Thus, we may assume that $X^{(0)}$ is an $(m-1)$-dimensional subcomplex.
Let $Y \subset X$ be the $(m-1)$-skeleton of $X$, so that $X^{(0)}$ is a subcomplex of $Y$.

Define $B'$ and $\rho'$ by the following pull-back diagram:
\begin{equation}
\xymatrix{
B' \ar[d] \ar[r]^-{\rho'} & \mathrmm C(Y,M_k) \ar[d]^{f \mapsto f|_{X^{(0)}}} \\
B \ar[r]_-{\phi} & \mathrmm C(X^{(0)},M_k).
}
\end{equation}
By the inductive hypothesis, $B'$ is an NC cell complex of topological dimension at most $n$.
Moreover, it is not hard to see that the diagram
\begin{equation}
\xymatrix{
A \ar[d] \ar[r]^-{\rho} & \mathrmm C(X,M_k) \ar[d]^{f \mapsto f|_{Y}} \\
B' \ar[r]_-{\rho'} & \mathrmm C(Y,M_k)
}
\end{equation}
commutes and is a pull-back diagram.

This reduces the problem to the case that $X^{(0)}$ is the $m$-skeleton of $X$.
To deal with this case, we use induction on the number of $m$-cells in $X$.
In the base case of this induction, $X$ has no $m$-cells, and then $X=X^{(0)}$ and so $A=B$ and we are finished.

Otherwise, by singling out an $m$-cell of $X$, we obtain a pull-back diagram
\begin{equation}
\xymatrix{
\mathrmm C(X) \ar[d] \ar[r]^-{\rho} & \mathrmm C(D^m) \ar[d]^{f \mapsto f|_{S^{m-1}}} \\
\mathrmm C(X') \ar[r]_-{\alpha} & \mathrmm C(S^{m-1}),
}
\end{equation}
where $X'$ is a subcomplex of $X$ given by deleting the interior of one $m$-cell, and $\alpha$ is a ${}^*$-homomorphism.
This means that $X'$ contains $X^{(0)}$ and has one fewer $m$-cell than $X$ does.

Define $B'$ and $\rho'$ by the following pull-back diagram:
\begin{equation}
\xymatrix{
B' \ar[d] \ar[r]^-{\rho'} & \mathrmm C(X',M_k) \ar[d]^{f \mapsto f|_{X^{(0)}}} \\
B \ar[r]_-{\phi} & \mathrmm C(X^{(0)},M_k).
}
\end{equation}
By the inductive hypothesis, $B'$ is an NC cell complex of topological dimension at most $n$.
Moreover, it is not hard to see that the diagram
\begin{equation}
\xymatrix{
A \ar[d] \ar[rr] && \mathrmm C(D^n,M_k) \ar[d]^{f \mapsto f|_{S^{n-1}}} \\
B' \ar[rr]_-{(\alpha\otimes \mathrm{id}_{M_k}) \circ \rho'} && \mathrmm C(S^{n-1},M_k)
}
\end{equation}
is a pull-back diagram, and, therefore, $A$ is an NC cell complex of topological dimension at most $n$.
\end{proof}

\begin{lemma}
\label{lem:ApproxCellBaby}
Let $A$ be given by the pull-back diagram
\begin{equation}
\xymatrix{
A \ar[d]_\lambda \ar[r]^-{\rho} & \mathrmm C(X,M_k) \ar[d]^{f \mapsto f|_{X^{(0)}}} \\
B \ar[r]_-{\phi} & \mathrmm C(X^{(0)},M_k),
}
\end{equation}
where $B$ is an NC cell complex of topological dimension at most $n$, and $X$ is a simplicial complex of dimension at most $n$.
Then $A$ is an inductive limit of NC cell complexes of topological dimension at most $n$, where the maps of the inductive system are injective.
\end{lemma}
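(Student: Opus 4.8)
The plan is to exhibit $A$ as the norm closure of an increasing union $\bigcup_m A_m$ of subalgebras of $B \oplus \mathrmm C(X,M_k)$, each $A_m$ being obtained by replacing the closed set $X^{(0)}$ in the pull-back defining $A$ by a subcomplex neighbourhood $Z_m$ of $X^{(0)}$ in $X$; Lemma \ref{lem:CellRSHChar} will then identify each $A_m$ as an NC cell complex of topological dimension at most $n$. In order for such pull-backs to make sense, the map $\phi\colon B \to \mathrmm C(X^{(0)},M_k)$ has to be extended off $X^{(0)}$, and this is precisely what the restricted semiprojectivity of the NC cell complex $B$ supplies: applying Theorem \ref{thm:CellRSHSemiproj} with the algebra $\mathrmm C(X,M_k)$ and its quotient $\mathrmm C(X^{(0)},M_k)$ produces a closed neighbourhood $Z$ of $X^{(0)}$ in $X$ together with a unital ${}^*$-homomorphism $\tilde\phi\colon B \to \mathrmm C(Z,M_k)$ satisfying $\tilde\phi(\,\cdot\,)|_{X^{(0)}} = \phi$.

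Using iterated barycentric subdivisions of $X$, I would then construct inductively a decreasing sequence of subcomplexes
\begin{equation*}
Z \supseteq Z_1 \supseteq Z_2 \supseteq \cdots
\end{equation*}
with $Z_{m+1} \subseteq \mathrm{int}_X(Z_m)$, with $X^{(0)} \subseteq \mathrm{int}_X(Z_m)$ for each $m$, and with $\bigcap_m Z_m = X^{(0)}$: at the $(m+1)$-st step one lets $Z_{m+1}$ be the closed star of $X^{(0)}$ in a subdivision whose mesh is small enough that this closed star lies inside the open set $\mathrm{int}_X(Z_m)$, which is possible since $X^{(0)}$ is compact; taking the meshes to tend to $0$ forces $\bigcap_m Z_m = X^{(0)}$. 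Setting $\tilde\phi_m := \tilde\phi(\,\cdot\,)|_{Z_m}\colon B \to \mathrmm C(Z_m,M_k)$, I define $A_m$ by the pull-back
\[
\xymatrix{
A_m \ar[d] \ar[r] & \mathrmm C(X,M_k) \ar[d]^{f \mapsto f|_{Z_m}} \\
B \ar[r]_-{\tilde\phi_m} & \mathrmm C(Z_m,M_k).
}
\]
Since $X$, with the cell structure of the relevant subdivision, is a CW complex of dimension at most $n$ in which $Z_m$ is a subcomplex, Lemma \ref{lem:CellRSHChar} shows that $A_m$ is an NC cell complex of topological dimension at most $n$. Realising the $A_m$ explicitly as the subalgebras $\{(b,f) \in B \oplus \mathrmm C(X,M_k) : f|_{Z_m} = \tilde\phi_m(b)\}$, the inclusion $Z_{m+1} \subseteq Z_m$ together with $\tilde\phi_{m+1} = \tilde\phi_m(\,\cdot\,)|_{Z_{m+1}}$ gives $A_m \subseteq A_{m+1}$, and the inclusion $X^{(0)} \subseteq Z_m$ together with $\tilde\phi_m(\,\cdot\,)|_{X^{(0)}} = \phi$ gives $A_m \subseteq A$; hence the $A_m$ form an inductive system with injective connecting maps, all sitting inside $A$.

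It remains to verify $A = \overline{\bigcup_m A_m}$. One containment is immediate from $A_m \subseteq A$. For the other, fix $(b,f) \in A$ and $\e > 0$. The function $x \mapsto \|\tilde\phi(b)(x) - f(x)\|$ is continuous on $Z$ and vanishes on $X^{(0)}$, so it is less than $\e$ on some open neighbourhood $U$ of $X^{(0)}$; since $X$ is compact and $\bigcap_m Z_m = X^{(0)}$, we have $Z_m \subseteq U$ for some $m$. Pick $h \in \mathrmm C(X,[0,1])$ equal to $1$ on $Z_{m+1}$ and supported in $\mathrm{int}_X(Z_m)$, extend $\tilde\phi(b) \in \mathrmm C(Z,M_k)$ arbitrarily to $g \in \mathrmm C(X,M_k)$, and set $f' := hg + (1-h)f \in \mathrmm C(X,M_k)$. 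Then $f'|_{Z_{m+1}} = g|_{Z_{m+1}} = \tilde\phi_{m+1}(b)$, while $\|f' - f\| = \|h(g-f)\| \leq \sup_{Z_m} \|\tilde\phi(b) - f\| < \e$, so that $(b,f') \in A_{m+1}$ lies within $\e$ of $(b,f)$.

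I expect the one genuinely delicate point to be the construction of the nested subcomplex neighbourhoods $Z_m$ with $Z_{m+1} \subseteq \mathrm{int}_X(Z_m)$ and $\bigcap_m Z_m = X^{(0)}$; since $X^{(0)}$ is only assumed closed (not a subcomplex), passing to subdivisions is unavoidable, but this is a routine simplicial-neighbourhood argument. The remaining ingredients are the black-box inputs Theorem \ref{thm:CellRSHSemiproj} and Lemma \ref{lem:CellRSHChar}, a little pull-back bookkeeping, and one partition-of-unity interpolation.
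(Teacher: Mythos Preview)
Your proposal is correct and follows essentially the same route as the paper's proof: both lift $\phi$ to a closed neighbourhood via Theorem \ref{thm:CellRSHSemiproj}, realise $A$ as the increasing union of pull-backs along a decreasing sequence of subcomplex neighbourhoods of $X^{(0)}$ (obtained by subdividing $X$), and invoke Lemma \ref{lem:CellRSHChar} at each stage. You supply more detail than the paper on two points it leaves implicit---the explicit construction of the nested subcomplex neighbourhoods via barycentric subdivision, and the verification that $A=\overline{\bigcup_m A_m}$ via your interpolation argument---but the underlying strategy is the same.
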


\begin{proof}
$X^{(0)}$ is a closed subset of a simplicial complex, and therefore it is the intersection of a decreasing sequence $(Y_n)_{n=1}^\infty$ of closed subsets, such that for each $n$, by changing the simplicial complex structure on $X$, $Y_n$ can be viewed as a subcomplex.
By Theorem \ref{thm:CellRSHSemiproj}, $\phi$ lifts to a $^*$-homomorphism $\tilde\phi\colon B \to \mathrmm C(Y_{n_0},M_k)$ for some $n_0$.

It follows that $A$ is an inductive limit (with injective connecting maps) of algebras $A_n$, defined by the pull-back diagrams
\begin{equation} 
\xymatrix{
A_n \ar[d]_\lambda \ar[r]^-{\rho} & \mathrmm C(X,M_k) \ar[d]^{f \mapsto f|_{Y_n}} \\
B \ar[r]_-{\tilde\phi(\cdot)|_{Y_n}} & \mathrmm C(Y_n,M_k),
}
\end{equation}
$n=1,2,\dots$.
Indeed, as subalgebras of $B \oplus C(X,M_k)$, we have 
\begin{equation}
A_1 \subseteq A_2 \subseteq \cdots \quad \text{and} \quad A = \overline{\bigcup_i A_i}.
\end{equation}
Moreover, by Lemma \ref{lem:CellRSHChar}, $A_n$ is an NC cell complex of topological dimension at most $n$.
\end{proof}

\begin{proof}[Proof of Theorem \ref{thm:ApproxCell}]
By \cite[Theorem 2.16]{Phillips:RSH}, $A$ is a recursive subhomogeneous algebra.
By induction on the length of the recursive subhomogeneous decomposition (see \cite[Definition 1.2]{Phillips:RSH}), it suffices to show that if $A$ is given by a pull-back diagram
\begin{equation}
\xymatrix{
A \ar[d] \ar[r] & \mathrmm C(X,M_k) \ar[d]^{f \mapsto f|_{X^{(0)}}} \\
B \ar[r]_-{\phi} & \mathrmm C(X^{(0)},M_k),
}
\end{equation}
where $B$ is locally approximated by NC cell complexes of topological dimension at most $n$ and $\dim X \leq n$, then $A$ is locally approximated by images of NC cell complexes of topological dimension at most $n$.
Using Lemma \ref{lem:ApproxCellB}, we may assume that $B$ is a single NC cell complex of topological dimension at most $n$. 

Let $\mathcal F \subset A$ be a finite subset and let $\e > 0$.
Without loss of generality, since $B$ is finitely generated, the finite set $\mathcal G := \{b \in B \mid (b,f) \in \mathcal F\}$ includes a set of generators of $B$.
Let $\dl \in (0,\e/3)$ be as provided by Corollary \ref{cor:CellRSHCloseRepsHomotopic}, with $m=k$ and $\e/3$ in place of $\e$.
By Corollary \ref{cor:CellRSHStableRelations}, there exists $\eta \in (0,\dl)$ such that, if $\alpha\colon B \to C$ is a ${}^*$-homomorphism, $D$ is a subalgebra of $C$, $\alpha(\mathcal G) \subset_\eta D$, and $D$ is isomorphic to $\mathrmm C(W,M_k)$ for some space $W$, then there is a ${}^*$-homomorphism $\hat\alpha\colon B \to D$ such that $\alpha(b) \approx_{\dl} \hat\alpha(b)$ for all $b\in\mathcal G$.

By Theorem \ref{thm:CellRSHSemiproj}, there exists a closed neighbourhood $Y$ of $X^{(0)}$ such that $\phi$ lifts to a $^*$-homomorphism $\tilde\phi\colon B \to \mathrmm C(Y,M_k)$.
Replace $Y$ by a smaller neighbourhood, if necessary, so that 
\begin{equation}
\label{eq:ApproxCellYDef}
\tilde\phi(b) \approx_\eta f|_Y, \quad a=(b,f) \in \mathcal F.
\end{equation}
For each $a=(b,f) \in \mathcal F$, choose $f_a \in \mathrmm C(X,M_k)$ such that 
\begin{equation}
\label{eq:ApproxCellfaDef}
f_a|_Y = \tilde\phi(b)\quad \text{and}\quad f_a \approx_\eta f.
\end{equation}
Since $X \setminus Y^{\circ}$ has dimension at most $n$, by \cite[Theorem 1.10.16]{Engelking:DimTheory} there exists an $n$-dimensional simplical complex $\Gamma$ and an injective ${}^*$-homomorphism $\sigma\colon \mathrmm C(\Gamma,M_k) \to \mathrmm C(X \setminus Y^{\circ},M_k)$ such that, for each $a=(b,f) \in \mathcal F$,
\begin{equation}
\label{eq:ApproxCellgaDef}
 f_a|_{X \setminus Y^{\circ}} \approx_\eta \sigma(g_a)
\end{equation}
for some $g_a \in \mathrmm C(\Gamma,M_k)$.
Let $\bar g_a \in \mathrmm C(X,M_k)$ be such that
\begin{equation}
\label{eq:ApproxCellgbarDef}
f_a \approx_\eta \bar g_a, \quad \bar g_a|_{X^{(0)}} = \phi(b), \quad\text{and}\quad \bar g_a|_{X \setminus Y^{\circ}} = \sigma(g_a).
\end{equation}
Note then that for $a=(b,f) \in \mathcal F$,
\begin{equation} \tilde\phi(b)|_{\partial Y} = f_a|_{\partial Y} \approx_\eta \bar g_a|_{\partial Y} \in \sigma(\mathrmm C(\Gamma,M_k))|_{\partial Y} =: D. \end{equation}
Let $\Gamma^{(0)}$ be the closed subset of $\Gamma$ such that $C_0(\Gamma \setminus \Gamma^{(0)})$ is the kernel of the $^*$-homomorphism $\sigma(\cdot)|_{\partial Y}$.
Then since $D$ is isomorphic to $\mathrmm C(\Gamma^{(0)},M_k)$, our choice of $\eta$ ensures that there is a ${}^*$-homomorphism $\theta_1\colon B \to D$
such that
\begin{equation}
\label{eq:ApproxCelltheta1Def}
\theta_1(b) \approx_{\dl} \bar g_a|_{\partial Y}
\end{equation}
for all $a=(b,f) \in \mathcal F$.
By Theorem \ref{thm:CellRSHSemiproj}, there is a closed neighbourhood $W$ of $\partial Y$ in $Y$ such that $\theta_1:B \to D \subset C(\partial Y,M_k)$ extends to a $^*$-homomorphism $B \to C(W,M_k)$, that we continue to denote $\theta_1$.
Replace $W$ by a smaller neighbourhood, if necessary, so that $\theta_1(b) \approx_\dl \tilde\phi(b)|_W$.

Corollary \ref{cor:CellRSHCloseRepsHomotopic} now provides us with a homotopy $\theta=(\theta_t)\colon B \to \mathrmm C([0,1]) \otimes \mathrmm C(W,M_k)$, such that $\theta_0=\tilde\phi(\cdot)|_{W}$, $\theta_1$ agrees with the existing definition, and 
\begin{equation} \theta_t(b) \approx_{\e/3} \tilde\phi(b)|_{W},\quad b \in \mathcal G. \end{equation}

Note that $\partial W$ (taken in $X$) is a disjoint union of $\partial Y$ and the closed set $\partial W \setminus \partial Y$.
Therefore, by Urysohn's Lemma, there exists a continuous function $f\colon W \to [0,1]$ satisfying
\begin{equation} f|_{\partial W \setminus \partial Y} \equiv 0 \quad\text{and} \quad f|_{\partial Y} \equiv 1. \end{equation}

Define $\psi\colon B \to \mathrmm C(Y,M_k)$ by
\begin{equation} \psi(b)(y) = \begin{cases}
\tilde\phi(b)(y),\quad&\text{if }y \in Y \setminus W, \\
\theta_{f(y)}(b)(y),\quad&\text{if }y \in W.
\end{cases} \end{equation}
Since $\tilde\phi(b)|_{\partial W \setminus \partial Y} = \theta_0(b)|_{\partial W \setminus \partial Y}$, the function $\psi(b)$ is continuous.
Also,
\begin{equation}
\label{eq:ApproxCellpsiApprox}
\psi(b) \approx_{\e/3} \tilde\phi(b),\quad b \in \mathcal G \quad \text{and} \quad \psi(\cdot)|_{X^{(0)}} = \phi.
\end{equation}

Since $\theta_1(B) \subseteq \sigma(\mathrmm C(\Gamma,M_k))$, $\psi(\cdot)|_{\partial Y}$ is a $^*$-homomorphism from $B$ to $\sigma(C(\Gamma,M_k))|_{\partial Y} \cong C(\Gamma^{(0)})$. 
Define $A'$ by the pull-back diagram
\begin{equation}
\xymatrix{
A' \ar[d] \ar[r] & \mathrmm C(\Gamma,M_k)  \ar[d]^{f \mapsto f|_{\Gamma^{(0)}}} \\
B \ar[r]_-{\psi(\cdot)|_{\partial Y}} & \mathrmm C(\Gamma^{(0)},M_k).
}
\end{equation}
By Lemma \ref{lem:ApproxCellBaby}, $A'$ is locally approximated by NC cell complexes of topological dimension at most $n$.

Let us now describe a $^*$-homomorphism $\pi\colon A' \to A$.
For $(b,g) \in A' \subseteq B \oplus \mathrmm C(\Gamma,M_k)$, define $f \in \mathrmm C(X,M_k)$ by
\begin{equation}
\label{eq:ApproxCellpiDef}
f|_{X \setminus Y^{\circ}} := \sigma(g)|_{X \setminus Y^{\circ}}
\quad \text{and} \quad 
f|_{Y^{\circ}} := \psi(b).
\end{equation}
The definition of $B$ ensures that $\sigma(b)|_{\partial Y} = \psi(g)|_{\partial Y}$, so that $f$ is indeed continuous.
Note that
\begin{equation} f|_{X^{(0)}} = \psi(b)|_{X^{(0)}} \stackrel{\eqref{eq:ApproxCellpsiApprox}}= \phi(b), \end{equation}
so $(b,f) \in A$; we define $\pi(b,g) := (b,f)$.
Since $\sigma$ is injective, $f$ is only $0$ if $g$ is zero; consequently, $\pi$ is injective.

Now, let us show that $\mathcal F \subset_\e \pi(A')$.
Let $a =(b,f) \in \mathcal F$.
By \eqref{eq:ApproxCelltheta1Def} and \eqref{eq:ApproxCellgbarDef}, 
\begin{equation} g_a|_{\Gamma_0} \approx_{\e/3} \theta_1(b)|_{\partial Y} = \psi(b)|_{\partial Y}, \end{equation}
 so let $g \in \mathrmm C(\Gamma,M_k)$ be such that 
\begin{equation}
\label{eq:ApproxCellgDef}
g|_{\Gamma_0} = \psi(b)|_{\partial Y}\quad \text{and} \quad g \approx_{\e/3} g_a.
\end{equation}
Then $(b,g) \in A'$; let us show that $\pi(b,g) \approx_\e a$.

Let $\pi(b,g) = (b,f')$.
Then
\begin{eqnarray}
\notag
f'|_Y &\stackrel{\eqref{eq:ApproxCellpiDef}}=& \psi(b) \\
\notag
&\stackrel{\eqref{eq:ApproxCellpsiApprox}}{\approx_{\e/3}}& \tilde\phi(b) \\
&\stackrel{\eqref{eq:ApproxCellYDef}}{\approx_{\e/3}}& f,
\end{eqnarray}
and
\begin{eqnarray}
\notag
f'|_{X \setminus Y} &\stackrel{\eqref{eq:ApproxCellpiDef}}=& \sigma(g)|_{X \setminus Z} \\
\notag
&\stackrel{\eqref{eq:ApproxCellgDef}}{\approx_{\e/3}}& \sigma(g_a)|_{X \setminus Z} \\
\notag
&\stackrel{\eqref{eq:ApproxCellgaDef}}{\approx_{\e/3}}& f_a|_{X \setminus Z} \\
&\stackrel{\eqref{eq:ApproxCellfaDef}}{\approx_{\e/3}}& f|_{X \setminus Z}.
\end{eqnarray}
Thus, $\pi(b,g) =(b,f') \approx_\e (b,f) = a$ as required.
\end{proof}

\begin{cor}
Let $A$ be a $\mathrmm C^*$-algebra.
Then $A$ is locally subhomogeneous if and only if it is locally approximated by NC cell complexes.
\end{cor}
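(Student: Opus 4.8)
The plan is to prove the two implications separately, reducing the substantial direction to Theorem~\ref{thm:ApproxCell}. The direction ``locally approximated by NC cell complexes $\Rightarrow$ locally subhomogeneous'' is immediate: by construction every NC cell complex is a unital recursive subhomogeneous algebra in the sense of \cite[Definition~1.1]{Phillips:RSH}, hence has a finite upper bound on the dimensions of its irreducible representations, i.e.\ is subhomogeneous; so a $C^*$-algebra locally approximated by NC cell complexes is a fortiori locally approximated by subhomogeneous algebras, which is exactly local subhomogeneity.

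For the converse, I would combine Theorem~\ref{thm:ApproxCell} with the (routine) transitivity of the relation ``is locally approximated by''. Given a finite set $\mathcal F\subset A$ and $\e>0$, local subhomogeneity provides a subhomogeneous subalgebra $C\subseteq A$ with $\mathcal F\subset_{\e/2}C$. Choosing a finite $\mathcal F'\subset C$ with $\mathcal F\subset_{\e/2}\mathcal F'$ and setting $C_0:=C^*(\mathcal F'\cup\{1_A\})$ yields a separable, unital subalgebra of $A$ with $\mathcal F\subset_\e C_0$; moreover $C_0$ is again subhomogeneous, since subhomogeneity passes to $C^*$-subalgebras (for instance, it is equivalent to satisfying the standard polynomial identity $s_{2n}$ of $M_n$) and adjoining a unit raises the dimension bound by at most one. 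Theorem~\ref{thm:ApproxCell} then applies to $C_0$, exhibiting an NC cell complex $D\subseteq C_0\subseteq A$ with $\mathcal F\subset_\e D$; since $\mathcal F$ and $\e$ were arbitrary, $A$ is locally approximated by NC cell complexes.

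The step I expect to be the real obstacle is the reduction to the hypotheses of Theorem~\ref{thm:ApproxCell}, namely separability and unitality. Separability is harmless, handled by passing to $C_0=C^*(\mathcal F'\cup\{1_A\})$ as above. Unitality is essential and cannot be dispensed with: as noted in the remark following Theorem~\ref{thm:ApproxCell}, a non-unital subhomogeneous algebra such as $C_0(\R)$ has no nonzero projections, hence no nonzero unital subalgebra, and so is not locally approximated by NC cell complexes. Accordingly the ``only if'' direction should be read with $A$ unital (or, for non-unital $A$, interpreted through the unitization $A^\sim$, the approximating NC cell complexes then living in $A^\sim$); under the unitality assumption the unit $1_A$ is at hand to form $C_0$, and the argument above applies verbatim.
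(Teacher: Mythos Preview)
Your overall strategy matches the paper's, but there is one genuine gap. You identify the hypotheses of Theorem~\ref{thm:ApproxCell} as ``separability and unitality,'' but the theorem as stated also requires \emph{finite topological dimension} (``of topological dimension at most $n$''), and this hypothesis is genuinely used in the proof (to approximate the compact metrizable pieces by finite-dimensional simplicial complexes). A separable unital subhomogeneous algebra need not have finite topological dimension: $\mathrmm C([0,1]^{\mathbb N})$ is a counterexample. So as written, your appeal to Theorem~\ref{thm:ApproxCell} for $C_0$ is not justified.

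The fix is exactly what the paper's proof supplies and what your construction already sets up: your $C_0=\mathrmm C^*(\mathcal F'\cup\{1_A\})$ is \emph{finitely generated}, and \cite[Theorem~1.5]{NgWinter:ash} shows that a finitely generated subhomogeneous $\mathrmm C^*$-algebra has finite topological dimension. Insert this citation before invoking Theorem~\ref{thm:ApproxCell} and your argument goes through; it then coincides with the paper's proof. Your discussion of the unitality issue is apt and more careful than the paper's own treatment of the corollary.
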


\begin{proof}
Since NC cell complexes are subhomogenous, the reverse direction is obvious.
In the other direction, since a $\mathrmm C^*$-subalgebra of a subhomogeneous algebra is itself subhomogeneous, every locally subhomogeneous algebra is locally approximated by finitely generated, subhomogeneous algebras.
Moreover, \cite[Theorem 1.5]{NgWinter:ash} shows that every finitely generated, subhomogeneous algebra satisfies the hypothesis of Theorem \ref{thm:ApproxCell}, and is therefore itself locally approximated by NC cell complexes.
\end{proof}

\subsection{$\mathcal W$-stable approximately subhomogeneous algebras}

We include another result here, which is proven using an adaptation of the proof of Theorem \ref{thm:ApproxCell}.
Recall the stably projectionless, monotracial, $K$-theoretically trivial algebra $\mathcal W$ studied in \cite{Jacelon:R}.

\begin{thm}
\label{thm:WASH}
Let $A$ be a separable locally subhomogeneous algebra.
Then $A \otimes \mathcal W$ is an inductive limit of one-dimensional NC cell complexes (i.e., point--line algebras; see Remark \ref{rmk:NCcellDef} (iii)).
\end{thm}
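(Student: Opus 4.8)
The plan is to reduce, via Theorem \ref{thm:ApproxCell}, to the case of a single unital non-commutative cell complex, and then to run an induction on the cell structure in which the r\^ole of the dimension-controlled simplicial approximation in the proof of Theorem \ref{thm:ApproxCell} is played instead by the dimension reduction for $\mathrmm C(X,M_k) \otimes \mathcal W$ established in \cite{Santiago:DimRed}. For the reduction: $A$, being separable and locally subhomogeneous, is the closure of an increasing union of finitely generated subhomogeneous subalgebras $A_i$, and each unitization $A_i^{\sim}$ is a unital, finitely generated, subhomogeneous algebra, hence locally approximated by NC cell complexes by Theorem \ref{thm:ApproxCell} (in conjunction with the subsequent corollary). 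Granting the theorem for unital NC cell complexes --- the substance of the argument, below --- each $A_i^{\sim} \otimes \mathcal W$ is locally approximated by point--line algebras, and hence, by separability and the semiprojectivity of point--line algebras, is an inductive limit of them; the ideal $A_i \otimes \mathcal W$, and therefore $A \otimes \mathcal W = \overline{\bigcup_i A_i \otimes \mathcal W}$, is then an inductive limit of point--line algebras by the corresponding permanence properties (the class of such inductive limits being closed under passing to ideals and under countable increasing unions). So it suffices to treat $A = B$ a unital NC cell complex.

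For this, I would induct on the number of pull-backs of the form \eqref{eq:NCcellDefpb} used to build $B$. If $B$ is finite dimensional then $B \otimes \mathcal W$ is a finite direct sum of matrix algebras over $\mathcal W$, and since $\mathcal W$ is by construction an inductive limit of point--line algebras \cite{Jacelon:R} (and point--line algebras are closed under matrix amplification), the base case holds. For the inductive step, write $B$ as the pull-back of a unital $\phi\colon B_0 \to \mathrmm C(S^{n-1},M_k)$ against the boundary restriction $\mathrmm C(D^n,M_k) \to \mathrmm C(S^{n-1},M_k)$, where $B_0$ is an NC cell complex for which $B_0 \otimes \mathcal W$ is, by the inductive hypothesis, an inductive limit of point--line algebras. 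Since $\mathcal W$ is exact, tensoring preserves pull-backs, so $B \otimes \mathcal W$ is the pull-back of $\phi \otimes \id_{\mathcal W}\colon B_0 \otimes \mathcal W \to \mathrmm C(S^{n-1},M_k) \otimes \mathcal W$ against the boundary restriction $\mathrmm C(D^n,M_k) \otimes \mathcal W \to \mathrmm C(S^{n-1},M_k) \otimes \mathcal W$, and both $\mathrmm C(D^n,M_k) \otimes \mathcal W$ and $\mathrmm C(S^{n-1},M_k) \otimes \mathcal W$ are inductive limits of point--line algebras by \cite{Santiago:DimRed}.

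It then remains to show that this pull-back is locally approximated by point--line algebras, from which (again by semiprojectivity) it is an inductive limit of them, with injective connecting maps as in Lemma \ref{lem:ApproxCellBaby}. Here the argument would imitate the proof of Theorem \ref{thm:ApproxCell}. Given a finite set $\mathcal F$ and $\e > 0$, choose point--line subalgebras $P \subseteq B_0 \otimes \mathcal W$ and $Q \subseteq \mathrmm C(D^n,M_k) \otimes \mathcal W$ approximately containing the coordinates of $\mathcal F$, and a point--line subalgebra $R \subseteq \mathrmm C(S^{n-1},M_k) \otimes \mathcal W$ into which both $(\phi \otimes \id_{\mathcal W})(P)$ and the boundary restriction of $Q$ approximately fall. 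Using the semiprojectivity of $P$ and $Q$ together with a homotopy reconciliation along a collar --- exactly as in Corollary \ref{cor:CellRSHCloseRepsHomotopic} and in the proof of Theorem \ref{thm:ApproxCell} --- replace these near-maps by honest ${}^*$-homomorphisms into $R$, and form the pull-back of $P$ and $Q$ over $R$. A one-dimensional analogue of Lemma \ref{lem:CellRSHChar} --- a pull-back of point--line algebras over a point--line algebra along cellular gluing maps is again a point--line algebra --- then exhibits this pull-back as a point--line algebra, and by construction it approximately contains $\mathcal F$.

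The main obstacle is precisely this last approximation step: one must reconcile the dimension reduction inside the \emph{new} cell $\mathrmm C(D^n,M_k) \otimes \mathcal W$ with the gluing, which takes place over $\mathrmm C(S^{n-1},M_k) \otimes \mathcal W$ whose underlying space $S^{n-1}$ is genuinely $(n-1)$-dimensional --- so a naive pull-back of one-dimensional pieces need not be one-dimensional. The way around this is to use that $D^n$ is the cone on $S^{n-1}$ in order to re-present $B \otimes \mathcal W$ as a variant of the mapping cone of $\phi \otimes \id_{\mathcal W}$, and then to push the dimension reduction of \cite{Santiago:DimRed} through this mapping-cone construction, exploiting the $K$-theoretic triviality and strong self-absorption of $\mathcal W$ to collapse the $(n-1)$-dimensional spine $\mathrmm C(S^{n-1},M_k) \otimes \mathcal W$ to a point--line algebra without obstruction. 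Carrying this out uniformly with respect to the gluing data, while keeping the connecting maps of the resulting inductive system injective (as in Lemma \ref{lem:ApproxCellBaby}), is the technical heart of the argument.
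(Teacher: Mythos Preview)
Your reduction and inductive skeleton are right, and you correctly locate the difficulty: a pull-back of point--line algebras over a point--line algebra need not be one-dimensional. But your proposed workaround (mapping cones, $K$-theoretic triviality of $\mathcal W$) is not how the paper resolves this, and as stated it is not a proof. The paper's argument uses no $K$-theory and no special feature of $\mathcal W$ beyond Santiago's dimension reduction; the missing idea is purely structural.

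The point you are missing is this: do not glue over the boundary $S^{n-1}$ at all. The paper (Proposition~\ref{prop:WASHtechnical}) first uses the \emph{full} semiprojectivity of one-dimensional NC cell complexes \cite{EilersLoringPedersen:NCCW} --- not the restricted version of Corollary~\ref{cor:CellRSHCloseRepsHomotopic}, which only targets $\mathrmm C(X,M_m)$ --- to extend $\phi$ past $X^{(0)}$ to a map $\tilde\phi\colon B \to C|_Y$ for a closed collar $Y \supset X^{(0)}$, where $C = \mathrmm C(X,M_k)\otimes\mathcal W$ is treated as a $\mathrmm C(X)$-algebra. One then approximates $C$ by a one-dimensional subhomogeneous $D$ (Santiago), and reconciles on the overlap $Y\cap Z$ for a closed neighbourhood $Z$ of $X\setminus Y^\circ$ disjoint from $X^{(0)}$, using semiprojectivity of $B$ once more to perturb into $D|_{Y\cap Z}$. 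The approximating algebra is then the pull-back
\[
E = B \times_{D|_{Y\cap Z}} D|_Z,
\]
where now the map $D|_Z \to D|_{Y\cap Z}$ is \emph{restriction}, hence surjective. This makes $E$ an extension of $B$ by an ideal of $D|_Z$, and Lemma~\ref{lem:drSHext} immediately gives topological dimension at most one. No analogue of Lemma~\ref{lem:CellRSHChar} is needed, and the $(n-1)$-dimensional sphere never enters the gluing.

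In your scheme, by contrast, the map $Q \to R$ from an arbitrary point--line subalgebra of $\mathrmm C(D^n,M_k)\otimes\mathcal W$ to $R \subseteq \mathrmm C(S^{n-1},M_k)\otimes\mathcal W$ has no reason to be surjective, so the extension argument is unavailable, and there is no substitute lemma controlling the dimension of $P\times_R Q$.
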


We need some technical lemmas to prove this.

\begin{lemma}
\label{lem:drSHext}
Suppose that 
\begin{equation} 0 \to I \to A \to B \to 0 \end{equation}
is an extension of $\mathrmm C^*$-algebras, such that $I$ and $B$ are subhomogeneous.
Then the topological dimension of $A$ is the maximum of the topological dimension of $I$ and of $B$.
\end{lemma}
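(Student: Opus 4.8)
The plan is to reduce the statement to classical dimension theory applied to the Hausdorff spaces $\Prim_k$. Recall that for an extension $0 \to I \to A \to B \to 0$, the space $\Prim(A)$ is the disjoint union of the open subset $\{P : I \not\subseteq P\}$, canonically homeomorphic to $\Prim(I)$, and the complementary closed subset $\{P : I \subseteq P\}$, canonically homeomorphic to $\Prim(B)$ (see \cite[Chapter 3]{Pedersen:CstarBook}). An irreducible representation of $A$ that is non-degenerate on $I$ restricts to an irreducible representation of $I$ on the same Hilbert space, while one that annihilates $I$ factors through an irreducible representation of $B$ on the same Hilbert space; hence $A$ is subhomogeneous, and for each $k$ this decomposition restricts to
\[ \Prim_k(A) = \bigl(\Prim_k(A) \cap \Prim(I)\bigr) \sqcup \bigl(\Prim_k(A) \cap \Prim(B)\bigr), \]
where the first summand is open in $\Prim_k(A)$ and canonically homeomorphic to $\Prim_k(I)$, and the second is closed in $\Prim_k(A)$ and canonically homeomorphic to $\Prim_k(B)$. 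I would work in the separable case, where topological dimension is as in Definition \ref{def:TopDim}; there $\Prim_k(A)$, being the primitive ideal space of the separable $k$-homogeneous algebra $A|_{\Prim_k(A)}$, is a second countable, locally compact Hausdorff space, hence metrizable.

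Given this, it suffices to establish, for each fixed $k$,
\[ \dim \Prim_k(A) = \max\{\dim \Prim_k(I),\ \dim \Prim_k(B)\}, \]
the lemma following upon taking the maximum over $k$. The inequality $\geq$ is immediate from monotonicity of covering dimension under passage to subspaces in separable metrizable spaces \cite{Engelking:DimTheory}. For the reverse inequality I would exploit $\sigma$-compactness: the open set $\Prim_k(A) \cap \Prim(I)$ is itself locally compact, Hausdorff and second countable, hence $\sigma$-compact, so it is a countable union $\bigcup_j K_j$ of compact — therefore closed in $\Prim_k(A)$ — subsets, each with $\dim K_j \leq \dim \Prim_k(I)$ by monotonicity. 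Then $\Prim_k(A) = \bigl(\Prim_k(A)\cap\Prim(B)\bigr) \cup \bigcup_j K_j$ is a countable union of closed subsets, each of covering dimension at most $\max\{\dim\Prim_k(I),\dim\Prim_k(B)\}$, and the countable closed sum theorem for separable metrizable spaces \cite{Engelking:DimTheory} yields the desired bound on $\dim\Prim_k(A)$.

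The one point requiring care is this last step: covering dimension behaves well under closed subsets and countable closed covers but not, in general, under open subsets, so one cannot directly split $\Prim_k(A)$ into its open and closed halves — the compact exhaustion $\bigcup_j K_j$ is what bridges this gap, and it is the only place second countability is used. I expect the rest to be routine bookkeeping. To cover the non-separable case one should replace Definition \ref{def:TopDim} by the Brown--Pedersen notion from \cite[2.2(v)]{BrownPedersen:Limits} (which agrees with it in the subhomogeneous case by \cite[Proposition 2.4]{BrownPedersen:Limits}); the extension formula then follows from the corresponding elementary permanence properties of that invariant, but this is not needed for the applications here.
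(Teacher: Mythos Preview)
Your proposal is correct and follows essentially the same approach as the paper: both decompose $\Prim_k(A)$ into the open piece $\Prim_k(I)$ and the closed piece $\Prim_k(B)$, and conclude $\dim\Prim_k(A)=\max\{\dim\Prim_k(I),\dim\Prim_k(B)\}$. The paper simply asserts this dimension equality, whereas you supply the details (monotonicity, $\sigma$-compact exhaustion of the open piece, and the countable closed sum theorem) that make it rigorous.
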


\begin{proof}
$A$ is subhomogeneous, and for each $k$, $\mathrm{Prim}_k(A)$ can be decomposed into an open set homeomorphic to $\mathrm{Prim}_k(I)$ and a closed set homeomorphic to $\mathrm{Prim}_k(B)$.
Therefore,
\begin{equation} \dim \mathrm{Prim}_k(A) = \max\{\dim \mathrm{Prim}_k(I), \dim \mathrm{Prim}_k(B)\}, \end{equation}
and the result follows by the definition of topological dimension (Definition \ref{def:TopDim}).
\end{proof}

\begin{prop}
\label{prop:WASHtechnical}
Let $C$ be a $\mathrmm C(X)$-algebra, let $X^{(0)} \subseteq X$ be closed and let $A$ be given by the pull-back diagram
\begin{equation}
\label{eq:WASHtechnicalPB}
\xymatrix{
A \ar[d]_\lambda \ar[r]^-{\rho} & C  \ar[d]^{f \mapsto f|_{X^{(0)}}} \\
B \ar[r]_-{\phi} & C|_{X^{(0)}},
}
\end{equation}
where $\phi$ is a ${}^*$-homomorphism.
If $B$ is approximated by subhomogeneous algebras of topological dimension at most $1$, and $C$ is locally subhomogeneous, then $A$ is locally subhomogeneous.
Moreover, if $C$ is approximated by subhomogeneous $\mathrmm C^*$-algebras of topological dimension at most $n \geq 1$, then so also is $A$.
\end{prop}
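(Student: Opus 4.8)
The plan is to mimic the proof of Theorem~\ref{thm:ApproxCell}, treating the boundary algebra $B$ of the pull-back \eqref{eq:WASHtechnicalPB} as an NC cell complex was treated there, and the $\mathrmm C(X)$-algebra $C$ as the algebra $\mathrmm C(X,M_k)$ was treated there, with the topological-dimension bookkeeping supplied by Lemma~\ref{lem:drSHext}. The essential difference is that the target is now a general $\mathrmm C(X)$-algebra, so the restricted semiprojectivity of Theorem~\ref{thm:CellRSHSemiproj} is unavailable; instead one reduces $B$ to a genuinely semiprojective algebra. This is exactly where the hypothesis on $B$ enters: since $B$ is locally approximated by subhomogeneous algebras of topological dimension at most $1$, it is (as a routine consequence of Theorem~\ref{thm:ApproxCell}) locally approximated by point--line algebras, and point--line algebras are semiprojective \cite{EilersLoringPedersen:NCCW}, hence finitely presented with weakly stable defining relations \cite{Loring:book}. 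Moreover, arguing exactly as in Lemma~\ref{lem:ApproxCellB}---the only input being that $C\to C|_{X^{(0)}}$ is surjective---one sees that if $B$ is locally approximated by subalgebras $B_j$, then $A$ is locally approximated by the pull-backs
\begin{equation}
A_j := \{(b,c)\in B_j\oplus C\mid \phi(b)=c|_{X^{(0)}}\}
\end{equation}
taken along $\phi|_{B_j}$. Since each $A_j$ again has the form \eqref{eq:WASHtechnicalPB}, this reduces the problem to the case that $B$ is a point--line algebra (in particular, finitely generated), which I assume henceforth.

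Now fix a finite set $\mathcal F\subseteq A$ and $\e>0$, write $a=(b_a,c_a)$, and enlarge $\{b_a\mid a\in\mathcal F\}$ to a finite ${}^*$-closed generating set $\mathcal G$ of $B$. Lift each $\phi(g)$ to some $\tilde c_g\in C$ with $\tilde c_g|_{X^{(0)}}=\phi(g)$, and, using that $C$ is locally subhomogeneous (resp.\ locally approximated by subhomogeneous algebras of topological dimension at most $n$), choose a subhomogeneous subalgebra $C_0\subseteq C$ (of topological dimension at most $n$, in the second case) with $\{c_a\}\cup\{\tilde c_g\}\subset_\eta C_0$ for a small $\eta$ to be specified. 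Then $\phi(\mathcal G)\subset_\eta C_0|_{X^{(0)}}$, so the images of the generators of $B$ form an approximate representation of the defining relations of $B$ inside $C_0|_{X^{(0)}}$; by weak stability of those relations there is---once $\eta$ is small enough---a genuine ${}^*$-homomorphism $\psi\colon B\to C_0|_{X^{(0)}}$ with $\psi(g)\approx_{\e/4}\phi(g)$ for $g\in\mathcal G$. (This is the analogue of Corollary~\ref{cor:CellRSHStableRelations}, now legitimate for the arbitrary target $C_0|_{X^{(0)}}$ precisely because $B$ is truly semiprojective.) Form the pull-back $A_0:=B\oplus_{C_0|_{X^{(0)}}}C_0$ along $\psi$. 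Applying Lemma~\ref{lem:drSHext} to the extension
\begin{equation}
0\to\ker(C_0\to C_0|_{X^{(0)}})\to A_0\to B\to 0
\end{equation}
(the quotient map being surjective, as $C_0\to C_0|_{X^{(0)}}$ is) shows that $A_0$ is subhomogeneous, and of topological dimension at most $\max(n,1)=n$ in the second case.

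The remaining step---and the main obstacle---is that $A_0$ is not literally a subalgebra of $A$, since $\psi\neq\phi$; this is repaired as in the endgame of the proof of Theorem~\ref{thm:ApproxCell}. Writing $C|_{X^{(0)}}$ as the quotient of $C$ by $\overline{\bigcup_j\ker(C\to C|_{Y_j})}$ for a decreasing sequence $(Y_j)$ of closed neighbourhoods of $X^{(0)}$ (which exists once $C$, hence $X$, is separable, as will be the case in the applications), semiprojectivity of $B$ lifts $\phi$ to $\tilde\phi\colon B\to C|_Z$ for some such neighbourhood $Z$; and since $\psi$ and $\phi$ agree to within $\e/4$ on $\mathcal G$, the general-target analogue of Corollary~\ref{cor:CellRSHCloseRepsHomotopic} (again valid because $B$ is semiprojective) yields, over a smaller neighbourhood, a homotopy of ${}^*$-homomorphisms from $\tilde\phi$ to a lift of $\psi$ that is small on $\mathcal G$. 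Interpolating this homotopy along an Urysohn function on the collar between $X^{(0)}$ and the boundary of that neighbourhood---precisely the construction of $\psi$ from $\theta$ and $f$ in the proof of Theorem~\ref{thm:ApproxCell}, assembled with the $C_0$-part away from $X^{(0)}$ via the Mayer--Vietoris decomposition of $C$ over a closed cover of $X$---produces an injective ${}^*$-homomorphism $\pi\colon A_0\to A$, and tracking the accumulated tolerances (the $\eta$'s, the $\e/4$ from the perturbation, the smallness of the homotopy, and the smallness of $Z$) gives $\mathcal F\subset_\e\pi(A_0)$.

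I expect essentially all of the genuine work to lie in this last step---constructing $\pi$, checking injectivity, and bookkeeping the estimates---all of which run parallel to the corresponding (already delicate) passage in the proof of Theorem~\ref{thm:ApproxCell}; the one new point is that the lifts and homotopies must now be carried out in the general $\mathrmm C(X)$-algebra $C$ rather than in $\mathrmm C(X,M_k)$, and it is for this that the full semiprojectivity of point--line algebras---rather than merely the restricted semiprojectivity of Theorem~\ref{thm:CellRSHSemiproj}---is indispensable. Since $\pi$ is injective, $\pi(A_0)$ is a closed subhomogeneous subalgebra of $A$, of topological dimension at most $n$ in the second case, that $\e$-contains $\mathcal F$, which is exactly the assertion of the Proposition.
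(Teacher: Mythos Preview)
Your reduction (via the analogue of Lemma~\ref{lem:ApproxCellB}) to the case that $B$ is a point--line algebra, and your recognition that full semiprojectivity of such $B$ replaces the restricted semiprojectivity of Theorem~\ref{thm:CellRSHSemiproj}, are both exactly right and match the paper's approach. The dimension bookkeeping via Lemma~\ref{lem:drSHext} is also correct.

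The gap is in the embedding step. Your algebra $A_0 = B \oplus_{C_0|_{X^{(0)}}} C_0$ is taken as a pull-back over $X^{(0)}$ along $\psi$, and you then propose to embed it in $A$ by interpolating on a collar $Y$ between the lift $\tilde\phi$ of $\phi$ and a lift of $\psi$, while using $c_0$ on the bulk $X\setminus Y^\circ$. But for a pair $(b,c_0)\in A_0$, the value $c_0|_{\partial Y}$ is not determined by $b$ (only $c_0|_{X^{(0)}}=\psi(b)$ is), whereas the collar interpolation you describe depends only on $b$. So the two pieces do not glue over $\partial Y$, and $\pi$ is not well-defined as a map on $A_0$. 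This is not the construction in the proof of Theorem~\ref{thm:ApproxCell}: there the auxiliary algebra $A'$ is a pull-back over $\Gamma^{(0)}\cong\partial Y$, not over $X^{(0)}$, precisely so that the bulk component \emph{is} pinned down on $\partial Y$.

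The paper avoids the whole homotopy/interpolation step by a different choice of target for the perturbation. After lifting $\phi$ to $\tilde\phi\colon B\to C|_Y$ and choosing the subhomogeneous $D\subseteq C$, one considers the map $(\mathrm{id}_B,\tilde\phi)\colon B\to B\oplus_{C|_{X^{(0)}}}C|_Y$ and perturbs it (by stability of relations) to a ${}^*$-homomorphism $\psi=(\psi_1,\psi_2)$ landing in the subalgebra generated by the $(b,\bar g_a|_Y)$. Because the target is the \emph{fibred product}, one gets $\psi_2(\cdot)|_{X^{(0)}}=\phi\circ\psi_1$ for free. One then defines $E$ as the pull-back of $B$ and $D|_Z$ over $D|_{Y\cap Z}$ (the collar overlap, not $X^{(0)}$), using $\psi_2(\cdot)|_{Y\cap Z}$; the map $(b,d)\mapsto(\psi_1(b),c)$ with $c|_Y=\psi_2(b)$, $c|_Z=d$ is then a genuine embedding $E\hookrightarrow A$ with no further adjustment needed. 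If you rework your argument with the perturbation taken inside $B\oplus_{C|_{X^{(0)}}}C|_Y$ and the auxiliary pull-back taken over $Y\cap Z$, it goes through.
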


\begin{proof}
This proof is along the same lines as the proof of Theorem \ref{thm:ApproxCell}.
Let $\mathcal F \subset A$ be a finite subset that we wish to approximate, up to a tolerance $\e > 0$.
Without loss of generality, by Theorem \ref{thm:ApproxCell} and Lemma \ref{lem:ApproxCellB}, we may suppose that $B$ is a one-dimensional NC cell complex.
Choose a finite generating subset $\mathcal G$ of $B$ containing $\lambda(\mathcal F)$.

Since $B$ is (weakly) semiprojective \cite{EilersLoringPedersen:NCCW} (see also \cite[Remark 3.5]{Enders:SemiprojPB}), there exists $\eta \in (0,\e/3)$ such that, if $\alpha\colon B \to E$ is a ${}^*$-homomorphism and $E'$ is a subalgebra of $E$ such that $\alpha(\mathcal G) \subset_\eta E'$ then there is a ${}^*$-homomorphism $\hat\alpha\colon B \to E'$ such that $\alpha(b) \approx_{\e/3} \hat\alpha(b)$ for all $b\in\mathcal G$.
Also since $B$ is semiprojective, there exists a closed neighbourhood $Y$ of $X^{(0)}$ such that $\phi$ lifts to a $^*$-homomorphism $\tilde\phi\colon B \to C|_Y$.
Replace $Y$ by a smaller neighbourhood if necessary, so that $\tilde\phi\circ \lambda(a) \approx_\eta \rho(a)|_Y$ for $a \in \mathcal F$.

For each $a =(b,f) \in \mathcal F$, choose $f_a \in C$ such that
\begin{equation}
\label{eq:1NCCWfaDef}
f_a|_Y = \tilde\phi(b) \quad\text{and}\quad f_a \approx_\eta f.
\end{equation}

Since $C$ is locally subhomogeneous, there exists a subhomogeneous subalgebra $D$ of $C$ such that, for each $a \in \mathcal G$,
\begin{equation} f_a \approx_\eta g_a \end{equation}
for some $g_a \in D$.
Let $Z$ be a closed neighbourhood of $X \setminus Y^\circ$, disjoint from $X^{(0)}$.
Choose, for each $a=(b,f) \in \mathcal F$, an element $\bar g_a \in C$ such that
\begin{equation}
\label{eq:1NCCWgbarDef}
f_a \approx_\eta \bar g_a, \quad \bar g_a|_{X^{(0)}} = \phi(b)|_{X^{(0)}}, \quad\text{and}\quad \bar g_a|_Z = g_a|_Z \in D|_Z.
\end{equation}

Consider the quotient of $A$ by $C|_{X \setminus Y}$, which we denote by $B \oplus_{C|_{X^{(0)}}} C|_Y$.
The pair $(\id_B,\tilde\phi)$ provides a ${}^*$-homomorphism $B \to B \oplus_{C|_{X^{(0)}}} C|_Y$, and the image of $\mathcal G$ under this ${}^*$-homomorphism is contained, up to $\eta$, in the subalgebra 
\begin{equation} \mathrmm C^*(\{(b,\bar g_a|_Y) \mid a=(b,f) \in \mathcal F\}). \end{equation}
By the choice of $\eta$, it follows that there is a ${}^*$-homomorphism
\begin{align}
\psi=(\psi_1,\psi_2)\colon B \to\, & \mathrmm C^*(\{(b,\bar g_a|_Y) \mid a=(b,f) \in \mathcal F\})
\end{align}
such that
\begin{equation}
\label{eq:1NCCWpsiDef}
\psi\circ \lambda(a) \approx_{\e/3} (b,\bar g_a|_Y)
\end{equation}
for all $a=(b,f) \in \mathcal F$.

Note that, for $b\in \mathcal G$, there exists $a \in \mathcal F$ such that $\lambda(a)=b$; thus $\psi_2(b)|_{Y \cap Z} \in \mathrmm C^*(\{\bar g_a|_{Y \cap Z}\}) \subseteq D|_{Y \cap Z}$.
Since $\mathcal G$ generates $B$, it follows that $\psi_2(B) \subseteq D|_{Y \cap Z}$; therefore, we may define $E$ by the pull-back diagram
\begin{equation}
\xymatrix{
E \ar[r] \ar[d] & D|_Z \ar[d]^{f \mapsto f|_{Y \cap Z}} \\
B \ar[r]_-{\psi_2(\cdot)|_{Y \cap Z}} & D|_{Y \cap Z},
}
\end{equation}
which is clearly subhomogeneous since $D$ and $B$ are.
Moreover, $E$ can be identified with a subalgebra of $A$ by
\begin{equation} (b,d) \mapsto (\psi_1(b), c) \in B \oplus C \end{equation}
where $c|_Y := \psi_2(b)$ and $c|_Z := d$.
This subalgebra approximates $\mathcal F$ up to $\e$.

Applying Lemma \ref{lem:drSHext} to $E$ yields the final statement, about topological dimension.
\end{proof}

\begin{proof}[Proof of Theorem \ref{thm:WASH}]
Since one-dimensional subhomogeneous algebras are approximated by one-dimensional NC cell complexes (Theorem \ref{thm:ApproxCell}), and one-dimensional NC cell complexes are semiprojective \cite{EilersLoringPedersen:NCCW} (see also \cite[Remark 3.5]{Enders:SemiprojPB}), it suffices to show that $A \otimes \mathcal W$ is locally approximated by one-dimensional subhomogeneous algebras.
For this, it suffices to assume that $A$ is recursive subhomogeneous (in fact, by Theorem \ref{thm:ApproxCell}, we could assume it is an NC cell complex---but this isn't needed).

By induction on its decomposition, we need to show that if an algebra $A$ is defined by a pull-back diagram as in \eqref{eq:WASHtechnicalPB} (with $X^{(0)}$ possibly empty), where $C:=\mathrmm C(X,\mathcal W)$, and $B$ is approximated by one-dimensional subhomogeneous algebras, then $A$ is approximated by one-dimensional subhomogeneous algebras.
In this case, $C$ is approximated by one-dimensional subhomogeneous algebras by \cite[Theorem 1.4]{Santiago:DimRed}, and hence Proposition \ref{prop:WASHtechnical} implies that the same holds for $A$, as required.
\end{proof}

\section{Decomposition rank}

Recall the definition of the decomposition rank of a $^*$-homomorphism from \cite[Definition 2.2]{TW:Zdr}.
If $A \subseteq B$ then we write $\dr(A \subseteq B)$ to denote the decomposition rank of the inclusion map; when $A=B$, this is the same as the decomposition rank of $B$, whereas in the case of a first-factor embedding $A \otimes 1_D \subseteq A \otimes D$, where $D$ is strongly self-absorbing, this decomposition rank value relates to the decomposition rank of $A \otimes D$, see \cite[Proposition 2.6]{TW:Zdr}.

In this section, we prove the following result:

\begin{thm}
\label{thm:drBound}
Let $A$ be an NC cell complex.
Then 
for any unital $\mathrmm C^*$-algebra $D$,
\begin{equation}
\dr (A \otimes 1_D \subseteq A \otimes D) \leq \sup \dr (\mathrmm C(Z) \otimes 1_D \subseteq \mathrmm C(Z) \otimes D),
\end{equation}
where the maximum is taken over all compact metrizable spaces $Z$ of the same topological dimension as that of $A$.
\end{thm}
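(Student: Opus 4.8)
The plan is to induct on the length of the NC cell decomposition of $A$. In the base case $A$ is finite dimensional, so $A \otimes D$ is a finite direct sum of matrix algebras over $D$; since the decomposition rank of a first-factor inclusion is unchanged under finite direct sums and under matrix amplification (standard facts; cf.\ \cite{KirchbergWinter:CovDim,TW:Zdr}), $\dr(A\otimes 1_D \subseteq A\otimes D) = \dr(1_D \subseteq D) = \dr(\mathrmm C(\mathrm{pt})\otimes 1_D \subseteq \mathrmm C(\mathrm{pt})\otimes D)$, and as a point is zero dimensional and a finite dimensional algebra has topological dimension zero, this is the asserted bound. For the inductive step, write $A$ as the pull-back of the restriction map $\mathrmm C(D^n,M_k)\to \mathrmm C(S^{n-1},M_k)$ and a unital $*$-homomorphism $\phi\colon B\to \mathrmm C(S^{n-1},M_k)$ with $B$ an NC cell complex of smaller length, put $d:=\dim A$ (so $d\geq n$ and $d\geq \dim B$), and set $S:=\sup_{\dim Z=d}\dr(\mathrmm C(Z)\otimes 1_D\subseteq \mathrmm C(Z)\otimes D)$. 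We must produce, for each finite $\mathcal F\subset A$ and each $\e>0$, an $(S+1)$-coloured c.p.c.\ approximation of the inclusion $A\otimes 1_D\subseteq A\otimes D$ that is $\e$-accurate on $\mathcal F$.

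Two preliminary reductions are needed. First, for any compact metrizable $Z$ with $\dim Z\leq d$ one still has $\dr(\mathrmm C(Z)\otimes 1_D\subseteq \mathrmm C(Z)\otimes D)\leq S$: embed $Z$ as a clopen subset of $Z':=Z\sqcup[0,1]^d$, which has dimension exactly $d$, and transport a coloured approximation of $\mathrmm C(Z')\otimes D$ along the summand inclusion $\mathrmm C(Z)\hookrightarrow \mathrmm C(Z')$ and the projection $\mathrmm C(Z')\otimes D\to \mathrmm C(Z)\otimes D$, noting that order zero maps remain order zero after composition with a $*$-homomorphism and that the colour count is preserved. Second, since $\mathrmm C(D^n,M_k)=M_k(\mathrmm C(D^n))$ and $\dim D^n=n\leq d$, matrix amplification (of a coloured approximation, by tensoring with $\mathrm{id}_{M_k}$) together with the first reduction gives $\dr(\mathrmm C(D^n,M_k)\otimes 1_D\subseteq \mathrmm C(D^n,M_k)\otimes D)\leq S$. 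By the inductive hypothesis and the first reduction, also $\dr(B\otimes 1_D\subseteq B\otimes D)\leq S$.

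The heart of the matter is to splice an $(S+1)$-coloured approximation of $A\otimes D$ out of these pieces. Fix a collar $S^{n-1}\times[0,1]\subseteq D^n$ with $S^{n-1}\times\{0\}=S^{n-1}$ and complementary core disk $D^n_0:=D^n\setminus(S^{n-1}\times[0,1))$, so that $A\otimes D$ is the iterated pull-back of $B\otimes D$, $\mathrmm C(S^{n-1}\times[0,1],M_k)\otimes D$, and $\mathrmm C(D^n_0,M_k)\otimes D$ along their boundary restrictions. Take a coloured approximation of the core via the second reduction and of $B\otimes D$ via the inductive bound; each comes equipped with prescribed boundary data over $S^{n-1}$ (one pushed through $\phi$, one restricted from the core). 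The genuinely new task is to interpolate between these two across the collar using only $S+1$ colours while keeping the order zero structure. This is where the cell structure is indispensable: because a $*$-homomorphism $\mathrmm C(X,M_k)\to M_k$ amounts to a $k$-multiset of points of $X$ together with a unitary, the gluing map $\hat\phi\colon S^{n-1}\to\Hom(B,M_k)$, combined with the decomposition of $\Hom(B,M_k)$ from Lemma \ref{lem:HomDecomp} into pieces fibred over products of the Hausdorff spaces $\Prim_{k'}(B)$, lets the collar's worth of gluing data be encoded by a \emph{commutative} $\mathrmm C^*$-algebra $\mathrmm C(Z)$ --- a mapping-cone-type construction built from $S^{n-1}\times[0,1]$ and the induced maps into symmetric products of the $\Prim_{k'}(B)$ (the ``eigenvalue patterns'' of $\phi$). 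Since $Z$ is assembled from cells of dimension $\leq n$ and closed subsets of the $\Prim_{k'}(B)$, which have dimension $\leq \dim B$, we get $\dim Z\leq d$; the commutative bound then supplies an $(S+1)$-coloured approximation of $\mathrmm C(Z)\otimes D$, which we pull back to the collar summand of $A\otimes D$.

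The step I expect to be the main obstacle is precisely this transport and splice: arranging the transported collar approximation to be compatible simultaneously, at $S^{n-1}\times\{0\}$, with the $B$-approximation pushed forward along $\phi$, and at $S^{n-1}\times\{1\}$ with the core approximation restricted to $S^{n-1}$, all while keeping the colour count at $S+1$ and each summand map order zero. This requires (i) a choice of the commutative model $\mathrmm C(Z)$ for which the two pieces of boundary data literally appear inside a coloured approximation of $\mathrmm C(Z)\otimes D$, and (ii) a gluing/perturbation lemma for coloured c.p.c.\ maps --- an order zero analogue of the homotopy extension property, available because $S^{n-1}$ is a neighbourhood retract in the contractible disk $D^n$ --- that splices two coloured approximations agreeing up to $\e$ on an overlap without adding a colour. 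It is exactly this bookkeeping that keeps the colour count from the doubling one would incur from a crude extension estimate $\dr A\leq \dr I+\dr B+1$, and where contractibility of $D^n$ and the retract property of $S^{n-1}\subset D^n$ do their work.
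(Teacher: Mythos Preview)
Your proposal has a genuine gap, precisely at the step you yourself flag as ``the main obstacle.'' You propose to obtain, separately, an $(S+1)$-coloured approximation of $B\otimes D$ (from the inductive hypothesis) and one of $\mathrmm C(D^n_0,M_k)\otimes D$ (from the commutative bound), and then to \emph{splice} them across a collar using a third commutative model $\mathrmm C(Z)$. But the two approximations come with unrelated finite-dimensional models $F_B$ and $F_0$ and unrelated order zero maps; there is no mechanism forcing their boundary restrictions over $S^{n-1}$ to agree, even approximately, in a way that respects the colouring. Your appeal to ``an order zero analogue of the homotopy extension property'' is not a known lemma --- it is essentially a restatement of the problem. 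Without it, the naive combination of two $(S+1)$-coloured approximations over an extension yields $2S+2$ colours (or at best $S+2$), which is exactly the doubling you say you want to avoid, and you give no argument that avoids it. Likewise, item (i) --- arranging that ``the two pieces of boundary data literally appear inside a coloured approximation of $\mathrmm C(Z)\otimes D$'' --- is an assertion, not a construction: a coloured approximation of $\mathrmm C(Z)\otimes D$ is just some system $(\psi,F,\phi)$, and there is no reason its restriction should reproduce the specific, independently chosen systems for $B$ and for the core disk.

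The paper's proof takes a fundamentally different route that sidesteps the splice entirely. Rather than inducting on the cell decomposition and combining separately obtained approximations, it builds (Sections~\ref{sec:Setup}--\ref{sec:OpenCover}) a \emph{single} compact metrizable space $Z=Z_{k_0,\bar\eta}$ for the whole NC cell complex, recursively out of the spaces $CX_k$ and certain fibred spaces $Y_{k,\bar\eta}$ encoding the eigenvalue patterns of the gluing maps, together with ${}^*$-homomorphisms $\xi_{k,\bar\eta,k'}\colon C_0(Z^{**}_{k,\bar\eta,k'},M_{m_{k'}})\to A_k$. One then takes \emph{one} $(S+1)$-colourable approximate partition of unity in $\mathrmm C(Z)\otimes D$ subordinate to a carefully designed open cover $\mathcal U_{k_0}$, and transports it through the $\xi$-maps to produce the coloured c.p.c.\ approximation of $A\otimes D$ (Lemmas~\ref{lem:CpcFacts} and~\ref{lem:CpcApproximation}). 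All the compatibility across cells is handled inside the single commutative object $Z$, so no gluing of independent approximations --- and hence no colour doubling --- ever arises. Your sketch of a collar model built from $\Prim_{k'}(B)$ via Lemma~\ref{lem:HomDecomp} is also not the paper's construction (the recursion is over the previously built $Z_{k-1,\bar\eta}$, not over primitive ideal spaces), and your claimed dimension bound for it is not justified; in the paper the dimension of $Z_{k,\bar\eta}$ is controlled via the finite-to-one projection $Y_{k,\bar\eta}\to X_k$ (see the proof of Corollary~\ref{cor:drSH}).
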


In combination with the main result of \cite{TW:Zdr} (and Theorem \ref{thm:ApproxCell} to handle general subhomogeneous $\mathrm C^*$-algebras), Theorem \ref{thm:MainThmA} is a consequence (see Corollary \ref{cor:MainThmA}).
It also provides an alternative proof of the main result of \cite{Winter:drSH}, see Corollary \ref{cor:drSH} (although both proofs are fairly technical, they are fundamentally different; A.T.\ and Wilhelm Winter found out the hard way that the argument in \cite{Winter:drSH} probably cannot be adapted to prove Theorem \ref{thm:drBound}).

\subsection{A special case}
We include the construction of c.p.c.\ approximations for special one-step NC cell complexes, to give an idea of how the general case works.
The general case involves a significant amount of recursion and induction which can be avoided in this case.

Consider a separable $\mathrmm C^*$-algebra $A$ which is given by a pull-back diagram
\begin{equation}
\label{eq:BabySetup}
\xymatrix{
A \ar[r]^-{\sigma} \ar[d]_{\lambda} & \mathrmm C(X_1 \times [0,1],M_{m_1}) \ar[d]^-{ f \mapsto f|_{X_1 \times \{0\}}} \\
\mathrmm C(X_0,M_{m_0}) \ar[r]_-{\theta} & \mathrmm C(X_1,M_{m_1}),
}
\end{equation}
where $\theta$ is a unital ${}^*$-homomorphism.
This implies that $m_1 = \el m_0$ for some $\el \in \N$.

Recalling that $\hat\theta(x)$ means $\ev_x \circ \theta$, define
\begin{equation} S\colon \mathrm{Hom}(\mathrmm C(X_0,M_{m_0}),M_{m_1}) \to \{\text{finite subsets of }X_0\} \end{equation}
by
\begin{equation} S(\Ad(u) \circ \diag(\ev_{z_1},\dots, \ev_{z_\el})) := \{z_1,\dots,z_\el\} \end{equation}
(note that we are taking a set, not a multiset, here).
Of course, every unital ${}^*$-homomorphism $\alpha\colon \mathrmm C(X_0,M_{m_0}) \to M_{m_1}$ can be expressed as $\alpha=\Ad(u) \circ \diag(\ev_{z_1},\dots, \ev_{z_\el})$, and although this expression is not unique, the set $S(\alpha)=\{z_1,\dots,z_\el\}$ is; in fact,
\begin{equation} S(\alpha) = X_0 \setminus U \end{equation}
where $\ker(\alpha) = C_0(U,M_{m_0})$.

Define
\begin{equation}
\label{eq:BabyYDef}
Y := \{(x_1,x_0) \in X_1 \times X_0 \mid x_1 \in X_1\text{ and }x_0 \in S(\hat\theta(x_1))\};
\end{equation}
since $S$ is continuous (in, say, the Hausdorff metric on finite subsets of $X_0$), it follows that $Y$ is a closed, and therefore compact, subset of $X_1 \times X_0$.

Define $\bar\xi\colon \mathrmm C(Y,M_{m_0}) \to \mathrmm C(X_1,M_{m_1})$ as follows.
For $x_1 \in X_1$ and $f \in \mathrmm C(Y,M_{m_0})$, let us express $\hat\theta(x_1)$ as
\begin{equation} \hat\theta(x_1) = \,\Ad(u) \circ \diag(\ev_{z_1},\dots,\ev_{z_\el}), \end{equation}
and then define
\begin{equation}
\label{eq:barXiDef}
 \bar\xi(f)(x_1) := \,\Ad(u)(\diag(f(x_1,z_1),\dots,f(x_1,z_\el))).
\end{equation}
(One may check that this is well defined---it does not depend on the chosen representation of $\hat\theta(x)$ and $\bar\xi(f)$ is indeed continuous.)
A critical property of $\bar\xi$ is that $\theta$ factors as
\begin{equation} 
\label{eq:barXiFactors}
\mathrmm C(X_0,M_{m_0}) \to \mathrmm C(Y,M_{m_0}) \labelledrightarrow{\bar\xi} \mathrmm C(X_1,M_{m_1}), \end{equation}
where the first map is induced by the coordinate projection $Y \to X_0$.

Fix $\eta \in (0,1)$ and define
\begin{equation}
\label{eq:BabyZDef}
Z_\eta := (X_0 \times \{0\}) \amalg (Y \times [0,\eta]) \amalg (X_1 \times [\eta,1])/\sim, \end{equation}
where $\sim$ is the equivalence relation generated by the following:
\begin{enumerate}
\item for $(x_1,x_0) \in Y$,
\begin{equation} (x_0,0) \sim ((x_1,x_0),0); \text{ and} \end{equation}
\item for $(x_1,x_0) \in Y$,
\begin{equation} ((x_1,x_0),\eta) \sim (x_1,\eta). \end{equation}
\end{enumerate}
(It is easy to see that $\sim$ is closed, and therefore $Z_\eta$ is Hausdorff.)

These constructions will be used in the proof of Proposition \ref{prop:BabyVersion} (a baby version of Theorem \ref{thm:drBound}) below.

\begin{example}
This simple example illustrates the constructions.
Let $X_0:=\{a,b\}$, $X_1:=\{c,d\}$, $m_0=1, m_1=2$, and define $\theta\colon C(X_0) \to C(X_1,M_2)$ by
\begin{equation}
\theta(f)(c) := \diag(f(a), f(b)), \quad \theta(f)(d) := \diag(f(a),f(a)).
\end{equation}
In this case, we have $Y=\{(c,a),\, (c,b),\, (d,a)\}$ and $\bar\xi\colon C(Y) \to C(X_1,M_2)$ is given by 
\begin{equation}
\bar\xi(f)(c) = \diag(f(c,a), f(c,b)), \quad \bar\xi(f)(d) = \diag(f(d,a), f(d,a)).
\end{equation}
The space $Z_\eta$ looks like the following:

\centerline{\includegraphics[height=1in]{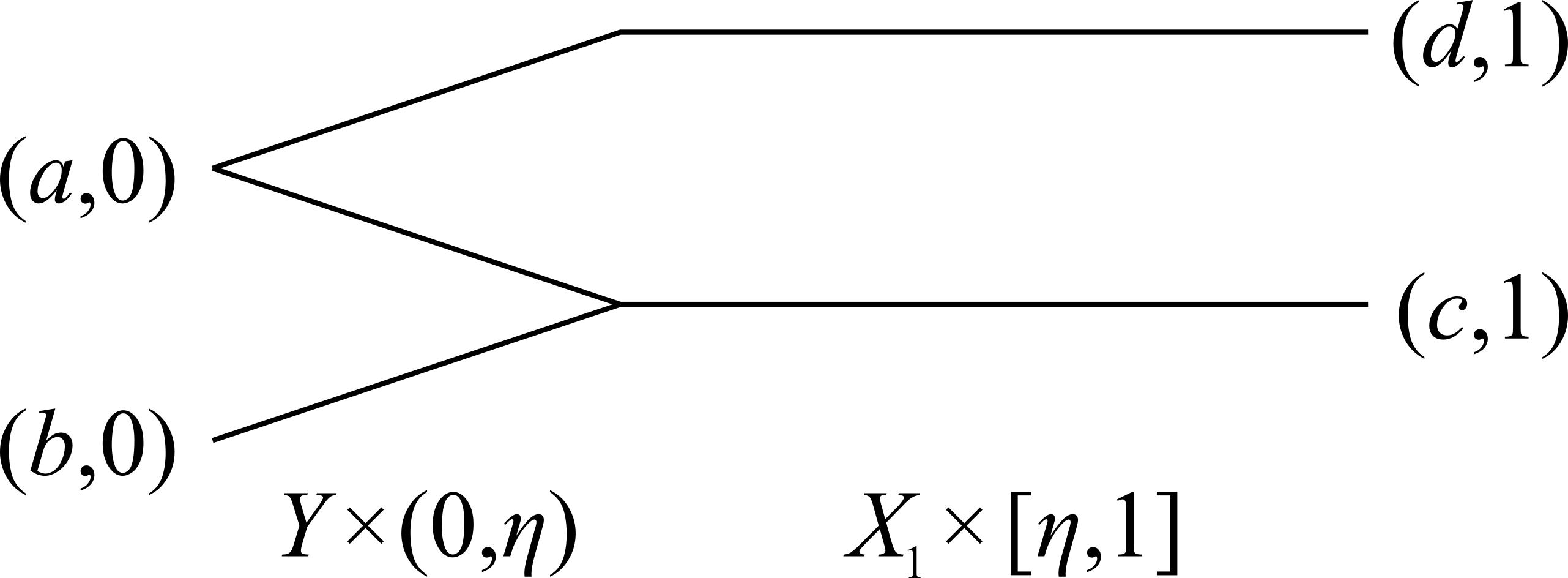}}
\end{example}

\begin{defn}
\label{def:Colourable}
Let $F,A$ be $\mathrmm C^*$-algebras, with $F$ finite dimensional, and let $\phi\colon F \to A$ be a c.p.\ map.
The map $\phi$ is \textbf{$(n+1)$-colourable} if $F$ decomposes as a direct sum of subalgebras, $F=F_0 \oplus \cdots \oplus F_n$, such that $\phi|_{F_i}$ is orthogonality preserving (i.e.\ order zero), $i=0,\dots,n$.
Likewise, a finite family $(e_j)_{j\in I}$ of positive contractions is \textbf{$(n+1)$-colourable} means that we can decompose $I=I_0\amalg \cdots \amalg I_n$ such that $(e_j)_{j\in I_i}$ is a pairwise orthogonal family, $i=0,\dots,n$.
\end{defn}

\begin{prop}
\label{prop:BabyVersion}
Consider the $\mathrmm C^*$-algebra $A$ given by the pull-back diagram \eqref{eq:BabySetup} and let $D$ be an infinite dimensional UHF algebra.
Then $\dr (A \otimes D) \leq 2$.
\end{prop}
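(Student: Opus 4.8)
The plan is to compare $A \otimes D$ with the ``commutative-over-$D$'' algebras $C(Z_\eta, M_{m_1}) \otimes D$, for $\eta \in (0,1)$ small, and to transfer to $A \otimes D$ a coloured c.p.c.\ approximation of the latter. To know there is one, note first that $\dr\bigl(C(Z_\eta, M_{m_1}) \otimes D\bigr) \le 2$: since $M_{m_1} \otimes D$ is again an infinite-dimensional UHF algebra, it absorbs $\mathcal Z$, so $C(Z_\eta, M_{m_1}) \otimes D \cong C(Z_\eta) \otimes \mathcal Z \otimes (M_{m_1} \otimes D)$, which is an inductive limit of matrix algebras over $C(Z_\eta) \otimes \mathcal Z$; each of these has decomposition rank at most $2$ by the main result of \cite{TW:Zdr} and the invariance of decomposition rank under matrix amplification, so the bound passes to the limit by Proposition \ref{prop:drLocal}.

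The heart of the argument is to produce, for each small $\eta$, c.p.c.\ maps $A \otimes D \xrightarrow{\ \nu_\eta\ } C(Z_\eta, M_{m_1}) \otimes D \xrightarrow{\ \pi_\eta\ } A \otimes D$ such that $\pi_\eta$ is asymptotically multiplicative as $\eta \to 0$ (so that, up to a perturbation that is small when $\eta$ is small, post-composing a c.p.c.\ order zero map with $\pi_\eta$ again yields an order zero map) and such that $\pi_\eta \circ \nu_\eta(x) \to x$ as $\eta \to 0$, for every $x \in A \otimes D$. Both maps are to be assembled from the three pieces of $Z_\eta$: over $X_1 \times [\eta,1]$ they are the obvious identifications with the corresponding part of $A \otimes D$, precomposed with the reparametrisation $[0,1] \to [\eta,1]$ that collapses $[0,\eta]$; over $Y \times [0,\eta]$ and $X_0 \times \{0\}$ they are governed by the factorisation \eqref{eq:barXiFactors} of $\theta$ through $\bar\xi$, which is exactly what allows the gluing datum of $A$ to be read off from the much simpler space $Z_\eta$. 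Since elements of $A \otimes D$ vary uniformly continuously in the $[0,1]$-coordinate and the region $X_1 \times [0,\eta]$ --- the only place the comparison discards information --- shrinks as $\eta \to 0$, the composite $\pi_\eta \circ \nu_\eta$ converges to the identity.

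Granting this, one concludes in the usual way. Given a finite set $\mathcal F \subset A \otimes D$ and $\e > 0$, fix $\eta$ small enough that $\pi_\eta \circ \nu_\eta(a) \approx_{\e/3} a$ for all $a \in \mathcal F$ and that $\pi_\eta$ is multiplicative to within the tolerance required below. Choose, by the first step, a c.p.c.\ approximation $C(Z_\eta, M_{m_1}) \otimes D \xrightarrow{\ \psi\ } F_0 \oplus F_1 \oplus F_2 \xrightarrow{\ \phi\ } C(Z_\eta, M_{m_1}) \otimes D$ witnessing $\dr \le 2$ and within $\e/3$ of the identity on the finite set $\nu_\eta(\mathcal F)$. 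Then $\psi \circ \nu_\eta$ is c.p.c., each $(\pi_\eta \circ \phi)|_{F_i}$ lies within a small tolerance of an order zero map and so can be replaced by an honest order zero map (using the stability of the order zero relation for maps out of a finite-dimensional algebra), and $\pi_\eta \circ \phi \circ \psi \circ \nu_\eta$ agrees with the identity on $\mathcal F$ to within $\e$; thus $\dr(A \otimes D) \le 2$.

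The step I expect to be the genuine obstacle is the construction of $\pi_\eta$, and in particular the reconciliation of the matrix size $m_1$ carried everywhere on $C(Z_\eta, M_{m_1})$ with the size $m_0$ attached to the $X_0$- and $Y$-directions of $A$. Since $m_1 = \el m_0$ with possibly $\el > 1$, the algebra $A$ is genuinely inhomogeneous --- it has irreducible representations of both dimensions $m_0$ and $m_1$ --- whereas $C(Z_\eta, M_{m_1})$ and all its quotients are $m_1$-homogeneous, so there is no honest ${}^*$-homomorphism $C(Z_\eta, M_{m_1}) \to A$, which is why $\pi_\eta$ can only be asymptotically multiplicative. This is exactly where the hypothesis that $D$ is an infinite-dimensional UHF algebra is used: after tensoring by $D$ there is enough room to carry the inclusion $M_{m_0} \hookrightarrow M_{m_1}$ coherently over all of $Y$ (absorbing a copy of $D$ via $D \otimes D \cong D$), and the mapping-cylinder shape of $Z_\eta$ in \eqref{eq:BabyZDef}, together with the factorisation through $\bar\xi$, is engineered so that this gluing matches up continuously across $Y \times [0,\eta]$ while agreeing with the $X_0$- and $X_1$-ends.
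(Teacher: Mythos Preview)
Your plan has a real gap at precisely the point you flag: the map $\pi_\eta\colon C(Z_\eta,M_{m_1})\otimes D \to A\otimes D$ cannot be made asymptotically multiplicative in the way you need. Over $X_0\times\{0\}\subset Z_\eta$ the source has fibre $M_{m_1}\otimes D$ while the target has fibre $M_{m_0}\otimes D$; any c.p.c.\ map between these fibres which is compatible with the identity on $M_{m_1}\otimes D$ at the other end $X_1\times\{\eta\}$ must genuinely compress (or reorganise) the matrix factor, and the resulting multiplicativity defect is of order one on all of $X_0$ --- it does not shrink with $\eta$, because $X_0$ itself does not shrink. Your suggestion to absorb $M_\ell$ into $D$ via $D\otimes D\cong D$ does not help in general: for an arbitrary infinite-dimensional UHF algebra $D$ there need be no unital embedding $M_\ell\hookrightarrow D$ (take $D=M_{2^\infty}$ and $\ell=3$), so you cannot even arrange the fibre map at $X_0$ to be a ${}^*$-homomorphism. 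The attempt to package everything as $C(Z_\eta,M_{m_1})$ forces a uniform matrix size and thereby throws away exactly the inhomogeneity of $A$ that must be respected.

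The paper's proof avoids this by never building such a $\pi_\eta$. Instead it uses only the \emph{commutative} algebra $C(Z_\eta)\otimes D$: by \cite[Proposition~3.2]{TW:Zdr} the bound $\dr(C(Z_\eta)\otimes 1_D\subset C(Z_\eta)\otimes D)\le 2$ yields a $3$-colourable approximate partition of unity $(a_i)$ in $C(Z_\eta,D)_+$ subordinate to a carefully chosen cover $\mathcal U_1$. The c.p.c.\ approximation of $A\otimes D$ is then assembled \emph{by hand} from the $a_i$: to each $i$ one attaches a matrix algebra $F_i$ of the correct size --- $M_{m_0}$ when $\mathrm{supp}\,a_i$ meets $X_0$ or $Y\times(0,\eta)$, and $M_{m_1}$ when it lies in $X_1\times(\eta,1]$ --- and an explicit order zero map $\phi_i\colon F_i\to D\otimes A$ built from $a_i$ via $\bar\xi$. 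The key point is that the matrix sizes are allowed to vary with $i$, so no single homogeneous intermediary is needed; the map $\bar\xi$ is used not to define a global map into $A$ but only to embed each $\phi_i$ correctly. This is the idea you are missing: transfer the \emph{colouring} from $C(Z_\eta)$, not a factorisation.
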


\begin{proof}
Let $\mathcal F \subset A$ be a finite subset and let $\e > 0$; we must show that we can $(\mathcal F,\e)$-approximately factorize the identity map on $A$ in a $3$-colourable way.
We may assume that $\mathcal F$ consists of positive elements.
Moreover, by approximating $\mathcal F$, we may assume that there exists $\eta > 0$ such that, for every $a = (f,g) \in \mathcal F \subset \mathrmm C(X_0,M_{m_0}) \oplus \mathrmm C(X_1 \times [0,1],M_{m_1})$, $x \in X_1$, and $t \in [0,2\eta]$,
\begin{equation}
\label{eq:BabyVersionF}
 g(x,t) = g(x,0). \end{equation}

Set $Z:=Z_\eta$ as defined in \eqref{eq:BabyZDef}.
Let us identify the spaces $X_0$, $Y \times (0,\eta)$, and $X_1 \times [\eta,1]$ with subspaces of $Z$.

Let $\mathcal U_0$ be an open cover of $X_0$ such that for every $(f,g) \in \mathcal F \subset \mathrmm C(X_0,M_{m_0}) \oplus \mathrmm C(X_1\times [0,1],M_{m_1})$ and every $U \in \mathcal U_0$, the restriction $f|_U$ is approximately constant (to within $\e$).
Choose an open cover $\mathcal U_1$ of $Z$ such that for $U \in \mathcal U_1$:

(a) If $U \cap X_0 \neq \emptyset$ then $U \cap (X_1 \times [\eta,1]) = \emptyset$, and there exists $V \in \mathcal U_0$ such that $U \cap X_0 \subseteq V$ and for every $((x,y),t) \in (Y \times (0,\eta)) \cap U$, it is the case that $y \in V$;

(b) If $U \cap (Y \times (0,\eta)) \neq \emptyset$ then $U \cap (X_1 \times [2\eta,1]) = \emptyset$, and there exist disjoint open sets $V_1,\dots,V_q \in \mathcal U_0$ such that for any $(x,t) \in U \cap (X_1 \times [\eta,2\eta))$ or $((x,y),t) \in U \cap (Y \times (0,\eta))$,
\begin{equation} S(\hat\theta(x)) \subset V_1 \cup \cdots \cup V_q; \text{and} \end{equation}

(c) For every $(f,g) \in \mathcal F \subset \mathrmm C(X_0,M_{m_0}) \oplus \mathrmm C(X_1 \times [0,1],M_{m_1})$, the restriction $g|_{U \cap (X_1 \times [\eta,1])}$ is approximately constant (to within $\e$).

Since the embedding $\mathrmm C(Z)\otimes 1_D \subset \mathrmm C(Z) \otimes D$ has decomposition rank at most $2$ \cite[Theorem 4.1]{TW:Zdr}, we may find a $3$-colourable approximate (to within $\e$) finite partition of unity $(a_i)_{i=1}^r$ in $\mathrmm C(Z,D)_+$ subordinate to $\mathcal U_1$ (by \cite[Proposition 3.2]{TW:Zdr}, where this concept is defined; see also Definition \ref{def:Colourable}).
By possibly rescaling, assume that $\sum a_i \leq 1_D$.

Let's now define our c.p.c.\ approximation
\begin{equation} A \labelledrightarrow{\psi} F_1 \oplus \cdots \oplus F_r \labelledrightarrow{\phi} D \otimes A, \end{equation}
by defining finite dimensional (in fact matrix) algebras $F_i$ together with the parts of $\psi$ and $\phi$ corresponding to each $F_i$.
This is done in cases.

\textit{Case 1}: The support of $a_i$ intersects $X_0$.
Let $b_i \in \mathrmm C(Y \times [0,\eta],D)$ be given by composing $a_i$ with the natural map $Y \times [0,\eta] \to Z$ (from the definition of $Z$).
Using $\bar\xi\colon C(Y,M_{m_0})\to C(X_1,M_{m_1})$ as defined in \eqref{eq:barXiDef}, identify the domain and codomain appropriately so as to view $\bar\xi \otimes \id_{\mathrmm C([0,\eta]) \otimes D}$ as a map from $\mathrmm C(Y \times [0,\eta],D) \otimes M_{m_0}$ to $D \otimes \mathrmm C(X_1 \times [0,\eta], M_{m_1})$.
Then, note that by condition (a) on the open cover $\mathcal U_1$, $\bar\xi(b_i \otimes 1_{m_0})$ vanishes on $X_1 \times \{\eta\}$, i.e.,
\begin{align}
(\bar\xi \otimes \id_{\mathrmm C([0,\eta]) \otimes D})(b_i \otimes 1_{m_0}) &\in D \otimes C_0(X_1 \times [0,\eta), M_{m_1})
\end{align}
and
\begin{eqnarray}
\notag
 (\bar\xi \otimes \id_{\mathrmm C([0,\eta]) \otimes D})(b_i \otimes 1_{m_0})|_{X_1 \times \{0\}} &=& (\bar\xi \otimes \id_{D})(b_i|_{Y \times \{0\}} \otimes 1_{m_0}) \\
&\stackrel{\eqref{eq:barXiFactors}}=& (\theta \otimes \id_{D})(a_i|_{X_0} \otimes 1_{m_0}),
\end{eqnarray}
where, again, we make appropriate identifications of domains and codomains of the maps involved.
This shows that
\begin{equation} e_i := (a_i|_{X_0} \otimes 1_{m_0}, (\bar\xi \otimes \id_{\mathrmm C([0,\eta]) \otimes D})(b_i \otimes 1_{m_0})) \in (D \otimes A)_+. \end{equation}

Set
\begin{equation} F_i := M_{m_0} \end{equation}
and define the c.p.c.\ order zero map $\phi_i\colon M_{m_0} \to D \otimes A$ by
\begin{equation} \phi_i(\kappa) := (a_i|_{X_0} \otimes \kappa, (\bar\xi \otimes \id_{\mathrmm C([0,\eta]) \otimes D})(b_i \otimes \kappa)); \end{equation}
(the range of this map is contained in $D \otimes A$, for similar reasons as for $e_i$).
Fix a choice of $x_i \in X_0 \cap \mathrm{supp}\, a_i$, and use this to define $\psi_i\colon A \to M_k$ by
\begin{equation} \psi_i(f,g) = f(x_i). \end{equation}

\textit{Case 2}: The support of $a_i$ intersects $Y \times (0,\eta)$, but not $X_0$.
Let $U \in \mathcal U_1$ contain the support of $a_i$ and for this set, let $V_1,\dots,V_q \in \mathcal U_0$ be disjoint sets as in (b), which we relabel $V_{i,1},\dots,V_{i,q(i)}$.
We define $F_i:=F_{i,1} \oplus \cdots \oplus F_{i,q(i)}$ where we define $F_{i,j}$ (and the parts of $\phi,\psi$ relating to $F_{i,j}$) now.
Let us therefore fix $j=1,\dots,q(i)$. 

Define $b_{i,j}\colon Y \times (0,2\eta) \to D$ by
\begin{equation} b_{i,j}((x,y),t) := \begin{cases}
a_i((x,y),t),\quad &\text{if }t \in (0,\eta)\text{ and }y\in V_{i,j}; \\
a_i(x,t),\quad &\text{if }t\in [\eta,2\eta) \text{ and }S(\hat\theta)(x) \cap V_{i,j} \neq \emptyset; \\
0, \quad &\text{otherwise.}
\end{cases}
\end{equation}
One may check (using (b)) that $b_{i,j}$ is continuous, and vanishes on $Y \times \{0,2\eta\}$; therefore, we may view it as an element of
\begin{equation} C_0(Y \times (0,2\eta),D) \subset C_0(Y \times (0,1],D). \end{equation}
We note, for future use, that for $((x,y),t) \in Y \times (0,2\eta)$,
\begin{equation}
\label{eq:BabybFact}
\sum_{j=1}^{q(i)} b_{i,j}((x,y),t) = \begin{cases} a_i((x,y),t), \quad &\text{if }t\in (0,\eta); \\ a_i(x,t),\quad &\text{if }t \in [\eta,2\eta). \end{cases}
\end{equation}

Define
\begin{equation} e_{i,j}:= (0_{X_0}, (\bar\xi \otimes \id_{\mathrmm C([0,1]) \otimes D})(b_{i,j} \otimes 1_{m_0})) \in A_+. \end{equation}
Define the summand
\begin{equation} F_{i,j} := M_{m_0} \end{equation}
and define $\phi_{i,j}\colon M_{m_0} \to D \otimes A$ by
\begin{equation} \phi_{i,j}(\kappa):= (0_Y, (\bar\xi \otimes \id_{\mathrmm C([0,1]) \otimes D})(b_{i,j} \otimes \kappa)). \end{equation}
Fix a choice of $y_{i,j} \in V_{i,j}$ and use this to define $\psi_{i,j}\colon A \to M_{m_0}$ by
\begin{equation} \psi_{i,j}(f,g) = f(y_{i,j}). \end{equation}

\textit{Case 3}: The support of $a_i$ is contained in $X_1 \times (\eta,1]$.
In this case, $a_i|_{X_1 \times (\eta,1]} \in C_0(X_1 \times (\eta,1],D)_+ \subset \mathrmm C(X_1 \times [0,1],D)_+$, so
\begin{equation} e_i := (0_{X_0},a_i|_{X_1 \times [\eta,1]} \otimes 1_{m_1}) \in (D \otimes A)_+. \end{equation}
Define the summand
\begin{equation} F_i := M_{m_1} \end{equation}
and define $\phi_i\colon M_{m_1} \to D \otimes A$ by
\begin{equation} \phi_i(\kappa) := (0_{X_0},a_i|_{X_1 \times [\eta,1]} \otimes \kappa). \end{equation}
Choose $z_i \in \mathrm{supp}\, a_i \subseteq X_1 \times (\eta,1]$ and use this to define $\psi_i\colon A \to M_k$ by
\begin{equation} \psi_i(f,g) := g(z_i). \end{equation}

Set $G_1,G_2$ and $G_3$ equal to the set of all $i$ from Cases 1, 2 and 3 respectively.
Thus, we have
\begin{equation} F=\bigoplus_{i \in G_1} F_i \oplus \bigoplus_{i \in G_2} \bigoplus_{j=1}^{q(i)} F_{i,j} \oplus \bigoplus_{i \in G_3} F_i. \end{equation}
The maps $\psi$ and $\phi$ are given as (direct) sums in the same way.

It is easy to see that whenever $a_ia_{i'}=0$, we also have $e_ie_{i'}=0$ (or $e_ie_{i',j'}=0$, etc.), and that for $i \in G_2$, the elements $e_{i,1},\dots,e_{i,q(i)}$ are mutually orthogonal.
Therefore, $\phi$ is $3$-colourable.
Since $\sum a_i \leq 1_Z$, we also have
\begin{equation} \phi(1_F) = \sum_{i \in G_1} e_i + \sum_{i \in G_2} \sum_{j=1}^{q(i)} e_{i,j} + \sum_{i\in G_3} e_i \leq 1_{D \otimes A}, \end{equation}
and so $\phi$ is contractive.

Next, we must check that $\phi\psi(a) \approx 1_D \otimes a$ for $a = (f,g) \in \mathcal F$.
We see that $\phi\psi(a) = (f',g')$ where
\begin{equation}
\label{eq:Babyf'Def}
f' := \sum_{i \in G_1} (a_i|_{X_0} \otimes f(x_i))
\end{equation}
and
\begin{align}
\notag
g' &:= \sum_{i\in G_1}(\bar\xi \otimes \id_{\mathrmm C([0,\eta]) \otimes D})(b_i \otimes f(x_i)) + \\
\notag
&\quad \sum_{i \in G_2} \sum_{j=1}^{q(i)} (\bar\xi \otimes \id_{\mathrmm C([0,\eta]) \otimes D})(b_{i,j} \otimes f(y_{i,j})) + \\
&\quad \sum_{i \in G_3} a_i|_{X_1 \times [\eta,1]} \otimes g(z_i).
\end{align}
For $x \in X_0$, we have that $a_i(x) \neq 0$ only if $f(x_i) \approx_\e f(x)$.
Also, by the definition of $G_1$, $\sum_{i\in G_1} a_i(x) = \sum_i a_i(x) \approx_\e 1_D$.
Therefore, it follows from \eqref{eq:Babyf'Def} that $f'(x) \approx_{2\e} 1_D \otimes f(x)$.
Hence $\|f'-f\| \leq 2\e$.

Next, let $(x,t) \in X_1 \times [0,\eta)$.
Express $\hat\theta(x)$ as 
\begin{equation}
\label{eq:BabyHatThetaExp}
\hat\theta(x) = \,\Ad(u) \circ \diag(\ev_{z_1},\dots,\ev_{z_\el}).
\end{equation}
Then, for $i \in G_1$,
\begin{eqnarray} 
&&\quad (\bar\xi \otimes \id_{\mathrmm C([0,\eta]) \otimes D})(b_i \otimes f(x_i))(x,t) \\
\notag
&\stackrel{\eqref{eq:barXiDef}}=& \Ad(1_D \otimes u)(\diag(b_i((x,z_1),t) \otimes f(x_i),\dots,b_i((x,z_\el),t) \otimes f(x_i))) \\
\notag
&=& \Ad(1_D \otimes u)(\diag(a_i((x,z_1),t) \otimes f(x_i),\dots,a_i((x,z_\el),t) \otimes f(x_i))).
\end{eqnarray}
using the definition of $b_i$.
Likewise, for $i \in G_2$ and $j=1,\dots,q(i)$,
\begin{eqnarray}
\notag
&&
(\bar\xi \otimes \id_{\mathrmm C([0,\eta]) \otimes D})(b_{i,j} \otimes f(y_{i,j}))(x,t) \\
\notag
&\stackrel{\eqref{eq:barXiDef}}=& \,\Ad(1_D \otimes u)(\diag(b_{i,j}((x,z_1),t) \otimes f(y_{i,j}),\dots, \\
\label{eq:BabyComputation1}
&&\quad b_{i,j}((x,z_\el),t) \otimes f(y_{i,j}))).
\end{eqnarray}
Combining these, we find that
\begin{align}
\notag
g'(x,t) &= \sum_{i\in G_1}(\bar\xi \otimes \id_{\mathrmm C([0,\eta]) \otimes D})(b_i \otimes f(x_i))(x,t) + \\
\notag
&\quad \sum_{i \in G_2} \sum_{j=1}^{q(i)} (\bar\xi \otimes \id_{\mathrmm C([0,\eta]) \otimes D})(b_{i,j} \otimes f(y_{i,j}))(x,t) \\
\label{eq:BabyComputation2}
&= \Ad(1_D \otimes u)(\diag(c_1,\dots,c_l))
\end{align}
where
\begin{align}
\notag
c_k &:= \sum_{i\in G_1} a_i(x,z_k) \otimes f(x_i) + \\
\notag
&\qquad \sum_{i \in G_2} \sum_{j=1}^{q(i)} b_{i,j}((x,z_k),t) \otimes f(y_{i,j}) \\
\notag
&\approx_\e \sum_{i\in G_1} a_i(x,z_k) \otimes f(z_k) + \\
\notag
&\qquad \sum_{i \in G_2} \sum_{j=1}^{q(i)} b_{i,j}((x,z_k),t) \otimes f(z_k) \\
&\approx_\e 1_D \otimes f(z_k),
\label{eq:BabycComputation1}
\end{align}
using the fact that $f$ is approximately constant on each set in $\mathcal U_0$ for the first approximation, and using \eqref{eq:BabybFact} and that $\sum_{i\in G_1 \cup G_2} a_i(x,z_k) \approx_\e 1$ for the second approximation.
Putting \eqref{eq:BabycComputation1} into \eqref{eq:BabyComputation2} we obtain
\begin{eqnarray}
\notag
g'(x,t) &\approx_{2\e}& \Ad(1_D \otimes u)(\diag(1_D \otimes f(z_1), \dots, 1_D \otimes f(z_k)) \\
&\stackrel{\eqref{eq:barXiFactors}}=& 1_D \otimes \theta(f)(x) \\
&\stackrel{\eqref{eq:BabyVersionF}}=& 1_D \otimes g(x,t).
\end{eqnarray}
Next, let $(x,t) \in X_1 \times [\eta,2\eta)$.
Express $\hat\theta(x)$ once again as in \eqref{eq:BabyHatThetaExp}.
For $i\in G_2$ and $j=1,\dots,q(i)$, the computation \eqref{eq:BabyComputation1} remains valid, and so
\begin{equation}
\sum_{i\in G_2} \sum_{j=1}^{q(i)} (\bar\xi \otimes \id_{\mathrmm C([0,\eta]) \otimes D})(b_{i,j} \otimes f(y_{i,j}))(x,t) =
\Ad(1_D \otimes u)(\diag(c_1,\dots,c_l)
\end{equation}
where
\begin{eqnarray}
\notag
c_k &:=& \sum_{i\in G_2} \sum_{j=1}^{q(i)} b_{i,j}((x,z_k),t) \otimes f(y_{i,j}) \\
\notag
&\approx_\e& \sum_{i\in G_2} \sum_{j=1}^{q(i)} b_{i,j}((x,z_k),t) \otimes f(z_k) \\
&\stackrel{\eqref{eq:BabybFact}}=& \sum_{i\in G_2} a_i(x,t) \otimes f(z_k),
\end{eqnarray}
using the fact that $f$ is approximately constant on each set in $\mathcal U_0$, in the approximation.
Thus we have
\begin{eqnarray}
\notag
&& \sum_{i\in G_2} \sum_{j=1}^{q(i)} (\bar\xi \otimes \id_{\mathrmm C([0,\eta]) \otimes D})(b_{i,j} \otimes f(y_{i,j}))(x,t) \\
\notag
&\approx_\e&
\sum_{i\in G_2} a_i(x,t) \otimes \Ad(u)(\diag(f(z_1),\dots,f(z_k)) \\
\notag
&=& \sum_{i \in G_2} a_i(x,t) \otimes \theta(f)(x) \\
&\stackrel{\eqref{eq:barXiFactors}}=& \sum_{i \in G_2} a_i(x,t) \otimes g(x,t).
\end{eqnarray}

Using this and the fact that $\sum_{i\in G_2 \cup G_3} a_i(x,t) \approx_\e 1_D$, we obtain
\begin{align}
\notag
g'(x,t) &= \sum_{i \in G_2} \sum_{j=1}^{q(i)} (\bar\xi \otimes \id_{\mathrmm C([0,\eta]) \otimes D})(b_{i,j} \otimes f(y_{i,j}))(x,t) \\
\notag
&\quad + \sum_{i \in G_3} a_i(x,t) \otimes g(z_i) \\
\notag
&\approx_\e \sum_{i \in G_2 \cup G_3} a_i(x,t) \otimes g(x,t) \\
&\approx_\e 1_D \otimes g(x,t).
\end{align}

Finally, for $(x,t) \in X_1 \times [2\eta,1]$, we have
\begin{equation}
g'(x,t) = \sum_{i \in G_3} a_i(x,t) \otimes g(z_i) \approx_{2\e} 1_D \otimes g(x,t)
\end{equation}
This concludes the verification that $1_D \otimes (f,g) \approx_{3\e} \phi\psi(f,g)$.

This shows that $\dr(A \otimes 1_D \subset A \otimes D) \leq 2$.
Since $D$ is UHF, it follows that $\dr(A \otimes D) \leq 2$.
\end{proof}

\subsection{Set-up}
\label{sec:Setup}

Let us fix an NC cell complex $A$; below we shall fix a decomposition of it.
In subsequent sections, we shall produce a construction based on this fixed data.

For a topological space $X$, let $CX$ denote the cone over $X$, defined by
\begin{equation} X \times [0,1]/\sim \end{equation}
where $(x,1) \sim (x',1)$ for all $x,x' \in X$.
Note that this is the commutative space corresponding to the unitization of the standard $\mathrmm C^*$-algebraic cone over $\mathrmm C(X)$.
We view $CX$ as the (non-topological) disjoint union of $X \times [0,1)$ and $\{1\}$.

Let $A$ have an NC cell decomposition, in which it is the $k_0$-th stage, as follows.
The first stage is $A_0 = \mathrmm C(CX_0,M_{m_0})$ and, for $k \geq 1$, the $k$-th stage is $A_k$ which satisfies the pull-back diagram
\begin{equation}
\xymatrix{
A_k \ar[d]_{\lambda_k} \ar[r]^-{\sigma_k} & \mathrmm C(CX_k,M_{m_k}) \ar[d]^-{f \mapsto f|_{X_k \times \{0\}}} \\
A_{k-1} \ar[r]_-{\theta_k} & \mathrmm C(X_k,M_{m_k}),
}
\end{equation}
where each $X_k$ is a sphere of some dimension.
Thus, $A=A_{k_0}$.
Occasionally, it will be convenient to use $\sigma_0\colon A_0 \to \mathrmm C(CX_0,M_{m_0})$ to denote the identity map on $\mathrmm C(CX_0,M_{m_0})$.

We will frequently use the notation 
\begin{equation}
\lambda_k^{k'}:= \lambda_{k'+1} \circ \cdots \circ \lambda_k\colon A_k \to A_{k'}
\end{equation}
for $k' \leq k$.

Presently we will initiate a construction that will lead to c.p.c.\ approximations for the inclusion $A_{k_0} \to A_{k_0} \otimes D$.
These will involve tuples of paramaters $(\eta_1,\dots,\eta_{k_0}) \in (0,1)^{k_0}$.
We will frequently abbreviate this tuple as $\bar\eta$, and also use $\bar\eta$ to denote $(\eta_1,\dots,\eta_k)$ for values $k<k_0$ where appropriate.

Throughout this section, we will use the notation $x_k$ (or $x_k'$, etc.) to denote a point in $X_k$, $v_k$ to denote a point in $CX_k$, and similar notation for points in sets $Z_{k,\bar\eta}$ and $Y_{k,\bar\eta}$ to be introduced in Section \ref{sec:ZY}.
This is not to be confused with the $k^{\text{th}}$ entry from a tuple or sequence of points.
When we wish to refer to an element of a tuple or sequence of points in $X_k$, we use the notation $x_k^{(i)}$ (where the tuple or sequence is indexed over $i$).

\begin{remark}
In the construction to follow, it is not important that the spectrum of $A_0$ is a cone; nor is it important that $X_k$ is a sphere.
It only matters that at each stage (after $A_0$), the space is a cone where the gluing occurs at the ``wide end'' of the cone.
(In fact, what is pertinent is that the gluing space is a neighbourhood retract.)
However, no stronger theorem (than Theorem \ref{thm:MainThmA}) is obtained by replacing $CX_0$ and $X_k$ by more general spaces.
\end{remark}

\subsection{Approximating subalgebras}
\label{sec:ApproxSubalgs}
Let us now define approximating subalgebras $A_{k,(\eta_1,\dots,\eta_k)}$ of $A_k$, parametrized by $\bar\eta=(\eta_1,\dots,\eta_k) \in (0,1)^k$.
Set
\begin{equation} A_{0,()} := A_0. \end{equation}
Having defined $A_{k-1,\bar\eta}$, define $A_{k,\bar\eta}$ to consist of $a \in A_k$ such that
\begin{align}
\notag
\lambda_k(a) &\in A_{k-1,\bar\eta},\text{ and} \\
\sigma_k(a)(x,t) &= \sigma_k(a)(x,0),\text{ for all }x \in X, t \in [0,\eta_k].
\label{eq:ApproxSubalgs}
\end{align}
Evidently, $A_k$ is locally approximated by the subalgebras $A_{k,\bar\eta}$.

\subsection{Construction of $Z_{k,\bar\eta}$}
\label{sec:ZY}
Let us now construct Hausdorff spaces $Y_{k,(\eta_1,\dots,\eta_{k-1})}$ (for $k \geq 1$ and $\eta_1,\dots,\eta_{k-1} \in (0,1)$) and $Z_{k,(\eta_1,\dots,\eta_k)}$ (for $k \geq 0$ and $\eta_1,\dots,\eta_{k-1} \in (0,1)$) recursively, together with a map
\begin{equation} S_{k,(\eta_1,\dots,\eta_k),m}\colon \mathrm{Hom}(A_k,M_m) \to \{\text{finite subsets of } Z_{k,\bar\eta}\}; \end{equation}
we emphasize that the values of this map are sets, not multisets.
Define
\begin{equation} Z_{0,()} := CX_0, \end{equation}
and for $\alpha \in \mathrm{Hom}(A_0,M_m)$, define $S_{0,(),m}(\alpha)$ to consist of all $x_0 \in CX_0$ for which $\ev_{x_0}$ is equivalent to a subrepresentation of $\alpha$.

Now, having defined $Z_{k-1,\bar\eta}$ and $S_{k-1,\bar\eta,m}$, set
\begin{equation}
Y_{k,\bar\eta} := \{(x_k,z_{k-1}) \in X_k \times Z_{k-1,\bar\eta} \mid z_{k-1} \in S_{k-1,\bar\eta,m_k}(\hat \theta_k(x_k))\},
\end{equation}
where we recall that $\hat \theta_k\colon X_k \to \mathrm{Hom}(A_{k-1},M_{m_k})$ is the continuous map induced by $\theta_k\colon A_{k-1} \to \mathrmm C(X_k,M_{m_k})$.
Next, for $\eta_k > 0$, set
\begin{align}
\notag
Z_{k,\bar\eta} := &\big((Z_{k-1,\bar\eta} \times \{0\})\, \amalg\, (Y_{k,\bar\eta} \times [0,\eta_k])\\
&\qquad \amalg\, (X_k \times [\eta_k,1])\big)/\sim,
\end{align}
where $\sim$ is the equivalence relation generated by the following:
\begin{enumerate}
\item
For every $z_{k-1} \in Z_{k-1,\bar\eta}$ and $(x_k,z_{k-1}) \in Y_{k,\bar\eta}$,
\begin{equation} (z_{k-1},0) \sim ((x_k,z_{k-1}),0); \end{equation}
\item
For every $x_k \in X_k$ and $(x_k,z_{k-1}) \in Y_{k,\bar\eta}$,
\begin{equation} ((x_k,z_{k-1}),\eta_k) \sim (x_k,\eta_k); \quad\text{and} \end{equation}
\item
For every $x_k,x_k' \in X_k$,
\begin{equation} (x_k,1) \sim (x_k',1). \end{equation}
\end{enumerate}
As this equivalence relation is closed, $Z_{k,\bar\eta}$ is a Hausdorff space.
Identify $Z_{k-1,\bar\eta} \times \{0\}$, $Y_{k,\bar\eta} \times (0,\eta_k)$, $X_k \times [\eta_k,1)$, and $\{1\}$ in the natural way with subsets of $Z_{k,\bar\eta}$; note that $Z_{k,\bar\eta}$ is in fact the (non-topological) disjoint union of these four subsets.

Now let us define maps
\begin{equation} S_{k,\bar\eta,m}\colon \mathrm{Hom}(A_k,M_m) \to \{\text{finite subsets of } Z_{k,\bar\eta}\}. \end{equation}
For $\alpha \in \mathrm{Hom}(A_k,M_m)$, let $S_{k,\bar\eta,m}(\alpha)$ be the set consisting of all $z_k \in Z_{k,\bar\eta}$ for which one of the following properties obtains:

(i) $z_k = 1$ and $\ev_1 \circ \sigma_k$ is equivalent to a subrepresentation of $\alpha$;

(i)$'$ $z_k = (x_k,t) \in X_k \times [\eta_k,1)$ and $\ev_{(x_k,t)} \circ \sigma_k$ is equivalent to a subrepresentation of $\alpha$;

(ii) $z_k = ((x_k,z_{k-1}),t) \in Y_{k,\bar\eta} \times (0,\eta_k)$ and $\ev_{(x_k,t)} \circ \sigma_k$ is equivalent to a subrepresentation of $\alpha$; or

(iii) $z_k = (z_{k-1},0) \in Z_{k,\bar\eta}$ and $\alpha$ contains a subrepresentation that factors as
\begin{equation} A_k \labelledrightarrow{\lambda_k} A_{k-1} \labelledrightarrow{\alpha'} M_{m'}, \end{equation}
such that $z_{k-1} \in S_{k-1,\bar\eta,m'}(\alpha')$.

This concludes the inductive definition of the spaces $Y$, $Z$, and the maps $S$.

We may regard $Y_{k,\bar\eta}$ as an ``$X$-fibred'' space, with its fibres being finite subsets of $Z_{k-1,\bar\eta}$.
For a subset $A$ of $Y_{k,\bar\eta}$ and for $x_k \in X_k$, let us write $A(x_k)$ to denote the set
\begin{equation} \{z_{k-1}\in Z_{k-1,\bar\eta} \mid (x_k,z_{k-1}) \in A\}, \end{equation}
and for $B \subseteq X_k$, let us also consider the subset 
\begin{equation}
A|_B := \{(z_{k-1},x_k) \mid x_k \in B\text{ and }z_{k-1} \in A(x_k)\} \subseteq A.
\end{equation}
(Note that, while $A(x_k)$ can be canonically identified with $A|_{\{x_k\}}$, they are technically different.)
Let us also regard $Z_{k,\bar\eta}$ and $CX_k$ as $[0,1]$-fibred spaces, and use similar notation: for $A \subseteq Z_{k,\bar\eta}$, 
\begin{align}
\notag
A(0) &:= \{z_{k-1} \mid (z_{k-1},0) \in A\} \subseteq Z_{k-1,\bar\eta}, \\
\notag
A(t) &:= \{(x_k,z_{k-1}) \mid ((x_k,z_{k-1}),t) \in A\} \subseteq Y_{k,\bar\eta},\quad t \in (0,\eta), \\
\notag
A(t) &:= \{x_k \mid (x_k,t) \in A\} \subseteq X_k, \quad t \in [\eta,1), \text{ and}\\
A(1) &:= \{*\};
\end{align}
and for $B \subseteq [0,1]$,
\begin{equation}
A|_B := \{(w,t) \mid t \in [0,1]\text{ and }w \in A(t)\} \subseteq A.
\end{equation}
It is clear how to define similar notation for subsets of $CX_k$.

\begin{remark}
\label{rmk:ZAltDesc}
A more explicit description of the points in $Z_{k,\bar\eta}$ is as follows.
The points of $Z_{k,\bar\eta}$ are tuples $(v_{k_1},\dots,v_{k_p})$ where
\begin{enumerate}
\item $v_{k_i} \in CX_{k_i}$ with $0 \leq k_1 < \cdots < k_p \leq k$;
\item $v_{k_1} \in CX_{k_1}|_{[\eta_{k_1},1]}$ (or $CX_0$ in case $k_1=0$);
\item $v_{k_i}=(x_{k_i},t_i) \in CX_{k_i}|_{(0,\eta_{k_i})}$ for $i=1,\dots,p$; and
\item $\ev_{v_{k_i}} \circ \sigma_{k_i} \circ \lambda_{k_{i+1}}^{k_i}\colon A_{k_{i+1}-1} \to M_{k_i}$ is a subrepresentation of $\theta_{k_{i+1}}(x_{k_{i+1}})$ for $i=0,\dots,p-1$.
\end{enumerate}
We will refer to this description at times, although we do not use it exclusively because the topology is more accessible using the other description.

Under this description, for $\alpha \in \Hom(A_k,M_m)$, the set $S_{k,\bar\eta,m}(\alpha)$ consists of $z_k=(v_{k_1},\dots,v_{k_p})$ for which $\ev_{v_{k_p}} \circ \sigma_{k_p} \circ \lambda_k^{k_p}$ is equivalent to a subrepresentation of $\alpha$.
\end{remark}

\begin{lemma}
\label{lem:SContinuous}
Each map 
\begin{equation}
S_{k,\bar\eta,m}\colon \mathrm{Hom}(A_k,M_m) \to \{\mathrm{finite\ subsets\ of}\ Z_{k,\bar\eta}\}
\end{equation}
is continuous, where we use the Hausdorff metric on the set of finite subsets of $Z_{k,\bar\eta}$.
\end{lemma}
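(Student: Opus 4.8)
**The plan is to prove continuity by induction on $k$, following the recursive construction of the spaces $Z_{k,\bar\eta}$ and the maps $S_{k,\bar\eta,m}$.** The base case $k=0$ is a statement about $S_{0,(),m}\colon \Hom(A_0,M_m) \to \{\text{finite subsets of }CX_0\}$, where $A_0 = \mathrmm C(CX_0, M_{m_0})$; here $S_{0,(),m}(\alpha)$ records which point-evaluations occur (up to equivalence) as subrepresentations of $\alpha$. Since every $\alpha \in \Hom(\mathrmm C(CX_0,M_{m_0}),M_m)$ has the form $\Ad(u)\circ\diag(\ev_{x_0^{(1)}},\dots,\ev_{x_0^{(r)}})$ with $r m_0 = m$, and since convergence in $\Hom(A_0,M_m)$ is (by Lemma~\ref{lem:sigmaUnitaryKernel} and Proposition~\ref{prop:sigmaCts}, in the purely point-evaluation case) equivalent to convergence of the underlying multiset $\{x_0^{(1)},\dots,x_0^{(r)}\}$ together with the unitary (modulo the appropriate stabilizer), continuity of $\alpha \mapsto \{x_0^{(1)},\dots,x_0^{(r)}\}$ in the Hausdorff metric is straightforward.

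**For the inductive step, I would decompose $\Hom(A_k,M_m)$ according to the eigenvalue pattern of the restriction to the ideal $\ker\lambda_k \cong \mathrmm C_0(X_k\times(0,1],M_{m_k})$ versus the part factoring through $\lambda_k$.** Concretely, any $\alpha \in \Hom(A_k,M_m)$ decomposes (up to unitary equivalence) as $\diag(\alpha_D, \alpha_B\circ\lambda_k)$ where $\alpha_D \in \Hom_1(\mathrmm C_0(CX_k\setminus(X_k\times\{0\}),M_{m_k}), M_{m_D})$ is a sum of point-evaluations $\ev_{(x_k^{(i)},t_i)}\circ\sigma_k$ with $t_i>0$, and $\alpha_B \in \Hom(A_{k-1},M_{m_B})$ has no subrepresentation factoring through $\theta_k$. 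The set $S_{k,\bar\eta,m}(\alpha)$ is then assembled from three pieces according to cases (i)/(i)$'$, (ii), and (iii) in the definition: the points $(x_k,t)$ coming from $\alpha_D$ with $t\geq\eta_k$ land in $X_k\times[\eta_k,1)$ (or $\{1\}$); those with $t\in(0,\eta_k)$ must be lifted to $Y_{k,\bar\eta}\times(0,\eta_k)$, i.e. paired with the fibre $S_{k-1,\bar\eta,m_k}(\hat\theta_k(x_k))$; and the contribution of $\alpha_B$ gives, via the inductive hypothesis applied to $S_{k-1,\bar\eta,m_B}(\alpha_B)$, a finite subset of $Z_{k-1,\bar\eta} = Z_{k,\bar\eta}(0)$. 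I would verify continuity of each piece separately and then note that Hausdorff-metric continuity is preserved under finite unions. The first piece is continuous because point-evaluation parameters depend continuously on $\alpha_D$ (again by Lemma~\ref{lem:sigmaUnitaryKernel}); the third by the inductive hypothesis plus continuity of $\alpha\mapsto\alpha_B$ on the locus where the $\alpha_B$-part has a fixed multiplicity pattern (one must stratify, as in Lemma~\ref{lem:HomDecomp}, to make $\alpha\mapsto\alpha_B$ continuous, then glue across strata using that $S$ is already known to be continuous into the common ambient space).

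**The main obstacle I anticipate is the gluing behaviour near $t=0$ and $t=\eta_k$, where points of $Z_{k,\bar\eta}$ cross between the strata $Z_{k-1,\bar\eta}\times\{0\}$, $Y_{k,\bar\eta}\times(0,\eta_k)$, and $X_k\times[\eta_k,1)$.** When a point-evaluation parameter $t_i$ of $\alpha_D$ tends to $0$, the corresponding point $(x_k,t_i)\in X_k\times(0,\eta_k)$—lifted to $((x_k,z_{k-1}),t_i)\in Y_{k,\bar\eta}\times(0,\eta_k)$ for $z_{k-1}\in S_{k-1,\bar\eta,m_k}(\hat\theta_k(x_k))$—must limit to the identified points $(z_{k-1},0)\sim((x_k,z_{k-1}),0)$ in $Z_{k,\bar\eta}$; and in the limit $\ev_{(x_k,0)}\circ\sigma_k = \ev_{x_k}\circ\theta_k\circ\lambda_k$ becomes a subrepresentation factoring through $\lambda_k$, so the limiting data is exactly captured by case (iii) applied to (a subrepresentation of) $\alpha_B$. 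Checking that the Hausdorff distance between $S_{k,\bar\eta,m}(\alpha)$ and $S_{k,\bar\eta,m}(\alpha')$ is controlled requires knowing that the fibre assignment $x_k\mapsto S_{k-1,\bar\eta,m_k}(\hat\theta_k(x_k))$ is continuous—which is just the inductive hypothesis composed with continuity of $\hat\theta_k$—and that the quotient topology on $Z_{k,\bar\eta}$ makes these transitions continuous, which follows because the gluing equivalence relation is closed (as already noted in the construction). The analogous analysis at $t=\eta_k$, using the identification $((x_k,z_{k-1}),\eta_k)\sim(x_k,\eta_k)$, is symmetric and easier since no representation-theoretic degeneration occurs there. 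Once these boundary compatibilities are in hand, the remaining estimates are routine.
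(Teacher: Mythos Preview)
Your inductive strategy and case analysis mirror the paper's approach closely, and the identification of the boundary behaviour at $t=0$ and $t=\eta_k$ as the crux is exactly right. However, there is a genuine gap in how you propose to obtain full Hausdorff continuity.

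The claim ``Hausdorff-metric continuity is preserved under finite unions'' is true for globally defined continuous set-valued maps, but your pieces are not globally defined: the decomposition $\alpha \sim \diag(\alpha_D,\alpha_B\circ\lambda_k)$ depends on $\alpha$, and as $\alpha'\to\alpha$ with some $t_i\to 0$, the $\alpha_B$-part changes size discontinuously (it lives in a different $\Hom(A_{k-1},M_{m_B'})$). So you cannot apply the inductive hypothesis to compare $S_{k-1,\bar\eta,m_B}(\alpha_B)$ with $S_{k-1,\bar\eta,m_B'}(\alpha_B')$ directly. Your parenthetical fix (``glue across strata using that $S$ is already known to be continuous'') is circular as stated. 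More concretely, your argument establishes only one half of Hausdorff closeness: that each $z_k\in S_{k,\bar\eta,m}(\alpha)$ has a nearby point in $S_{k,\bar\eta,m}(\alpha')$. It does not rule out $S_{k,\bar\eta,m}(\alpha')$ containing \emph{extra} points far from $S_{k,\bar\eta,m}(\alpha)$.

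The paper closes exactly this gap with a dimension-counting pigeonhole argument (its Step~2). To each $z_k=(v_{k_1},\dots,v_{k_p})\in S_{k,\bar\eta,m}(\alpha)$ one assigns the dimension $D(z_k):=m_{k_1}$; these dimensions sum (with multiplicity) to $m$. The one-sided argument is then refined so that for each $z_k$ one finds points $z_k^{(1)},\dots,z_k^{(q)}\in S_{k,\bar\eta,m}(\alpha')$ near $z_k$ with $\sum_i D(z_k^{(i)})=D(z_k)$. Since the total dimension on both sides is $m$, pigeonhole forces every point of $S_{k,\bar\eta,m}(\alpha')$ to be one of these $z_k^{(i)}$, giving the reverse inclusion up to $\epsilon$. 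This is the missing ingredient in your proposal; once you add it, your inductive case analysis and the paper's are essentially the same.
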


\begin{proof}
Implicitly, we are fixing a metric $d$ on $Z_{k,\bar\eta}$, giving rise to a Hausdorff metric.
(By compactness, the topology does not depend on this choice of metric).

\textbf{Step 1.}
Let $\alpha \in \mathrm{Hom}(A_k,M_m)$ and let $\e >0$; in this step, we show that for each $z_k \in S_{k,\bar\eta,m}(\alpha)$, there is a neighbourhood $U$ of $\alpha$ such that, for all $\alpha' \in U$, there exists $z_k' \in S_{k,\bar\eta,m}(\alpha')$ with $d(z_k,z_k')<\e$.
We prove this inductively on $k$, thereby allowing us to assume it is true for $k-1$.
(The base case, where $k=0$, is straightforward.)

Consider cases:

(i) If $z_k=1$ or $z_k=(x_k,t)$ for $t \in (\eta_k,1)$, then this means that $\ev_{z_k} \circ \sigma_k$ is equivalent to a subrepresentation of $\alpha$.
We may then choose $U$ to be a neighbourhood of $\alpha$ such that every $\alpha' \in U$ contains some subrepresentation $\ev_{z_k'} \circ \sigma_k$, where $z_k' \in CX_k|_{(\eta_k,1]}$ and $d(z_k,z_k') < \e$ (inside $Z_{k,\bar\eta}$).
Thus, for such $\alpha'$, $z_k' \in S_{k,\bar\eta,m}(\alpha')$.

(ii) If $z_k=(x_k,\eta_k)$, then this means that $\ev_{z_k} \circ \sigma_k$ is equivalent to a subrepresentation of $\alpha$.
This is essentially the same as Case (i), but we need to be a bit more careful.
Choose $U_x$ and $U_t$ to be neighbourhoods of $x_k$ and $t$ inside $X_k$ and $(0,1)$ respectively, such that the set
\begin{equation} \left(Y_{k,\bar\eta}|_{U_x} \times (U_t \cap (0,\eta_k))\right) \cup \left(U_x \times (U_t \cap [\eta_k,1)\right)\end{equation}
is contained in the ball of radius $\e$ about $z_k$.
We may then choose $U$ to be a neighbourhood of $\alpha$ such that every $\alpha' \in U$ contains some subrepresentation $\ev_{(x_k',t')} \circ \sigma_k$, where $x_k' \in U_x$ and $t' \in U_t$.
Thus, for such $\alpha'$, if $t' \geq \eta_k$ then $z_k':=(x_k',t') \in S_{k,\bar\eta,m}(\alpha')$ and $d(z_k,z_k') < \e$.
Otherwise, $t' < \eta_k$, and for any $z_{k-1} \in S_{k-1,\bar\eta,m_k}(\hat\theta_k(x_k))$, we have
\begin{equation} z_k' := ((x_k',z_{k-1}),t') \in S_{k,\bar\eta,m}(\alpha') \end{equation}
and $d(z_k,z_k') < \e$.

(iii) If $z_k=((x_k,z_{k-1}),t) \in Y_{k,\bar\eta} \times (0,\eta_k)$, then this means that $\ev_{(x_k,t)} \circ \sigma_k$ is equivalent to a subrepresentation of $\alpha$ and that $z_{k-1} \in S_{k-1,\bar\eta,m_k}(\hat\theta_k(x_k))$.
Let $U_x,U_z$, and $U_t$ be neighbourhoods of $x_k,z_{k-1}$, and $t$ inside $X_k, Z_{k-1,\bar\eta}$, and $(0,\eta_k)$ respectively, such that
\begin{equation} \left((U_x \times U_z) \cap Y_{k,\bar\eta}\right) \times U_t \end{equation}
is contained in the ball of radius $\e$ about $z_k$.

By induction and by continuity of $\hat\theta_k$, there is a neighbourhood $U_x'$ of $x_k$ inside $X_k$ such that, for $x_k' \in U_x'$, there exists $z_{k-1}' \in S_{k-1,\bar\eta,m_k}(\hat\theta_k(x_k))$ such that $z_{k-1}' \in U_z$.
We may suppose that $U_x' \subseteq U_x$.
We may then choose $U$ to be a neighbourhood of $\alpha$ such that every $\alpha' \in U$ contains some subrepresentation $\ev_{(x_k',t')} \circ \sigma_k$, where $x_k' \in U'_x$ and $t' \in U_t$.
For such $\alpha'$, it follows that there exists $z_k':=((x_k',z_{k-1}'),t') \in S_{k,\bar\eta,m}(\alpha)$ with $d(z_k',z_k)<\e$.

(iv) If $z_k=(z_{k-1},0) \in Z_{k-1,\bar\eta} \times \{0\}$, then this means that $\alpha$ contains a subrepresentation that factors as
\begin{equation} A_k \labelledrightarrow{\lambda_k} A_{k-1} \labelledrightarrow{\beta} M_{n}, \end{equation}
in such a way that $z_{k-1} \in S_{k-1,\bar\eta,n}(\beta)$.

By induction, we may pick a neighbourhood $V$ of $\beta$ in $\Hom(A_{k-1},M_n)$ such that for every $\beta' \in V$, there exists $z_{k-1}' \in S_{k-1,\bar\eta,n}(\beta')$ with 
\begin{equation} d((z_{k-1},0),(z_{k-1}',0))<\e/2 \end{equation}
(inside $Z_{k,\bar\eta}$).

If $\beta$ is not contained as a subrepresentation of $\hat\theta_k(x_k)$ for any $x_k$, then we may choose $U$ to be a neighbourhood of $\alpha$ such that every $\alpha' \in U$ contains a subrepresentation that factors as
\begin{equation} A_k \labelledrightarrow{\lambda_k} A_{k-1} \labelledrightarrow{\beta'} M_{n}, \end{equation}
with $\beta' \in V$.

Otherwise, $\beta$ is contained as a subrepresentation of $\hat\theta_k(x_k)$ for some $x_k$ (possibly more than one), and we therefore have a gluing
\begin{equation} (z_{k-1},0) \sim ((x_k,z_{k-1}),0) \end{equation}
for any such $x_k$.
We may then choose $U$ to be a neighbourhood of $\alpha$ such that for every $\alpha' \in U$, one of the following holds:

(a) $\alpha'$ contains a subrepresentation that factors as
\begin{equation} A_k \labelledrightarrow{\lambda_k} A_{k-1} \labelledrightarrow{\beta'} M_{n}, \end{equation}
with $\beta' \in V$.
In this case, there exists $z_{k-1}' \in S_{k-1,\bar\eta,n}(\beta')$ with 
\begin{equation} d((z_{k-1},0),(z_{k-1}',0))<\e/2. \end{equation}
By construction, $(z_{k-1}',0) \in S_{k,\bar\eta,m}(\alpha')$.

(b) $\alpha'$ contains a subrepresentation of the form $\ev_{(x_k',t)} \circ \sigma_k$ where $t \in (0,\eta_k)$ and $\hat\theta_k(x_k')$ contains a subrepresentation that factors as
\notag
\begin{equation} A_k \labelledrightarrow{\lambda_k} A_{k-1} \labelledrightarrow{\beta'} M_{n}, \end{equation}
with 
$\beta' \in V$, and therefore there exists $z_{k-1}' \in S_{k-1,\bar\eta,n}(\beta')$ with 
\begin{equation}
d((z_{k-1},0),(z_{k-1}',0))<\e/2.
\end{equation}
We may also ask (by making $U$ small enough) that in this case, $d(((x_k',z_{k-1}'),t),(z_{k-1}',0))<\e/2$.
By construction, $z_k':=((x_k',z_{k-1}'),t) \in S_{k,\bar\eta,m}(\alpha')$ and
\begin{equation} d(z_k',z_k) \leq d(z_k',(z_{k-1}',0)) + d((z_{k-1}',0)),z_k) < \e/2+\e/2=\e.\end{equation}

This concludes Step 1.

\textbf{Step 2.}
Let $\alpha \in \mathrm{Hom}(A_k,M_m)$.
To show continuity at $\alpha$, we must show that for $\e > 0$, there is a neighbourhood $U$ of $\alpha$ such that, for all $\alpha' \in U$, the Hausdorff distance from $S_{k,\bar\eta,m}(\alpha)$ to $S_{k,\bar\eta,m}(\alpha')$ is at most $\e$.
This can be done using Step 1 and a pigeonhole principle argument.

Namely, we may associate a non-zero dimension to each $z_k \in S_{k,\bar\eta,m}(\alpha)$ such that the sum of these dimensions (counted with multiplicity) is exactly $m$.
To be precise, if $z_k = (v_{k_1},\dots,v_{k_p})$ as in Remark \ref{rmk:ZAltDesc}, then the ``dimension'' of $z_k$ is $D(z_k):=m_{k_1}$.
Then, with a bit of care, in Step 1, we achieve, for each $z_k \in S_{k,\bar\eta,m}(\alpha)$, a neighbourhood $U_{z_k}$ of $\alpha$ such that for $\alpha' \in U_{z_k}$, there exist $z_k^{(1)},\dots,z_k^{(q)} \in S_{k,\bar\eta,m}(\alpha')$ for some $q$ (counting with multiplicity) with $d(z_k,z_k^{(i)}) < \e$ for each $i$, and such that $D(z_k^{(1)}) + \cdots + D(z_k^{(q)}) = D(z_k)$.
Set
\begin{equation} \textstyle{U := \bigcap_{z_k \in S_{k,\bar\eta,m}(\alpha)} U_{z_k}}. \end{equation}
Then the pigeonhole principle implies that for $\alpha' \in U$, every $z_k' \in S_{k,\bar\eta,m}(\alpha')$ is of the form $z_k^{(i)}$ as above for some $z_k \in S_{k,\bar\eta,m}(\alpha)$, and so $d(z_k,z_k') < \e$.
\end{proof}

\subsection{Pointwise finite dimensional approximations}
\label{sec:PointwiseApprox}

For $z_k =(v_{k_1},\dots,v_{k_p}) \in Z_{k,\bar\eta}$ (using the notation of Remark \ref{rmk:ZAltDesc}), define $F_{k,\bar\eta,z_k}:=M_{m_{k_1}}$ and define the ${}^*$-homomorphism $\nu_{k,\bar\eta,z_k}:A_k \to F_{k,\bar\eta,z_k}$ by
\begin{equation}
\label{eq:nuDef1}
\nu_{k,\bar\eta,z_k}(a) := \big(\sigma_{k_1} \circ \lambda_k^{k_1}(a)\big)(v_{k_1}).
\end{equation}

Next, we construct, for each $\alpha \in \mathrm{Hom}(A_k,M_m)$ and $z_k \in S_{k,\bar\eta,m}(\alpha)$, a ${}^*$-homomorphism $\mu_{k,\bar\eta,\alpha,z_k}\colon F_{k,\bar\eta,z_k} \to M_m$.
This gives us two ${}^*$-ho\-mo\-mor\-phisms
\begin{equation}
\label{eq:munuDef1}
 A_k
\labelledrightarrow
{\nu_{k,\bar\eta,z_k}}
F_{k,\bar\eta,z_k} 
\labelledrightarrow
{\mu_{k,\bar\eta,\alpha,z_k}}
M_m;
\end{equation}
ultimately, our c.p.c.\ approximations used to prove Theorem \ref{thm:drBound} will be built out of these two maps; in fact, the c.p.c.\ approximations will look like
\begin{equation}
\xymatrix{
A_{k_0} \ar[dr]_-{\psi} \ar[rr]^-{\id_{A_{k_0}} \otimes 1_D} && A_{k_0} \otimes D, \\
& F \ar[ur]_-{\phi} &
}
\end{equation}
where $F$ will be a direct sum of certain $F_{k_0,\bar\eta,z_{k_0}}$ and $\psi$ will be a corresponding direct sum of the $\nu_{k_0,\bar\eta,z_{k_0}}$.
The components of the map $\phi$ will be built out of $\sigma_k$ and certain $\mu_{k-1,\bar\eta,\hat\theta_k(x_k),z_{k-1}}$.

The construction of $\mu_{k,\bar\eta,\alpha,z_k}$ is recursive in $k$.

For $k=0$, $A_0 = \mathrmm C(CX_0,M_{m_0})$ and if $z_0 \in S_{0,(),m}(\alpha)$ then this means that $z_0 \in CX_0$ and
$\ev_{z_0}$ is equivalent to a subrepresentation of $\alpha$.
We have
$F_{0,(),z_0} = M_{m_0}$ and $\nu_{0,(),z_0} = \ev_{z_0}$.
Denote by $\alpha'$ the largest subrepresentation of $\alpha$ that is equivalent to a multiple of $\ev_{z_0}$, which means that there exists a unique $\mu_{0,(),\alpha,z_0}$ such that \eqref{eq:munuDef1} is a factorization of $\alpha'$.

For $k >0$, the definition breaks into cases.

(i) $z_k \in Z_{k,\bar\eta}|_{[\eta_k,1]} = CX_k|_{[\eta_k,1]}$:
This means that $\ev_{z_k} \circ \sigma_k$ is equivalent to a subrepresentation of $\alpha$.
We handle this in exactly the same way as for $k=0$.
We have $F_{k,\bar\eta,z_k} = M_{m_k}$ and $\nu_{k,\bar\eta,z_k} = \ev_{z_k} \circ \sigma_k$.
Let $\alpha'$ denote the largest subrepresentation of $\alpha$ that is equivalent to a multiple of $\ev_{z_k} \circ \sigma_k$, and we define $\mu_{k,\bar\eta,\alpha,z_k}$ to be the unique ${}^*$-homomorphism for which \eqref{eq:munuDef1} is a factorization of $\alpha'$; thus we have a commuting triangle
\begin{equation}
\label{eq:muDef1}
\xymatrix{
A_k \ar[rr]^-{\alpha'} \ar[rd]_-{\nu_{k,\bar\eta,z_k}} && M_m. \\
& F_{k,\bar\eta,z_k} \ar[ru]_-{\mu_{k,\bar\eta,\alpha,z_k}} &
}
\end{equation}

(ii) $z_k = ((x_k,z_{k-1}),t) \in Y_{k,\bar\eta} \times (0,\eta_k)$:
In this case, $F_{k,\bar\eta,z_k} = F_{k-1,\bar\eta,z_{k-1}}$.
Also, $z_{k-1} \in S(\hat \theta_k(x_k))$, so by recursion, we have a map $\mu_{k-1,\bar\eta,\hat \theta_k(x_k), z_{k-1}}\colon F_{k,\bar\eta,z_k} \to M_{m_k}$.
We also know that $\ev_{(x_k,t)}$ is equivalent to a subrepresentation $\alpha$.
Let $\alpha'$ denote the largest subrepresentation that is equivalent to a multiple of $\ev_{(x_k,t)}$, and factorize it as
\begin{equation}
\label{eq:muDefmu'Def}
 A_k \labelledrightarrow{ev_{(x_k,t)}}  M_{m_k} \labelledrightarrow{\mu'}  M_m.
\end{equation}
Now, set
\begin{align}
\label{eq:muDef2}
\mu_{k,\bar\eta,\alpha,z_k} &:= \mu' \circ \mu_{k-1,\bar\eta,\hat \theta_k(x_k),z_{k-1}}\colon F_{k,\bar\eta,z_k} \to M_m.
\end{align}

Diagrammatically (and leaving out some subscripts), we have
\begin{equation}
\label{eq:munuDefCommute2}
\xymatrix{
A_k \ar[r]^-{\nu_k} \ar[d]_{\lambda_k} & F_k=F_{k-1} \ar[r]^-{\mu_k} \ar[dr]_{\mu_{k-1}} & M_m \\
A_{k-1} \ar[ur]_{\nu_{k-1}} \ar[rr]_{\alpha'} && M_{m_k}, \ar[u]^-{\mu'}
}
\end{equation}
where the bottom triangle approximately commutes (it exactly commutes when restricted to $A_{k-1,\bar\eta}$, as defined in Section \ref{sec:ApproxSubalgs}).

(iii) $z_k = (z_{k-1},0) \in Z_{k,\bar\eta}$.
In this case, $F_{k,\bar\eta,z_k} = F_{k-1,\bar\eta,z_{k-1}}$ and $\nu_{k,\bar\eta,z_k} = \nu_{k-1,\bar\eta,z_{k-1}} \circ \lambda_k$.
Then, $\alpha$ contains a subrepresentation that factors as
\begin{equation} A_k \labelledrightarrow{\lambda_k} A_{k-1} \labelledrightarrow{\alpha'} M_{m'} \labelledrightarrow{\mu'} M_m, \end{equation}
such that $\alpha'$ is unital, $\mu'$ is injective (usually non-unital), in such a way that $z_{k-1} \in S_{k-1,\bar\eta,m'}(\alpha')$.
Take the largest subrepresentation of this form, i.e., let $m'$ be maximal satisfying these conditions (note that this uniquely determines $\alpha'$, up to unitary equivalence).

By recursion we have a map $\mu_{k-1,\bar\eta,\alpha',z_{k-1}}\colon F_{k,\bar\eta,z_k} \to M_{m'}$.
Set
\begin{align}
\label{eq:muDef3}
\mu_{k,\bar\eta,\alpha,z_k} &:= \mu' \circ \mu_{k-1,\bar\eta,\alpha',z_{k-1}}\colon F_{k,\bar\eta,z_k} \to M_m.
\end{align}
Diagrammatically, we have the commuting diagram \eqref{eq:munuDefCommute2} (with $m'$ in place of $m_k$), except that in this case, all triangles commute exactly.

\begin{lemma}
\label{lem:PointwiseApprox}
For $\alpha \in \mathrm{Hom}(A_k,M_m)$, and $a \in A_{k,\bar\eta}$ (as defined in Section \ref{sec:ApproxSubalgs}),
\begin{equation}
\alpha(a) = \sum_{z_k \in S_{k,\bar\eta,m}(\alpha)} \mu_{k,\bar\eta,\alpha,z_k} \circ \nu_{k,\bar\eta,z_k}(a).
\end{equation}
In particular,
\begin{equation}
\label{eq:PointwiseApproxMult}
\mu_{k,\bar\eta,\alpha,z_k}(1_{F_{k,\bar\eta,z_k}})\cdot\alpha(a) = \mu_{k,\bar\eta,\alpha,z_k} \circ \nu_{k,\bar\eta,z_k}(a)
\end{equation}
and
\begin{equation}
\label{eq:muOrthog}
(\mu_{k,\bar\eta,\alpha,z_k}(1_{F_{k,\bar\eta,z_k}}))_{z_k \in S_{k,\bar\eta,m}(\alpha)}
\end{equation}
is a partition of unity in $M_m$ (consisting of orthogonal projections).
\end{lemma}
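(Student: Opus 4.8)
The plan is to prove the displayed identity by induction on $k$, and then to read off \eqref{eq:PointwiseApproxMult} and \eqref{eq:muOrthog}. The engine is the structural decomposition of an arbitrary $\alpha\in\Hom(A_k,M_m)$. For $k\geq 1$, writing the $k$-th stage as an extension $0\to I_k\to A_k\xrightarrow{\lambda_k}A_{k-1}\to 0$ with $I_k=C_0(CX_k|_{(0,1]},M_{m_k})$ (this being $\ker\lambda_k$, sitting in $A_k$ via $\sigma_k$ as the non-unital ideal on which evaluation at $X_k\times\{0\}$ vanishes), one sees that every irreducible representation of $A_k$ is either $\ev_v\circ\sigma_k$ for a unique $v\in CX_k|_{(0,1]}$ (those not annihilating $I_k$) or $\pi\circ\lambda_k$ for an irreducible $\pi$ of $A_{k-1}$ (those annihilating $I_k$). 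Hence, up to unitary equivalence, $\alpha\cong\bigoplus_{v\in V}(\ev_v\circ\sigma_k)^{\oplus n_v}\oplus(\beta\circ\lambda_k)$ for a finite $V\subseteq CX_k|_{(0,1]}$, multiplicities $n_v\geq 1$, and some $\beta\in\Hom(A_{k-1},M_{m_\beta})$ (with $\beta=0$ allowed), and the isotypic projections of $\alpha$ cut out by these summands are mutually orthogonal with sum $1_{M_m}$. The base case $k=0$ is exactly the statement that a finite-dimensional representation of $C(CX_0,M_{m_0})$ is a direct sum of point evaluations, with $\mu_{0,(),\alpha,z_0}\circ\ev_{z_0}$ equal to the $z_0$-isotypic part and with $A_{0,()}=A_0$ so that $a$ is unrestricted.

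Next I would match $S_{k,\bar\eta,m}(\alpha)$ to the summands, using the explicit description in Remark \ref{rmk:ZAltDesc}. A point $v\in V$ lying in $CX_k|_{[\eta_k,1]}$ contributes precisely the single index $z_k=v$ (cases (i) and (i$'$) in the definition of $S$); a point $v=(x_k,t)\in V$ with $t\in(0,\eta_k)$ contributes precisely the indices $z_k=((x_k,z_{k-1}),t)$ with $z_{k-1}\in S_{k-1,\bar\eta,m_k}(\hat\theta_k(x_k))$ (case (ii)); and $\beta\circ\lambda_k$ contributes precisely the indices $z_k=(z_{k-1},0)$ with $z_{k-1}\in S_{k-1,\bar\eta,m_\beta}(\beta)$ (case (iii)). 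The only non-formal point—and the step I expect to be the main obstacle—is case (iii): one must check that the ``maximal $m'$'' occurring there always forces $m'=m_\beta$ and $\alpha'\cong\beta$. This follows from monotonicity of $S$ under passing to subrepresentations (immediate from Remark \ref{rmk:ZAltDesc}, since membership of $z_{k-1}$ in $S_{k-1,\bar\eta,\cdot}(\cdot)$ is exactly a subrepresentation condition): any subrepresentation of $\alpha$ factoring through $\lambda_k$ is a subrepresentation of $\beta\circ\lambda_k$, so an admissible triple $(\alpha',\mu',m')$ has $\alpha'$ equivalent to a subrepresentation $\gamma$ of $\beta$ with $z_{k-1}\in S_{k-1,\bar\eta,\dim\gamma}(\gamma)\subseteq S_{k-1,\bar\eta,m_\beta}(\beta)$, and $\gamma=\beta$ is admissible precisely when $z_{k-1}\in S_{k-1,\bar\eta,m_\beta}(\beta)$.

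With the index set in hand, I would sum $\mu_{k,\bar\eta,\alpha,z_k}\circ\nu_{k,\bar\eta,z_k}(a)$ over each group. For $v\in V\cap CX_k|_{[\eta_k,1]}$, case (i) of the construction of $\mu$ gives directly that $\mu_{k,\bar\eta,\alpha,v}\circ\nu_{k,\bar\eta,v}(a)$ is the $(\ev_v\circ\sigma_k)$-isotypic part of $\alpha(a)$. For $v=(x_k,t)\in V$ with $t\in(0,\eta_k)$, using $\nu_{k,\bar\eta,z_k}=\nu_{k-1,\bar\eta,z_{k-1}}\circ\lambda_k$ and $\mu_{k,\bar\eta,\alpha,z_k}=\mu'\circ\mu_{k-1,\bar\eta,\hat\theta_k(x_k),z_{k-1}}$, the sum over $z_{k-1}$ equals $\mu'(\hat\theta_k(x_k)(\lambda_k(a)))$ by the inductive hypothesis applied to $\hat\theta_k(x_k)$ and to $\lambda_k(a)\in A_{k-1,\bar\eta}$; and $\hat\theta_k(x_k)(\lambda_k(a))=\sigma_k(a)(x_k,0)=\sigma_k(a)(x_k,t)=(\ev_{(x_k,t)}\circ\sigma_k)(a)$ by the pull-back together with the defining constancy \eqref{eq:ApproxSubalgs} of $A_{k,\bar\eta}$ (the one and only place \eqref{eq:ApproxSubalgs} enters), so the group again sums to the $(\ev_v\circ\sigma_k)$-isotypic part. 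For the indices $z_k=(z_{k-1},0)$, the sum equals $\iota(\beta(\lambda_k(a)))$ by the inductive hypothesis applied to $\beta$ and $\lambda_k(a)\in A_{k-1,\bar\eta}$, where $\iota$ is the inclusion of the $\beta$-block, and this is the $(\beta\circ\lambda_k)$-isotypic part. Adding the contributions of all summands reconstitutes $\alpha(a)$, completing the induction.

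Finally, \eqref{eq:muOrthog} follows from the same block analysis: within each group, applying the injective $*$-homomorphism $\mu'$ (respectively $\iota$, respectively the identity) to the partition of unity $(\mu_{k-1,\bar\eta,\cdot,z_{k-1}}(1))_{z_{k-1}}$ furnished by the inductive hypothesis produces a partition of the corresponding isotypic projection of $\alpha$; since those isotypic projections are mutually orthogonal with sum $1_{M_m}$, the whole family $(\mu_{k,\bar\eta,\alpha,z_k}(1))_{z_k}$ is a partition of unity into orthogonal projections. Then \eqref{eq:PointwiseApproxMult} is immediate: left-multiplying the already-proven identity $\alpha(a)=\sum_{z_k'}\mu_{k,\bar\eta,\alpha,z_k'}\circ\nu_{k,\bar\eta,z_k'}(a)$ by the projection $\mu_{k,\bar\eta,\alpha,z_k}(1)$ and invoking orthogonality of the partition kills every term but the $z_k$-th, which is unchanged since its range lies under $\mu_{k,\bar\eta,\alpha,z_k}(1)$.
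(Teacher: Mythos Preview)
Your proposal is correct and follows essentially the same approach as the paper: induction on $k$, decomposition of $\alpha$ into point-evaluation summands on $CX_k|_{(0,1]}$ plus a piece factoring through $\lambda_k$, and application of the inductive hypothesis to $\hat\theta_k(x_k)$ for the $(0,\eta_k)$-summands and to $\beta$ for the $\lambda_k$-summand, with the constancy condition \eqref{eq:ApproxSubalgs} entering exactly where you indicate. Your treatment is slightly more careful than the paper's in explicitly verifying that the ``maximal $m'$'' in case (iii) of the $\mu$-construction coincides with the full $\beta$-block (via monotonicity of $S$ under subrepresentations), a point the paper's proof uses implicitly.
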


\begin{proof}
This is proven by induction on $k$.
In the case $k=0$, it is trivial.

Suppose that the statement holds for $k-1$.
Let $\alpha \in \mathrm{Hom}(A_k,M_m)$.
The map $\alpha$ decomposes as follows:
\begin{equation} \alpha = \Ad(u) \circ
\diag(\ev_{v_1} \circ \sigma_k, \ev_{v_p} \circ \sigma_k, \alpha' \circ \lambda_k),
\end{equation}
where $v_1,\dots,v_p \in CX_k|_{(0,1]}$ and $\alpha' \in \mathrm{Hom}(A_{k-1},M_{m'})$ for some $m'$.

By possibly reordering, arrange that 
\begin{equation} v_1,\dots,v_{p'} \in CX_k|_{[\eta_k,1]}\quad\text{and}\quad v_{p'+1},\dots,v_p \in CX_k|_{(0,\eta_k)}. \end{equation}
Let us assume that $v_1,\dots,v_p$ are distinct; this is only to simplify notation---if the $v_i$ are not distinct, the following argument still works provided we keep track of multiplicities.

For $i=1,\dots,p'$, we have $v_i \in S_{k,\bar\eta,m}(\alpha)$ and 
\begin{align}
\notag
F_{k,\bar\eta,v_i} &= M_{m_k}, \\
\notag
\nu_{k,\bar\eta,v_i} &= \ev_{v_i} \circ \sigma_k, \\
\notag
\mu_{k,\bar\eta,\alpha,v_i} &= \Ad(u) \circ \\
&\qquad \diag(0_{m_k},\dots,0_{m_k},\id_{M_{m_k}},0_{m_k},\dots,0_{m_k},0_{m'}),
\end{align}
by \eqref{eq:muDef1}.
Thus, for $a \in A_k$,
\begin{equation}
\label{eq:PointwiseApprox1}
\mu_{k,\bar\eta,\alpha,v_i} \circ \nu_{k,\bar\eta,v_i}(a) = e_i \alpha(a),
\end{equation}
where
\begin{equation}
\label{eq:PointwiseApproxeiDef}
 e_i := \Ad(u)(\diag(0_{m_k},\dots,0_{m_k},1_{m_k},0_{m_k},\dots,0_{m_k},0_{m'})),
\end{equation}
with the $1_{m_k}$ appearing in the $i$th position.

For $i=p'+1,\dots,p$, write $v_i=(x^{(i)}_k,t_i)$.
Then for every $z_{k-1}\in S_{k-1,\bar\eta,m_k}(\hat\theta_k(x^{(i)}_k))$,
\begin{equation}
 ((x^{(i)}_k,z_{k-1}),t_i) \in S_{k,\bar\eta,m}(\alpha). \end{equation}
By induction, for $a \in A_{k,\bar\eta}$, since $\lambda_k(a) \in A_{k-1,\bar\eta}$,
\begin{align}
\label{eq:PointwiseApproxthetaDecomp}
\notag
&\hspace*{-10mm}\hat\theta_k(x^{(i)}_k)(\lambda_k(a)) \\
&\ \ = \sum_{z_{k-1} \in S_{k-1,\bar\eta,m_k}(\hat\theta_k(x^{(i)}_k))} \mu_{k-1,\bar\eta,\hat\theta_k(x^{(i)}_k),z_{k-1}} \circ \nu_{k-1,\bar\eta,z_{k-1}}(\lambda_k(a)).
\end{align}
By \eqref{eq:muDef2}, 
for $z_{k-1} \in S_{k-1,\bar\eta,m_k}(\hat\theta_k(x^{(i)}_k))$,
\begin{align}
\label{eq:PointwiseApproxmunuIndDef1}
\notag
\nu_{k,\bar\eta,((x^{(i)}_k,z_{k-1}),t_i)} &= \nu_{k-1,z_{k-1}} \circ \lambda_k, \text{ and} \\
\notag
\mu_{k,\bar\eta,\alpha,((x^{(i)}_k,z_{k-1}),t_i)} &= \Ad(u) \circ \diag(0_{m_k},\dots,0_{m_k},\mu_{k-1,\hat\theta_k(x^{(i)}_k),z_{k-1}}, \\
&\qquad\qquad 0_{m_k},\dots,0_{m_k},0_{m'}).
\end{align}
Thus, defining $e_i$ again as in \eqref{eq:PointwiseApproxeiDef}, for $a \in A_{k,\bar\eta}$, we have
\begin{eqnarray}
\notag
e_i\alpha(a) &=& \Ad(u) \circ \diag(0,\dots,0,\ev_{(x^{(i)}_k,t)} \circ \sigma_k(a),0,\dots,0,0) \\
\notag
&\stackrel{\eqref{eq:ApproxSubalgs}}=& \Ad(u) \circ \diag(0,\dots,0,\ev_{x_k^{(i)}} \circ \theta_k\circ\lambda_k(a),0,\dots,0,0) \\
\notag
&\stackrel{\eqref{eq:PointwiseApproxthetaDecomp}}=&
\Ad(u) \circ \diag(0,\dots,0,\\
\notag
&&\sum_{z_{k-1} \in S_{k-1,\bar\eta,m_k}(\hat\theta_k(x^{(i)}_k))} \mu_{k-1,\bar\eta,\hat\theta_k(x^{(i)}_k),z_{k-1}} \circ \nu_{k-1,\bar\eta,z_{k-1}}(\lambda_k(a)), \\
\notag
&&\qquad\qquad 0,\dots,0,0) \\
\label{eq:PointwiseApprox2}
&\stackrel{\eqref{eq:PointwiseApproxmunuIndDef1}}=&
\sum_{z_{k-1} \in S_{k-1,\bar\eta,m_k}(\hat\theta_k(x_k))} \mu_{k,\bar\eta,\alpha,((x_k,z_{k-1}),t)} \circ \nu_{k,\bar\eta,((x_k,z_{k-1}),t)}(a).
\end{eqnarray}

Next let us consider $\alpha'$.
$S_{k,\bar\eta,m}(\alpha)$ contains $S_{k-1,\bar\eta,m'}(\alpha') \times \{0\}$, and by induction, for $a \in A_{k,\bar\eta}$, since $\lambda_k(a) \in A_{k-1,\bar\eta}$,
\begin{equation}
\label{eq:PointwiseApproxalpha'Decomp}
\alpha'(\lambda_k(a)) = \sum_{z_{k-1} \in S_{k-1,\bar\eta,m'}(\alpha')} \mu_{k-1,\bar\eta,\alpha',z_{k-1}} \circ \nu_{k-1,\bar\eta,z_{k-1}} \circ \lambda_k(a).
\end{equation}
By \eqref{eq:muDef3}, 
for $z_{k-1} \in S_{k-1,\bar\eta,m'}(\alpha')$,
\begin{align}
\label{eq:PointwiseApproxmunuIndDef2}
\notag
\nu_{k,\bar\eta,(z_{k-1},0)} &= \nu_{k-1,\bar\eta,z_{k-1}} \circ \lambda_k, \text{ and} \\
\mu_{k,\bar\eta,\alpha,(z_{k-1},0)} &= \Ad(u) \circ \diag(0_{m_k},\dots,0_{m_k},\mu_{k-1,\bar\eta,\alpha',z_{k-1}},).
\end{align}
Define
\begin{equation}
\label{eq:PointwiseApproxe'Def}
e' := \Ad(u) \circ \diag(0_{m_k},\dots,0_{m_k},1_{m'}).
\end{equation}
Then, for $a \in A_{k,\bar\eta}$,
\begin{eqnarray}
\notag
e\alpha(a) &=& \Ad(u) \circ \diag(0,\dots,0,\alpha' \circ \lambda_k(a)) \\
\notag
&\stackrel{\eqref{eq:PointwiseApproxalpha'Decomp}}=&
\Ad(u) \circ \diag(0,\dots,0, \\
\notag
&&\qquad \sum_{z_{k-1} \in S_{k-1,\bar\eta,m'}(\alpha')} \mu_{k-1,\bar\eta,\alpha',z_{k-1}} \circ \nu_{k-1,\bar\eta,z_{k-1}} \circ \lambda_k(a)) \\
\label{eq:PointwiseApprox3}
&\stackrel{\eqref{eq:PointwiseApproxmunuIndDef2}}=&
\sum_{z_{k-1} \in S_{k-1,\bar\eta,m'}(\alpha'} \mu_{k,\bar\eta,\alpha,(z_{k-1},0)} \circ \nu_{k,\bar\eta,(z_{k-1},0)}(a).
\end{eqnarray}

Putting these together, for $a \in A_{k,\bar\eta}$ we have
\begin{eqnarray}
\notag
&&\hspace*{-20mm}\sum_{z_k \in S_{k,\bar\eta,m}(\alpha)} \mu_{k,\bar\eta,\alpha,z_k} \circ \nu_{k,\bar\eta,z_k}(a) \\
\notag
&=& \sum_{i=1}^{p'} \mu_{k,\bar\eta,\alpha,v_i} \circ \nu_{k,\bar\eta,v_i}(a) + \sum_{i=p'+1}^p  \\
\notag
&&\sum_{z_{k-1} \in S_{k-1,\bar\eta,m_k}(\hat\theta_k(x^{(i)}_k))} \mu_{k,\bar\eta,\alpha,((x_k^{(i)},z_{k-1}),t_i)} \circ \nu_{k,\bar\eta,((x_k^{(i)},z_{k-1}),t_i)}(a) \\
\notag
&&\quad +\sum_{z_{k-1} \in S_{k-1,\bar\eta,m'}(\alpha')} \mu_{k,\bar\eta,\alpha,(z_{k-1},0)} \circ \nu_{k,\bar\eta,(z_{k-1},0)}(a) \\
\notag
&\stackrel{\begin{tabular}{@{}c}{\scriptsize \eqref{eq:PointwiseApprox1},\eqref{eq:PointwiseApprox2},} \\ {\scriptsize \eqref{eq:PointwiseApprox3}} \end{tabular}}=&
\sum_{i=1}^{p'} e_i\alpha(a) + \sum_{i=p'+1}^p e_i\alpha(a) + e'\alpha(a) \\
&\stackrel{\begin{tabular}{@{}c} {\scriptsize \eqref{eq:PointwiseApproxeiDef},} \\ {\scriptsize \eqref{eq:PointwiseApproxe'Def}}\end{tabular}}=& \alpha(a).
\end{eqnarray}

Since $\mu_{k,\bar\eta,\alpha,z_k}$ is a ${}^*$-homomorphism, \eqref{eq:muOrthog} is a family of projections.
\end{proof}

\subsection{A partition of $Z_{k,\bar\eta}$}

We now partition $Z_{k,\bar\eta}$ into sets
\begin{equation} Z_{k,\bar\eta,0}^*,\dots,Z^*_{k,\bar\eta,k}, \end{equation}
where we define $Z^*_{k,\bar\eta,m,k'}$ by fixing $k'$ and using recursion in $k$, from $k'$ to $k_0$.

For $k=k'=0$, set
\begin{equation} Z^*_{0,(),0} := Z_{0,()}. \end{equation}
For $k=k'>0$, set
\begin{equation} Z^*_{k,\bar\eta,k} := Z_{k,\bar\eta}|_{[\eta_k,1]}. \end{equation}

For $k>k'$, define $Z^*_{k,\bar\eta,k'}$ to consist of:
\begin{enumerate}
\item $((x_k,z_{k-1}),t) \in Y_{k,\bar\eta} \times (0,\eta_k)$ such that 
\begin{equation} z_{k-1} \in Z^*_{k-1,\bar\eta,k'}; \text{ and} \end{equation}
\item $(z_{k-1},0) \in Z_{k-1,\bar\eta} \times \{0\}$ such that 
\begin{equation} z_{k-1} \in Z^*_{k-1,\bar\eta,k'}. \end{equation}
\end{enumerate}

Likewise, let us define
\begin{align}
\label{eq:Z**Def}
\notag
Z_{0,(),0}^{**} &:= Z_{0,()} \\
\notag
Z_{k,\bar\eta,k}^{**} &:= Z_{k,\bar\eta}|_{(\eta_k,1]},\quad k>0 \quad \text{(compare, $Z_{k,\bar\eta,k}^* = Z_{k,\bar\eta}|_{[\eta_k,1]}$)}, \\
\notag
Z_{k,\bar\eta,k'}^{**} &:= \{((x_k,z_{k-1}),t) \in Y_{k,\bar\eta} \times (0,\eta_k) \mid z_{k-1} \in Z_{k-1,\bar\eta,k'}^{**}\}\\
&\qquad \cup\, \{(z_{k-1},0) \in Z_{k-1,\bar\eta} \times \{0\} \mid z_{k-1} \in Z_{k-1,\bar\eta,k'}^{**}\}.
\end{align}
Observe that $Z_{k,\bar\eta,k'}^{**} \subseteq Z_{k,\bar\eta,k'}^*$ and $Z_{k,\bar\eta,k'}^{**}$ is open in $Z_{k,\bar\eta}$.

\begin{lemma}
\label{lem:ZkPartition}
(i)
In the notation of Remark \ref{rmk:ZAltDesc},
$Z^*_{k,\bar\eta,k'}$ consists of points $z_k=(v_{k_1},\dots,v_{k_p}) \in Z_{k,\bar\eta}$ for which $k_1=k'$; for $k'>0$, the points in $Z^{**}_{k,\bar\eta,k'}$ are the ones that additionally satisfy $v_{k_1} \in CX_{k_1}|_{(\eta_{k_1},1]}$.

(ii)
\begin{equation}
\label{eq:ZkPartition}
 Z_{k,\bar\eta} = \coprod_{k'=0}^k Z_{k,\bar\eta,k'}^*
\end{equation}
\end{lemma}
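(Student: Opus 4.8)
The plan is to prove (i) by induction on $k$, with $k'$ held fixed and $k$ running from $k'$ up to $k_0$, and then to read off (ii) as an immediate corollary. The first thing I would pin down is the precise dictionary between the recursive construction of $Z_{k,\bar\eta}$ and the tuple picture of Remark~\ref{rmk:ZAltDesc}. Recall that, for $k>0$, $Z_{k,\bar\eta}$ is the (non-topological) disjoint union of $Z_{k-1,\bar\eta}\times\{0\}$, $Y_{k,\bar\eta}\times(0,\eta_k)$, $X_k\times[\eta_k,1)$ and $\{1\}$. A point in one of the last two pieces is the one-entry tuple $(v_k)$ with $v_k\in CX_k|_{[\eta_k,1]}$, so its innermost index is $k_1=k$; a point $((x_k,z_{k-1}),t)\in Y_{k,\bar\eta}\times(0,\eta_k)$ has the tuple of $z_{k-1}$ with $v_k:=(x_k,t)\in CX_k|_{(0,\eta_k)}$ appended; and a point $(z_{k-1},0)\in Z_{k-1,\bar\eta}\times\{0\}$ has exactly the tuple of $z_{k-1}$, no new entry being added (because of the gluing $(z_{k-1},0)\sim((x_k,z_{k-1}),0)$). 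In particular, for a point of $Z_{k,\bar\eta}$ lying outside $CX_k|_{[\eta_k,1]}$, its innermost index $k_1$ and innermost entry $v_{k_1}$ agree with those of its image $z_{k-1}\in Z_{k-1,\bar\eta}$.

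With this dictionary in hand, (i) is a routine induction. For the base case $k=k'$: if $k'=0$ then $Z^*_{0,(),0}=CX_0$ is the set of one-entry tuples $(v_0)$, all with $k_1=0$; if $k'>0$ then $Z^*_{k',\bar\eta,k'}=Z_{k',\bar\eta}|_{[\eta_{k'},1]}=CX_{k'}|_{[\eta_{k'},1]}$ is the set of one-entry tuples $(v_{k'})$ with $k_1=k'$, and every tuple with $k_1=k$ is forced to have $p=1$ and to satisfy $v_{k_1}\in CX_{k_1}|_{[\eta_{k_1},1]}$; moreover $Z^{**}_{k',\bar\eta,k'}=CX_{k'}|_{(\eta_{k'},1]}$ is exactly the subset where, in addition, $v_{k_1}\in CX_{k_1}|_{(\eta_{k_1},1]}$. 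For the inductive step $k>k'$, the two defining clauses of $Z^*_{k,\bar\eta,k'}$ single out the points $((x_k,z_{k-1}),t)\in Y_{k,\bar\eta}\times(0,\eta_k)$ and $(z_{k-1},0)\in Z_{k-1,\bar\eta}\times\{0\}$ with $z_{k-1}\in Z^*_{k-1,\bar\eta,k'}$; by the inductive hypothesis $z_{k-1}\in Z^*_{k-1,\bar\eta,k'}$ says that the tuple of $z_{k-1}$ has innermost index $k'$, and by the dictionary passing to $((x_k,z_{k-1}),t)$ or to $(z_{k-1},0)$ changes neither $k_1$ nor $v_{k_1}$; hence $Z^*_{k,\bar\eta,k'}$ is exactly the set of points of $Z_{k,\bar\eta}$ whose tuple has $k_1=k'$ (such points automatically avoid $CX_k|_{[\eta_k,1]}$, where $k_1=k>k'$). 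The converse inclusion is the same bookkeeping read backwards, using that a tuple with $k_1=k'<k$ is either of the form of an appended $v_k=(x_k,t)$ over such a $z_{k-1}$ (when its top index equals $k$) or lives in the $Z_{k-1,\bar\eta}\times\{0\}$ piece. Running the identical argument with $Z^{**}_{k-1,\bar\eta,k'}$ and \eqref{eq:Z**Def} in place of $Z^*_{k-1,\bar\eta,k'}$ gives the statement about $Z^{**}$, since the extra requirement constrains $v_{k_1}$ alone.

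Finally, (ii) is then immediate from (i): each point of $Z_{k,\bar\eta}$ carries a well-defined innermost index $k_1\in\{0,\dots,k\}$, and by (i) the set of points with $k_1=k'$ is precisely $Z^*_{k,\bar\eta,k'}$, so $Z_{k,\bar\eta}=\coprod_{k'=0}^{k}Z^*_{k,\bar\eta,k'}$. (One can equally well prove (ii) by a direct induction on $k$: split $Z_{k,\bar\eta}$ into the four pieces above, note $X_k\times[\eta_k,1)\amalg\{1\}=CX_k|_{[\eta_k,1]}=Z^*_{k,\bar\eta,k}$, and observe that $Z_{k-1,\bar\eta}\times\{0\}$ together with $Y_{k,\bar\eta}\times(0,\eta_k)$ decomposes, via the inductive hypothesis for $Z_{k-1,\bar\eta}$ and the defining clauses of the $Z^*_{k,\bar\eta,k'}$ with $k'<k$, as $\coprod_{k'=0}^{k-1}Z^*_{k,\bar\eta,k'}$.)

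I do not expect a serious obstacle here; the one thing to watch is the behaviour at the gluing coordinates $t=0$ and $t=\eta_k$. The facts actually used are that $Z_{k,\bar\eta}$ genuinely is the (non-topological) disjoint union of its four listed pieces — so that the clauses defining $Z^*_{k,\bar\eta,k'}$ for $k'<k$ never collide with $Z^*_{k,\bar\eta,k}=CX_k|_{[\eta_k,1]}$ — and that a point of $Z_{k-1,\bar\eta}\times\{0\}$ acquires no new top entry while a point of $Y_{k,\bar\eta}\times(0,\eta_k)$ acquires the entry $v_k=(x_k,t)$ with $t>0$. Both are already built into the construction of the spaces $Z$, so beyond careful bookkeeping the proof is a direct unwinding of the definitions.
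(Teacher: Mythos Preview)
Your proposal is correct and takes essentially the same approach as the paper: the paper's own proof reads in full ``(i) is easily proven by induction on $k$, and (ii) follows immediately,'' and you have simply written out that induction in detail, using the tuple dictionary of Remark~\ref{rmk:ZAltDesc} exactly as intended.
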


\begin{proof}
(i) is easily proven by induction on $k$, and (ii) follows immediately.
\end{proof}

The map $(\alpha,z_k) \mapsto \mu_{k,\bar\eta,\alpha,z_k}$ has a certain continuity property, which is expressed in the following lemma.

\begin{lemma}
\label{lem:muContinuous}
For $\alpha \in \mathrm{Hom}(A_k,M_m)$ and
\begin{equation} z_k \in S_{k,\bar\eta,m}(\alpha) \cap Z_{k,\bar\eta,k'}^{**}, \end{equation}
and for $\e > 0$, there exist neighbourhoods $U_{z_k}$ (respectively $U_\alpha$) of $z_k$ ($\alpha$) in $Z_{k,\bar\eta,k'}^{**}$ ($\mathrm{Hom}(A_k,M_m)$) such that the following approximation holds:
For every $\alpha' \in U_\alpha$ and every contraction $\kappa \in M_{m_{k'}}$,
\begin{equation}
\label{eq:muContinuous}
 \mu_{k,\bar\eta,\alpha,z_k}(\kappa) \approx_\e \sum_{z_k' \in S_{k,\bar\eta,m}(\alpha') \cap U_{z_k}} \mu_{k,\bar\eta,\alpha',z_k'}(\kappa).
\end{equation}
\end{lemma}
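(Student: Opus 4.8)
The plan is to prove this by induction on $k$, following the same case structure that was used to define $\mu_{k,\bar\eta,\alpha,z_k}$ in Section \ref{sec:PointwiseApprox}. The base case $k=k'=0$ reduces to the statement that if $\ev_{z_0}$ is equivalent to a subrepresentation of $\alpha \in \Hom(A_0,M_m)$ with $A_0 = C(CX_0,M_{m_0})$, then the spectral projection onto the $\ev_{z_0}$-isotypic part of $\alpha$, composed with $\kappa \mapsto$ (that block acting as $\kappa$), varies continuously; more precisely, as $\alpha'\to\alpha$ and $z_0'\to z_0$, the isotypic parts at nearby points $z_0'$ collect up (by the pigeonhole argument already used in Lemma \ref{lem:SContinuous}, Step 2) to approximate $\mu_{0,(),\alpha,z_0}(\kappa)$. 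This is a standard spectral-continuity fact for finite-dimensional representations of $C(Y,M_{m_0})$, where $Y=CX_0$: eigenprojections depending continuously on evaluation points, together with the fact that a representation close to $\alpha$ has its "mass near $z_0$" split among finitely many points $z_0'$ near $z_0$, whose contributions sum to something close to $\mu_{0,(),\alpha,z_0}(\kappa)$.

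For the inductive step, I would handle the two cases that can occur for $z_k \in Z^{**}_{k,\bar\eta,k'}$ with $k' < k$, namely $z_k = ((x_k,z_{k-1}),t) \in Y_{k,\bar\eta}\times(0,\eta_k)$ (definition \eqref{eq:muDef2}) and $z_k = (z_{k-1},0)$ (definition \eqref{eq:muDef3}); the case $k'=k$, i.e. $z_k \in CX_k|_{(\eta_k,1]}$, is again the base-case type argument applied to $\ev_{z_k}\circ\sigma_k$. In the case $z_k=((x_k,z_{k-1}),t)$, recall $\mu_{k,\bar\eta,\alpha,z_k} = \mu' \circ \mu_{k-1,\bar\eta,\hat\theta_k(x_k),z_{k-1}}$, where $\mu'$ is determined by the largest $\ev_{(x_k,t)}$-isotypic subrepresentation $\alpha'$ of $\alpha$. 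As $\alpha$ moves to a nearby $\alpha'' \in \Hom(A_k,M_m)$, two things move: the outer map $\mu'$ (governed by how the isotypic part of $\ev_{(x_k',t')}$ varies, handled by the base-case spectral continuity applied to $\ev\circ\sigma_k$ and controlled via Lemma \ref{lem:SContinuous}), and the inner map $\mu_{k-1,\bar\eta,\hat\theta_k(x_k'),z_{k-1}'}$ — but $\hat\theta_k$ is continuous, so $\hat\theta_k(x_k')$ is close to $\hat\theta_k(x_k)$ in $\Hom(A_{k-1},M_{m_k})$, and $z_{k-1}'$ close to $z_{k-1}$, so the inductive hypothesis (applied at level $k-1$, with the representation $\hat\theta_k(x_k)$ and the point $z_{k-1}$) tells us that $\sum \mu_{k-1,\bar\eta,\hat\theta_k(x_k'),z_{k-1}'}(\kappa)$ over the relevant nearby $z_{k-1}'$ approximates $\mu_{k-1,\bar\eta,\hat\theta_k(x_k),z_{k-1}}(\kappa)$. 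The case $z_k=(z_{k-1},0)$ is similar but cleaner since there all triangles commute exactly; here one uses that $\alpha$ close to $\alpha''$ means the $\alpha'$-factoring subrepresentations (through $\lambda_k$) are close in $\Hom(A_{k-1},M_{m'})$, and applies the inductive hypothesis at level $k-1$. In both cases, the sum over $z_k' \in S_{k,\bar\eta,m}(\alpha'')\cap U_{z_k}$ decomposes according to which of the cases (i)$'$/(ii)/(iii) produces $z_k'$, and one matches each piece against the corresponding piece of the decomposition of $\mu_{k,\bar\eta,\alpha,z_k}(\kappa)$ via the diagram \eqref{eq:munuDefCommute2}.

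The main obstacle I anticipate is bookkeeping the combinatorics of "mass splitting" uniformly in $\kappa$: when $\alpha$ is perturbed, a single point $z_k$ in $S_{k,\bar\eta,m}(\alpha)$ may split into several points $z_k'$ of $S_{k,\bar\eta,m}(\alpha')$ (as in Lemma \ref{lem:SContinuous}, Step 2, and as flagged by the multiplicities in the proof of Lemma \ref{lem:PointwiseApprox}), and the projections $\mu_{k,\bar\eta,\alpha',z_k'}(1)$ must add up — across exactly the $z_k'$ landing in the chosen neighbourhood $U_{z_k}$ — to something close to $\mu_{k,\bar\eta,\alpha,z_k}(1)$, with the "close" being in operator norm and independent of the contraction $\kappa$. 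Getting the neighbourhoods $U_{z_k}$ (for the finitely many $z_k \in S_{k,\bar\eta,m}(\alpha)$) to be pairwise disjoint and small enough that the splitting is "clean" — no point $z_k'$ of a nearby $\alpha'$ is simultaneously near two different $z_k$'s — is exactly the pigeonhole setup already carried out for $S$, and I would import it wholesale; the extra content here is that the associated maps $\mu$ (not just the sets) assemble correctly, which is where the inductive hypothesis on the inner maps and base-case spectral perturbation on the outer maps do the work. The estimates are all of the form "a norm-continuous family of $*$-homomorphisms out of a fixed finite-dimensional algebra", so uniformity in $\kappa$ over the unit ball is automatic once the homomorphisms themselves are close; no separate argument is needed for that.
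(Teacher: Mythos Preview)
Your plan is essentially the same as the paper's: induction on $k$, with the case analysis mirroring the recursive definition of $\mu_{k,\bar\eta,\alpha,z_k}$, and the inner continuity coming from the inductive hypothesis applied at level $k-1$ (to $\hat\theta_k(x_k)$ in case (ii), to the $\lambda_k$-factoring part in case (iii)). The paper carries this out exactly as you describe for the base case $k=k'$ and for case (ii).

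However, your assessment that the case $z_k=(z_{k-1},0)$ is ``similar but cleaner'' is backwards, and the sentence ``$\alpha$ close to $\alpha''$ means the $\alpha'$-factoring subrepresentations (through $\lambda_k$) are close in $\Hom(A_{k-1},M_{m'})$'' is the one genuine soft spot in your plan. The maximal $\lambda_k$-factoring subrepresentation does \emph{not} vary continuously: a summand $\hat\theta_k(x_k)\circ\lambda_k$ of $\beta\circ\lambda_k$ (equivalently $\ev_{(x_k,0)}\circ\sigma_k$) can perturb to $\ev_{(x_k',t')}\circ\sigma_k$ with $t'>0$, which no longer factors through $\lambda_k$, so $m'$ can drop. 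You do note that $z_k'$ near $(z_{k-1},0)$ may arise via case (ii), which is exactly this phenomenon, but the bookkeeping needed to show that these ``escaped'' pieces still contribute correctly to the sum is where the paper spends most of its effort: it builds an auxiliary neighbourhood $W$ of $\beta\circ\lambda_k$ in $\Hom(A_k,M_{m'})$ that allows the boundary evaluations to drift to small positive $t$, then checks that for $\alpha'\in U_\alpha$ the relevant piece $\check\beta$ lies in $W$, and that after pushing $t$'s back to zero one lands in $U_\beta$ so the inductive hypothesis applies. This is not hard once seen, but it is the opposite of ``cleaner'' --- it is the one place where the non-Hausdorff nature of $\Prim(A_k)$ bites, and your plan should budget for it.
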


\begin{proof}
Decompose $\alpha$:
\begin{equation}
\label{eq:muContinuousalphaDecomp}
 \alpha = \,\Ad(u) \circ
\diag(
\ev_{v_1} \circ \sigma_k,\dots,
\ev_{v_p} \circ \sigma_k,
\beta \circ \lambda_k), 
\end{equation}
where $v_1,\dots,v_p \in CX_k|_{(0,1]}$, and $\beta \in \mathrm{Hom}(A_{k-1},M_{m'})$ for $m'=m-pm_k$.

Let us prove the statement inductively on $k \geq k'$.
For $k=k'=0$, it is quite easy---and uses the same idea as the case $k=k'>0$ which is done in detail next.

For $k=k'>0$, assume that the $v_i$ are ordered so that $v_1=\cdots=v_{p'}=z_k$ and $v_i \neq z_k$ for $i=p'+1,\dots,p$ (thus $p'\geq 1$).

Let $U_{z_k}$ be a neighbourhood of $z_k$ in $CX_k|_{(\eta_k,1]}$ such that $v_i \not\in \overline V$ for $i=p'+1,\dots,p$ (we identify this with a subset of $Z_{k,\bar\eta,k}^{**}$).
Then define $U_\alpha$ to consist of all $\alpha' \in \mathrm{Hom}(A_k,M_m)$ that are of the form
\begin{equation}
\label{eq:muContinuousalpha'Decomp}
\alpha' = \Ad(u') \circ \diag(\ev_{v'_1}\circ \sigma_k,\dots,\ev_{v'_{p'}} \circ \sigma_k,\beta) \end{equation}
where $u'$ is a unitary near to $u$, $v'_1,\dots,v'_{p'} \in U_{z_k}$, and $\beta \in \mathrm{Hom}(A_k,M_{m-p'm_k})$ does not contain $\ev_v \circ \sigma_k$ as a subrepresentation  for any $v \in \overline{V}$.
Then $U_\alpha$ is a neighbourhood of $\alpha$.

For $\alpha' \in U_\alpha$, let us write $\alpha'$ as in \eqref{eq:muContinuousalpha'Decomp}, so
\begin{equation} S_{k,\bar\eta,m}(\alpha') \cap U_{z_k} = \{v_1',\dots,v_{p'}'\}. \end{equation}
Thus,
\begin{align}
\notag
& \hspace*{-10mm} \sum_{z_k' \in S_{k,\bar\eta,m}(\alpha') \cap U_{z_k}} \mu_{k,\bar\eta,\alpha',z_k'}(\kappa) \\
\notag
&= \sum_{i=1}^{p'} \mu_{k,\bar\eta,\alpha',v_i'}(\kappa) \\
\notag
&= \Ad(u')(\diag(\kappa,\dots,\kappa,0_{m-p'm_k}) \\
\notag
&\approx_\e \Ad(u)(\diag(\kappa,\dots,\kappa,0_{m-p'm_k}) \\
&= \mu_{k,\bar\eta,z_k}(\kappa),
\end{align}
thus establishing \eqref{eq:muContinuous}.

For $k>k'$, let us consider two cases.

(i) $z_k=((x_k,z_{k-1}),t) \in Y_{k,\bar\eta} \times (0,\eta_k)$:
In this case, $z_{k-1} \in Z_{k-1,\bar\eta,k'}^{**}$.
Referring to the decomposition in \eqref{eq:muContinuousalphaDecomp}, we may assume that the $v_i$ are ordered so that $v_1=\cdots=v_{p'}=(x_k,t)$ and $v_i \neq (x_k,t)$ for $i=p'+1,\dots,p$.

By induction and by continuity of $\hat\theta_k$, there exist neighbourhoods $U_{z_{k-1}}$, $U_{x_k}$ of $z_{k-1}$, $x_k$ respectively (the neighbourhood $U_{x_k}$ arising from a neighbourhood of $\hat\theta_k(x_k)$), such that, for every $x_k' \in U_{x_k}$ and every contraction $\kappa \in M_{m_{k'}}$,
\begin{equation}
\label{eq:muContinuousCase2Ind}
 \mu_{k-1,\bar\eta,\hat\theta_k(x_k),z_{k-1}}(\kappa) \approx_{\e/p} \sum_{z_{k-1}' \in S_{k,\bar\eta,m_k}(\hat\theta_k(x_k')) \cap U_{z_{k-1}}} \mu_{k-1,\bar\eta,\hat\theta_k(x_k'),z_{k-1}'}(\kappa).
\end{equation}
Choose a neighbourhood $V \subset U_{x_k} \times (0,\eta_k)$ of $(x_k,t)$ such that $v_i \not\in \overline{V}$ for $i=p'+1,\dots,p$.
Define
\begin{equation} U_{z_k} := \{((x_k',z_{k-1}'),t') \in Y_{k,\bar\eta} \times (0,\eta_k) \mid (x_k',t') \in V, z_{k-1}' \in U_{z_{k-1}}\}. \end{equation}
Then define $U_\alpha$ to consist of all $\alpha' \in \mathrm{Hom}(A_k,M_m)$ with a decomposition
\begin{equation}
\label{eq:muContinuousalpha'Decomp2}
\alpha' = \Ad(u') \circ \diag(\ev_{v'_1}\circ \sigma_k,\dots,v'_{p'} \circ \sigma_k,\beta) \end{equation}
where $u'$ is a unitary near to $u$, $v'_1,\dots,v'_{p'} \in V$, and $\beta \in \mathrm{Hom}(A,M_{m-p'm_k})$ does not contain $\ev_v \circ \sigma_k$ as a subrepresentation for any $v \in \overline{V}$.
Our set-up ensures that $U_\alpha$ is a neighbourhood of $\alpha$.

To see \eqref{eq:muContinuous}, let $\alpha' \in U_\alpha$ be as in \eqref{eq:muContinuousalpha'Decomp2} and let $v_i'=(x^{(i)}_k,t_i)$ for each $i$, so that
\begin{align}
S_{k,\bar\eta,m}(\alpha') \cap U_{z_k} = \bigcup_{i=1}^n \{(x_k^{(i)},z_{k-1}'),t_i) \mid z_{k-1}' \in R_i\},
\end{align}
where $R_i:=\{z_{k-1}' \in U_{z_{k-1}} \mid (x_k^{(i)},z_{k-1}') \in Y_{k,\bar\eta}\}$.
For $z_{k-1}' \in R_i$, by \eqref{eq:muDef2} we have
\begin{align}
\notag
&\mu_{k,\bar\eta,\alpha',((x_k^{(i)},z_{k-1}'),t_i)}(\kappa) =  \\
&\qquad \Ad(u')\circ\diag(0,\dots,0,\mu_{k-1,\bar\eta,\hat\theta_k(x_k^{(i)}),z_{k-1}'}(\kappa),0,\dots,0,0_{m-p'm_k}),
\end{align}
where the non-zero entry is in the $i^\text{th}$ position.
By \eqref{eq:muContinuousCase2Ind}, it follows that
\begin{align}
\notag
& \sum_{z_{k-1}'\in R_i} \mu_{k,\bar\eta,\alpha',((x_k^{(i)},z_{k-1}'),t_i)}(\kappa) \\
&\quad \approx_{\e}
\Ad(u')(\diag(0,\dots,0,\mu_{k-1,\bar\eta,\theta_k(x_k^{(i)}),z_{k-1}}(\kappa),0,\dots,0,0_{m-p'm_k})).
\end{align}
Thus,
\begin{eqnarray}
\notag
&& \hspace*{-10mm} \sum_{z_k' \in S_{k,\bar\eta,m}(\alpha') \cap U_{z_k}} \mu_{k,\bar\eta,\alpha',z_k'}(\kappa) \\
\notag
&=& \sum_{i=1}^{p'} \sum_{z_{k-1}' \in R_i} \mu_{k,\bar\eta,\alpha',((x_k^{(i)},z_{k-1}'),t_i)}(\kappa) \\
\notag
&\approx_\e& \Ad(u')(\diag(1_{p'} \otimes \mu_{k-1,\bar\eta,\theta_k(x_k^{(i)}),z_{k-1}}(\kappa),0_{m-p'm_k})) \\
&\stackrel{\eqref{eq:muDef2}}=& \mu_{k,\bar\eta,\alpha}(\kappa)
\end{eqnarray}
(the approximation is within $\e$ because the errors, each of norm at most $\e$, are orthogonal), which establishes \eqref{eq:muContinuous}.

(ii) $z_k=(z_{k-1},0) \in Y_{k,\bar\eta} \times (0,\eta_k)$:
Then $z_{k-1} \in Z_{k-1,\bar\eta,k'}^{**} \cap S_{k-1,\bar\eta,m'}(\beta)$ (referring to the decomposition \eqref{eq:muContinuousalphaDecomp}).
By induction, there exist neighbourhoods $U_{z_{k-1}}$ of $z_{k-1}$ in $Z_{k-1,\bar\eta,k'}^{**}$ and $U_{\beta}$ of $\beta$ in $\Hom(A_{k-1},M_{m'})$ such that for every $\beta' \in U_{\beta}$ and every contraction $\kappa \in M_{m_{k'}}$,
\begin{align}
\label{eq:muContinuousCase3Ind}
\notag
&\hspace*{-10mm}\mu_{k-1,\bar\eta,\beta',z_{k-1}}(\kappa) \\
&\quad \approx_{\e/3} \sum_{z_{k-1}' \in S_{k,\bar\eta,m'}(\beta') \cap U_{z_{k-1}}} \mu_{k-1,\bar\eta,\beta',z_{k-1}'}(\kappa).
\end{align}

Here we need to be particularly careful about our choice of neighbourhoods, to take into account the possibility that $\beta \circ \lambda_k$ can be decomposed differently, involving point evaluations on $CX_k|_{\{0\}}$.
For each $q\in \mathbb N$, consider the set
\begin{align}
\notag
 V_q &:= \{(v,x_k^{(1)},\dots,x_k^{(q)},\gamma) \in U(M_{m'}) \times X_k^q \times \mathrm{Hom}(A_{k-1},M_{m'-qm_k}) \mid \\
&\qquad \Ad(v) \circ \diag(\ev_{(x_k^{(1)},0)} \circ \sigma_k,\dots,\ev_{(x_k^{(q)},0)} \circ \sigma_k,\gamma \circ \lambda_k) \in U_\beta \circ \lambda_k\}. 
\end{align}
Since
\begin{align}
\notag
(v,x_k^{(1)},\dots,x_k^{(q)},\gamma) \mapsto\, &\Ad(v) \circ \diag(\ev_{(x_k^{(1)},0)} \circ \sigma_k,\dots,\\
&\qquad \ev_{(x_k^{(q)},0)} \circ \sigma_k,\gamma \circ \lambda_k)
\end{align}
is a continuous map into $\mathrm{Hom}(A_{k-1},M_{m'}) \circ \lambda_k$, the set $V_q$ is open.
Let $\eta' < \eta_k$ be such that $v_1,\dots,v_p \in CX_k|_{(\eta',1]}$.
Define
\begin{align}
\notag
 W := \bigcup_{q\in\mathbb N} &\{\Ad(v) \circ \diag(\ev_{(x_k^{(1)},t_1)} \circ \sigma_k,\dots,\ev_{(x_k^{(q)},t_q)} \circ \sigma_k,\gamma \circ \lambda_k) \mid \\
&(v,x_k^{(1)},\dots,x_k^{(q)},\gamma) \in V_q\text{ and }t_1,\dots,t_q \in [0,\eta')\},
\end{align}
so that $W$ is a neighbourhood of $\beta \circ \lambda_k$ in $\mathrm{Hom}(A_k,M_{m'})$.

Set
\begin{align}
\notag
U_\alpha &:= \{\Ad(v) \circ \diag(\ev_{w_1} \circ \sigma_k,\dots,\ev_{w_p} \circ \sigma_k,\check\beta) \mid \\
&\qquad w_1,\dots,w_p \in CX_k|_{(\eta',1]}, \check\beta \in W, v \in U(M_m), v \approx_{\e/3} u\},
\end{align}
which is a neighbourhood of $\alpha$, and set
\begin{align}
\notag
 U_{z_k} :=\, &U_{z_{k-1}} \times \{0\} \cup \\
&\quad \{((x_k,z_{k-1}'),t) \in Y_{k,\bar\eta} \times (0,\eta') \mid z_{k-1}' \in U_{z_{k-1}}\},
\end{align}
which is a neighbourhood of $z_k$.

Let $\alpha' \in U_\alpha$ so that
\begin{equation} \alpha' = \Ad(v) \circ \diag(\ev_{w_1} \circ \sigma_k,\dots,\ev_{w_p} \circ \sigma_k,\check\beta), \end{equation}
for some 
$w_1,\dots,w_p \in CX_k|_{(\eta',1]}$, some $\check\beta \in W$, and some $v \in U(M_m)$ with 
\begin{equation}
\label{eq:muContinuousvApproxu}
v \approx_{\e/3} u
\end{equation}
Thus,
\begin{equation}
\check\beta = \Ad(w) \circ \diag(\ev_{(x_k^{(1)},t_1)} \circ \sigma_k,\dots,\ev_{(x_k^{(q)},t_q)} \circ \sigma_k, \gamma \circ \lambda_k)
\end{equation}
where $q \in \mathbb N$, $(x_k^{(i)},t_i) \in CX_k|_{[0,\eta')}$, $\gamma \in \Hom(A_{k-1},M_{m'-qm_k})$, and these satisfy
\begin{equation}
\label{eq:muContinuousCheckBeta'Def}
\check\beta' := \Ad(w) \circ \diag(\ev_{(x_k^{(1)},0)} \circ \sigma_k,\dots,\ev_{(x_k^{(q)},0)} \circ \sigma_k, \gamma \circ \lambda_k) \in U_\beta \circ \lambda_k.
\end{equation}
Set 
\begin{equation}
\label{eq:muContinuousw'Def}
w' := \diag(1_{pm_k}, w) \in U(M_m),
\end{equation}
so that $\check\beta$ corresponds to the subrepresentation
\begin{equation}
\Ad(vw') \circ \diag(0_{pm_k}, \ev_{(x_k^{(1)},0)} \circ \sigma_k,\dots,\ev_{(x_k^{(q)},0)} \circ \sigma_k, \gamma \circ \lambda_k)
\end{equation}
of $\alpha$.
Note that
\begin{equation}
\check\beta' = \Ad(w) \circ \diag(\hat\theta_k(x_k^{(1)}),\dots,\hat\theta_k(x_k^{(q)}),\gamma) \circ \lambda_k,
\end{equation}
and since $\lambda_k$ is surjective, it follows from \eqref{eq:muContinuousCheckBeta'Def} that
\begin{equation}
\beta':= \Ad(w) \circ \diag(\hat\theta_k(x_k^{(1)}),\dots,\hat\theta_k(x_k^{(q)}),\gamma) \in U_\beta.
\end{equation}
We compute
\begin{align}
\notag
S_{k,\bar\eta,m}(\alpha') \cap U_{z_k} &= \bigcup_{i=1}^q R_i \cup R_\gamma
\end{align}
where for $i=1,\dots,q$,
\begin{equation}
R_i := \begin{cases}
\{((x_k^{(i)},z_{k-1}'),t_i) \mid z_{k-1}' \in S_{k-1,\bar\eta,m_k}(\hat\theta_k(x_k^{(i)})) \cap U_{z_{k-1}}\}, \quad &\text{if } t_i>0; \\
\{(z_{k-1}',0) \mid z_{k-1}' \in S_{k-1,\bar\eta,m_k}(\hat\theta_k(x_k^{(i)})) \cap U_{z_{k-1}}\}, \quad &\text{if }t_i=0,
\end{cases}
\end{equation}
and
\begin{equation}
R_\gamma:= \{(z_{k-1}',0) \mid z_{k-1}' \in S_{k-1,\bar\eta,m'-qm_k}(\gamma) \cap U_{z_{k-1}}\}.
\end{equation}
For $z_k' \in R_i$, let $z_k'=(x_k,z_{k-1}',t_i)$ if $t_i>0$ or $z_k'=(z_{k-1}',0)$ if $t_i=0$; we see that $z_{k-1}' \in S_{k-1,\bar\eta,m'}(\beta') \cap U_{z_{k-1}}$.
Then by \eqref{eq:muDef2} (in the former case) or \eqref{eq:muDef3} (in the latter), and using $w'$ as defined in \eqref{eq:muContinuousw'Def}, we have
\begin{eqnarray}
\notag
&&\mu_{k,\bar\eta,\alpha',z_k'} \\
&=& \Ad(vw') \circ \diag(0_{(p+i-1)m_k},\mu_{k-1,\bar\eta,\hat\theta_k(x_k^{(i)}),z_{k-1}'},0_{m-(p-i)m_k})
\end{eqnarray}
For $z_k' = (z_{k-1}',0) \in R_\gamma$, by \eqref{eq:muDef3}, we have
\begin{equation}
\mu_{k,\bar\eta,\alpha,z_k'} = \Ad(vw') \circ \diag(0_{(p+q)m_k},\mu_{k-1,\bar\eta,\gamma,z_{k-1}'}) \\
\end{equation}
From these computations, we see that
\begin{eqnarray}
\notag
&&\sum_{z_k' \in S_{k,\bar\eta,m}(\alpha') \cap U_{z_k}} \mu_{k,\bar\eta,\alpha',z_k'}(\kappa) \\
\notag
&=& \sum_{z_{k-1}' \in S_{k-1,\bar\eta,m'}(\beta') \cap U_{z_{k-1}}} \Ad(v)(\diag(0_{pm_k},\mu_{k-1,\bar\eta,\beta',z_{k-1}'}(\kappa))) \\
\notag
&\stackrel{\eqref{eq:muContinuousCase3Ind}}{\approx_{\e/3}}& 
\Ad(v)(\diag(0_{pm_k},\mu_{k-1,\bar\eta,\beta,z_{k-1}}(\kappa))) \\
\notag
&\stackrel{\eqref{eq:muContinuousvApproxu}}{\approx_{2\e/3}}&
\Ad(u)(\diag(0_{pm_k},\mu_{k-1,\bar\eta,\beta,z_{k-1}}(\kappa))) \\
&\stackrel{\eqref{eq:muDef3}}=& \mu_{k,\bar\eta,\alpha,z_k}(\kappa).
\end{eqnarray}
\end{proof}

\subsection{Construction of $\xi_{k,\bar\eta,k'}$}

For $k' < k$ and $f \in C_0(Z^{**}_{k,\bar\eta,k'},M_{m_{k'}})$, define $\check\xi_{k,\bar\eta,k'}(f)\colon X_k \times [0,\eta_k) \to M_{m_k}$ by
\begin{equation}
\label{eq:checkXiDef}
\check\xi_{k,\bar\eta,k'}(f)(x_k,t) := \begin{cases}
\sum_{z_{k-1}} \mu_{k-1,\bar\eta,\hat\theta_k(x_k),z_{k-1}}(f((x_k,z_{k-1}),t)),\quad &\text{if }t>0; \\
\sum_{z_{k-1}} \mu_{k-1,\bar\eta,\hat\theta_k(x_k),z_{k-1}}(f(z_{k-1},0)),\quad &\text{if }t=0.
\end{cases}
\end{equation}
where the sum is taken over all 
\begin{equation}
 z_{k-1} \in S_{k-1,\bar\eta,m_k}(\hat\theta_k(x_k)) \cap Z^{**}_{k-1,\bar\eta,k'}.
\end{equation}
By Lemma \ref{lem:ZkPartition} (i), the domain of $\mu_{k-1,\bar\eta,\hat\theta_k(x_k),z_{k-1}}$ is indeed $M_{m_{k'}}$ for such $z_{k-1}$.

\begin{lemma}
\label{lem:checkXiDef}
For $k' < k$, $\check\xi_{k,\bar\eta,k'}$ defines a ${}^*$-homomorphism
\begin{equation}
C_0(Z^{**}_{k,\bar\eta,k'},M_{m_{k'}}) \to C_0(X_k \times [0,\eta_k),M_{m_k})
\end{equation}
\end{lemma}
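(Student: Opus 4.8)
The plan is to verify three things, in order: (a) for each fixed $(x_k,t)\in X_k\times[0,\eta_k)$, the assignment $f\mapsto\check\xi_{k,\bar\eta,k'}(f)(x_k,t)$ is a $^*$-homomorphism $C_0(Z^{**}_{k,\bar\eta,k'},M_{m_{k'}})\to M_{m_k}$; (b) $\check\xi_{k,\bar\eta,k'}(f)$ is a continuous function of $(x_k,t)$; and (c) it vanishes as $t\to\eta_k$. Granting (a)--(c), since the algebraic operations in $C_0(X_k\times[0,\eta_k),M_{m_k})$ are computed pointwise, $\check\xi_{k,\bar\eta,k'}$ is automatically a $^*$-homomorphism into that algebra. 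The key structural input used throughout is that for $z_{k-1}\in Z^{**}_{k-1,\bar\eta,k'}$ one has $F_{k-1,\bar\eta,z_{k-1}}=M_{m_{k'}}$ by Lemma \ref{lem:ZkPartition}(i), so each term $\mu_{k-1,\bar\eta,\hat\theta_k(x_k),z_{k-1}}(f(\cdot))$ in the definition makes sense and lies in $M_{m_k}$, and the sum is finite since $S_{k-1,\bar\eta,m_k}(\hat\theta_k(x_k))$ is finite.

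For (a): fix $(x_k,t)$ and write $\alpha:=\hat\theta_k(x_k)\in\Hom(A_{k-1},M_{m_k})$. Each summand is $\mu_{k-1,\bar\eta,\alpha,z_{k-1}}\circ\mathrm{ev}_{((x_k,z_{k-1}),t)}$ (for $t>0$) or $\mu_{k-1,\bar\eta,\alpha,z_{k-1}}\circ\mathrm{ev}_{(z_{k-1},0)}$ (for $t=0$), a composite of $^*$-homomorphisms; note $((x_k,z_{k-1}),t),(z_{k-1},0)\in Z^{**}_{k,\bar\eta,k'}$ by \eqref{eq:Z**Def}. By Lemma \ref{lem:PointwiseApprox} applied to $\alpha$, the projections $\mu_{k-1,\bar\eta,\alpha,z_{k-1}}(1)$, for $z_{k-1}$ ranging over $S_{k-1,\bar\eta,m_k}(\alpha)$, are mutually orthogonal; hence a sum of any subcollection of $^*$-homomorphisms of the form $\mu_{k-1,\bar\eta,\alpha,z_{k-1}}\circ(\,\cdot\,)$ is again a $^*$-homomorphism, since $\mu(a)\mu'(b)=\mu(a)\mu(1)\mu'(1)\mu'(b)=0$ when the range projections of $\mu,\mu'$ are orthogonal. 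This orthogonality will also be used repeatedly below to keep errors from accumulating.

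For (b) and (c): continuity at a point $(x_k^0,t_0)$ is proved by combining continuity of $\hat\theta_k$, of $f$, and of $S_{k-1,\bar\eta,m_k}$ (Lemma \ref{lem:SContinuous}) with the continuity property of $\mu$ from Lemma \ref{lem:muContinuous}. Given $\e>0$, for each of the finitely many $z_{k-1}^0\in S_{k-1,\bar\eta,m_k}(\hat\theta_k(x_k^0))\cap Z^{**}_{k-1,\bar\eta,k'}$, apply Lemma \ref{lem:muContinuous} at level $k-1$ (with $\alpha=\hat\theta_k(x_k^0)$, $m=m_k$) to obtain mutually disjoint neighbourhoods $U_{z_{k-1}^0}\subseteq Z^{**}_{k-1,\bar\eta,k'}$ and a common neighbourhood of $\hat\theta_k(x_k^0)$, which by continuity of $\hat\theta_k$ pulls back to a neighbourhood of $x_k^0$; using Lemma \ref{lem:SContinuous} and a pigeonhole argument as in Step 2 of its proof (with the dimension bookkeeping $D(\cdot)$), shrink these so that for $x_k$ in this neighbourhood $S_{k-1,\bar\eta,m_k}(\hat\theta_k(x_k))\cap Z^{**}_{k-1,\bar\eta,k'}$ is the disjoint union, over the $z_{k-1}^0$, of $S_{k-1,\bar\eta,m_k}(\hat\theta_k(x_k))\cap U_{z_{k-1}^0}$. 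Regrouping the defining sum of $\check\xi_{k,\bar\eta,k'}(f)(x_k,t)$ accordingly, one first replaces, in each group, the argument $f((x_k,z_{k-1}),t)$ (or $f(z_{k-1},0)$) by $f((x_k^0,z_{k-1}^0),t_0)$ (or $f(z_{k-1}^0,0)$), at a cost controlled, thanks to orthogonality, by the supremum of $\|f(w)-f(w^0)\|$ over nearby points $w,w^0$ of $Z^{**}_{k,\bar\eta,k'}$ — small by continuity of $f$ and the fact that $((x_k,z_{k-1}),t)\to((x_k^0,z_{k-1}^0),t_0)$ in $Z^{**}_{k,\bar\eta,k'}$ (using the gluing $(z_{k-1},0)\sim((x_k,z_{k-1}),0)$ to handle the transition $t\to 0$, which also shows the two cases of the definition match up). One then applies Lemma \ref{lem:muContinuous} within each group to replace $\sum_{z_{k-1}}\mu_{k-1,\bar\eta,\hat\theta_k(x_k),z_{k-1}}(\kappa)$ by $\mu_{k-1,\bar\eta,\hat\theta_k(x_k^0),z_{k-1}^0}(\kappa)$ up to $\e/N$. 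This yields $\check\xi_{k,\bar\eta,k'}(f)(x_k,t)\approx\check\xi_{k,\bar\eta,k'}(f)(x_k^0,t_0)$, proving (b). For (c), since $Z^{**}_{k,\bar\eta,k'}$ (for $k'<k$) meets $X_k\times\{t\}$ only for $t<\eta_k$, the set where $\|f\|\geq\e$ is a compact subset of $Z^{**}_{k,\bar\eta,k'}$ on which $t$ is bounded away from $\eta_k$; as $\check\xi_{k,\bar\eta,k'}(f)(x_k,t)$ has norm at most $\sup_{z_{k-1}}\|f((x_k,z_{k-1}),t)\|$ (by orthogonality and $\sum_{z_{k-1}}\mu_{k-1,\bar\eta,\hat\theta_k(x_k),z_{k-1}}(1)\leq 1$), it follows that $\check\xi_{k,\bar\eta,k'}(f)(x_k,t)\to 0$ as $t\to\eta_k$.

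The main obstacle is the continuity argument (b): one must simultaneously track the motion of the finite set $S_{k-1,\bar\eta,m_k}(\hat\theta_k(x_k))$ (points of which may split under perturbation), the $\alpha$-dependence of the maps $\mu_{k-1,\bar\eta,\alpha,z_{k-1}}$, and the point at which $f$ is evaluated; the orthogonality of the ranges of the $\mu$'s supplied by Lemma \ref{lem:PointwiseApprox} is exactly what keeps the accumulated error of order $\e$ rather than proportional to the number of summands.
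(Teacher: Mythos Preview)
Your proof is correct and follows essentially the same approach as the paper's: both establish the pointwise ${}^*$-homomorphism property via the orthogonality of the ranges of the $\mu_{k-1,\bar\eta,\alpha,z_{k-1}}$ (Lemma~\ref{lem:PointwiseApprox}), prove continuity by combining Lemma~\ref{lem:muContinuous} with Lemma~\ref{lem:SContinuous} to regroup the defining sum into pieces indexed by the points $z_{k-1}^0\in S_{k-1,\bar\eta,m_k}(\hat\theta_k(x_k^0))\cap Z^{**}_{k-1,\bar\eta,k'}$, and deduce vanishing as $t\to\eta_k$ from $f\in C_0(Z^{**}_{k,\bar\eta,k'})$. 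The paper's version is slightly more explicit in treating the cases $t,t'>0$, $t=0<t'$, and $t=t'=0$ separately, whereas your remark about the gluing $(z_{k-1},0)\sim((x_k,z_{k-1}),0)$ handles these uniformly; otherwise the arguments coincide.
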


\begin{proof}
That $\check\xi_{k,\bar\eta,k'}$ is a ${}^*$-homomorphism follows from the fact that in \eqref{eq:checkXiDef}, the right hand side is a sum of ${}^*$-homomorphisms (applied to $f$) with orthogonal ranges (by the last part of Lemma \ref{lem:PointwiseApprox}).

We need to show that $\check\xi_{k,\bar\eta,k'}(f)$ is continuous and vanishes at infinity, for every $f\in C_0(Z^{**}_{k,\bar\eta,k'},M_{m_{k'}})$.
For continuity, let $(x_k,t) \in X_k \times [0,\eta_k)$, $\e > 0$.
Let
\begin{equation}
\label{eq:xiContinuousSetup0}
S_{k-1,\bar\eta,m_k}(\hat\theta_k(x_k)) \cap Z^{**}_{k-1,\bar\eta,k'} = \{z_{k-1}^{(1)},\dots,z_{k-1}^{(p)}\}.
\end{equation}
By Lemma \ref{lem:muContinuous}, there exists a neighbourhood $V$ of $\hat\theta_k(x_k)$ in $\Hom(A_{k-1},M_{m_k})$ and, for each $i=1,\dots,p$, a neighbourhood  $U_i$ of $z_{k-1}^{(i)}$ in $Z_{k-1,\bar\eta,k'}^{**}$ such that for $\alpha \in V$ and each contraction $\kappa \in M_{m_{k'}}$,
\begin{equation}
\label{eq:xiContinuousSetup1}
\mu_{k-1,\bar\eta,\hat\theta_k(x_k),z_{k-1}^{(i)}}(\kappa) \approx_{\e/p} \sum_{z_{k-1}'} \mu_{k-1,\bar\eta,\alpha,z_{k-1}'}(\kappa),
\end{equation}
where the sum is over all $z_{k-1}' \in S_{k-1,\bar\eta,m_k}(\alpha) \cap U_i$.

By possibly shrinking the $U_i$, arrange that they are disjoint.
By Lemma \ref{lem:SContinuous}, if $\alpha$ is sufficiently close to $\hat\theta_k(x_k)$ then
\begin{equation}
\label{eq:xiContinuousSetup2}
S_{k-1,\bar\eta,m_k}(\alpha) \cap Z_{k-1,\bar\eta,k'}^{**} = \coprod_{i=1}^p S_{k-1,\bar\eta,m_k}(\alpha) \cap U_i.
\end{equation}
By possibly shrinking $V$, arrange that \eqref{eq:xiContinuousSetup2} holds for all $\alpha \in V$.

Let $W$ be a neighbourhood of $(x_k,t)$ in $X_k \times [0,\eta_k)$ such that, for $(x_k',t') \in W$, the following hold:

(i) $\hat\theta_k(x_k') \in V$;

(ii) If $t>0$ then $t'>0$ and for $((x_k',z_{k-1}'),t') \in Z_{k,\bar\eta,k'}^{**}$, there exists $i$ (necessarily unique) such that $z_{k-1}' \in U_i$ and
\begin{equation}
\label{eq:xiContinuousSetup3a}
f((x_k,z_{k-1}^i),t) \approx_{\e/p} f((x_k',z_{k-1}'),t').
\end{equation}

(iii) If $t=0$ and $t'>0$ then for $z_{k-1}' \in S_{k-1,\bar\eta,m_k}(\hat\theta_k(x_k'))$, there exists $i$ (necessarily unique) such that $z_{k-1}' \in U_i$ and
\begin{equation}
\label{eq:xiContinuousSetup3b}
f(z_{k-1}^i,0) \approx_{\e/p} f((x_k',z_{k-1}'),t').
\end{equation}

(iv) If $t=0$ and $t'=0$ then for $z_{k-1}' \in S_{k-1,\bar\eta,m_k}(\hat\theta_k(x_k'))$, there exists $i$ (necessarily unique) such that $z_{k-1}' \in U_i$ and
\begin{equation}
\label{eq:xiContinuousSetup3c}
f(z_{k-1}^i,0) \approx_{\e/p} f(z_{k-1}',0).
\end{equation}

Now, for $(x_k',t') \in W$, let us show that $\check\xi_{k,\bar\eta,k'}(f)(x_k,t) \approx_{3\e} \check\xi_{k,\bar\eta,k'}(f)(x_k',t')$.
Suppose first that $t,t'>0$.
Then for each $i=1,\dots,p$,
\begin{eqnarray}
\notag
&& \mu_{k-1,\bar\eta,\hat\theta_k(x_k),z_{k-1}^{(i)}}\big(f((x_k,z_{k-1}^{(i)}),t)\big) \\
\notag
&\stackrel{\eqref{eq:xiContinuousSetup1}}{\approx_{\e/p}}& \sum_{z_{k-1}' \in S_{k-1,\bar\eta,m_k}(\hat\theta_k(x_k')) \cap U_i} \\
\notag
&& \quad \mu_{k-1,\bar\eta,\hat\theta_k(x_k'),z_{k-1}'}\big(f((x_k,z_{k-1}^{(i)}),t)\big) \\
\notag
&\stackrel{\eqref{eq:xiContinuousSetup3a}}{\approx_{\e/p}}& \sum_{z_{k-1}' \in S_{k-1,\bar\eta,m_k}(\hat\theta_k(x_k')) \cap U_i} \\
&& \quad \mu_{k-1,\bar\eta,\hat\theta_k(x_k'),z_{k-1}'}\big(f((x_k',z_{k-1}'),t')\big).
\label{eq:xiContinuousAlmost1}
\end{eqnarray}
Thus,
\begin{eqnarray}
\notag
&& \check\xi_{k,\bar\eta,k'}(f)(x_k,t) \\
\notag
&\stackrel{\eqref{eq:checkXiDef},\eqref{eq:xiContinuousSetup0}}=& \sum_{i=1}^p \mu_{k-1,\bar\eta,\hat\theta_k(x_k),z_{k-1}^{(i)}}(f((x_k,z_{k-1}^{(i)}),t)) \\
\notag
&\stackrel{\eqref{eq:xiContinuousAlmost1}}{\approx_{2\e}}& \sum_{i=1}^p \textstyle{\sum_{z_{k-1}' \in S_{k-1,\bar\eta,m_k}(\hat\theta_k(x_k')) \cap U_i}} \\
\notag
&& \quad \mu_{k-1,\bar\eta,\hat\theta_k(x_k'),z_{k-1}'}\big(f((x_k',z_{k-1}'),t')\big) \\
\notag
&\stackrel{\eqref{eq:xiContinuousSetup2}}=&
\notag
\textstyle{\sum_{z_{k-1}' \in S_{k-1,\bar\eta,m_k}(\hat\theta_k(x_k')) \cap Z_{k-1,\bar\eta,k'}^{**}}} \\
\notag
&& \quad \mu_{k-1,\bar\eta,\hat\theta_k(x_k'),z_{k-1}'}\big(f((x_k',z_{k-1}'),t')\big) \\
&\stackrel{\eqref{eq:checkXiDef}}=& \check\xi_{k,\bar\eta,k'}(f)(x_k',t').
\end{eqnarray}

The argument is almost exactly the same in other cases (if $t=0$ and $t'>0$, or if $t=t'=0$), using \eqref{eq:xiContinuousSetup3b} or \eqref{eq:xiContinuousSetup3c} in place of \eqref{eq:xiContinuousSetup3a}.
This concludes the proof that $\check\xi_{k,\bar\eta,k'}(f)$ is continuous.

To see that $\check\xi_{k,\bar\eta,k'}(f)$ vanishes at infinity, note that
\begin{equation} \lim_{t\to \eta_k} \|f|_{Z^{**}_{k,\bar\eta,k'}|_{\{t\}}}\| = 0, \end{equation}
and so, by \eqref{eq:checkXiDef},
\begin{equation} \lim_{t\to \eta_k} \|\check\xi_{k,\bar\eta,k'}(f)|_{X_k \times \{t\}}\| = 0, \end{equation}
as required.
\end{proof}

Upon identifying $Z^{**}_{k,\bar\eta,k}$ with $CX_k|_{(\eta_k,1]}$ for $k >0$, we also define 
\begin{equation}
\label{eq:checkXiTrivDef}
\check\xi_{k,\bar\eta,k}\colon C_0(Z^{**}_{k,\bar\eta,k},M_{m_k}) \to C(CX_k,M_{m_k})
\end{equation}
to be the inclusion.
Likewise, $\check\xi_{0,(),0}\colon C_0(Z^{**}_{0,(),0},M_{m_0}) \to \mathrmm C(CX_0,M_{m_0})$ is defined to be the identity map.

Now, for $1 \leq k' \leq k$, we define
\begin{align}
\notag
 \xi_{k,\bar\eta,k'}\colon &C_0(Z^{**}_{k,\bar\eta,k'},M_{m_{k'}}) \to A_{k'-1} \oplus \\
&\qquad \mathrmm C(CX_{k'},M_{m_{k'}}) \oplus \cdots \oplus \mathrmm C(CX_k,M_{m_k})
\end{align}
by
\begin{align}
\notag
\xi_{k,\bar\eta,k'}(f) := &(0_{A_{k'-1}},\check\xi_{k',\bar\eta,k'}(f|_{Z^{**}_{k',\bar\eta,k'}}),\\
\label{eq:xiDef}
&\qquad \check\xi_{k'+1,\bar\eta,k'}(f|_{Z^{**}_{k'+1,\bar\eta,k'}}),\dots,\check\xi_{k,\bar\eta,k'}(f)),
\end{align}
where we are viewing $Z^{**}_{k'',\bar\eta,k'}$ as a subset of $Z^{**}_{k,\bar\eta,k'}$, for $k''=k',\dots,k-1$, by identifying
\begin{equation} Z^{**}_{k-1,\bar\eta,k'} \quad\text{with}\quad Z^{**}_{k,\bar\eta,k'}|_{\{0\}} \end{equation}
and so on.

For $k'=0$, we define $\xi_{k,\bar\eta,k'}$ by more or less the same formula, except that its range is
$\mathrmm C(CX_0,M_{m_0}) \oplus \cdots \oplus \mathrmm C(CX_k,M_{m_k})$.

\begin{lemma}
\label{lem:xiWellDef}
The image of $\xi_{k,\bar\eta,k'}$ is contained in $A_k$.
\end{lemma}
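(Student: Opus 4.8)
The plan is to prove Lemma \ref{lem:xiWellDef} by induction on $k \geq k'$, carried out \emph{simultaneously} with the following auxiliary identity, which we call $(\star)$: for every $m$, every $\alpha \in \Hom(A_k, M_m)$, and every $h \in C_0(Z^{**}_{k,\bar\eta,k'}, M_{m_{k'}})$,
\[
\alpha\bigl(\xi_{k,\bar\eta,k'}(h)\bigr) = \sum_{z_k \in S_{k,\bar\eta,m}(\alpha) \cap Z^{**}_{k,\bar\eta,k'}} \mu_{k,\bar\eta,\alpha,z_k}\bigl(h(z_k)\bigr).
\]
Both halves are needed at each stage: Lemma \ref{lem:xiWellDef} at level $k$ is what makes $\xi_{k,\bar\eta,k'}(h)$ an element of $A_k$ to begin with (so that $(\star)$ at level $k$ even makes sense), while $(\star)$ at level $k$ is precisely what verifies the gluing condition for Lemma \ref{lem:xiWellDef} at level $k+1$. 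The identity $(\star)$ should be thought of as a ``continuous-family'' analogue of Lemma \ref{lem:PointwiseApprox}, and its proof will follow the same pattern.

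The base case $k = k'$ is direct: $\xi_{k',\bar\eta,k'}(h) = (0_{A_{k'-1}}, \check\xi_{k',\bar\eta,k'}(h))$ (or simply $h$ when $k'=0$), and since $\check\xi_{k',\bar\eta,k'}$ is by definition the inclusion of $C_0(Z^{**}_{k',\bar\eta,k'}, M_{m_{k'}}) = C_0(CX_{k'}|_{(\eta_{k'},1]}, M_{m_{k'}})$ into $C(CX_{k'},M_{m_{k'}})$, its image vanishes on $X_{k'} \times \{0\}$; thus the gluing condition $\theta_{k'}(0_{A_{k'-1}}) = \check\xi_{k',\bar\eta,k'}(h)|_{X_{k'} \times \{0\}}$ holds trivially, so $\xi_{k',\bar\eta,k'}(h) \in A_{k'}$, and $(\star)$ is a one-line computation using Lemma \ref{lem:ZkPartition}(i) and \eqref{eq:muDef1}. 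For the inductive step of Lemma \ref{lem:xiWellDef}, assume both statements at level $k$. Unwinding \eqref{eq:xiDef} together with the nested identifications $Z^{**}_{j,\bar\eta,k'} \hookrightarrow Z^{**}_{k+1,\bar\eta,k'}$, one checks that $\sigma_{k+1}(\xi_{k+1,\bar\eta,k'}(h)) = \check\xi_{k+1,\bar\eta,k'}(h)$ and $\lambda_{k+1}(\xi_{k+1,\bar\eta,k'}(h)) = \xi_{k,\bar\eta,k'}(h_0)$, where $h_0 := h|_{Z^{**}_{k,\bar\eta,k'}}$ is the restriction to the slice at $0$; by the induction hypothesis the latter lies in $A_k$, so by the pull-back description of $A_{k+1}$ it remains only to check that $\theta_{k+1}(\xi_{k,\bar\eta,k'}(h_0)) = \check\xi_{k+1,\bar\eta,k'}(h)|_{X_{k+1} \times \{0\}}$. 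Evaluating at $x_{k+1} \in X_{k+1}$, the left side is $\hat\theta_{k+1}(x_{k+1})(\xi_{k,\bar\eta,k'}(h_0))$, which by $(\star)$ at level $k$ (with $\alpha = \hat\theta_{k+1}(x_{k+1})$, $m = m_{k+1}$) equals $\sum_{z_k} \mu_{k,\bar\eta,\hat\theta_{k+1}(x_{k+1}),z_k}(h_0(z_k))$ over $z_k \in S_{k,\bar\eta,m_{k+1}}(\hat\theta_{k+1}(x_{k+1})) \cap Z^{**}_{k,\bar\eta,k'}$; by the defining formula \eqref{eq:checkXiDef} (with $t=0$) this is exactly $\check\xi_{k+1,\bar\eta,k'}(h)(x_{k+1},0)$, completing Lemma \ref{lem:xiWellDef} at level $k+1$.

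With Lemma \ref{lem:xiWellDef} now available at level $k+1$, we prove $(\star)$ at level $k+1$ by an argument parallel to the proof of Lemma \ref{lem:PointwiseApprox}. Decompose $\alpha = \Ad(u) \circ \diag(\ev_{v_1} \circ \sigma_{k+1}, \dots, \ev_{v_p} \circ \sigma_{k+1}, \beta \circ \lambda_{k+1})$ with $v_i \in CX_{k+1}|_{(0,1]}$ and $\beta \in \Hom(A_k,M_{m'})$, and treat the diagonal blocks in three groups. (a) For $v_i \in CX_{k+1}|_{[\eta_{k+1},1]}$ the corresponding block of $\alpha(\xi_{k+1,\bar\eta,k'}(h))$ vanishes, since $\check\xi_{k+1,\bar\eta,k'}(h) \in C_0(X_{k+1}\times[0,\eta_{k+1}),M_{m_{k+1}})$ by Lemma \ref{lem:checkXiDef}, and correspondingly no point of $Z^{**}_{k+1,\bar\eta,k'}$ lies over such a $v_i$ (Lemma \ref{lem:ZkPartition}(i) and \eqref{eq:Z**Def}), so both sides receive $0$ here. (b) For $v_i = (x_{k+1}^{(i)},t_i) \in CX_{k+1}|_{(0,\eta_{k+1})}$ the block equals $\check\xi_{k+1,\bar\eta,k'}(h)(x_{k+1}^{(i)},t_i)$, and this is matched against the points $((x_{k+1}^{(i)}, z_k), t_i) \in Y_{k+1,\bar\eta}\times(0,\eta_{k+1})$ via \eqref{eq:muDef2} and \eqref{eq:checkXiDef}. (c) The last block is $\beta(\lambda_{k+1}(\xi_{k+1,\bar\eta,k'}(h))) = \beta(\xi_{k,\bar\eta,k'}(h_0))$, to which we apply $(\star)$ at level $k$ and then match against the points $(z_k,0)$ using \eqref{eq:muDef3} and clause (iii) in the definition of $S_{k+1,\bar\eta,m}$. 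Summing the three contributions and undoing the $\Ad(u)$ reproduces the right-hand side of $(\star)$.

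I expect the main obstacle to be the bookkeeping in group (c): \eqref{eq:muDef3} and clause (iii) of $S_{k+1,\bar\eta,m}$ refer to the \emph{largest} subrepresentation of $\alpha$ factoring through $\lambda_{k+1}$ of a given isotypic form, and one must verify (exactly as in the corresponding part of the proof of Lemma \ref{lem:PointwiseApprox}) that the point-evaluation blocks $\ev_{v_i}\circ\sigma_{k+1}$, with $v_i$ at positive height in the cone, never factor through $\lambda_{k+1}$ — so that $\beta\circ\lambda_{k+1}$ really is the full factor-through-$\lambda_{k+1}$ part of $\alpha$ — and that the isotypic decomposition of $\beta$ is compatible with the sum produced by $(\star)$ at level $k$. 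The remaining verifications (continuity of $\check\xi$ via Lemma \ref{lem:checkXiDef}, the nested identifications of the spaces $Z^{**}_{j,\bar\eta,k'}$, and the disjointness and exhaustiveness of the three groups within $S_{k+1,\bar\eta,m}(\alpha)\cap Z^{**}_{k+1,\bar\eta,k'}$) are routine and mirror steps already carried out in the excerpt.
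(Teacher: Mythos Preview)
Your proposal is correct and follows essentially the same approach as the paper: simultaneous induction on $k \geq k'$ proving both the lemma and the auxiliary identity $(\star)$ (which is exactly equation \eqref{eq:xiWellDefExtra} in the paper), with the gluing condition verified pointwise via $(\star)$ at the previous level and $(\star)$ itself established by decomposing $\alpha$ into blocks handled exactly as you describe. The anticipated bookkeeping issue in group (c) is handled the same way in the paper, relying on the fact that $\ev_{v_i}\circ\sigma_{k+1}$ for $v_i$ in the open cone does not factor through $\lambda_{k+1}$.
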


\begin{proof}
The proof consists in showing, by induction on $k$, simultaneously both the statement of the lemma and that, for $m\in \mathbb N$, $\alpha \in \mathrm{Hom}(A_k,M_m)$, and $f \in C_0(Z^{**}_{k,\bar\eta,k'},M_{m_{k'}})$,
\begin{equation}
\label{eq:xiWellDefExtra}
 \alpha(\xi_{k,\bar\eta,k'}(f)) = \sum_{z_k} \mu_{k,\bar\eta,\alpha,z_k}(f(z_k)),
\end{equation}
where the sum is taken over all
\begin{equation} z_k \in S_{k,\bar\eta,m}(\alpha) \cap Z^{**}_{k,\bar\eta,k'}. \end{equation}

The case $k=k'$ is trivial.

For the inductive step, note that
\begin{equation} \xi_{k,\bar\eta,k'}(f) = \xi_{k-1,\bar\eta,k'}(f|_{Z^{**}_{k-1,\bar\eta,k'}}) \oplus \check\xi_{k,\bar\eta,k'}(f), \end{equation}
so that, to show that the image of $\xi_{k,\bar\eta,k'}$ is contained in $A_k$, we must show that
\begin{equation}
\label{eq:xiWellDefToShow}
\theta_k(\xi_{k-1,\bar\eta,k'}(f|_{Z^{**}_{k-1,\bar\eta,k'}})) = \check\xi_{k,\bar\eta,k'}(f)|_{X_k \times \{0\}}.
\end{equation}

For $x_k \in X_k$,
\begin{eqnarray}
\notag
\check\xi_{k,\bar\eta,k'}(f)(x_k,0) &\stackrel{\eqref{eq:checkXiDef}}=&
\sum_{z_{k-1}} \mu_{k-1,\bar\eta,\hat\theta_k(x_k),z_{k-1}}(f(z_{k-1},0)) \\
\notag
&\stackrel{\eqref{eq:xiWellDefExtra}}=& \hat\theta_k(x_k)(\xi_{k-1,\bar\eta,k'}(f|_{Z^{**}_{k-1,\bar\eta,k'}})) \\
&=& \theta_k(\xi_{k-1,\bar\eta,k'}(f|_{Z^{**}_{k-1,\bar\eta,k'}}))(x_k),
\end{eqnarray}
where we have used induction for the second line.
This establishes \eqref{eq:xiWellDefToShow}.

Now, to show \eqref{eq:xiWellDefExtra}, let $\alpha \in \mathrm{Hom}(A_k,M_m)$, and decompose it as
\begin{equation}
 \alpha = \Ad(u) \circ 
\diag( \ev_{v_1} \circ \sigma_k,\dots, \ev_{v_p} \circ \sigma_k, \beta \circ \lambda_k),
\end{equation}
where $v_1,\dots,v_p \in CX_k|_{(0,1]}$, and $\beta \in \mathrm{Hom}(A_{k-1},M_{m'})$ for some $m'$.
Assume that $v_1,\dots,v_p$ are arranged so that $v_1,\dots,v_{p'} \in CX_k|_{[\eta_k,1]}$ and $v_{p'+1},\dots,v_p \in CX_k|_{(0,\eta_k)}$.

Fix $f \in C_0(Z^{**}_{k,\bar\eta,k'},M_{m_{k'}})$.
For $i=1,\dots,p'$,
\begin{equation} \ev_{v_i} \circ \sigma_k \circ \xi_{k,\bar\eta,k'}(f) = 0. \end{equation}

For $i=p'+1,\dots,p$, let $v_i=(x_k^{(i)},t_i) \in X_k \times (0,\eta_k)$.
Then
\begin{eqnarray}
\notag
&&\hspace*{-10mm} \ev_{v_i} \circ \sigma_k \circ \xi_{k,\bar\eta,k'}(f) \\
\notag
&\stackrel{\eqref{eq:xiDef}}=& \check\xi_{k,\bar\eta,k'}(f)(x_k^{(i)},t_i) \\
\notag
&\stackrel{\eqref{eq:checkXiDef}}=& \textstyle{\sum_{z_{k-1} \in S_{k-1,\bar\eta,m_k}(\hat\theta_k(x^{(i)}_k)) \cap Z^{**}_{k,\bar\eta,k'}}} \\
&&\quad \mu_{k-1,\bar\eta,\hat\theta_k(x_k),z_{k-1}}(f((x_k^{(i)},z_{k-1}),t)),
\end{eqnarray}
so that, by \eqref{eq:muDef2}, 
\begin{align}
\notag
&\diag(0_{m_k},\dots,0_{m_k},\ev_{v_i} \circ \sigma_k \circ \xi_{k,\bar\eta,k'}(f),0_{m_k},\dots,0_{m_k},0_{m'}) \\
&\quad = \textstyle{\sum_{z_{k-1} \in S_{k-1,\bar\eta,m_k}(\hat\theta_k(x^{(i)}_k)) \cap Z^{**}_{k-1,\bar\eta,k'}}}
\mu_{k,\bar\eta,\alpha,((x_k^{(i)},z_{k-1}),t_i)}(f(x_k,z_{k-1}),t).
\end{align}
Summing over $i$ gives
\begin{equation}
\label{eq:xiWellDefExtra1-e'}
(1-e')\alpha(\xi_{k,\bar\eta,k'}(f)) = \sum_{z_k} \mu_{k,\bar\eta,\alpha,z_k}(f(z_k)),
\end{equation}
where $e' := \Ad(u)(0_{m_k},\dots,0_{m_k},1_{m'})$ and the sum is taken over all
\begin{equation} z_k \in S_{k,\bar\eta,m}(\alpha) \cap Z^{**}_{k,\bar\eta,k'}|_{(0,1]}. \end{equation}

By induction,
\begin{equation} \beta(\xi_{k-1,\bar\eta,k'}(f|_{Z^{**}_{k-1,\bar\eta,k'}})) = \sum_{z_{k-1}} \mu_{k-1,\bar\eta,\beta,z_{k-1}}(f(z_{k-1},0)), \end{equation}
where the sum is taken over all
\begin{equation} z_{k-1} \in S_{k-1,\bar\eta,m'}(\beta) \cap Z^{**}_{k-1,\bar\eta,k'}. \end{equation}
By \eqref{eq:muDef3}, 
it follows that
\begin{align}
\label{eq:xiWellDefExtrae'}
\notag
e'\alpha(\xi_{k,\bar\eta,k'}(f)) &= (0_{m_k},\dots,0_{m_k},\beta \circ \lambda_k \circ \xi_{k,\bar\eta,k'}(f)) \\
&= \sum_{z_k} \mu_{k,\bar\eta,\alpha,(z_k)}(f(z_k)),
\end{align}
where the sum is taken over all
\begin{equation} z_k \in S_{k,\bar\eta,k'}(\alpha) \cap Z^{**}_{k,\bar\eta,k'}|_{\{0\}}. \end{equation}
Combining \eqref{eq:xiWellDefExtra1-e'} and \eqref{eq:xiWellDefExtrae'} yields \eqref{eq:xiWellDefExtra}.
\end{proof}

\subsection{Maps related to changing parameters $\bar\eta$}

Our construction will involve using different $Z_{k,\bar\eta}$ given by varying the parameters $\bar\eta$.
We will need to relate these spaces, and for this we define maps between them.

Let $\eta_1,\dots,\eta_k,\eta_1',\dots,\eta_k' \in (0,1)$ with $\eta_i \geq \eta_i'$ for $i=1,\dots,k$.
Let $\bar\eta$ and $\bar\eta'$ denote $(\eta_1,\dots,\eta_k)$ and $(\eta_1',\dots,\eta_k')$ respectively (or $(\eta_1,\dots,\eta_{k-1})$, $(\eta_1',\dots,\eta_{k-1}')$, as appropriate).
We shall now construct maps
\begin{equation} \rho_{k,\bar\eta,\bar\eta'}\colon Z_{k,\bar\eta} \to Z_{k,\bar\eta'}. \end{equation}

For $k=0$, set
\begin{equation} \rho_{0,(),()}:= \id_{X^{(0)}}. \end{equation}

Having defined $\rho_{k-1,\bar\eta,\bar\eta'}$, let us define $\rho_{k,\bar\eta,\bar\eta'}$.
We do this in cases:

(i) For $z_k \in Z_{k,\bar\eta}|_{[\eta_k,1)} = CX_k|_{[\eta_k,1]} = Z_{k,\bar\eta'}|_{[\eta_k,1]}$, simply set
\begin{equation} \rho_{k,\bar\eta,\bar\eta'}(z_k) := (z_k). \end{equation}

(ii) For $((x_k,z_{k-1}),t) \in Y_{k,\bar\eta} \times [\eta_k',\eta_k)$, set
\begin{equation} \rho_{k,\bar\eta,\bar\eta'}((x_k,z_{k-1}),t) := (x_k,t). \end{equation}

(ii)$'$ For $((x_k,z_{k-1}),t) \in Y_{k,\bar\eta} \times (0,\eta_k')$, set
\begin{equation}
\label{eq:rhoDefii'}
\rho_{k,\bar\eta,\bar\eta'}((x_k,z_{k-1}),t) := ((x_k,\rho_{k-1,\bar\eta,\bar\eta'}(z_{k-1})),t). \end{equation}
By \eqref{eq:rhoCommutes} (with $k-1$ in place of $k$, i.e., using induction on $k$) we see that
\begin{equation} \rho_{k,\bar\eta,\bar\eta'}(x_k,\rho_{k-1,\bar\eta,\bar\eta'}(z_{k-1})) \in Y_{k,\bar\eta'}, \end{equation}
so that the right-hand side of \eqref{eq:rhoDefii'} is in $Z_{k,\bar\eta'}$.

(iii) For $(z_{k-1},0) \in Z_{k-1,\bar\eta} \times \{0\}$, set
\begin{equation}
\label{eq:rhoDefiv}
 \rho_{k,\bar\eta,\bar\eta'}(z_{k-1},0) := (\rho_{k-1,\bar\eta,\bar\eta'}(z_{k-1}),0). \end{equation}

Here are some easy facts about the maps $\rho_{k,\bar\eta,\bar\eta'}$.

\begin{lemma}
\label{lem:rhoFacts}
Let $k,\bar\eta,\bar\eta'$ be given as above.

(i) $\rho_{k,\bar\eta,\bar\eta'}\colon Z_{k,\bar\eta} \to Z_{k,\bar\eta'}$ is continuous.

(ii) If $\eta_1'',\dots,\eta_k'' \in (0,1)$ are such that $\eta_i'' \leq \eta_i'$ for each $i$ then
\begin{equation}
\label{eq:rhoFacts2}
 \rho_{k,\bar\eta,\bar\eta''} = \rho_{k,\bar\eta',\bar\eta''} \circ \rho_{k,\bar\eta,\bar\eta'}. \end{equation}

(iii) For each $m$ and $\alpha \in \mathrm{Hom}(A_k,M_m)$,
\begin{align}
\notag
\rho_{k,\bar\eta,\bar\eta'}(S_{k,\bar\eta,m}(\alpha)) &= S_{k,\bar\eta',m}(\alpha) \quad \text{and} \\
\label{eq:rhoCommutes}
S_{k,\bar\eta,m}(\alpha) &= \rho_{k,\bar\eta,\bar\eta'}^{-1}(S_{k,\bar\eta',m}(\alpha)).
\end{align}
\end{lemma}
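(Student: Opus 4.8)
The plan is to prove all three statements simultaneously by induction on $k$, the base case $k=0$ being immediate since $\rho_{0,(),()}=\id_{Z_{0,()}}$ and $Z_{0,()}=CX_0$.

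For (i), recall that $Z_{k,\bar\eta}$ carries the quotient topology induced by the surjection
\[ q\colon (Z_{k-1,\bar\eta}\times\{0\})\amalg(Y_{k,\bar\eta}\times[0,\eta_k])\amalg(X_k\times[\eta_k,1])\longrightarrow Z_{k,\bar\eta}, \]
so it suffices to check that $\rho_{k,\bar\eta,\bar\eta'}\circ q$ is continuous on each of the three clopen summands. On $X_k\times[\eta_k,1]$ this composite is a corestriction of the corresponding quotient map for $Z_{k,\bar\eta'}$; on $Z_{k-1,\bar\eta}\times\{0\}$ it is $\rho_{k-1,\bar\eta,\bar\eta'}$, continuous by the inductive hypothesis, followed by the canonical map $Z_{k-1,\bar\eta'}\to Z_{k,\bar\eta'}$; and on $Y_{k,\bar\eta}\times[0,\eta_k]$ I would split into the two closed pieces $Y_{k,\bar\eta}\times[\eta_k',\eta_k]$, on which the composite is $((x_k,z_{k-1}),t)\mapsto(x_k,t)$ followed by a quotient map, and $Y_{k,\bar\eta}\times[0,\eta_k']$, on which it factors through the continuous map $((x_k,z_{k-1}),t)\mapsto((x_k,\rho_{k-1,\bar\eta,\bar\eta'}(z_{k-1})),t)$ into $Y_{k,\bar\eta'}\times[0,\eta_k']$ (using the inductive hypothesis together with part (iii) at level $k-1$ to see that the image lands in $Y_{k,\bar\eta'}$), followed by the quotient map. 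These two formulas agree on the overlap $Y_{k,\bar\eta}\times\{\eta_k'\}$ — both give the class of $(x_k,\eta_k')$, by relation (2) in the definition of $Z_{k,\bar\eta'}$ — so the pasting lemma for closed covers applies and continuity follows.

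For (ii), I would again induct on $k$ and verify the identity $\rho_{k,\bar\eta,\bar\eta''}=\rho_{k,\bar\eta',\bar\eta''}\circ\rho_{k,\bar\eta,\bar\eta'}$ on each of the pieces $X_k\times[\eta_k,1]$, $Y_{k,\bar\eta}\times[\eta_k',\eta_k)$, $Y_{k,\bar\eta}\times[\eta_k'',\eta_k')$, $Y_{k,\bar\eta}\times(0,\eta_k'')$ and $Z_{k-1,\bar\eta}\times\{0\}$ of $Z_{k,\bar\eta}$, in each case just reading off which of the four cases (i), (ii), (ii)$'$, (iii) in the definition of $\rho$ applies at each step; the only non-formal input is the inductive hypothesis $\rho_{k-1,\bar\eta',\bar\eta''}\circ\rho_{k-1,\bar\eta,\bar\eta'}=\rho_{k-1,\bar\eta,\bar\eta''}$, which enters via \eqref{eq:rhoDefii'} on $Y_{k,\bar\eta}\times(0,\eta_k'')$ and via \eqref{eq:rhoDefiv} on $Z_{k-1,\bar\eta}\times\{0\}$. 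For (iii), the key point — again proved by induction on $k$ from inspection of the cases defining $\rho$ — is that $\rho_{k,\bar\eta,\bar\eta'}$ preserves the \emph{top component}: writing $z_k=(v_{k_1},\dots,v_{k_p})$ as in Remark \ref{rmk:ZAltDesc}, the tuple $\rho_{k,\bar\eta,\bar\eta'}(z_k)$ again ends in $v_{k_p}$, at the same level $k_p$ (in cases (i) and (ii) the image is the singleton $(v_k)$ at level $k$; in case (ii)$'$ the entry $v_k$ is retained and $\rho_{k-1}$ is applied to the remaining entries; in case (iii) the whole tuple is fed into $\rho_{k-1}$). By Remark \ref{rmk:ZAltDesc}, $z_k\in S_{k,\bar\eta,m}(\alpha)$ precisely when $\ev_{v_{k_p}}\circ\sigma_{k_p}\circ\lambda_k^{k_p}$ is equivalent to a subrepresentation of $\alpha$ — a condition on the top component alone — so $z_k\in S_{k,\bar\eta,m}(\alpha)$ if and only if $\rho_{k,\bar\eta,\bar\eta'}(z_k)\in S_{k,\bar\eta',m}(\alpha)$, which is exactly the second equality in \eqref{eq:rhoCommutes}. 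For the first equality it remains to note that $\rho_{k,\bar\eta,\bar\eta'}$ is surjective — established by the same induction, using that each $\theta_k$ is unital so that the sets $S_{k-1,\bar\eta,m_k}(\hat\theta_k(x_k))$ occurring in cases (ii)$'$ and (iii) are non-empty (Lemma \ref{lem:PointwiseApprox} applied to the unit) — whence $\rho_{k,\bar\eta,\bar\eta'}(S_{k,\bar\eta,m}(\alpha))=\rho_{k,\bar\eta,\bar\eta'}\big(\rho_{k,\bar\eta,\bar\eta'}^{-1}(S_{k,\bar\eta',m}(\alpha))\big)=S_{k,\bar\eta',m}(\alpha)$.

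The lemma is essentially bookkeeping; the only step with genuine content is the observation in (iii) that $\rho$ preserves top components, and the only step requiring care is the pasting argument in (i) at the junctions $t=0,\eta_k',\eta_k$, where one must match the recursive definition of $\rho$ against the gluing relations defining $Z_{k,\bar\eta}$ and $Z_{k,\bar\eta'}$. I do not anticipate a serious obstacle.
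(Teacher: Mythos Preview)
Your proposal is correct and, for parts (i) and (ii), takes essentially the same inductive case-analysis approach as the paper --- just written out in considerably more detail than the paper's ``easy inductive arguments''.

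For part (iii) your route differs slightly from the paper's. The paper proves the preimage equation by a direct case match: it restricts both sides to the ranges $[0,\eta_k')$, $[\eta_k',\eta_k)$, and $[\eta_k,1]$ and pairs each case of the definition of $\rho_{k,\bar\eta,\bar\eta'}$ with the corresponding cases (i)$'$, (ii), (iii) of the recursive definition of $S_{k,\bar\eta,m}(\alpha)$. You instead pass through the explicit tuple description of Remark~\ref{rmk:ZAltDesc}, isolating the single conceptual fact that $\rho$ preserves the top entry $v_{k_p}$ of the tuple and observing that membership in $S_{k,\bar\eta,m}(\alpha)$ depends only on that top entry. This is a cleaner way to see \emph{why} the preimage equation holds, though the verification that $\rho$ preserves the top component is itself the same four-case induction. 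One small slip: in your surjectivity argument, the nonemptiness of $S_{k-1,\bar\eta,m_k}(\hat\theta_k(x_k))$ is actually needed to produce preimages of points $(x_k,t)$ with $t\in[\eta_k',\eta_k)$ --- i.e.\ the case corresponding to clause (ii) of the definition of $\rho$, not (ii)$'$ or (iii); in the latter two cases you need the inductive hypothesis for surjectivity (and, in case (ii)$'$, the already-established preimage equation at level $k-1$ to ensure the lifted $z_{k-1}$ lands in $Y_{k,\bar\eta}$). This is a labeling issue, not a gap.
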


\begin{proof}
(i), (ii): These are easy inductive arguments.

(iii)
By induction on $k$; the case $k=0$ is easy.

For the inductive step, let $\alpha \in \mathrm{Hom}(A_k,M_m)$.
By induction, Cases (ii)$'$ and (iii) of the definition of $\rho_{k,\bar\eta,\bar\eta'}$, and Cases (ii) and (iii) of the definition of $S_{k,\bar\eta,m}(\alpha)$, we see that \eqref{eq:rhoCommutes} holds when both sides are restricted to $[0,\eta_k')$.
It is also clear, from Case (i) of the definition of $\rho_{k,\bar\eta,\bar\eta'}$ that \eqref{eq:rhoCommutes} holds when both sides are restricted to $[\eta_k,1]$.

Finally, Case (ii) of the definition of $\rho_{k,\bar\eta,\bar\eta'}$, together with Cases (i)$'$ and (ii) of the definition of $S_{k,\bar\eta,m}(\alpha)$ shows that \eqref{eq:rhoCommutes} holds when both sides are restricted to $[\eta_k',\eta_k)$.
\end{proof}

\begin{lemma}
\label{lem:omegaMuCompat}
Let $\bar\eta,\bar\eta'$ be as above.
Let $\alpha \in \mathrm{Hom}(A_k,M_m)$ and $z_k' \in S_{k,\bar\eta',m}(\alpha)$.
Then
\begin{equation}
\mu_{k,\bar\eta',\alpha,z_k'}(1) =
\sum_{z_k \in \rho^{-1}(\{z_k'\})} \mu_{k,\bar\eta,\alpha,z_k}(1).
\end{equation}
\end{lemma}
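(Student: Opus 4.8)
The plan is to prove this by induction on $k$, following the recursive definitions of $\mu_{k,\bar\eta,\alpha,\cdot}$ from Section~\ref{sec:PointwiseApprox}, of $\rho_{k,\bar\eta,\bar\eta'}$, and of the sets $S_{k,\bar\eta,m}$ and $Z_{k,\bar\eta}$. Write $\rho=\rho_{k,\bar\eta,\bar\eta'}$. First I would note that, since $z_k'\in S_{k,\bar\eta',m}(\alpha)$, Lemma~\ref{lem:rhoFacts}(iii) gives $\rho^{-1}(\{z_k'\})\subseteq\rho^{-1}(S_{k,\bar\eta',m}(\alpha))=S_{k,\bar\eta,m}(\alpha)$, so that every term $\mu_{k,\bar\eta,\alpha,z_k}(1)$ in the sum is defined. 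The base case $k=0$ is trivial: then $\bar\eta=\bar\eta'=()$ and $\rho_{0,(),()}=\id$, so both sides are $\mu_{0,(),\alpha,z_0'}(1)$. For the inductive step I would fix a decomposition
\[ \alpha = \Ad(u)\circ\diag(\ev_{v_1}\circ\sigma_k,\dots,\ev_{v_p}\circ\sigma_k,\beta\circ\lambda_k), \]
with $v_1,\dots,v_p\in CX_k|_{(0,1]}$ and $\beta\in\Hom(A_{k-1},M_{m'})$, assuming the $v_i$ distinct (otherwise one carries multiplicities along, exactly as in the proof of Lemma~\ref{lem:PointwiseApprox}). Then I would split into cases according to which of the pieces of the disjoint decomposition $Z_{k,\bar\eta'}=(Z_{k-1,\bar\eta'}\times\{0\})\amalg(Y_{k,\bar\eta'}\times(0,\eta_k'))\amalg(X_k\times[\eta_k',1))\amalg\{1\}$ contains $z_k'$, subdividing the third piece into $CX_k|_{[\eta_k,1]}$ (where $\rho$ acts as the identity) and the ``strip'' $X_k\times[\eta_k',\eta_k)$.

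If $z_k'\in CX_k|_{[\eta_k,1]}$, then $\rho^{-1}(\{z_k'\})=\{z_k'\}$ and $\mu_{k,\bar\eta,\alpha,z_k'}=\mu_{k,\bar\eta',\alpha,z_k'}$ directly, since both are given by Case (i) of the construction of $\mu$, which references only $\alpha$ and $z_k'$. In the strip case $z_k'=(x_k,t)$ with $\eta_k'\le t<\eta_k$, Case (ii) of the definition of $\rho$ gives $\rho^{-1}(\{z_k'\})=\{((x_k,z_{k-1}),t)\mid z_{k-1}\in S_{k-1,\bar\eta,m_k}(\hat\theta_k(x_k))\}$; writing $\mu'\colon M_{m_k}\to M_m$ for the embedding used in Case (i) of the construction of $\mu_{k,\bar\eta',\alpha,z_k'}$ — which is precisely the map $\mu'$ that also appears in Case (ii) of the construction of each $\mu_{k,\bar\eta,\alpha,((x_k,z_{k-1}),t)}$, as it depends only on $\alpha$ and $(x_k,t)$ — one gets
\[ \sum_{z_k\in\rho^{-1}(\{z_k'\})}\mu_{k,\bar\eta,\alpha,z_k}(1) = \mu'\Big(\sum_{z_{k-1}\in S_{k-1,\bar\eta,m_k}(\hat\theta_k(x_k))}\mu_{k-1,\bar\eta,\hat\theta_k(x_k),z_{k-1}}(1)\Big) = \mu'(1) = \mu_{k,\bar\eta',\alpha,z_k'}(1), \]
the middle equality being Lemma~\ref{lem:PointwiseApprox} applied to $\hat\theta_k(x_k)\in\Hom(A_{k-1},M_{m_k})$.

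If $z_k'=((x_k,z_{k-1}'),t)\in Y_{k,\bar\eta'}\times(0,\eta_k')$, then Case (ii)$'$ of the definition of $\rho$ gives $\rho^{-1}(\{z_k'\})=\{((x_k,z_{k-1}),t)\mid z_{k-1}\in\rho_{k-1,\bar\eta,\bar\eta'}^{-1}(\{z_{k-1}'\})\}$, where $z_{k-1}'\in S_{k-1,\bar\eta',m_k}(\hat\theta_k(x_k))$ because $z_k'\in S_{k,\bar\eta',m}(\alpha)$ forces $(x_k,z_{k-1}')\in Y_{k,\bar\eta'}$; with $\mu'$ the embedding appearing in Case (ii) of the construction of both $\mu_{k,\bar\eta,\alpha,\cdot}$ and $\mu_{k,\bar\eta',\alpha,\cdot}$ (again the same map), Case (ii) of the $\mu$-construction followed by the inductive hypothesis at level $k-1$ (applied to $\hat\theta_k(x_k)$ and $z_{k-1}'$) gives
\[ \sum_{z_k\in\rho^{-1}(\{z_k'\})}\mu_{k,\bar\eta,\alpha,z_k}(1) = \mu'\Big(\sum_{z_{k-1}\in\rho_{k-1,\bar\eta,\bar\eta'}^{-1}(\{z_{k-1}'\})}\mu_{k-1,\bar\eta,\hat\theta_k(x_k),z_{k-1}}(1)\Big) = \mu'\big(\mu_{k-1,\bar\eta',\hat\theta_k(x_k),z_{k-1}'}(1)\big) = \mu_{k,\bar\eta',\alpha,z_k'}(1). \]
Finally, if $z_k'=(z_{k-1}',0)$, then Case (iii) of the definition of $\rho$ gives $\rho^{-1}(\{z_k'\})=\{(z_{k-1},0)\mid z_{k-1}\in\rho_{k-1,\bar\eta,\bar\eta'}^{-1}(\{z_{k-1}'\})\}$, and in Case (iii) of the $\mu$-construction the largest subrepresentation of $\alpha$ factoring through $\lambda_k$ is $\beta\circ\lambda_k$ (no $\ev_{v_i}\circ\sigma_k$ with $v_i\in CX_k|_{(0,1]}$ factors through $\lambda_k$), so the relevant embedding $\mu'\colon M_{m'}\to M_m$ is $\Ad(u)$ applied to the inclusion of the last block and is independent of $\bar\eta$; one also checks $z_{k-1}'\in S_{k-1,\bar\eta',m'}(\beta)$, which follows from $z_k'\in S_{k,\bar\eta',m}(\alpha)$ together with monotonicity of $S_{k-1,\bar\eta',\cdot}$ under passing to subrepresentations (see Remark~\ref{rmk:ZAltDesc}). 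The inductive hypothesis at level $k-1$ applied to $\beta$ and $z_{k-1}'$ then finishes this case exactly as the previous one.

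The main obstacle is the bookkeeping: one must verify that the embedding maps $\mu'$, and in Case (iii) the choice of ``largest subrepresentation'', appearing in the construction of $\mu_{k,\bar\eta,\alpha,\cdot}$ depend only on $\alpha$ and on the relevant point data and not on $\bar\eta$, so that the whole $\bar\eta$-dependence is funneled through the recursively defined maps $\mu_{k-1,\bar\eta,\cdot,\cdot}$. Once this is pinned down (together with the containment $\rho^{-1}(\{z_k'\})\subseteq S_{k,\bar\eta,m}(\alpha)$ and its level-$(k-1)$ analogues, all coming from Lemma~\ref{lem:rhoFacts}(iii)), each case collapses either to the partition-of-unity statement of Lemma~\ref{lem:PointwiseApprox} — which supplies the genuinely new content, in the strip case where a coordinate $z_{k-1}$ is summed out — or to the inductive hypothesis at level $k-1$.
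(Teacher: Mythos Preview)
Your proposal is correct and follows essentially the same approach as the paper's own proof: induction on $k$ with the same four-case split (the identity region $CX_k|_{[\eta_k,1]}$, the strip $[\eta_k',\eta_k)$ handled via the partition-of-unity statement in Lemma~\ref{lem:PointwiseApprox}, and the two remaining pieces reduced to the inductive hypothesis at level $k-1$). Your additional remarks about $\bar\eta$-independence of the embeddings $\mu'$ and the containment $\rho^{-1}(\{z_k'\})\subseteq S_{k,\bar\eta,m}(\alpha)$ make explicit what the paper leaves implicit.
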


\begin{proof}
We prove this by induction on $k$.
In the base case, $k=0$, it is trivial since $\bar\eta=()=\bar\eta'$.

For $k>0$, consider cases.

(i) If $z_k' \in Z_{k,\bar\eta'}|_{[\eta_k,1]}=Z_{k,\bar\eta}|_{[\eta_k,1]}$ (see Case (i) of the definition of $\rho_{k,\bar\eta,\bar\eta'}$) then $\rho_{k,\bar\eta,\bar\eta'}^{-1}(\{z_k'\}) = \{z_k\}$ and $\mu_{k,\bar\eta',\alpha,z_k'}=\mu_{k,\bar\eta,\alpha,z_k}$, so the result holds in this case.

(ii) If $z_k'=(x_k,t) \in Z_{k,\bar\eta'}|_{[\eta_k',\eta_k)}$, then by Case (ii) of the definition of $\rho_{k,\bar\eta,\bar\eta'}$,
\begin{equation}
\label{eq:muRhoSumSetup1}
\rho_{k,\bar\eta,\bar\eta'}^{-1}(\{z_k'\}) = \{((x_k,z_{k-1}),t) \mid z_{k-1} \in S_{k,\bar\eta,m_k}(\hat\theta_k(x_k))\}.
\end{equation}
Let $\alpha'\colon A_k \to M_m$ be the maximal subrepresentation of $\alpha$ that factors through $\ev_{(x_k,t)} \circ \sigma_k$.
Then by \eqref{eq:muDef1},
\begin{equation}
\alpha' = \mu_{k,\bar\eta',\alpha,z_k'} \circ \ev_{(x_k,t)} \circ \sigma_k,
\end{equation}
and so, by \eqref{eq:muDef2}, for $z_{k-1} \in S_{k,\bar\eta,m_k}(\hat\theta_k(x_k))$,
\begin{equation}
\label{eq:muRhoSumSetup3}
\mu_{k,\bar\eta,\alpha,((x_k,z_{k-1},t)} = \mu_{k,\bar\eta',\alpha,z_k'} \circ \mu_{k-1,\bar\eta,\hat\theta_k(x_k),z_{k-1}}.
\end{equation}
By the final statement of Lemma \ref{lem:PointwiseApprox},
\begin{equation}
1_{m_k} = \sum_{z_{k-1} \in S_{k,\bar\eta,m_k}(\hat\theta_k(x_k))} \mu_{k-1,\bar\eta,\hat\theta_k(x_k),z_{k-1}}(1),
\end{equation}
and thus,
\begin{eqnarray}
\notag
&& \mu_{k,\bar\eta',\alpha,z_k}(1_{m_k}) \\
\notag
&=& \sum_{z_{k-1} \in S_{k,\bar\eta,m_k}(\hat\theta_k(x_k))} \\
\notag
&&\qquad \mu_{k,\bar\eta',\alpha,z_k} \circ \mu_{k-1,\bar\eta,\hat\theta_k(x_k),z_{k-1}}(1) \\
\notag
&\stackrel{\eqref{eq:muRhoSumSetup3}}=& 
\sum_{z_{k-1}} \mu_{k,\bar\eta,\alpha,((x_k,z_{k-1}),t}(1) \\
&\stackrel{\eqref{eq:muRhoSumSetup1}}=&
\sum_{z_k \in \rho_{k,\bar\eta,\bar\eta'}^{-1}(\{z_k'\})} \mu_{k,\bar\eta,\alpha,z_k,t}(1),
\end{eqnarray}
as required.

(iii)
If $z_k'=((x_k,z_{k-1}'),t) \in Y_{k,\bar\eta'} \times (0,\eta_k')$ then, defining $\mu'$ as in \eqref{eq:muDefmu'Def}, we have by \eqref{eq:muDef2}
\begin{equation} \mu_{k,\bar\eta',z_k'} = \mu' \circ \mu_{k-1,\bar\eta',\hat\theta_k(x_k),z_{k-1}'}. \end{equation}
Every $z_k \in \rho_{k,\bar\eta,\bar\eta'}^{-1}(z_k')$ is equal to $((x_k,z_{k-1}),t)$ for some $z_{k-1} \in \rho_{k-1,\bar\eta,\bar\eta'}^{-1}(z_{k-1}')$, and for such $z_k$ we likewise have
\begin{equation} \mu_{k,\bar\eta,z_k} := \mu' \circ \mu_{k-1,\bar\eta,\hat\theta_k(x_k),z_{k-1}}. \end{equation}
Therefore, in this case, the conclusion follows from the inductive hypothesis.

(iv) If $z_k'=(z_{k-1}',0) \in Z_{k-1,\bar\eta'} \times \{0\}$ then defining $\alpha'$ and $\mu'$ as in Case (iii) of the definition of $\mu_{k,\bar\eta,\alpha,z_k}$, we have by \eqref{eq:muDef3},
\begin{equation} \mu_{k,\bar\eta',z_k'} := \mu' \circ \mu_{k-1,\bar\eta',\alpha',z_{k-1}'}. \end{equation}
Every $z_k \in \rho_{k,\bar\eta,\bar\eta'}^{-1}(z_k')$ is of the form $(z_{k-1},0)$ for $z_{k-1} \in \rho_{k-1,\bar\eta,\bar\eta'}^{-1}(z_{k-1}')$, and for such $z_k$ we likewise have
\begin{equation} \mu_{k,\bar\eta,z_k} := \mu' \circ \mu_{k-1,\bar\eta,\alpha',z_{k-1}}. \end{equation}
So once again, in this case, the conclusion follows from the inductive hypothesis.
\end{proof}

\subsection{Open covers}
\label{sec:OpenCover}

Now fix $\eta \in (0,1/2)$ and \textbf{let us use $\bar\eta$ or $\overline\eta^k$ to denote $(\eta,\dots,\eta)$}.
Further, write 
\begin{equation} \overline{2\eta}^{k} := (2\eta,\dots,2\eta) \in (0,1)^k \end{equation}
and
\begin{equation} \overline\eta^{k'}\overline{2\eta}^{k''} := (\eta,\dots,\eta,2\eta,\dots,2\eta) \in (0,1)^{k'+k''}, \end{equation}
where $\eta$ appears $k'$ times and $2\eta$ appears $k''$ times.

We shall construct open covers $\mathcal U_{k}$ of $Z_{k,\bar\eta}$.
These will depend on a prescribed finite subset $\mathcal F$ of $A_{k_0,\bar\eta}$ and a tolerance $\dl>0$, although this dependence is suppressed in the notation.
For simplicity, all open covers will be hereditary, i.e., closed under taking open subsets.

First, for each $k$, choose a hereditary open cover $\mathcal V_k$ of $CX_k$ such that, for each $a \in \mathcal F$, the function $\sigma_k \circ \lambda_{k_0}^k(a)$ is approximately constant, to within $\dl$, on each set in $\mathcal V_k$, i.e., for $V \in \mathcal V_k$ and $v_k,v_k' \in V$,
\begin{equation}
\label{eq:VkApproxConst}
\sigma_k \circ \lambda_{k_0}^k(a)(v_k) \approx_\dl \sigma_k \circ \lambda_{k_0}^k(a)(v_k').
\end{equation}

Let us define $\mathcal U_k$ recursively over $k$.
For $k=0$, set $\mathcal U_0 := \mathcal V_0$.

For $k\geq 1$, set $\mathcal U^{(1)}_{k}$ to the set of all open sets $U \in \mathcal V_k$ which are subsets of $CX_k|_{(\eta,1]}$.
Identifying $CX_k|_{(\eta,1]}$ with $Z_{k,\bar\eta}|_{(\eta,1]}$, let us view $\mathcal U^{(1)}_{k}$ as a family of open subsets of $Z_{k,\bar\eta}$.

Set $\mathcal U^{(2)}_{k}$ equal to the collection of all open sets $U \subset Z_{k,\bar\eta}|_{[0,2\eta)}$ such that the following decomposition holds:
We may decompose
\begin{equation}
\label{eq:UkDecomp}
\rho_{k,(\eta,\dots,\eta,2\eta),\bar\eta}^{-1}(U)
=
V_{1} \amalg \cdots \amalg V_{p} \end{equation}
such that $V_1,\dots,V_p$ are open in $Z_{k,(\eta,\dots,\eta,2\eta)}$, and for each $i=1,\dots,p$,
\begin{equation}
\label{eq:UkDecompProperty}
V_{i}(0) \cup 
\{z_{k-1} \mid ((x_k,z_{k-1}),t) \in V_i, \text{some }x_k \in X_k, t \in (0,2\eta)\} \in \mathcal U_{k-1}.
\end{equation}

Now set
\begin{equation} \mathcal U_{k} := \mathcal U^{(1)}_{k} \cup \mathcal U^{(2)}_{k}. \end{equation}

\begin{lemma}
\label{lem:UkOpenCover}
For $k\geq 1$, $\mathcal U^{(1)}_k$ (respectively $\mathcal U^{(2)}_k$) is a hereditary open cover of $Z_{k,\bar\eta}|_{(\eta,1]}$ (respectively $Z_{k,\bar\eta}|_{[0,2\eta)}$).
Therefore, $\mathcal U_k$ is a hereditary open cover of $Z_{k,\bar\eta}$.
\end{lemma}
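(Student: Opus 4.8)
The plan is to prove, by induction on $k$, the two assertions about $\mathcal U^{(1)}_k$ and $\mathcal U^{(2)}_k$ separately, since their union then covers all of $Z_{k,\bar\eta} = Z_{k,\bar\eta}|_{(\eta,1]} \cup Z_{k,\bar\eta}|_{[0,2\eta)}$ (here $\eta < 1/2$, so $(\eta,1]$ and $[0,2\eta)$ overlap and together exhaust $[0,1]$). Hereditarity of both families is immediate from their definitions: $\mathcal U^{(1)}_k$ consists of sets in $\mathcal V_k$ contained in $CX_k|_{(\eta,1]}$, and $\mathcal V_k$ is hereditary; for $\mathcal U^{(2)}_k$, if $U' \subseteq U$ is open and $U$ admits a decomposition as in \eqref{eq:UkDecomp}, then intersecting each $V_i$ with $\rho^{-1}_{k,(\eta,\dots,\eta,2\eta),\bar\eta}(U')$ gives a decomposition of $\rho^{-1}_{k,(\eta,\dots,\eta,2\eta),\bar\eta}(U')$, using that $\mathcal U_{k-1}$ is hereditary to see that property \eqref{eq:UkDecompProperty} persists (the relevant set shrinks to an open subset of the one for $V_i$). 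So the real content is that $\mathcal U^{(1)}_k$ covers $Z_{k,\bar\eta}|_{(\eta,1]}$ and $\mathcal U^{(2)}_k$ covers $Z_{k,\bar\eta}|_{[0,2\eta)}$.

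For $\mathcal U^{(1)}_k$: since $Z_{k,\bar\eta}|_{(\eta,1]}$ is identified with $CX_k|_{(\eta,1]}$ and $\mathcal V_k$ is an open cover of $CX_k$, the sets in $\mathcal V_k$ that happen to lie inside the open set $CX_k|_{(\eta,1]}$ cover it (again using hereditarity of $\mathcal V_k$: intersect each member of $\mathcal V_k$ with $CX_k|_{(\eta,1]}$). That part is essentially immediate.

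For $\mathcal U^{(2)}_k$: this is the substantive step. Fix a point $z_k \in Z_{k,\bar\eta}|_{[0,2\eta)}$; I must produce $U \in \mathcal U^{(2)}_k$ containing it. The idea is to work upstairs in $Z_{k,(\eta,\dots,\eta,2\eta)}$, where the last coordinate has its ``full'' length $2\eta$ so that the gluing at level $k$ has not yet been collapsed on $[0,2\eta)$, pick a preimage $\tilde z$ of $z_k$ under $\rho := \rho_{k,(\eta,\dots,\eta,2\eta),\bar\eta}$, apply the inductive hypothesis to the $Z_{k-1,\bar\eta}$-coordinate of $\tilde z$ to get a member $U_{k-1} \in \mathcal U_{k-1}$, and then ``thicken'' it over $X_k \times [0,2\eta)$ to build an open set $\tilde V$ in $Z_{k,(\eta,\dots,\eta,2\eta)}$ of the product-like form whose image under $\rho$ is the desired $U$. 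Concretely, depending on whether $z_k$ lies in the $Z_{k-1,\bar\eta}\times\{0\}$ part, the $Y_{k,\bar\eta}\times(0,\eta)$ part (where $\rho$ unfolds via Case (ii)$'$), or the $X_k \times [\eta,2\eta)$ part (where $\rho$ acts via Case (ii)), one chooses $\tilde V$ to be $\{((x_k,z_{k-1}),t) : z_{k-1} \in U_{k-1}, \ t \in [0,2\eta), \ x_k \in O\}$ for a suitable open $O \subseteq X_k$, arranged so that its fibre-collapsing set in \eqref{eq:UkDecompProperty} is exactly $U_{k-1}$ (or a single member of $\mathcal U_{k-1}$ from the decomposition of $U_{k-1}$, which is why \eqref{eq:UkDecomp} allows a disjoint union of the $V_i$). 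One then checks that $U := \rho(\tilde V)$ is open in $Z_{k,\bar\eta}$ — using that $\rho$ is continuous, closed (being a quotient-type map between compact Hausdorff spaces) and that $\tilde V$ is saturated for the fibres of $\rho$ on $[0,2\eta)$ by construction — and that $\rho^{-1}(U)$ decomposes as required.

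The main obstacle I anticipate is the bookkeeping in this last paragraph: verifying that the thickened set $\tilde V$ is genuinely open in $Z_{k,(\eta,\dots,\eta,2\eta)}$ near the gluing loci (the identifications $(z_{k-1},0)\sim((x_k,z_{k-1}),0)$ and $((x_k,z_{k-1}),2\eta)\sim(x_k,2\eta)$), and that its $\rho$-image is open and saturated so that \eqref{eq:UkDecomp}–\eqref{eq:UkDecompProperty} hold with the $V_i$ taken to be the components coming from a decomposition of $U_{k-1} \in \mathcal U_{k-1}$ furnished by the inductive hypothesis (recall $\mathcal U_{k-1} = \mathcal U^{(1)}_{k-1} \cup \mathcal U^{(2)}_{k-1}$, and only the $\mathcal U^{(2)}_{k-1}$ members carry a nontrivial decomposition, which must be propagated). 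Handling the three location cases uniformly, and in particular checking that a point in $X_k \times [\eta, 2\eta)$ — which downstairs lies in $Z_{k,\bar\eta}|_{[\eta,2\eta)}$ where the fibre over $X_k$ is a single point but upstairs fans out over $S_{k-1,\bar\eta,m_k}(\hat\theta_k(x_k))$ via $\rho$ Case (ii) — gets a neighbourhood whose preimage still decomposes with each piece's collapsing set in $\mathcal U_{k-1}$, is where the argument needs the most care; the continuity of $S_{k-1,\bar\eta,m_k}$ (Lemma \ref{lem:SContinuous}) and of $\hat\theta_k$ will be invoked to choose $O \subseteq X_k$ small enough that $S_{k-1,\bar\eta,m_k}(\hat\theta_k(x_k'))$ stays inside a single member of $\mathcal U_{k-1}$ (or a controlled union of members coming from $\mathcal V_{k-1}$-refinement) for all $x_k' \in O$.
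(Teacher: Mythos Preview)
Your proposal is correct and follows essentially the same approach as the paper: induction on $k$, the $\mathcal U^{(1)}_k$ case being immediate from hereditarity of $\mathcal V_k$, and for $\mathcal U^{(2)}_k$ a case analysis on the location of $z_k$ in $Z_{k,\bar\eta}|_{[0,2\eta)}$ using the inductive hypothesis and continuity of $S_{k-1,\bar\eta,m_k}\circ\hat\theta_k$.

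One clarification: in the case $z_k \in X_k\times[\eta,2\eta)$ (and likewise for $z_k \in Y_{k,\bar\eta}\times(0,\eta)$), you should not aim for a \emph{single} $U_{k-1}\in\mathcal U_{k-1}$ containing all of $S_{k-1,\bar\eta,m_k}(\hat\theta_k(x_k))=\{z_{k-1}^{(1)},\dots,z_{k-1}^{(p)}\}$; these points may be far apart. The paper picks \emph{disjoint} $W_1,\dots,W_p\in\mathcal U_{k-1}$ with $z_{k-1}^{(i)}\in W_i$, then uses continuity of $S_{k-1,\bar\eta,m_k}\circ\hat\theta_k$ to find a neighbourhood $\hat U$ of $(x_k,t)$ in $CX_k$ with $S_{k-1,\bar\eta,m_k}(\hat\theta_k(x_k'))\subseteq W_1\cup\cdots\cup W_p$ for all $(x_k',t')\in\hat U$, and the resulting $U$ has $\rho^{-1}(U)$ decomposing into $p$ pieces, one per $W_i$. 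You do gesture at this (``a controlled union of members''), but your initial framing around a single preimage $\tilde z$ and a single $U_{k-1}$ obscures it; commit to the multi-$W_i$ picture from the outset and the bookkeeping you worry about becomes straightforward.
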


\begin{proof}
Using the fact that $\mathcal V_k$ is a hereditary open cover of $CX_k$, we see that $\mathcal U^{(1)}_k$ is an open cover of $Z_{k,\bar\eta}|_{(\eta,1]}$.
The rest of the lemma is proven by induction; at the outset, we know that $\mathcal U_0$ is an open cover of $Z_{0,()}$.

Now let us assume that $\mathcal U_{k-1}$ is an open cover of $Z_{k-1,\bar\eta}$, and prove that $\mathcal U^{(2)}_k$ is an open cover of $Z_{k,\bar\eta}|_{[0,2\eta)}$.
Fix $z_k \in Z_{k,\bar\eta}|_{[0,2\eta)}$, and let us argue that it is contained in a set in $\mathcal U^{(2)}_k$ in cases as follows:

(i)
If $z_k=(x_k,t) \in X_k \times [\eta,2\eta)$, write $S_{k,\bar\eta,m_k}(\hat \theta_k(x)) = \{z_{k-1}^{(1)},\dots,z_{k-1}^{(p)}\} \subset Z_{k-1,\bar\eta}$.
By the inductive hypothesis, choose $W_1,\dots,W_p \in \mathcal U_{k-1}$ that are disjoint, with $z_{k-1}^i \in W_i$ for each $i$.
By continuity of $S_{k,\bar\eta,m_k} \circ \hat \theta_k$, there is a neighbourhood $\hat U$ of $(x_k,t)$ in $CX_k$ such that
\begin{equation} \bigcup_{(x_k',t') \in \hat U} S_{k,\bar\eta,m_k}(\hat\theta_k(x_k',t')) \subseteq W_1\cup \cdots \cup W_p. \end{equation}
Set
\begin{align}
\notag
U:= &\{((x_k',z_{k-1}),t) \in Y_{k,\bar\eta} \times (0,\eta) \mid (x_k',t) \in \hat U\} \\
&\quad \cup (\hat U \cap X_k \times [\eta,2\eta)). \end{align}
Then, by construction, $U \in \mathcal U^{(2)}_k$ ($p_U = p$ and the sets $V_{U,1},\dots,V_{U,p_U}$ are preimages of $W_1,\dots,W_p$) and $(x_k,t) \in U$.

(ii)
If $z_k=((x_k,z_{k-1}),t) \in Y_{k,\bar\eta} \times (0,\eta)$, we do the exact same thing as in (i).

(iii)
For $(z_{k-1},0) \in Z_{k-1,\bar\eta}$, by the inductive hypothesis, there exists a neighbourhood $W$ of $z_{k-1}$ such that $W \in \mathcal U_{k-1}$.
Set
\begin{equation} U:= W \times \{0\} \cup \{((x_k,z'_{k-1}),t) \in Y_{k,\bar\eta} \times [0,\eta) \mid z'_{k-1} \in W\}. \end{equation}
By construction, $U \in \mathcal U^{(2)}_k$ ($p_U = 1$) and $(z_{k-1},0) \in U$.
\end{proof}

\begin{lemma}
\label{lem:OpenCoverFact}
Let $U \in \mathcal U_k$.
Then we have a decomposition 
\begin{equation} \rho_{k,\overline{2\eta}^k,\overline\eta^k}^{-1}(U) = W_{1} \amalg \cdots \amalg W_{q} \end{equation}
where, for each $i=1,\dots,q$, there is $k(i) \in \{0,\dots,k\}$ such that the set
\begin{equation} \hat W_{i} := \rho_{k,\overline{2\eta}^k,\overline \eta^{k(i)}\overline{2\eta}^{k-k(i)}}(W_{i}) \end{equation}
is open component of $\rho_{k, \overline\eta^{k(i)}\overline{2\eta}^{k-k(i)},\overline\eta^k}^{-1}(U)$, 
\begin{equation} \hat W_{i} \subseteq Z^{**}_{k,\overline\eta^{k(i)}\overline{2\eta}^{k-k(i)},k(i)}, \end{equation}
and for $a \in \mathcal F$ and $z_k,z_k' \in \hat W_i$
\begin{equation}
\label{eq:OpenCoverFactnuApprox}
\nu_{k,\overline\eta^{k(i)}\overline{2\eta}^{k-k(i)},z_k}(\lambda_{k_0}^k(a) \approx_\e 
\nu_{k,\overline\eta^{k(i)}\overline{2\eta}^{k-k(i)},z_k'}(\lambda_{k_0}^k(a))
\end{equation}
in $M_{m_{k(i)}}$.
\end{lemma}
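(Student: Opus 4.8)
The plan is to induct on $k$, treating separately the two cases $U \in \mathcal U^{(1)}_k$ and $U \in \mathcal U^{(2)}_k$; the case $U\in\mathcal U^{(1)}_k$ also serves as the base case $k=0$ (where $\mathcal U_0=\mathcal V_0$ and every $U$ is of ``$\mathcal U^{(1)}$-type''). Throughout, the key structural observation — read off directly from the definition of $\rho_{k,\bar\eta,\bar\eta'}$ — is that $\rho_{k,\bar\eta,\bar\eta'}$ is the identity on $CX_k|_{[\eta_k,1]}$, ``fans out'' the band $X_k\times(\eta_k',\eta_k)$ along the $Y_k$-fibres, and, on the region $|_{[0,\eta_k')}$, acts purely by applying $\rho_{k-1,\cdot,\cdot}$ to the $Z_{k-1}$-coordinate. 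In particular, when the top parameter is \emph{unchanged}, $\rho_{k,\bar\eta,\bar\eta'}$ on $|_{[0,\eta_k)}$ is just the fibrewise application of $\rho_{k-1,\bar\eta|,\bar\eta'|}$ (writing $\bar\eta|$ for the first $k-1$ entries), and the continuous ``forget-the-top-level'' surjection $\pi\colon Z_{k,\bar\eta}|_{[0,\eta_k)}\to Z_{k-1,\bar\eta|}$, $(z_{k-1},0)\mapsto z_{k-1}$, $((x_k,z_{k-1}),t)\mapsto z_{k-1}$, intertwines these $\rho$-maps. Lemma~\ref{lem:rhoFacts} will be used repeatedly to compose $\rho$'s.

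For $U\in\mathcal U^{(1)}_k$ (so $U\in\mathcal V_k$ and $U\subseteq CX_k|_{(\eta,1]}=Z_{k,\overline\eta^k}|_{(\eta,1]}$) I would take the trivial decomposition: $q=1$, $k(1):=k$, and $W_1:=\rho_{k,\overline{2\eta}^k,\overline\eta^k}^{-1}(U)$. Since the $\rho$-maps fix $CX_k|_{[2\eta,1]}$ and merely fan out $X_k\times(\eta,2\eta)$, one checks $\hat W_1=\rho_{k,\overline{2\eta}^k,\overline\eta^k}(W_1)=U$, which is a (trivially clopen) component of $\rho_{k,\overline\eta^k,\overline\eta^k}^{-1}(U)=U$, and $U\subseteq Z^{**}_{k,\overline\eta^k,k}=CX_k|_{(\eta,1]}$. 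For $z_k\in U$ one has $\nu_{k,\overline\eta^k,z_k}=\ev_{z_k}\circ\sigma_k$, so $\nu_{k,\overline\eta^k,z_k}(\lambda_{k_0}^k(a))=\big(\sigma_k\circ\lambda_{k_0}^k(a)\big)(z_k)$ and \eqref{eq:OpenCoverFactnuApprox} becomes exactly \eqref{eq:VkApproxConst} (taking the tolerance $\dl$ used to define $\mathcal U_k$ to be at most $\e$; no error compounds through the recursion, so this is harmless).

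For $U\in\mathcal U^{(2)}_k$ the plan is to peel off the top level. By definition of $\mathcal U^{(2)}_k$, with $\overline\eta^{k-1}\overline{2\eta}=(\eta,\dots,\eta,2\eta)$, we have $\rho_{k,\overline\eta^{k-1}\overline{2\eta},\overline\eta^k}^{-1}(U)=V_1\amalg\cdots\amalg V_p$ with each $V_i$ open (hence clopen) in $Z_{k,\overline\eta^{k-1}\overline{2\eta}}$, contained in $|_{[0,2\eta)}$ (so every point of $V_i$ carries a $Z_{k-1}$-coordinate, and $\pi$ is defined on $V_i$ and on $\rho$-preimages of it), and with base set $B_i:=V_i(0)\cup\{z_{k-1}\mid((x_k,z_{k-1}),t)\in V_i\}\in\mathcal U_{k-1}$. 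Applying the inductive hypothesis to each $B_i$ gives $\rho_{k-1,\overline{2\eta}^{k-1},\overline\eta^{k-1}}^{-1}(B_i)=\coprod_j W_{i,j}$ with data $k(i,j)\le k-1$, $\bar\mu'_{i,j}:=\overline\eta^{k(i,j)}\overline{2\eta}^{\,k-1-k(i,j)}$, $\hat W_{i,j}=\rho_{k-1,\overline{2\eta}^{k-1},\bar\mu'_{i,j}}(W_{i,j})$ a clopen component of $\rho_{k-1,\bar\mu'_{i,j},\overline\eta^{k-1}}^{-1}(B_i)$ inside $Z^{**}_{k-1,\bar\mu'_{i,j},k(i,j)}$, with the $\nu$-estimate holding on it. I would then set $W_{(i,j)}:=\rho_{k,\overline{2\eta}^k,\overline\eta^{k-1}\overline{2\eta}}^{-1}(V_i)\cap\pi^{-1}(W_{i,j})$; since $\pi$ carries $\rho_{k,\overline{2\eta}^k,\overline\eta^{k-1}\overline{2\eta}}^{-1}(V_i)$ into $\rho_{k-1,\overline{2\eta}^{k-1},\overline\eta^{k-1}}^{-1}(B_i)=\coprod_j W_{i,j}$, these are open, pairwise disjoint, and (using Lemma~\ref{lem:rhoFacts}(ii) to factor $\rho_{k,\overline{2\eta}^k,\overline\eta^k}=\rho_{k,\overline\eta^{k-1}\overline{2\eta},\overline\eta^k}\circ\rho_{k,\overline{2\eta}^k,\overline\eta^{k-1}\overline{2\eta}}$) union to $\rho_{k,\overline{2\eta}^k,\overline\eta^k}^{-1}(U)$. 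Put $k(i,j)$ as above and $\bar\mu_{i,j}:=\overline\eta^{k(i,j)}\overline{2\eta}^{\,k-k(i,j)}$ (first $k-1$ entries $\bar\mu'_{i,j}$, top entry $2\eta$). One checks that $\hat W_{(i,j)}:=\rho_{k,\overline{2\eta}^k,\bar\mu_{i,j}}(W_{(i,j)})$ equals $\pi^{-1}(\hat W_{i,j})\cap\rho_{k,\bar\mu_{i,j},\overline\eta^{k-1}\overline{2\eta}}^{-1}(V_i)$, which is clopen in $\rho_{k,\bar\mu_{i,j},\overline\eta^k}^{-1}(U)$ (as $\hat W_{i,j}$ is clopen in $\rho_{k-1,\bar\mu'_{i,j},\overline\eta^{k-1}}^{-1}(B_i)$, $V_i$ is clopen in $\rho_{k,\overline\eta^{k-1}\overline{2\eta},\overline\eta^k}^{-1}(U)$, and $\pi$, $\rho$ are continuous); that $\hat W_{(i,j)}\subseteq Z^{**}_{k,\bar\mu_{i,j},k(i,j)}$ is immediate from \eqref{eq:Z**Def} together with $\hat W_{i,j}\subseteq Z^{**}_{k-1,\bar\mu'_{i,j},k(i,j)}$; and the $\nu$-estimate \eqref{eq:OpenCoverFactnuApprox} at level $k$ reduces verbatim to the one at level $k-1$, because Cases (ii), (iii) of the definition of $\nu$ give $\nu_{k,\bar\mu_{i,j},z_k}(\lambda_{k_0}^k(a))=\nu_{k-1,\bar\mu'_{i,j},\pi(z_k)}(\lambda_{k_0}^{k-1}(a))$ for $z_k\in Z^{**}_{k,\bar\mu_{i,j},k(i,j)}$. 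Re-indexing the $W_{(i,j)}$ finishes the induction.

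The main obstacle is purely organizational: keeping straight the several parameter tuples ($\overline\eta^k$, $\overline{2\eta}^k$, $\overline\eta^{k-1}\overline{2\eta}$, and the mixed $\overline\eta^{k(i)}\overline{2\eta}^{\,k-k(i)}$) and the transition maps $\rho$ between the corresponding $Z$-spaces, and verifying rigorously the structural identity that $\rho$ is, on $|_{[0,\eta_k)}$, fibrewise application of the lower-level $\rho$ when the top parameter is unchanged — it is exactly here that the projection $\pi$ and Lemma~\ref{lem:rhoFacts} do the real work. Once these identities are in hand, openness, disjointness, the clopen-component property, the inclusion into $Z^{**}$, and the $\nu$-estimate all follow mechanically from the inductive hypothesis, with no estimate ever being used twice.
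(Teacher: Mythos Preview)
Your proposal is correct and follows essentially the same route as the paper: induction on $k$, with the $\mathcal U^{(1)}_k$ case handled trivially (setting $q=1$, $k(1)=k$) and the $\mathcal U^{(2)}_k$ case handled by peeling off the top level via the decomposition $V_1\amalg\cdots\amalg V_p$, applying the inductive hypothesis to the base sets $B_i\in\mathcal U_{k-1}$, and lifting the resulting pieces back up to level $k$. Your explicit use of the ``forget-the-top-level'' surjection $\pi$ and its intertwining with the $\rho$-maps is a slightly cleaner organizational device than the paper's case-by-case description of $\hat W_{i,j}$, but the content is identical; in particular, your reduction of the $\nu$-estimate to level $k-1$ via $\nu_{k,\bar\mu_{i,j},z_k}\circ\lambda_{k_0}^k=\nu_{k-1,\bar\mu'_{i,j},\pi(z_k)}\circ\lambda_{k_0}^{k-1}$ is exactly what the paper does.
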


\begin{proof}
Let us use induction on $k$.
For $k=0$, the conclusion is trivial (with $q=1$).
For $k\geq 1$, we consider cases as follows.

(i) If $U \in \mathcal U_k^{(1)}$ then by definition, $U \in Z^{**}_{k,\overline\eta^k,k}$.
We may therefore set 
\begin{equation} q=1,\quad W_1=U,\quad k(1)=k. \end{equation}

(ii) If $U \in \mathcal U_k^{(2)}$, then let $V_1,\dots,V_p$ be as in \eqref{eq:UkDecomp}.
Fix $i$ for the moment.
Set
\begin{equation} V_i' :=
V_{i}(0) \cup 
\{z_{k-1} \mid ((x_k,z_{k-1}),t) \in V_i, \text{some }x_k \in X_k, t \in (0,2\eta)\} \in \mathcal U_{k-1}.
\end{equation}
i.e., exactly the set appearing in \eqref{eq:UkDecompProperty}.
By induction, we have the decomposition
\begin{equation}
\rho_{k-1,\overline{2\eta}^{k-1},\overline\eta^{k-1}}^{-1}(V_i') = W_{i,1}' \amalg \cdots \amalg W'_{i,q_i},
\end{equation}
and we can find $k(i,j) \in \{0,\dots,k-1\}$ for $j=1,\dots,q_i$ such that the set
\begin{equation} \hat W'_{i,j} := \rho_{k-1, \overline{2\eta}^{k-1},\overline \eta^{k(i,j)}\overline{2\eta}^{k-1-k(i,j)}}(W'_{i,j}) \end{equation}
is an open component of $\rho_{k-1, \overline\eta^{k(i,j)}\overline{2\eta}^{k-1-k(i)},\overline\eta^{k-1}}^{-1}(V'_i)$, 
\begin{equation} \hat W'_{i,j} \subseteq Z^{**}_{k-1,\overline\eta_{k(i,j)}\overline{2\eta}^{k-1-k(i,j)},k(i,j)}, \end{equation}
and for $a \in \mathcal F$, $z_{k-1},z_{k-1}' \in \hat W'_{i,j}$,
\begin{equation}
\label{eq:OpenCoverFactnuApproxInd}
\nu_{k-1,\overline\eta^{k(i)}\overline{2\eta}^{k-1-k(i)},z_{k-1}}(\lambda_{k_0}^{k-1}(a)) \approx_\e 
\nu_{k-1,\overline\eta^{k(i)}\overline{2\eta}^{k-1-k(i)},z_{k-1}'}(\lambda_k(a)).
\end{equation}

Define $\hat W_{i,j} \subset Z_{k,\overline\eta^{k(i,j)}\overline{2\eta}^{k-k(i,j)}}$ to consist of all 
\begin{equation}
\label{eq:OpenCoverFactWijDef1}
(z_{k-1},0) \in \rho_{k,\overline\eta^{k(i,j)}\overline{2\eta}^{k-k(i,j)},(\eta,\dots,\eta,2\eta)}^{-1}(V_i)|_{\{0\}} \end{equation}
such that $z_{k-1} \in \hat W'_{i,j}$, together with all
\begin{equation} 
\label{eq:OpenCoverFactWijDef2}
((x_k,z_{k-1}),t) \in \rho_{k,\overline\eta^{k(i,j)}\overline{2\eta}^{k-k(i,j)},(\eta,\dots,\eta,2\eta)}^{-1}(V_i)|_{(0,\eta)} \end{equation}
such that $z_{k-1} \in \hat W'_{i,j}$.

Then $\hat W_{i,j}$ is the preimage of a component of
\begin{equation} \rho_{k-1,\overline\eta^{k(i,j)}\overline{2\eta}^{k-1-k(i,j)},\overline\eta^{k-1}}^{-1}(V_i') \end{equation}
under the continuous map $Z_{k,\overline\eta^{k(i,j)}\overline{2\eta}^{k-k(i,j)}}|_{[0,\eta)} \to Z_{k-1,\overline\eta^{k(i,j)}\overline{2\eta}^{k-k(i,j)-1}}$ defined by $(z_{k-1},0) \mapsto z_{k-1}$ and $((x_k,z_{k-1}),t) \mapsto z_{k-1}$.
The set $\hat W_{i,j}$ is therefore a component of
\begin{equation} \rho_{k,\overline\eta^{k(i,j)}\overline{2\eta}^{k-k(i,j)},(\eta,\dots,\eta,2\eta)}^{-1}(V_i). \end{equation}
Since $V_i$ is a component of $\rho_{k,\overline\eta^{k-1},(\eta,\dots,\eta,2\eta)}^{-1}(U)$, it follows that $\hat W_{i,j}$ is a component of
\begin{equation} \rho_{k,\overline\eta^{k(i,j)}\overline{2\eta}^{k-k(i,j)},(\eta,\dots,\eta,2\eta)}^{-1}(U). \end{equation}
Moreover, since $\hat W_{i,j} \subseteq Z_{k,(\eta,\dots,\eta,2\eta)}|_{[0,\eta)} = Z_{k,\overline{\eta}^k}|_{[0,\eta)}$, it follows that $\hat W_{i,j}$ is a component of
\begin{equation} \rho_{k,\overline\eta^{k(i,j)}\overline{2\eta}^{k-k(i,j)},\overline{\eta}^k}^{-1}(U). \end{equation}

By the recursive definition \eqref{eq:Z**Def} of $Z^{**}_{k,\overline\eta^{k(i,j)}\overline{2\eta}^{k-k(i,j)},k(i,j)}$, it follows that
\begin{equation} \hat W_{i,j} \subseteq Z^{**}_{k,\overline\eta^{k(i,j)}\overline{2\eta}^{k-k(i,j)},k(i,j)}. \end{equation}
For $z_k,z_k'\in \hat W_{i,j}$, these correspond to points $z_{k-1},z_{k-1}'$ according to the definition of $\hat W_{i,j}$ (\eqref{eq:OpenCoverFactWijDef1} and \eqref{eq:OpenCoverFactWijDef2}), so that by the definition \eqref{eq:nuDef1} of $\nu_{k,\bar\eta,z_k}$,
\begin{equation}
\nu_{k,\overline\eta^{k(i)}\overline{2\eta}^{k-k(i)},z_k} = \nu_{k-1,\overline\eta^{k(i)}\overline{2\eta}^{k-1-k(i)},z_{k-1}} \circ \lambda_k,
\end{equation}
and
\begin{equation}
\nu_{k,\overline\eta^{k(i)}\overline{2\eta}^{k-k(i)},z_k'} = \nu_{k-1,\overline\eta^{k(i)}\overline{2\eta}^{k-1-k(i)},z_{k-1}'} \circ \lambda_k.
\end{equation}
Using this, and \eqref{eq:OpenCoverFactnuApproxInd}, it follows that for $a \in \mathcal F$,
\begin{equation}
\nu_{k,\overline\eta^{k(i)}\overline{2\eta}^{k-k(i)},z_k}(\lambda_{k_0}^k(a)) \approx_\e 
\nu_{k,\overline\eta^{k(i)}\overline{2\eta}^{k-k(i)},z_k'}(\lambda_{k_0}^k(a)).
\end{equation}

Define
\begin{equation} W_{i,j} := \rho_{k,\overline{2\eta}^{k},\overline\eta^{k(i,j)}\overline{2\eta}^{k-k(i,j)}}^{-1}(\hat W_{i,j}) \subseteq Z_{k,\overline{2\eta}^k}. \end{equation}
We have shown that $W_{i,j}$ has the properties required of each $W_i$ in the statement of the lemma.
Moreover, since
\begin{equation} \rho_{k-1,\overline{2\eta}^{k-1},\overline\eta^{k-1}}^{-1}(V'_i) = \coprod_j W'_{i,j}, \end{equation}
it follows that
\begin{equation}
\label{eq:OpenCoverFactViDecomp} \rho_{k,\overline{2\eta}^k,\overline\eta^{k-1}}^{-1}(V_i) = \coprod_j W_{i,j}.
\end{equation}

Consequently, combining these for all $i$, we have
\begin{align} 
\notag
\rho_{k,\overline{2\eta}^k,\overline\eta^k}^{-1}(U) &= \coprod_{i=1}^p \rho_{k,\overline{2\eta}^k,\overline\eta^k}^{-1}(V_i) \\
&= \coprod_{i=1}^q \coprod_{j=1}^{q_i} W_{i,j}.
\end{align}
We conclude by relabelling
the family $(W_{i,j})_{i,j}$ as $W_1,\dots,W_q$.
\end{proof}

\subsection{What to do with a partiton of unity}

Let $(e_i)_{i=1}^p$ be a finite family of positive functions in $\mathrmm C(Z_{k_0,\bar\eta},D)$ for some unital $\mathrmm C^*$-algebra $D$, subordinate to the open cover $\mathcal U_{k_0}$, which means that for each $i=1,\dots,p$,
\begin{equation} U_i := \{z_{k_0} \in Z_{{k_0},\bar\eta} \mid e_i(z_{k_0}) \neq 0\} \in \mathcal U_{k_0}. \end{equation}
(We will soon ask that $e_i$ is an approximate partition of unity, although this is not required yet.)

By Lemma \ref{lem:OpenCoverFact}, for each $i$, we have the decomposition
\begin{equation} 
\label{eq:UiDecomp}
\rho_{\overline{2\eta}^{k_0}, \overline\eta^{k_0}}^{-1}(U_i) = W_{i,1} \amalg \cdots \amalg W_{i,q_i} \end{equation}
where, for each $j=1,\dots,q_i$, there is $k(i,j) \in \{0,\dots,{k_0}\}$ such that the open set
\begin{equation}
\label{eq:HatWijDef}
\hat W_{i,j} := \rho_{{k_0},\overline{2\eta}^{k_0},\overline\eta^{k(i,j)}\overline{2\eta}^{{k_0}-k(i,j)}}(W_{i,j}) \end{equation}
is a component of $\rho_{{k_0},\overline\eta^{k(i,j)}\overline{2\eta}^{{k_0}-k(i,j)}}^{-1}(U)$,
and
\begin{equation}
\label{eq:HatWijSubsetZ**}
\hat W_{i,j} \subseteq Z^{**}_{{k_0},\overline\eta^{k(i,j)}\overline{2\eta}^{{k_0}-{k(i,j)}},k(i,j)}. \end{equation}

Define
\begin{equation} 
\label{eq:eijDef}
e_{i,j} := (e_i \circ \rho_{{k_0},\overline\eta^{k(i,j)}\overline{2\eta}^{{k_0}-k(i,j)},\overline{\eta}^{k_0}})|_{\hat W_{i,j}} \in C_0(\hat W_{i,j},D)_+, \end{equation}
and use this to define a c.p.c.\ order zero map
\begin{equation} \phi_{i,j}\colon M_{m_{k(i,j)}} \to A_{k_0} \otimes D \end{equation}
by
\begin{equation} 
\label{eq:phiDef}
\phi_{i,j}(\kappa) := \xi_{{k_0},\overline\eta^{k(i,j)}\overline{2\eta}^{{k_0}-k(i,j)},k(i,j)}(\kappa \otimes e_{i,j}). \end{equation}
Also, pick some 
\begin{equation}
\label{eq:zijDef}
z_{k_0}^{(i,j)} \in \hat W_{i,j}
\end{equation}
and set
\begin{equation}
\label{eq:psiDef}
\psi_{i,j}:=\nu_{{k_0},\overline\eta^{k(i,j)}\overline{2\eta}^{{k_0}-k(i,j)},z_{k_0}^{(i,j)}}\colon A_{k_0} \to M_{m_{k(i,j)}} \end{equation}
(the range is correct by \eqref{eq:HatWijSubsetZ**} and Lemma \ref{lem:ZkPartition} (i)).
Also set 
\begin{equation} \phi_i := \sum_{j=1}^{q_i} \phi_{i,j}\colon \bigoplus_{j=1}^{q_i} M_{m_{k(i,j)}} \to A_{k_0} \otimes D. \end{equation}

To save space, let us write $\sigma_k$ to mean $(\sigma_k \otimes \id_D)\colon A_k \otimes D \to \mathrm C(CX_k, M_{m_k}) \otimes D$, and likewise for other maps such as $\lambda_{k'}^k$ and $\mu_{k,\bar\eta,\alpha,z_k}$.

\begin{lemma}
\label{lem:phiPtEval}
Let $i=1,\dots,p$, $j=1,\dots,q_i$, and $k =0,\dots,{k_0}$.
Consider the c.p.c.\ map $\sigma_k \circ \lambda_{k_0}^k \circ \phi_{i,j}\colon M_{m_{k(i,j)}}\to C(CX_k,M_{m_k}) \otimes D$.

(i) If $k<k(i,j)$ then
\begin{equation}
\sigma_{k} \circ \lambda_{k_0}^k(\phi_{i,j}(1_{m_{k(i,j)}})) = 0.
\end{equation}

(ii) If $k=k(i,j)$ then
\begin{equation}
\sigma_{k} \circ \lambda_{k_0}^k(\phi_{i,j}(1_{m_{k(i,j)}})) = 1_{m_{k}} \otimes e_{i,j}|_{CX_{k}|_{(\eta,1]}},
\end{equation}
where we canonically identify $M_{m_{k}} \otimes C_0(CX_{k}|_{(\eta,1]}, D)$ (to which the right-hand side belongs) with a subalgebra of $C(CX_{k}, M_{m_{k}}) \otimes D$ (for the left-hand side).

(iii) If $k\geq k(i,j)$ and $(x_{k},t) \in X_{k} \times (0,2\eta)$ then
\begin{gather}
\notag
\sigma_{k} \circ \lambda_{k_0}^k(\phi_{i,j}(1_{m_{k(i,j)}}))(x_{k},t) \\
\notag
= \sum_{z_{k-1}} \mu_{k-1,\overline{2\eta}^{k-1},\hat\theta_{k}(x_{k}),z_{k-1}}(1_{m_{k(i,j)}}) \\
\otimes e_{i} \circ \rho_{k,\overline\eta^{k},\overline{2\eta}^{k}}((x_{k},z_{k-1}),t).
\end{gather}
where the sum is over all
\begin{equation} z_{k-1} \in S_{k-1,\overline{2\eta}^{k-1},m_{k}}(\hat\theta_{k}(x_{k})) \end{equation}
for which $((x_k,z_{k-1}),t) \in W_{i,j}$.

(iv) If $k > k(i,j)$ then
\begin{equation} 
\sigma_{k} \circ \lambda_{k_0}^k(\phi_{i,j}(1_{m_{k(i,j)}})) \end{equation}
vanishes on $CX_{k}|_{[2\eta,1]}$.
\end{lemma}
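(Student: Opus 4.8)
The plan is to unwind the definitions of $\phi_{i,j}$ (in terms of $\xi$), of $\xi$ (in terms of $\check\xi$), and of $\check\xi$, and then to reconcile the various tuples of parameters that occur using naturality in $\bar\eta$ (Lemmas~\ref{lem:rhoFacts} and~\ref{lem:omegaMuCompat}). Write $k':=k(i,j)$ and $\bar\eta^{(i,j)}:=\overline\eta^{k'}\overline{2\eta}^{k_0-k'}$, so that $\phi_{i,j}(1_{m_{k'}})=\xi_{k_0,\bar\eta^{(i,j)},k'}(1_{m_{k'}}\otimes e_{i,j})$ by~\eqref{eq:phiDef}. Since $A_{k_0}$ sits inside $\mathrmm C(CX_0,M_{m_0})\oplus\cdots\oplus\mathrmm C(CX_{k_0},M_{m_{k_0}})$ with $k$-th coordinate map $\sigma_k\circ\lambda_{k_0}^k$ (and $\sigma_0=\id$), the formula~\eqref{eq:xiDef} reads off, for $0\le k\le k_0$, as
\begin{equation}
\label{eq:phiPtEvalPlanReduction}
\sigma_k\circ\lambda_{k_0}^k(\phi_{i,j}(1_{m_{k'}}))=\begin{cases} 0, & k<k',\\[2pt] \check\xi_{k,\bar\eta^{(i,j)}|_k,k'}\!\big((1_{m_{k'}}\otimes e_{i,j})|_{Z^{**}_{k,\bar\eta^{(i,j)}|_k,k'}}\big), & k\ge k',\end{cases}
\end{equation}
where $\bar\eta^{(i,j)}|_k$ denotes the first $k$ entries of $\bar\eta^{(i,j)}$ and, as in the convention stated just before the lemma, $\check\xi$ abbreviates $\check\xi\otimes\id_D$. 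This gives (i) at once. For (ii), $k=k'$ gives $\bar\eta^{(i,j)}|_{k'}=\overline\eta^{k'}$, so $\eta_{k'}=\eta$ and $\check\xi_{k',\overline\eta^{k'},k'}$ is, by~\eqref{eq:checkXiTrivDef}, the inclusion $\mathrmm C_0(CX_{k'}|_{(\eta,1]},M_{m_{k'}})\hookrightarrow\mathrmm C(CX_{k'},M_{m_{k'}})$; tensoring with $D$ and identifying $Z^{**}_{k',\overline\eta^{k'},k'}$ with $CX_{k'}|_{(\eta,1]}$ yields the stated formula. For (iv), $k>k'$ gives $\eta_k=2\eta$, so Lemma~\ref{lem:checkXiDef} puts the range of $\check\xi_{k,\bar\eta^{(i,j)}|_k,k'}$ inside $\mathrmm C_0(X_k\times[0,2\eta),M_{m_k})$, which vanishes on $CX_k|_{[2\eta,1]}$.

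The real content is (iii), the case $k\ge k'$, $(x_k,t)\in X_k\times(0,2\eta)$. Here I would expand the right-hand side of~\eqref{eq:phiPtEvalPlanReduction} using the defining formula~\eqref{eq:checkXiDef} for $\check\xi$ (when $k=k'$ one uses the inclusion~\eqref{eq:checkXiTrivDef} in place of~\eqref{eq:checkXiDef}). This produces a sum of terms $\mu_{k-1,\bar\eta^{(i,j)}|_{k-1},\hat\theta_k(x_k),z_{k-1}}(1_{m_{k'}})\otimes e_{i,j}((x_k,z_{k-1}),t)$ over $z_{k-1}\in S_{k-1,\bar\eta^{(i,j)}|_{k-1},m_k}(\hat\theta_k(x_k))\cap Z^{**}_{k-1,\bar\eta^{(i,j)}|_{k-1},k'}$. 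It then remains to trade the ``mixed'' parameter $\bar\eta^{(i,j)}|_{k-1}=\overline\eta^{k'}\overline{2\eta}^{k-1-k'}$ for the ``uniform'' parameter $\overline{2\eta}^{k-1}$ occurring in the statement. For this I would use Lemma~\ref{lem:rhoFacts}(iii) to rewrite the index set as the $\rho$-image of $S_{k-1,\overline{2\eta}^{k-1},m_k}(\hat\theta_k(x_k))$, where $\rho:=\rho_{k-1,\overline{2\eta}^{k-1},\overline\eta^{k'}\overline{2\eta}^{k-1-k'}}$; Lemma~\ref{lem:omegaMuCompat} to replace each $\mu_{k-1,\bar\eta^{(i,j)}|_{k-1},\hat\theta_k(x_k),\rho(w)}(1)$ by $\sum_{w'\in\rho^{-1}(\rho(w))}\mu_{k-1,\overline{2\eta}^{k-1},\hat\theta_k(x_k),w'}(1)$; and~\eqref{eq:eijDef} together with the cocycle relation of Lemma~\ref{lem:rhoFacts}(ii) (namely $\rho_{k_0,\overline{2\eta}^{k_0},\overline\eta^{k_0}}=\rho_{k_0,\bar\eta^{(i,j)},\overline\eta^{k_0}}\circ\rho_{k_0,\overline{2\eta}^{k_0},\bar\eta^{(i,j)}}$) to see that $e_{i,j}$, pulled back along the identifications into $\hat W_{i,j}$, agrees with $e_i\circ\rho_{k,\overline{2\eta}^k,\overline\eta^k}$ and in particular is constant on $\rho$-fibres. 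After collapsing the double sum this is exactly the asserted formula, the constraint $((x_k,z_{k-1}),t)\in W_{i,j}$ in the statement being exactly the restriction imposed when one passes from $U_i$ to its piece $W_{i,j}$ via~\eqref{eq:UiDecomp}--\eqref{eq:HatWijDef}.

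The step I expect to be the main obstacle is this last reconciliation: one must juggle the nested identifications $Z^{**}_{k-1,\bar\eta,k'}\cong Z^{**}_{k,\bar\eta,k'}|_{\{0\}}$ implicit in~\eqref{eq:xiDef}, the several truncations of the tuples $\overline\eta^{k'}\overline{2\eta}^{k_0-k'}$, $\overline{2\eta}^k$ and $\overline{2\eta}^{k-1}$, the matrix algebra $F_{k-1,\cdot,z_{k-1}}$ attached to each point (and how it behaves under $\rho$), and above all the precise index set of each sum, so as to be sure that every point of $W_{i,j}$ contributing at $(x_k,t)$ is counted once and only once. None of this requires a new idea beyond the combinatorial descriptions of $Z_{k,\bar\eta}$ and $Z^{**}_{k,\bar\eta,k'}$ in Remark~\ref{rmk:ZAltDesc} and Lemma~\ref{lem:ZkPartition}, together with the $\rho$-naturality already established; it is a careful but routine bookkeeping exercise.
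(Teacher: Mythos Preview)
Your proposal is correct and follows essentially the same route as the paper: unwind $\phi_{i,j}$ via \eqref{eq:phiDef} and \eqref{eq:xiDef} to obtain the $\check\xi$ expression, read off (i), (ii), (iv) from \eqref{eq:checkXiTrivDef} and Lemma~\ref{lem:checkXiDef}, and for (iii) expand via \eqref{eq:checkXiDef} and then pass from the mixed parameter $\overline\eta^{k'}\overline{2\eta}^{k-1-k'}$ to $\overline{2\eta}^{k-1}$ using Lemma~\ref{lem:omegaMuCompat}, the cocycle relation Lemma~\ref{lem:rhoFacts}(ii), and the definition \eqref{eq:eijDef} of $e_{i,j}$. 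The paper also treats the boundary case $k=k'$ of (iii) separately, deducing it from (ii) together with Lemma~\ref{lem:omegaMuCompat}, just as you anticipate.
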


\begin{proof}
We compute
\begin{eqnarray}
\notag
&&\hspace*{-15mm} \sigma_{k} \circ \lambda_{k_0}^k(\phi_{i,j}(1_{m_{k(i,j)}})) \\
\notag
&\stackrel{\eqref{eq:phiDef}}=& \sigma_{k} \circ \lambda_{k_0}^k \circ \xi_{{k_0},\overline{\eta}^{k(i,j)}\overline{2\eta}^{{k_0}-k(i,j)},k(i,j)} (1_{m_{k(i,j)}} \otimes e_{i,j}) \\
\label{eq:phiPtEvalComp1}
&\stackrel{\eqref{eq:xiDef}}=& \begin{cases}
0,\quad &\text{if }k<k(i,j), \\
\check\xi_{k,\overline{\eta}^{k(i,j)}\overline{2\eta}^{k-k(i,j)},k(i,j)} & \\
\quad (1_{m_{k(i,j)}} \otimes e_{i,j})|_{Z^{**}_{k,\overline{\eta}^{k(i,j)}\overline{2\eta}^{k-k(i,j)}}},\quad &\text{if }k\geq k(i,j),
\end{cases}
\end{eqnarray}
where, in the latter case, as in \eqref{eq:xiDef}, we view $Z^{**}_{k,\overline{\eta}^{k(i,j)}\overline{2\eta}^{k-k(i,j)}}$ as a subset of $Z_{{k_0},\overline\eta^{k(i,j)}\overline{2\eta}^{{k_0}-k(i,j)}}$.
The former case proves (i).
Point (iv) follows from the latter case and Lemma \ref{lem:checkXiDef}.

Picking up in the latter case, for $k = k(i,j)$, we have
\begin{eqnarray}
\notag
&&\hspace*{-15mm} \sigma_{k} \circ \lambda_{k_0}^k(\phi_{i,j}(1_{m_{k(i,j)}})) \\
\notag
&=& \check\xi_{k,\overline{\eta}^{k(i,j)}\overline{2\eta}^{{k_0}-k(i,j)},k(i,j)} (1_{m_{k(i,j)}} \otimes e_{i,j}|_{Z^{**}_{k,\overline{\eta}^{k(i,j)}\overline{2\eta}^{k-k(i,j)}}}) \\
&\stackrel{\eqref{eq:checkXiTrivDef}}=&
1_{m_{k(i,j)}} \otimes e_{i,j}|_{CX_{k}|_{(\eta,1]}},
\end{eqnarray}
thus establishing (ii).

On the other hand, for $k > k(i,j)$ and for $(x_{k},t) \in X_{k} \times (0,2\eta)$,
\begin{eqnarray}
\notag
&&\hspace*{-15mm} \sigma_{k} \circ \lambda_{k_0}^k(\phi_{i,j}(1_{m_{k(i,j)}}))(x_{k},t) \\
\notag
&\stackrel{\eqref{eq:phiPtEvalComp1}}=& \check\xi_{k,\overline{\eta}^{k(i,j)}\overline{2\eta}^{k-k(i,j)},k(i,j)} (1_{m_{k(i,j)}} \otimes e_{i,j}|_{Z^{**}_{k,\overline{\eta}^{k(i,j)}\overline{2\eta}^{k-k(i,j)}}})(x_{k},t) \\
\notag
&\stackrel{\eqref{eq:checkXiDef}}=& \sum_{z_{k-1}} \mu_{k-1,\overline\eta^{k(i,j)}\overline{2\eta}^{k-1-k(i,j)},\hat\theta_{k}(x_{k}),z_{k-1}}(1_{m_{k(i,j)}})\\
\label{eq:phiPtEvalComp2}
&&\quad \otimes e_{i,j}((x_{k},z_{k-1}),t),
\end{eqnarray}
where the sum is taken over all
\begin{align}
\notag
 z_{k-1} \in &S_{k-1,\overline\eta^{k(i,j)}\overline{2\eta}^{k-1-k(i,j)},m_{k}}(\hat\theta_{k}(x_{k})) \cap \\
\label{eq:phiPtEvalzQuant}
&\quad Z^{**}_{k-1,\overline\eta^{k(i,j)}\overline{2\eta}^{k-1-k(i,j)},k(i,j)}.
\end{align}
For $z_{k-1}$ as in \eqref{eq:phiPtEvalzQuant}, by Lemma \ref{lem:omegaMuCompat},
\begin{align}
\notag
&\mu_{k-1,\overline{\eta}^{k(i,j)}\overline{2\eta}^{k-1-k(i,j)},\hat\theta_k(x_k),z_{k-1}}(1_{m_{k(i,j)}}) \\
\label{eq:phiPtEvalComp3}
&\quad = \sum_{z_{k-1}'} \mu_{k-1,\overline{2\eta}^{k-1},\hat\theta_k(x_k),z_{k-1}'}(1),
\end{align}
where the sum is over all
\begin{align}
\notag z_{k-1}' \in &\rho_{k-1,\overline{2\eta}^{k-1},\overline{\eta}^{k(i,j)}\overline{2\eta}^{k-1-k(i,j)}}^{-1}(\{z_{k-1}\}) \\
&\subset Z_{k-1,\overline{2\eta}^{k-1}}.
\end{align}
For such $z_{k-1}$ and $z_{k-1}'$,
\begin{eqnarray}
\notag
&& e_{i,j}((x_k,z_{k-1}),t) \\
\notag
&=& e_{i,j}((x_k,\rho_{k-1,\overline{2\eta}^{k-1},\overline{\eta}^{k(i,j)}\overline{2\eta}^{k-1-k(i,j)}}(z_{k-1}')),t) \\
\notag
&\stackrel{\begin{tabular}{@{}c}{\scriptsize \eqref{eq:rhoDefiv},\eqref{eq:rhoFacts2},} \\ \scriptsize{\eqref{eq:eijDef}} \end{tabular}}=& 
\begin{cases} e_i \circ \rho_{k_0,\overline{2\eta}^{k_0},\overline{\eta}^{k_0}}&((x_k,z_{k-1}'),t),\\
&\quad\text{if }((x_k,z_{k-1}),t) \in \hat W_{i,j}; \\ 0, &\quad\text{otherwise} \end{cases} \\
\label{eq:phiPtEvalComp4}
&\stackrel{\eqref{eq:rhoFacts2},\eqref{eq:HatWijDef}}=& \begin{cases} e_i \circ \rho_{k_0,\overline{2\eta}^{k_0},\overline{\eta}^{k_0}}&((x_k,z_{k-1}'),t),\\
&\quad\text{if }((x_k,z_{k-1}'),t) \in W_{i,j}; \\ 0,&\quad\text{otherwise,} \end{cases}
\end{eqnarray}
where we are viewing $Z_{k,\overline{2\eta}^k}$ as a subset of $Z_{k_0,\overline{2\eta}^{k_0}}$.
Putting together \eqref{eq:phiPtEvalComp2}, \eqref{eq:phiPtEvalComp3}, and \eqref{eq:phiPtEvalComp4}, and using \eqref{eq:HatWijSubsetZ**}, we obtain
\begin{gather}
\notag
\sigma_{k} \circ \lambda_{k_0}^k(\phi_{i,j}(1_{m_{k(i,j)}}))(x_{k},t) \\
\notag
= \sum_{z_{k-1}} \mu_{k-1,\overline{2\eta}^{k-1},\hat\theta_{k}(x_{k}),z_{k-1}}(1_{m_{k(i,j)}}) \\
\otimes e_{i} \circ \rho_{k,\overline\eta^{k},\overline{2\eta}^{k}}((x_{k},z_{k-1}),t),
\end{gather}
where the sum is over all
\begin{equation} z_{k-1} \in S_{k-1,\overline{2\eta}^{k-1},m_{k}}(\hat\theta_{k}(x_{k})) \end{equation}
for which $((x_k,z_{k-1}),t) \in W_{i,j}$.
This establishes (iii), in the case $k>k(i,j)$.

In the case $k=k(i,j)$, (iii) follows from (ii) and Lemma \ref{lem:omegaMuCompat}.
\end{proof}

\begin{lemma}
\label{lem:CpcFacts}
(i) For each $i$, the map $\phi_i$ is c.p.c.\ order zero.

(ii) If $e_{i_1}$ and $e_{i_2}$ are orthogonal, then so are the ranges of $\phi_i$ and $\phi_i'$.
Therefore, if the elements $(e_i)_i$ are $(n+1)$-colourable then so also is the map $\phi=\sum_{i,j} \phi_{i,j}$.

(iii)
If $(e_i)_i$ is an exact partition of unity then $\sum_{i,j} \phi_i(1_{m_{k(i,j)}}) = 1_B$.

(iii)$'$
If $\sum_i e_i \approx_\e 1$ (i.e., $(e_i)_i$ is an \textbf{$\e$-approximate partition of unity}) then $\sum_{i,j} \phi_{i,j}(1_{m_{k(i,j)}}) \approx_\e 1_B$.
\end{lemma}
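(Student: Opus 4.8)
The plan is to prove each of the four items by reducing everything to the pointwise behaviour of the maps $\check\xi$, $\mu$, and $\nu$ that has already been established, together with the combinatorial structure of the decomposition $\rho_{k_0,\overline{2\eta}^{k_0},\overline\eta^{k_0}}^{-1}(U_i) = \coprod_j W_{i,j}$ from Lemma \ref{lem:OpenCoverFact}.

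For (i): Each $\phi_{i,j}$ is a composition of the $*$-homomorphism $\xi_{k_0,\overline\eta^{k(i,j)}\overline{2\eta}^{k_0-k(i,j)},k(i,j)}$ (which lands in $A_{k_0}\otimes D$ by Lemma \ref{lem:xiWellDef}) with the c.p.c.\ order zero map $\kappa \mapsto \kappa \otimes e_{i,j}$ on $M_{m_{k(i,j)}}$; hence each $\phi_{i,j}$ is c.p.c.\ order zero. To see that $\phi_i = \sum_j \phi_{i,j}$ is order zero, I would check that the ranges of $\phi_{i,j}$ and $\phi_{i,j'}$ are orthogonal for $j \neq j'$. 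This follows because the sets $\hat W_{i,j}$ (for fixed $i$) are pairwise disjoint: by Lemma \ref{lem:OpenCoverFact}, the $W_{i,j}$ partition $\rho^{-1}(U_i)$, and after pushing forward under the various $\rho$'s the supports of $e_{i,j}$ and $e_{i,j'}$, regarded as functions on the relevant $Z$-spaces (via Lemma \ref{lem:xiWellDef} and its formula \eqref{eq:xiWellDefExtra}), are disjoint. Concretely, for any $\alpha \in \mathrm{Hom}(A_{k_0},M_m)$, equation \eqref{eq:xiWellDefExtra} expresses $\alpha(\phi_{i,j}(\kappa))$ as a sum of $\mu_{k_0,\bar\eta,\alpha,z_{k_0}}(\kappa \otimes e_{i,j}(z_{k_0}))$ over $z_{k_0}$, and the $\mu_{k_0,\bar\eta,\alpha,z_{k_0}}(1)$ are orthogonal projections by Lemma \ref{lem:PointwiseApprox}; since $e_{i,j}$ and $e_{i,j'}$ have disjoint supports, the corresponding sums are supported on orthogonal projections, so the images multiply to zero. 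This being true after applying every representation $\alpha$, the ranges are orthogonal.

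For (ii): If $e_{i_1}$ and $e_{i_2}$ are orthogonal, then $U_{i_1} \cap U_{i_2} = \emptyset$, so $W_{i_1,j_1}$ and $W_{i_2,j_2}$ are disjoint for all $j_1,j_2$, and hence (after transport as above) so are the supports of $e_{i_1,j_1}$ and $e_{i_2,j_2}$; the same argument via \eqref{eq:xiWellDefExtra} and Lemma \ref{lem:PointwiseApprox} shows the ranges of $\phi_{i_1,j_1}$ and $\phi_{i_2,j_2}$ are orthogonal. Consequently an $(n+1)$-colouring $I = I_0 \amalg \cdots \amalg I_n$ of $(e_i)_i$ induces the decomposition $F = \bigoplus_{l=0}^n \bigoplus_{i \in I_l}\bigoplus_j M_{m_{k(i,j)}}$, on each block of which $\phi$ restricts to $\sum_{i\in I_l,\,j}\phi_{i,j}$, a sum of order zero maps with pairwise orthogonal ranges, hence order zero. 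For (iii) and (iii)$'$: applying an arbitrary irreducible representation $\alpha$ and using \eqref{eq:xiWellDefExtra}, one gets
\begin{equation}
\alpha\Big(\sum_{i,j}\phi_{i,j}(1_{m_{k(i,j)}})\Big) = \sum_{i,j}\sum_{z_{k_0}} \mu_{k_0,\bar\eta,\alpha,z_{k_0}}\big(1 \otimes e_{i,j}(z_{k_0})\big),
\end{equation}
and regrouping the sum over $(i,j)$ using $\sum_j e_{i,j} = e_i \circ \rho$ (on the relevant piece) together with $\rho$-compatibility (Lemma \ref{lem:omegaMuCompat} and Lemma \ref{lem:rhoFacts}) collapses this to $\sum_i \sum_{z_{k_0}} \mu_{k_0,\bar\eta,\alpha,z_{k_0}}(1) \otimes e_i(z_{k_0})$; since the $\mu_{k_0,\bar\eta,\alpha,z_{k_0}}(1)$ form a partition of unity, this equals $\alpha(1)\otimes$ something that is $1_D$ when $\sum e_i = 1$ and within $\e$ of $1_D$ when $\sum e_i \approx_\e 1$. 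As this holds for every $\alpha$, the conclusions follow.

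The main obstacle I anticipate is bookkeeping the transport of the functions $e_i$, $e_{i,j}$ between the various spaces $Z_{k_0,\overline{2\eta}^{k_0}}$, $Z_{k_0,\overline\eta^{k(i,j)}\overline{2\eta}^{k_0-k(i,j)}}$, $Z_{k_0,\bar\eta}$ under the maps $\rho$, and making sure the ``disjoint supports'' claims are literally correct at the level of the $Z^{**}$-subsets involved in $\check\xi$ — i.e.\ verifying that Lemma \ref{lem:OpenCoverFact}'s decomposition really does translate into orthogonality of the relevant $\mu_{k_0,\bar\eta,\alpha,z_{k_0}}(1 \otimes e_{i,j}(z_{k_0}))$ terms after every representation. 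Everything else is a routine consequence of Lemmas \ref{lem:PointwiseApprox}, \ref{lem:xiWellDef}, \ref{lem:omegaMuCompat}, and \ref{lem:rhoFacts}.
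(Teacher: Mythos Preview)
Your approach is correct and is morally the same as the paper's, but organized differently: the paper extracts the pointwise computation into a separate Lemma~\ref{lem:phiPtEval}, which evaluates $\sigma_k\circ\lambda_{k_0}^k(\phi_{i,j}(1))$ at each irreducible representation $\ev_{v_k}$ and does a case analysis on where $v_k$ sits in $CX_k$ (relative to $[2\eta,1]$, $(0,2\eta)$, and the various $k(i,j)$), whereas you propose to apply \eqref{eq:xiWellDefExtra} directly to an arbitrary $\alpha$ and then transport everything to a common parameter space via $\rho$ and Lemma~\ref{lem:omegaMuCompat}. Both routes rest on the same ingredients.

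One point deserves more care than you give it in (i) and (ii). When $k(i_1,j_1)\neq k(i_2,j_2)$, the functions $e_{i_1,j_1}$ and $e_{i_2,j_2}$ live on \emph{different} spaces $Z_{k_0,\overline\eta^{k(i_s,j_s)}\overline{2\eta}^{k_0-k(i_s,j_s)}}$, and the projections $\mu_{k_0,\bar\eta_s,\alpha,z}(1)$ come from different partitions of unity; so ``disjoint supports'' and ``orthogonal projections from Lemma~\ref{lem:PointwiseApprox}'' are not literally available until you pull both back to the common refinement $Z_{k_0,\overline{2\eta}^{k_0}}$. Concretely, use Lemma~\ref{lem:omegaMuCompat} to write each $\mu_{k_0,\bar\eta_s,\alpha,z}(1)$ as $\sum_{w\in\rho^{-1}(z)}\mu_{k_0,\overline{2\eta}^{k_0},\alpha,w}(1)$; since $\rho^{-1}(\hat W_{i_s,j_s})=W_{i_s,j_s}$ and $W_{i_1,j_1}\cap W_{i_2,j_2}=\emptyset$, the resulting sums involve disjoint index sets in $S_{k_0,\overline{2\eta}^{k_0},m}(\alpha)$, and \emph{now} Lemma~\ref{lem:PointwiseApprox} gives orthogonality. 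You already do exactly this transport for (iii) and (iii)$'$; just make it explicit for (i) and (ii) as well. The paper's case analysis in Lemma~\ref{lem:phiPtEval} is precisely its way of handling this same issue.
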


\begin{proof}
(i) and (ii):
$\phi_{i,j}$ is quite clearly a c.p.c.\ order zero map.
To show that $\phi_i$ is order zero, it therefore suffices to show that
\begin{equation} \phi_{i,j_1}(1_{m_{k(i,j_1)}}) \quad\text{and}\quad \phi_{i,j_2}(1_{m_{k(i,j_2)}}) \end{equation}
are orthogonal for $j_1\neq j_2$.
The key fact to be used here is
\begin{equation} W_{i,j_1} \cap W_{i,j_2} = \emptyset. \end{equation}

Similarly, for (ii), we need to show that, for all $j_1,j_2$,
\begin{equation} \phi_{i_1,j_1}(1_{m_{k(i_1,j_1)}}) \quad\text{and}\quad \phi_{i_1,j_2}(1_{m_{k(i_1,j_2)}}) \end{equation}
are orthogonal, where we can assume that 
\begin{equation} W_{i_1} \cap W_{i_2} = \emptyset. \end{equation}

Thus, (i) and (ii) both reduce to showing that, if
\begin{equation} W_{i_1,j_1} \cap W_{i_2,j_2} = \emptyset \end{equation}
then $\phi_{i_1,j_1}(1)$ and $\phi_{i_2,j_2}(1)$ are orthogonal, for which it is sufficient to show that they have orthogonal images in each irreducible representation, i.e., that for $k =0,\dots,{k_0}$ and $v_{k} \in CX_{k}|_{(0,1]}$, the two operators
\begin{equation}
\label{eq:CpcFactsToShowOrthog}
 \sigma_{k} \circ \lambda_{k_0}^k(\phi_{i_s,j_s}(1))(v_{k}),\quad s=1,2, \end{equation}
are orthogonal.

Three cases naturally arise.

(a) $k(i_s,j_s)>k$ for some $s$: By Lemma \ref{lem:phiPtEval} (i), one of the elements in \eqref{eq:CpcFactsToShowOrthog} is already 0.

(b) $k(i_s,j_s)=k$ for $s=1,2$:
In this case, $e_{i_1,j_1}$ and $e_{i_2,j_2}$ are orthogonal (their supports lie in disjoint sets $W_{i_1,j_1}$ and $W_{i_2,j_2}$ respectively), and so by Lemma \ref{lem:phiPtEval} (ii), the elements in \eqref{eq:CpcFactsToShowOrthog} are orthogonal.

(c) $k(i_s,j_s) \leq k$ for $s=1,2$, with at least one inequality strict:
If $v_{k} \in CX_{k}|_{[2\eta,0)}$ then by Lemma \ref{lem:phiPtEval} (iv), one of the elements in \eqref{eq:CpcFactsToShowOrthog} is already zero.
So assume that $v_{k} \in CX_{k}|_{(0,2\eta)}$, so that the elements in \eqref{eq:CpcFactsToShowOrthog} are both described in Lemma \ref{lem:phiPtEval} (iii).

The sum in Lemma \ref{lem:phiPtEval} (iii) is taken over a subset of $W_{i,j}$, and therefore, for $(i,j)=(i_1,j_1)$ and $(i,j)=(i_2,j_2)$, it is taken over disjoint sets.
Since the ranges of $\mu_{k-1,\overline{2\eta}^{k-1},\hat\theta_{k}(x_{k}),z_{k-1}}$ are orthogonal as $z_{k-1}$ varies, it follows that the two elements in \eqref{eq:CpcFactsToShowOrthog} are orthogonal.

(iii) follows from (iii)$'$, which we do presently.

(iii)$'$:
We work again in each irreducible representation, that is, we will show that for $k =0,\dots,{k_0}$ and $v_{k} \in CX_{k}|_{(0,1]}$,
\begin{equation}
\label{eq:CpcFactsToShowPOU}
\sum_{i,j} \sigma_{k} \circ \lambda_{k_0}^k(\phi_{i,j}(1))(v_{k}) \approx_\e 1_{m_{k}}.
\end{equation}

We must break our analysis into two cases.

(a)
$v_{k} \in CX_{k}|_{[2\eta,1]}$:
By Lemma \ref{lem:phiPtEval} (i) and (iv), the only contribution to the sum in \eqref{eq:CpcFactsToShowPOU} comes from $(i,j)$, for which $k(i,j)=k$, and then by Lemma \ref{lem:phiPtEval} (ii), we get
\begin{align}
\notag
\sum_{i,j} \sigma_{k} \circ \lambda_{k_0}^k(\phi_{i,j}(1))(v_{k}) &=
\sum_{i,j} 
1_{m_{k}} \otimes e_{i,j}(v_{k}) \\
&\approx_{\e} 1_{m_{k}} \otimes 1_D,
\end{align}
(by hypothesis).

(b)
$v_{k} = (x_{k},t) \in CX_{k}|_{(0,2\eta)}$:
By Lemma \ref{lem:phiPtEval} (i), the only contributions to the sum in \eqref{eq:CpcFactsToShowPOU} come from those $(i,j)$ for which $k(i,j)\geq k$.
From Lemma \ref{lem:phiPtEval} (iii) and \eqref{eq:UiDecomp}, we derive that
\begin{align}
\notag
&\sigma_{k} \circ \lambda_{k_0}^k(\phi_{i}(1))(x_{k},t) \\
&\quad= \sum_{z_{k-1}}  \mu_{k-1,\overline{2\eta}^{k-1},\hat\theta_{k}(x_{k}),z_{k-1}}(e_{i} \circ \rho_{k,\overline\eta^{k},\overline{2\eta}^{k}}((x_{k},z_{k-1}),t)),
\end{align}
where the sum is over all
\begin{equation} z_{k-1} \in S_{k-1,\overline{2\eta}^{k-1},m_{k}}(\hat\theta_{k}(x_{k})). \end{equation}
Thus,
\begin{eqnarray}
\notag
&&\hspace*{-15mm}\sum_{i,j} \sigma_{k} \circ \lambda_{k_0}^k(\phi_{i,j}(1))(v_{k}) \\
\notag
&=& \hspace*{-5mm}\sum_{i,z_{k-1}}  \mu_{k-1,\overline{2\eta}^{k-1},\hat\theta_{k}(x_{k}),z_{k-1}}(e_{i} \circ \rho_{k,\overline\eta^{k},\overline{2\eta}^{k}}((x_{k},z_{k-1}),t)) \\
\notag
&\approx_{\e}&
\sum_{z_{k-1}} \mu_{k-1,\overline{2\eta}^{k-1},\hat\theta_{k}(x_{k}),z_{k-1}}(1) \\
&\stackrel{\text{Lemma\ }\ref{lem:PointwiseApprox}}=& 1,
\end{eqnarray}
where the approximation to within $\e$ is possible since the summands are orthogonal.
\end{proof}

Recall that the open cover $\mathcal U_{k_0}$ depended on open covers $\mathcal V_k$ of $CX_k$ which in turn depended on an open set $\mathcal F$ and a tolerance $\dl>0$, by the fact that $\sigma_k\circ \lambda_{k_0}^k(a)$ is approximately constant on each set in $\mathcal V_k$, for $a \in \mathcal F$.
We exploit this relationship in proving the following.

\begin{lemma}
\label{lem:CpcApproximation}
Suppose that $(e_i)_i$ is an $(n+1)$-colourable $\e$-approximate partition of unity.
Let $a \in \mathcal F$.
Then
\begin{equation} \sum_{i,j} \phi_{i,j} \circ \psi_{i,j}(a) \approx_{(n+1)\dl+\e} a \otimes 1_D. \end{equation}
\end{lemma}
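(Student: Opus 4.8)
The strategy is to verify the approximation in every irreducible representation of $A_{k_0}$, i.e., for each $k=0,\dots,k_0$ and each $v_k \in CX_k|_{(0,1]}$, to estimate
\begin{equation}
\label{eq:CpcApproxToShow}
\sum_{i,j} \sigma_k \circ \lambda_{k_0}^k\big(\phi_{i,j}(\psi_{i,j}(a))\big)(v_k)
\end{equation}
and show it is within $(n+1)\dl + \e$ of $\sigma_k\circ\lambda_{k_0}^k(a)(v_k) \otimes 1_D$. The starting point is the identity $\phi_{i,j}(\psi_{i,j}(a)) = \phi_{i,j}(\nu_{k_0,\overline\eta^{k(i,j)}\overline{2\eta}^{k_0-k(i,j)},z_{k_0}^{(i,j)}}(\lambda_{k_0}^{k_0}(a)))$, where $\phi_{i,j}(\kappa) = \xi_{k_0,\overline\eta^{k(i,j)}\overline{2\eta}^{k_0-k(i,j)},k(i,j)}(\kappa \otimes e_{i,j})$. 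The key observation will be that, since $\phi_{i,j}$ is order zero with $\phi_{i,j}(1) = e_{i,j}$-weighted, the element $\phi_{i,j}(\psi_{i,j}(a))$ differs from $\phi_{i,j}(1_{m_{k(i,j)}}) \cdot (\text{something like } a)$ only by the error coming from replacing the ``local value'' of $a$ at various points by its value at the single chosen point $z_{k_0}^{(i,j)}$; by \eqref{eq:OpenCoverFactnuApprox} (applied via Lemma \ref{lem:OpenCoverFact} and the construction of $\mathcal V_k$, hence to tolerance $\dl$), this replacement costs at most $\dl$ at each ``level'' where a nontrivial contribution occurs.

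Concretely, I would run the same case analysis as in Lemma \ref{lem:phiPtEval}, but tracking $\psi_{i,j}(a)$ instead of $1$. In case $v_k \in CX_k|_{[2\eta,1]}$: only $(i,j)$ with $k(i,j)=k$ contribute (Lemma \ref{lem:phiPtEval}(i),(iv)), and by the analogue of Lemma \ref{lem:phiPtEval}(ii) each such term equals $\sigma_k\circ\lambda_{k_0}^k(a)(z_{k_0}^{(i,j)})\otimes e_{i,j}(v_k)$ which, since $v_k, z_{k_0}^{(i,j)}$ lie in a common set of $\mathcal V_k$ when $e_{i,j}(v_k)\neq 0$ (this is precisely what \eqref{eq:OpenCoverFactnuApprox} with $k(i)=k$ gives, and it is built from \eqref{eq:VkApproxConst}), is $\approx_\dl \sigma_k\circ\lambda_{k_0}^k(a)(v_k)\otimes e_{i,j}(v_k)$; summing over $i,j$ and using $\sum e_{i,j}(v_k) \approx_\e 1_D$ (from \eqref{eq:UiDecomp} and the approximate-partition hypothesis) gives the estimate with error $\dl + \e$, which is within the allowed bound. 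In case $v_k = (x_k,t)$ with $t\in(0,2\eta)$: contributions come from $k(i,j)\geq k$; I would use the analogue of Lemma \ref{lem:phiPtEval}(iii), where $\psi_{i,j}(a) = \nu_{k_0,\dots,z_{k_0}^{(i,j)}}(\lambda_{k_0}^{k_0}(a))$ feeds through $\mu_{k-1,\overline{2\eta}^{k-1},\hat\theta_k(x_k),z_{k-1}}$, and then peel off one ``level'' of the recursion: the value $\nu$ at $z_{k_0}^{(i,j)}$ replaced by $\nu$ at a point agreeing in the first $\min(k(i,j),k)$ coordinates costs $\dl$ by \eqref{eq:OpenCoverFactnuApprox}, after which $\sum_i \mu_{k-1,\dots}(e_i\circ\rho(\dots))$ telescopes via Lemma \ref{lem:PointwiseApprox} down to a sum over fewer levels, and we recurse. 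Since the recursion descends through at most $n+1$ levels (the topological dimension bounds the relevant $k_0$-indexing, or more precisely only $n+1$ distinct ``colours'' of $k(i,j)$ can contribute orthogonally at a given point), the accumulated error is $(n+1)\dl$, plus the final $\e$ from the approximate partition of unity.

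\textbf{Main obstacle.} The delicate point is the bookkeeping in the recursive case: I must be careful that at each point $v_k=(x_k,t)$ the contributing summands $\phi_{i,j}$ organize into at most $n+1$ ``colour classes'' (indexed by the value $k(i,j)$, or by the colour of the underlying $e_i$), so that the errors from different classes, being carried on orthogonal ranges of the $\mu$'s (as exploited in the proof of Lemma \ref{lem:CpcFacts}(iii)$'$), add up to at most $(n+1)\dl$ rather than accumulating over all $p\cdot q_i$ summands. Establishing the orthogonality of the error terms across levels — using that the sums in Lemma \ref{lem:phiPtEval}(iii) range over disjoint subsets of the $W_{i,j}$ and that $\mu_{k-1,\overline{2\eta}^{k-1},\hat\theta_k(x_k),z_{k-1}}$ have orthogonal ranges as $z_{k-1}$ varies — is exactly the mechanism that converts a naive $(\sum_{i,j} 1)\cdot\dl$ bound into the clean $(n+1)\dl$ bound, and getting this alignment right (so that it meshes with the $\e$-approximate partition of unity hypothesis and the colourability of $(e_i)$) is where the real work lies.
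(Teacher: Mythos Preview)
Your overall strategy---working in irreducible representations and invoking \eqref{eq:OpenCoverFactnuApprox} to control the replacement of $\nu$ at a variable point by $\nu$ at the fixed $z_{k_0}^{(i,j)}$---matches the paper. But the architecture of your argument is more complicated than necessary, and there is a genuine confusion about where the factor $(n+1)$ comes from.

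The paper's proof proceeds in two clean steps. First, it shows for \emph{each} pair $(i,j)$ separately that
\[
\phi_{i,j}(1)\cdot(a\otimes 1_D) \;=\; (a\otimes 1_D)\cdot\phi_{i,j}(1) \;\approx_\dl\; \phi_{i,j}\circ\psi_{i,j}(a),
\]
verified representation-by-representation via the same case split you describe (using Lemma~\ref{lem:phiPtEval} and, in the nontrivial case $k>k(i,j)$, the identity \eqref{eq:PointwiseApproxMult} together with \eqref{eq:OpenCoverFactnuApprox}). There is no recursion on $k$ at this stage: the single $\dl$-error arises from one application of \eqref{eq:OpenCoverFactnuApprox}, and since the summands coming from \eqref{eq:checkXiDef} have orthogonal ranges (last part of Lemma~\ref{lem:PointwiseApprox}), the errors do not accumulate \emph{within} a single $(i,j)$. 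Second, one uses the $(n+1)$-colourability of $(e_i)$---hence of $\phi$ via Lemma~\ref{lem:CpcFacts}(ii)---to partition the index set into $I_0\amalg\cdots\amalg I_n$; within each $I_c$ the $\phi_{i,j}$ have orthogonal ranges, so the $\dl$-errors combine to a single $\dl$, and across the $n+1$ colours they add to $(n+1)\dl$. The final $\e$ comes from Lemma~\ref{lem:CpcFacts}(iii)$'$.

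Your sentence ``the recursion descends through at most $n+1$ levels (the topological dimension bounds the relevant $k_0$-indexing\dots)'' is the point of confusion. The number of ``levels'', i.e., the range of values $k(i,j)$ can take, is $k_0+1$, which bears no a~priori relation to $n$; the factor $(n+1)$ is \emph{purely} the colourability hypothesis on $(e_i)$ and has nothing to do with the cell-complex depth. Your ``Main obstacle'' paragraph actually identifies the correct mechanism (orthogonality within colour classes, inherited from Lemma~\ref{lem:CpcFacts}), but you treat it as a subtlety to be managed rather than as the organizing principle of the entire argument. If you restructure along the paper's two-step line---first a uniform single-$(i,j)$ estimate, then a colour-by-colour summation---the recursive bookkeeping you worry about disappears.
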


\begin{proof}
Let us first show that
\begin{equation}
\label{eq:CpcApproximationToShow}
 \phi_{i,j}(1)\cdot(a \otimes 1_D) = (a \otimes 1_D)\cdot\phi_{i,j}(1) \approx_\dl \phi_{i,j} \circ \psi_{i,j}(a) \end{equation}
for every $i,j$, by showing that this holds in every irreducible representation.

To this end, let $k =0,\dots,{k_0}$, let $v_{k} \in CX_{k}|_{(0,1]}$ and consider images under the representation
\begin{equation} \ev_{v_{k}} \circ \sigma_{k} \circ \lambda_{k_0}^k. \end{equation}
Set
\begin{equation} a_{k} := \lambda_{k_0}^k(a) \in A_{k}. \end{equation}

The analysis divides naturally into three cases.

(i) $k < k(i,j)$:
In this case, everything in \eqref{eq:CpcApproximationToShow} is zero in $A_{k}$, by Lemma \ref{lem:phiPtEval} (i).

(ii) $k=k(i,j)$:
By Lemma \ref{lem:phiPtEval} (ii),
\begin{eqnarray}
\notag
&&\hspace*{-20mm}\sigma_{k} \circ \lambda_{k_0}^k(\phi_{i,j}(1)\cdot (a \otimes 1_D))(v_{k}) \\
\notag
&=& (1_{m_{k}} \otimes e_{i,j}(v_{k}))\cdot(\sigma_{k}(a_{k})(v_{k}) \otimes 1_D) \\
\notag
&=& (\sigma_{k}(a_{k})(v_{k}) \otimes e_{i,j}(v_{k})) \\
\notag
&=& \nu_{k_0,\overline{\eta}^{k(i,j)}\overline{2\eta}^{k_0-k(i,j)},v_k}(a) \otimes e_{i,j}(v_k) \\
\notag
&\stackrel{\begin{tabular}{@{}c}{\scriptsize \eqref{eq:OpenCoverFactnuApprox},\eqref{eq:eijDef},} \\ \scriptsize{\eqref{eq:zijDef}} \end{tabular}}{\approx_\dl}& 
\nu_{k_0,\overline{\eta}^{k(i,j)}\overline{2\eta}^{k_0-k(i,j)},z_{k_0}^{(i,j)}}(a) \otimes e_{i,j}(v_k) \\
\notag
&\stackrel{\eqref{eq:psiDef}}=& (\psi_{i,j}(a) \otimes e_{i,j}(v_{k})) \\
&=& \sigma_{k} \circ \lambda_{k_0}^k(\phi_{i,j} \circ \psi_{i,j}(a)).
\end{eqnarray}
We clearly get the same thing in the second line if we start with the expression
\begin{equation}
\sigma_{k} \circ \lambda_{k_0}^k((a \otimes 1_D)\cdot\phi_{i,j}(1))(v_{k}), \end{equation}
and thus we have established \eqref{eq:CpcApproximationToShow} in this irreducible representation.

(iii) $k > k(i,j)$:
By Lemma \ref{lem:phiPtEval} (iv) and the fact that $\phi_{i,j}$ is order zero, everything in \eqref{eq:CpcApproximationToShow} is zero in this representation for $v_{k} \in CX_{k}|_{[2\eta,1]}$.
For $v_{k} = (x_{k},t) \in CX_{k}|_{(0,2\eta)}$, note that 
\begin{equation}
\label{eq:CpcApproximationQuicky}
\sigma_k(a_k) = \hat\theta_k(x_k)(a_{k-1}).
\end{equation}
As in Lemma \ref{lem:phiPtEval} (iii), we have
\begin{eqnarray}
\notag
&&\hspace*{-15mm}\sigma_{k} \circ \lambda_{k_0}^k(\phi_{i,j}(1)\cdot(a \otimes 1_D))(v_{k}) \\
\notag
&\stackrel{\eqref{eq:phiDef}}=&
\sigma_k \circ \lambda_{k_0}^k(\xi_{k_0,\bar\eta^{k(i,j)}\overline{2\eta}^{k_0-k(i,j)},k(i,j)}(1_{m_{k(i,j)}} \otimes e_{i,j})(v_k)\\
\notag
&&\quad\cdot (\sigma_k(a_k) \otimes 1_D)(v_k) \\
\notag
&\stackrel{\eqref{eq:xiDef}}=&
\check\xi_{k,\overline{\eta}^{k(i,j)}\overline{2\eta}^{k-k(i,j)},k(i,j)} \big(1 \otimes e_{i,j}|_{Z^{**}_{k,\overline{\eta}^{k(i,j)}\overline{2\eta}^{k-k(i,j)}}}\big)(v_k) \\
\notag
&&\quad \cdot(\sigma_{k}(a_{k}) \otimes 1_D)(v_{k}) \\
\notag
&\stackrel{\eqref{eq:checkXiDef},\eqref{eq:eijDef}}=& \textstyle{\sum_{z_{k-1} \in S_{k-1,\overline\eta^{k(i,j)}\overline{2\eta}^{k-1-k(i,j)},m_{k}}(\hat\theta_{k}(x_{k})) \cap \hat W_{i,j}}} \\
\notag
&& \hspace*{-25mm}\big(\mu_{k-1,\overline\eta^{k(i,j)}\overline{2\eta}^{k-1-k(i,j)},\hat\theta_{k}(x_{k}),z_{k-1}}(1) \otimes e_{i,j}(v_k)\big)\cdot(\sigma_k(a_{k-1}) \otimes 1_D)(v_k) \\
\notag
&\stackrel{\eqref{eq:CpcApproximationQuicky}}=& \textstyle{\sum_{z_{k-1} \in S_{k-1,\overline\eta^{k(i,j)}\overline{2\eta}^{k-1-k(i,j)},m_{k}}(\hat\theta_{k}(x_{k})) \cap \hat W_{i,j}}} \\
\notag
&& \hspace*{-25mm}\big(\mu_{k-1,\overline\eta^{k(i,j)}\overline{2\eta}^{k-1-k(i,j)},\hat\theta_{k}(x_{k}),z_{k-1}}(1) \otimes e_{i,j}(v_k)\big)\cdot(\hat\theta_k(x_k)(a_{k-1}) \otimes 1_D) \\
\notag
&=& \sum_{z_{k-1}} \big(\mu_{k-1,\overline\eta^{k(i,j)}\overline{2\eta}^{k-1-k(i,j)},\hat\theta_{k}(x_{k}),z_{k-1}}(1)\cdot\hat\theta_k(x_k)(a_{k-1})\big) \\
\notag
&& \quad \otimes e_{i,j}(v_k) \\
\label{eq:CpcApproximationMidComp}
&\stackrel{\eqref{eq:PointwiseApproxMult}}=& \sum_{z_{k-1}} \mu_{k-1,\overline\eta^{k(i,j)}\overline{2\eta}^{k-1-k(i,j)},\hat\theta_{k}(x_{k}),z_{k-1}} \\
\notag
&&\quad \circ \nu_{k-1,\overline\eta^{k(i,j)}\overline{2\eta}^{k-1-k(i,j)},z_{k-1}}(a_{k-1}) \otimes e_{i,j}(v_k) \\
\notag
&\stackrel{\eqref{eq:nuDef1}}=& \sum_{z_{k-1}} \mu_{k-1,\overline\eta^{k(i,j)}\overline{2\eta}^{k-1-k(i,j)},\hat\theta_{k}(x_{k}),z_{k-1}} \\
\notag
&&\quad \circ \nu_{k_0,\overline\eta^{k(i,j)}\overline{2\eta}^{k_0-k(i,j)},((x_k,z_{k-1}),t)}(a) \otimes e_{i,j}(v_k) \\
\notag
&\stackrel{\begin{tabular}{@{}c}{\scriptsize \eqref{eq:OpenCoverFactnuApprox},\eqref{eq:eijDef},} \\ \scriptsize{\eqref{eq:zijDef}} \end{tabular}}{\approx_\dl}& 
\sum_{z_{k-1}} \mu_{k-1,\overline\eta^{k(i,j)}\overline{2\eta}^{k-1-k(i,j)},\hat\theta_{k}(x_{k}),z_{k-1}} \\
\notag
&&\quad \circ \nu_{k_0,\overline\eta^{k(i,j)}\overline{2\eta}^{k_0-k(i,j)},z_{k_0}^{(i,j)}}(a) \otimes e_{i,j}(v_k) \\
\notag
&\stackrel{\eqref{eq:psiDef}}=& 
\sum_{z_{k-1}} \mu_{k-1,\overline\eta^{k(i,j)}\overline{2\eta}^{k-1-k(i,j)},\hat\theta_{k}(x_{k}),z_{k-1}}(\psi_{i,j}(a)) \\
\notag
&&\qquad \otimes e_{i,j}(v_{k}) \\
\notag
&\stackrel{\eqref{eq:checkXiDef}}=& \check\xi_{k_0,\overline{\eta}^{k(i,j)}\overline{2\eta},k}(\psi_{i,j}(a) \otimes e_{i,j}(v_k)) \\
\notag
&\stackrel{\eqref{eq:xiDef}}=& \left(\sigma_k \circ \lambda_{k_0}^k \circ \xi_{k_0,\overline{\eta}^{k(i,j)}\overline{2\eta},k}(\psi_{i,j}(a) \otimes e_{i,j})\right)(v_k) \\
&\stackrel{\eqref{eq:phiDef}}=& \sigma_k \circ \lambda_{k_0}^k \circ \phi_{i,j} \circ \psi_{i,j}(a),
\end{eqnarray}
where we have used the fact that the summands are orthogonal to get the approximation to within $\dl$.
We clearly get the same thing in \eqref{eq:CpcApproximationMidComp} if we start with the expression
\begin{equation}
\sigma_{k} \circ \lambda_{k_0}^k((a \otimes 1_D)\cdot\phi_{i,j}(1))(v_{k}), \end{equation}
and thus we have shown \eqref{eq:CpcApproximationToShow} in this irreducible representation.

We have finished establishing \eqref{eq:CpcApproximationToShow}.

By Lemma \ref{lem:CpcFacts} (ii), we may colour (i.e., partition) the set of all indices $(i,j)$ into
\begin{equation} I_0 \amalg \cdots \amalg I_n, \end{equation}
in such a way that for $c=0,\dots,n$,
\begin{equation} (\phi_{i,j})_{(i,j) \in I_c} \end{equation}
have orthogonal ranges.
It follows from \eqref{eq:CpcApproximationToShow} that
\begin{equation}
\label{eq:CpcApproximationOneColour} \sum_{(i,j) \in I_c} \phi_{i,j}(1)(a \otimes 1_D) \approx_{\dl} \sum_{(i,j) \in I_c} \phi_{i,j} \circ \psi_{i,j}(a). \end{equation}
Therefore,
\begin{eqnarray}
\notag
a \otimes 1_D &\stackrel{\mathrm{Lemma\ }\ref{lem:CpcFacts}\mathrm{\ (iii)}'}{\approx_\e}& \sum_{i,j} \phi_{i,j}(1)(a \otimes 1_D) \\
\notag
&=& \sum_{c=0}^n \sum_{(i,j) \in I_c} \phi_{i,j}(1)(a \otimes D) \\
\notag
&\stackrel{\eqref{eq:CpcApproximationOneColour}}{\approx_{(n+1)\dl}}& \sum_{c=0}^n \sum_{(i,j) \in I_c} \phi_{i,j}\circ\psi_{i,j}(a) \\
&=& \sum_{i,j} \phi_{i,j}\circ\psi_{i,j}(a),
\end{eqnarray}
as required.
\end{proof}

\subsection{Proof of Theorem \ref{thm:drBound} and applications}

\begin{proof}[Proof of Theorem \ref{thm:drBound}]
Since $A$ is an NC cell complex, we may assume it is $A_{k_0}$.

Since $A_{k_0}$ is locally approximated by $A_{{k_0},\bar\eta}$, it suffices to show that
\begin{equation} \dr (A_{{k_0},\bar\eta} \otimes 1_D \subseteq A_{k_0} \otimes D) \leq n. \end{equation}
Let $\mathcal F \subseteq A_{{k_0},\bar\eta}$ and let $\e >0$.
Using this $\mathcal F$ and $\dl:=\frac\e{2(n+1)}$, form the open cover $\mathcal U_{k_0}$ as in Section \ref{sec:OpenCover}.
By the hypothesis, applied to $Z:=Z_{{k_0},\bar\eta}$, and by \cite[Proposition 3.2]{TW:Zdr}, there exists an $(n+1)$-colourable $\e/2$-approximate partition of unity $(e_i)_i$ in $\mathrmm C(Z_{{k_0},\bar\eta},D)$ subordinate to $\mathcal U_{k_0}$.
Form the c.p.c.\ maps $\phi_{i,j}$ and $\psi_{i,j}$ as in \eqref{eq:phiDef}, \eqref{eq:psiDef}.
By Lemma \ref{lem:CpcFacts} (ii), $\phi=\sum_{i,j} \phi_{i,j}$ is $(n+1)$-colourable, and by Lemma \ref{lem:CpcFacts} (iii)$'$, $\|\phi\| \leq 1+\e/2$.
Rescaling, we may arrange that $\phi$ is c.p.c.

Finally, by Lemma \ref{lem:CpcApproximation}, 
\begin{equation} \sum_{i,j} \phi_{i,j} \circ \psi_{i,j}(a) \approx_\e a \otimes 1_D \end{equation}
for all $a \in \mathcal F$.
\end{proof}

\begin{cor}[Theorem \ref{thm:MainThmA}]
\label{cor:MainThmA}
Every $\mathcal Z$-stable, locally subhomogeneous algebra $A$ satisfies $\dr A \leq 2$.
\end{cor}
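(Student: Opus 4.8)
The plan is to assemble Corollary~\ref{cor:MainThmA} from the pieces already in place. Let $A$ be a $\mathcal Z$-stable, locally subhomogeneous $\mathrm C^*$-algebra. By Proposition~\ref{prop:drLocal}, it suffices to bound the decomposition rank of each subhomogeneous subalgebra approximating $A$; however, since we must preserve $\mathcal Z$-stability, the cleaner route is: first reduce to the unital, separable case, then replace $A$ by $A\otimes\mathcal Z$, write $\mathcal Z$ as an inductive limit of dimension-drop algebras (or simply use that $\mathcal Z$ is an inductive limit of prime dimension-drop algebras, each of which is subhomogeneous), and thereby realize $A\otimes\mathcal Z$ as locally approximated by algebras of the form $B\otimes\mathcal Z$ where $B$ is a unital separable subhomogeneous subalgebra. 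Then Theorem~\ref{thm:ApproxCell} says each such $B$ is locally approximated by NC cell complexes, so $B\otimes\mathcal Z$ is locally approximated by $C\otimes\mathcal Z$ with $C$ an NC cell complex. By Proposition~\ref{prop:drLocal} again, $\dr(A\otimes\mathcal Z)\le\sup_C\dr(C\otimes\mathcal Z)$.

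Next I would invoke Theorem~\ref{thm:drBound} with $D=\mathcal Z$: for each NC cell complex $C$,
\begin{equation}
\dr(C\otimes 1_{\mathcal Z}\subseteq C\otimes\mathcal Z)\le\sup_Z\dr(\mathrm C(Z)\otimes 1_{\mathcal Z}\subseteq \mathrm C(Z)\otimes\mathcal Z),
\end{equation}
the supremum over compact metrizable $Z$. By \cite[Theorem~4.1]{TW:Zdr} (the $\mathcal Z$-stabilized commutative case used already in Proposition~\ref{prop:BabyVersion}), the right-hand side is at most $2$; hence $\dr(C\otimes 1_{\mathcal Z}\subseteq C\otimes\mathcal Z)\le 2$. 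Since $\mathcal Z$ is strongly self-absorbing, \cite[Proposition~2.6]{TW:Zdr} converts this into $\dr(C\otimes\mathcal Z)\le 2$. Combining with the local approximation from the previous paragraph and Proposition~\ref{prop:drLocal} gives $\dr(A\otimes\mathcal Z)\le 2$, and $\mathcal Z$-stability of $A$ means $A\cong A\otimes\mathcal Z$, so $\dr A\le 2$.

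For the reductions at the start: decomposition rank does not increase under passing to hereditary subalgebras, inductive limits, or quotients, and it is insensitive to adding a unit (the standard permanence properties of \cite{KirchbergWinter:CovDim}), so the non-unital and non-separable cases follow from the unital separable case by a routine exhaustion argument — one approximates a given finite set inside a separable subalgebra, unitizes, and uses that a separable subalgebra of a locally subhomogeneous algebra is itself locally subhomogeneous (a subalgebra of a subhomogeneous algebra is subhomogeneous). The one genuine subtlety, which I would flag as the main thing to get right, is ensuring that the local approximation of $A\otimes\mathcal Z$ can be taken to consist of algebras $C\otimes\mathcal Z$ with $C$ an NC cell complex rather than merely of subhomogeneous-tensor-$\mathcal Z$ algebras: this uses that Theorem~\ref{thm:ApproxCell} produces NC cell complex \emph{subalgebras} of $B$, whence $C\otimes\mathcal Z\subseteq B\otimes\mathcal Z$, together with the fact that a finite subset of $A\otimes\mathcal Z$ can first be approximated within $B\otimes F$ for $B$ subhomogeneous and $F\subseteq\mathcal Z$ finite-dimensional, then within $C\otimes F\subseteq C\otimes\mathcal Z$. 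Everything else is bookkeeping with the permanence properties of $\dr$.
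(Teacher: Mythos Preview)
Your proof is correct and follows essentially the same route as the paper's: reduce to NC cell complexes via Theorem~\ref{thm:ApproxCell}, invoke Theorem~\ref{thm:drBound} with $D=\mathcal Z$, and feed in the commutative bound from \cite{TW:Zdr}. The one difference is that you apply \cite[Proposition~2.6]{TW:Zdr} directly to the NC cell complex $C$ (obtaining $\dr(C\otimes\mathcal Z)=\dr(C\otimes 1_{\mathcal Z}\subseteq C\otimes\mathcal Z)$), whereas the paper applies it to $C\otimes\mathcal Z$ and then inserts an extra approximation step, replacing $C\otimes\mathcal Z$ by $C\otimes\mathcal Z_{p,q}$ (which is again an NC cell complex) before invoking Theorem~\ref{thm:drBound}. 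Your version is slightly more economical; the paper's detour through $\mathcal Z_{p,q}$ is harmless but not needed.
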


\begin{proof}
Since locally subhomogeneous algebras are locally approximated by NC cell complexes, and by \cite[Proposition 2.6]{TW:Zdr}, it suffices to show, for any NC cell complex $A$, that the inclusion
\begin{equation} A \otimes \mathcal Z \otimes 1_{\mathcal Z} \subset A \otimes \mathcal Z \otimes \mathcal Z \end{equation}
has decomposition rank at most $2$.
By approximating, it suffices to show this for the inclusion
\begin{equation} A \otimes \mathcal Z_{p,q} \otimes 1_{\mathcal Z} \subset A \otimes \mathcal Z_{p,q} \otimes \mathcal Z. \end{equation}
Note that $A \otimes \mathcal Z_{p,q}$ is itself an NC cell complex, so that this inclusion has decomposition rank at most $2$ follows from Theorem \ref{thm:drBound} and \cite{TW:Zdr}.
\end{proof}

Theorem \ref{thm:drBound} also gives a different proof of the following result, the main theorem of \cite{Winter:drSH}.

\begin{cor}[\cite{Winter:drSH}]
\label{cor:drSH}
Let $A$ be a separable subhomogeneous algebra such that $\mathrm{Prim}_k(A)$ has dimension at most $n$, for each $k$.
Then $\dr A \leq n$.
\end{cor}

\begin{proof}
By Theorem \ref{thm:ApproxCell}, $A$ is locally approximated by NC cell complexes where the cells have dimension at most $n$.
(Note that this does not use the result of \cite{Winter:drSH}, since the proof of Theorem \ref{thm:ApproxCell} only uses the fact that $\mathrm{Prim}_k(A)$ has dimension at most $n$.)
Therefore, by Proposition \ref{prop:drLocal}, we may assume that $A$ is such an NC cell complex.

This means that $A$ has the form given in Section \ref{sec:Setup}, where $\dim X_i \leq n-1$ for all $i=0,\dots,k$.

In order to apply Theorem \ref{thm:drBound}, we must show that $Z_{k,\bar\eta}$ has dimension at most $n$.
Evidently, there exists a finite-to-one continuous map $Y_{k,\bar\eta} \to X_i$ and so by \cite[Theorem 1.12.4]{Engelking:DimTheory},
\begin{equation} \dim Y_{k,\bar\eta} \leq \dim X_i \leq n \end{equation}
for every $k$.
Then since $Z_{k,\bar\eta}$ can be decomposed into a closed set homeomorphic to $Z_{k-1,\bar\eta}$, an open set homeomorphic to $Y_{k,\bar\eta} \times (0,\eta)$, and a closed set homeomorphic to $CX_k$, we can show inductively that
\begin{equation} \dim Z_{k,\bar\eta} \leq n. \end{equation}
\end{proof}

\section{Minimal $\mathbb Z$-crossed products}
\label{sec:CrossedProd}

Let us now consider crossed products associated to minimal homeomorphisms $\alpha\colon X \to X$, for compact metrizable spaces $X$.
Using $\mathrmm C(X) \rtimes_\alpha \mathbb Z$ to denote the crossed product, we show the following result.

\begin{thm}
\label{thm:UHFCrossedProd}
Let $X$ be a compact metrizable space, let $\alpha\colon X \to X$ be a minimal homeomorphism, and let $U$ be an infinite dimensional UHF algebra.
Then
\begin{equation} (\mathrmm C(X) \rtimes_\alpha \mathbb Z) \otimes U \end{equation}
is locally subhomogeneous.
\end{thm}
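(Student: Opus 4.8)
The plan is to use the well-known structure theory for crossed products by minimal homeomorphisms, reducing to orbit-breaking subalgebras that are recursive subhomogeneous, and then apply Theorem \ref{thm:MainThmA} together with a Berg-type argument. First I would recall that for a minimal homeomorphism $\alpha\colon X \to X$ and a point $y \in X$, Putnam's orbit-breaking subalgebra $A_y \subseteq \mathrmm C(X) \rtimes_\alpha \mathbb Z$ (generated by $\mathrmm C(X)$ and $u C_0(X \setminus \{y\})$) is a recursive subhomogeneous algebra: by work of Q.\ Lin and Phillips, $A_y$ is an inductive limit of subalgebras, each of which is a finite direct sum of algebras of the form $\mathrmm C(Z, M_n)$ glued along closed subsets, hence subhomogeneous. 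Thus $A_y \otimes U$ is locally subhomogeneous (indeed, a subalgebra of a subhomogeneous algebra is subhomogeneous, and tensoring a recursive subhomogeneous algebra with a UHF algebra keeps it locally subhomogeneous).

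The next step is to pass from $A_y$ to the full crossed product. The standard fact here is that $\mathrmm C(X) \rtimes_\alpha \mathbb Z$ is a limit, in an appropriate sense, of the orbit-breaking subalgebras; more precisely, one uses that for a carefully chosen point $y$, the inclusion $A_y \hookrightarrow \mathrmm C(X) \rtimes_\alpha \mathbb Z$ is ``large'' — it captures everything except a single unitary that implements a small perturbation. The key technical tool is \emph{Berg's technique}: given a finite set $\mathcal F$ of elements of $\mathrmm C(X) \rtimes_\alpha \mathbb Z$ and a tolerance $\e > 0$, one decomposes $X$ (using minimality and a Rokhlin-type tower over a small neighbourhood of $y$) so that the canonical unitary $u$ can be approximated, to within $\e$ on the relevant tower, by a unitary lying in a subalgebra that is a matrix amplification of (a corner of) $A_y$. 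After tensoring with $U$, the extra room provided by the UHF algebra lets one absorb the rotation/shift correction in Berg's argument cleanly, so that $\mathcal F \otimes 1_U$ is approximately contained in a subalgebra of $(\mathrmm C(X) \rtimes_\alpha \mathbb Z) \otimes U$ that is isomorphic to (a unital corner of) $A_{y} \otimes U$ — or more precisely to a finite direct sum of matrix algebras over orbit-breaking subalgebras. Each such subalgebra is locally subhomogeneous by the previous paragraph, so $\mathcal F \otimes 1_U$ is approximately contained in a subhomogeneous subalgebra, which is exactly what local subhomogeneity requires.

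I expect the main obstacle to be making the Berg-technique step precise: one must arrange the Rokhlin tower so that the boundary effects (where the tower does not perfectly tile $X$) are controlled, and one must check that the matrix amplification of $A_y$ that arises genuinely sits inside $(\mathrmm C(X) \rtimes_\alpha \mathbb Z) \otimes U$ as a $\mathrmm C^*$-subalgebra (not merely approximately). The role of $U$ is crucial precisely here — the infinite UHF algebra supplies matrix units of all sizes to realize the conjugating unitaries and the shift along the tower exactly, rather than only approximately, so that no residual non-closedness issues arise. A secondary point to verify is that the orbit-breaking algebra $A_y$, and hence $A_y \otimes U$, has the local subhomogeneity established robustly enough that it survives passing to unital corners and matrix amplifications; this is routine since the class of locally subhomogeneous algebras is closed under these operations, but it should be stated explicitly. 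Once these are in place, the conclusion follows: every finite subset of $(\mathrmm C(X) \rtimes_\alpha \mathbb Z) \otimes U$ is, up to arbitrarily small error, contained in a subhomogeneous subalgebra, so the crossed product's UHF-stabilization is locally subhomogeneous.
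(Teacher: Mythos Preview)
Your overall strategy---orbit-breaking subalgebra $A_y$ plus Berg's technique, with the UHF tensor factor providing extra room---is exactly the paper's approach. But two points in your sketch are off in ways that matter.

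First, the approximating subalgebra is not a corner of $A_y \otimes U$ or a matrix amplification thereof. What Berg's technique actually produces (Lemma~\ref{lem:UHFCrossedProdSpecific}) is a direct sum
\[
p(A_x \otimes U)p \,\oplus\, \mathrmm C(\mathbb T, M_m),
\]
where the circle-matrix summand is the $\mathrmm C^*$-algebra generated by the chain of partial isometries built from the Rokhlin tower together with a closing-up partial isometry (Lemma~\ref{lem:UniversalCircle}). The unitary $u$ has nontrivial $K_1$-class, so $u\cdot 1_C$ cannot be approximated inside any matrix amplification of a corner of $A_x$; the $\mathrmm C(\mathbb T,M_m)$ piece is precisely where the winding goes. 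Your claim that $u$ is approximated by a unitary in a matrix amplification of $A_y$ would fail for this reason. Of course $\mathrmm C(\mathbb T,M_m)$ is itself subhomogeneous, so the conclusion still follows---but you need to name this summand.

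Second, the role of $U$ is not to supply matrix units for conjugating unitaries. In the paper, $U$ is used so that $A_x \otimes U$ has projections in every nonzero hereditary subalgebra, strict comparison, and stable rank one (Lemmas~\ref{lem:ProjBetween} and~\ref{lem:CPProjs}): this is what lets one produce a genuine projection $p_0$ near the base of the tower, inside $A_x \otimes U$, and then a partial isometry $v_0 \in A_x \otimes U$ implementing $p_0 \sim p_m$ (using cancellation and the $K_0$-isomorphism induced by $A_x \hookrightarrow \mathrmm C(X)\rtimes_\alpha\mathbb Z$). The path-of-projections interpolation (Lemma~\ref{lem:NilPIPath}) then closes the cycle. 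Finally, Theorem~\ref{thm:MainThmA} plays no role here; it concerns decomposition rank and is a \emph{consequence} of this theorem (via Corollary~\ref{cor:CrProddr}), not an input.
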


In particular, this result shows that Theorem \ref{thm:MainThmA} applies to $(\mathrmm C(X) \rtimes_\alpha \mathbb Z) \otimes U$, and then on using \cite[Lemma 3.1]{Carrion:RFproducts}, it follows that $(\mathrmm C(X) \rtimes_\alpha \mathbb Z) \otimes \mathcal Z$ has finite decomposition rank, as explained in Corollary \ref{cor:CrProddr}, below.
In fact, using the heavy classification machinery of Gong, Lin, and Niu \cite{GongLinNiu}, Lin has shown that $(\mathrmm C(X) \rtimes_\alpha \mathbb Z) \otimes \mathcal Z$ is approximately subhomogeneous (see Remark \ref{rmk:LinCrossedClass} and Corollary \ref{cor:CrProdClass}) \cite{Lin:CrossedClass}.

It would be nice to have a direct proof (such as the proof of Theorem \ref{thm:UHFCrossedProd}) that $\mathcal Z$-stable (or even all) minimal $\mathbb Z$-crossed products are locally subhomogeneous, not using lengthy classification results.

Let $X,\alpha$ as in Theorem \ref{thm:UHFCrossedProd}, and let $x \in X$.
Let $u \in \mathrmm C(X) \rtimes_\alpha \mathbb Z$ denote the canonical unitary, so that
\begin{equation} u^*fu = f \circ \alpha \text{ for }f \in \mathrmm C(X) \subset \mathrmm C(X) \rtimes_\alpha \mathbb Z. \end{equation}
Define
\begin{equation}
\label{eq:PutnamSugalgDef}
A_x := \mathrmm C^*(\mathrmm C(X) \cup uC_0(X \setminus \{x\})) \subset \mathrmm C(X) \rtimes_\alpha \mathbb Z.
\end{equation}
This algebra is known to be locally subhomogeneous (in fact, an inductive limit of fairly explicit subhomogeneous algebras), as well as simple; these facts are largely due to Q.~Lin \cite{QLin}; see \cite[Theorem 4.1]{Phillips:arsh} for a concise account.
Many times has the structure of $\mathrmm C(X) \rtimes_\alpha \mathbb Z$ been studied using $A_x$ \cite{ElliottEvans:IrratRotation,ElliottNiu:MeanDim0,GiordanoPutnamSkau:OrbitEq,LinPhillips:CrProd,Phillips:arsh,Putnam:CantorCrProd,Putnam:KTheoryGroupoids,LinPhillips:mindiffeos,StrungWinter:IJM,TomsWinter:MinClassification,TomsWinter:rigidity}!
In Lemma \ref{lem:UHFCrossedProdSpecific}, below, Berg's technique is used to show a new link: up to UHF stabilization, $\mathrmm C(X) \rtimes_\alpha \mathbb Z$ is locally approximated by the direct sum of a corner of $A_x$ and a circle-matrix algebra.
From this, Theorem \ref{thm:UHFCrossedProd} will immediately follow.

As pointed out to us by a referee, our use of Berg's technique here is similar to the argument by Strung and Winter in \cite{StrungWinter:IJM}, used to prove that tracial approximation (by a class $\mathcal S$) passes from a $A_x\otimes U$ to $(C(X)\rtimes_\alpha \mathbb Z) \otimes U$.
Combined with classification results (\cite{EGLN:ASH}), this can give an alternate proof of Theorem \ref{thm:UHFCrossedProd}; however, this route requires knowing that $A_x\otimes U$ has finite decomposition rank (which follows from Theorem \ref{thm:MainThmA}), whereas the present argument does not rely on Theorem \ref{thm:MainThmA}, and (perhaps more significantly) does not require classification.

First, a few (known) preparatory lemmas.

In the following, for positive elements $a,e$, we write $a \vartriangleleft e$ to mean $ea=ae=a$.

\begin{lemma}
\label{lem:ProjBetween}
Let $A$ be a $\mathrmm C^*$-algebra with stable rank one.
Suppose that $a,b,c \in A_+$ are positive contractions and $p \in A$ is a projection such that 
\begin{gather}
\notag
a \vartriangleleft b \vartriangleleft c \quad \text{and} \\
[b] \leq [p] \leq [c]
\end{gather}
in the Cuntz semigroup $W(A)$.
Then there exists a projection $q \in A$ such that $a \vartriangleleft q \vartriangleleft c$ and $[p]=[q]$.
\end{lemma}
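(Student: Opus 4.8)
The plan is to produce $q$ inside the hereditary subalgebra $\overline{cAc}$, using the projection $p$ as a template and moving it by unitary conjugation and a perturbation so that it both dominates $a$ and is dominated by $c$. First I would pass to the hereditary subalgebra $B:=\overline{cAc}$, which again has stable rank one (stable rank one passes to hereditary subalgebras). Since $a\vartriangleleft b\vartriangleleft c$, the element $b$ lies in $B$, and the functional calculus element $(b-\tfrac12)_+$ is a positive element of $B$ that still satisfies $a\vartriangleleft g$ for a suitable $g$ in its hereditary subalgebra; more importantly $[b]\le[p]$ in $W(A)$ means that (a cut-down of) $b$ is Cuntz-subequivalent to $p$, so by the standard Cuntz-subequivalence-to-stable-rank-one argument (Rørdam's lemma: if $x\precsim y$ with $y$ a projection and $A$ has stable rank one, then $x$ is actually Murray–von Neumann equivalent to a subprojection $p'\le p$, and in fact there is a unitary $w$ with $w(b-\varepsilon)_+w^*\in \overline{pAp}$ for small $\varepsilon$). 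The upshot of this first step is a projection $p_0\le p$ (hence $[p_0]\le[p]\le[c]$) together with a unitary conjugation arranging $a\vartriangleleft p_0$.

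Next I would use the right-hand inequality $[p]\le[c]$, together with $p\in B=\overline{cAc}$, to arrange that $p$ (or the $p_0$ just produced) sits below $c$ in the sense $p_0\vartriangleleft c$. Here the point is that $p_0$ is a projection in the hereditary subalgebra $B$, so $c$ acts as an approximate unit for $B$ and hence $cp_0c$ is close to $p_0$; by a small perturbation of projections (two projections within distance $<1$ are unitarily equivalent by a unitary close to $1$, valid in any $C^*$-algebra), we may conjugate $p_0$ by a unitary close to the identity of $\tilde B$ to a genuine projection $q$ with $q\vartriangleleft c$. Since this last unitary is close to $1$, it does not destroy the relation $a\vartriangleleft q$ that we arranged in the first step (one can first shrink: replace $a$ by a slightly smaller element $a'$ with $a\vartriangleleft a'\vartriangleleft p_0$, do the perturbation, and observe $a\vartriangleleft q$ still holds). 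Finally $[q]=[p_0]$ and we must upgrade this to $[q]=[p]$: this is where the two-sided squeeze $[b]\le[p]\le[c]$ is used in full — actually I expect that the statement should be read as asserting $q\sim p$, and the equivalence $p_0\sim p$ is \emph{not} automatic; rather one should run the argument symmetrically, producing $q$ with $[b]\le[q]$, $q\vartriangleleft c$, $a\vartriangleleft q$, and $[q]\le[c]$, and then invoke that in a stably finite (stable rank one) algebra, $[q]\le[p]$ and $[p]\le[c]$, combined with $q$ and $p$ both being ``between'' $b$ and $c$, force $[q]=[p]$ by cancellation — more precisely, the hypotheses give $[b]\le[p]\le[c]$ and we will have arranged $[b]\le[q]\le[c]$ with $q\vartriangleleft c$, and then a direct Cuntz-semigroup computation in $W(B)$ using $[c]=[1_{\tilde B}]$ and stable finiteness yields $[q]=[p]$.

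The main obstacle is this last coincidence-of-classes step: getting the projection we build to land in \emph{exactly} the class $[p]$ rather than merely $\le[p]$ or $\ge[p]$. The way I would handle it is to build two projections — $q_-$ with $a\vartriangleleft q_-$ and $[q_-]=[p]$ arising from a refined version of the subequivalence argument that uses $[b]\le[p]$ to get $a\precsim q_-\sim p$ with $q_-$ in $\overline{bAb}$, and separately verify $q_-\vartriangleleft c$ using $\overline{bAb}\subseteq\overline{cAc}$ and the approximate-unit property of $c$ — so that a single projection does all three jobs. Concretely: since $a\vartriangleleft b$, pick $\varepsilon$ with $a\vartriangleleft (b-\varepsilon)_+$; since $[(b-\varepsilon)_+]\le[b]\le[p]$ and $A$ has stable rank one, Rørdam's lemma gives a projection $q\le $ (a projection equivalent to $p$) inside $\overline{(b-\varepsilon/2)_+A(b-\varepsilon/2)_+}$ with $a\vartriangleleft q$; replacing $q$ by the equivalent projection $p$ via a unitary that is the identity off a hereditary subalgebra still inside $\overline{cAc}$ and still leaving $a$ fixed (possible because $a\vartriangleleft q$ and the unitary can be chosen to act as identity on $a$), we get $[q]=[p]$; and $q\vartriangleleft c$ because $q\in\overline{bAb}\subseteq\overline{cAc}$ and $c$ is a unit for projections there up to the perturbation already absorbed. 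Assembling these gives the required $q$ with $a\vartriangleleft q\vartriangleleft c$ and $[p]=[q]$.
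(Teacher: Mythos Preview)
Your proposal has a genuine gap: you never succeed in arranging $[q]=[p]$ exactly. In the first two paragraphs you produce a subprojection $p_0\le p$, which only gives $[p_0]\le[p]$, and the ``cancellation'' argument you sketch (that $[b]\le[q]\le[c]$ and $[b]\le[p]\le[c]$ force $[q]=[p]$) is simply false in general. In the third paragraph you try to repair this by conjugating to ``the equivalent projection $p$ via a unitary that \dots\ acts as identity on $a$,'' but no reason is given why such a unitary exists. Several auxiliary claims also fail: $a\vartriangleleft(b-\varepsilon)_+$ is false, since $(b-\varepsilon)_+$ takes the value $1-\varepsilon$ where $b=1$; and a projection in $\overline{cAc}$ need not satisfy $p_0\vartriangleleft c$, nor is $cp_0c$ automatically close to $p_0$.

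The missing idea is to conjugate $p$ itself rather than build $q$ from pieces. The paper's two-line argument runs as follows. First, use $[p]\le[c]$ and R{\o}rdam's lemma to replace $p$ by a Murray--von Neumann equivalent projection in $A\cap\{1-c\}^\perp$; this gives $p\vartriangleleft c$ \emph{exactly}, with no perturbation needed. Second, work inside the hereditary subalgebra $A\cap\{1-c\}^\perp$ (which again has stable rank one): since $a\vartriangleleft b$ gives $[a]\le[b]\le[p]$, R{\o}rdam's lemma yields a unitary $u\in(A\cap\{1-c\}^\perp)^\sim$ with $uau^*\in\mathrm{her}(p)$. Then $q:=u^*pu$ is unitarily equivalent to $p$ (so $[q]=[p]$ automatically), satisfies $a\vartriangleleft q$ because $uau^*\in pAp$, and satisfies $q\vartriangleleft c$ because $u$ lies in the unitization of $\{1-c\}^\perp$. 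The point you were circling around---apply R{\o}rdam to move $a$ into $\mathrm{her}(p)$ and then pull $p$ back---is exactly right, but you applied it in the wrong direction (moving $b$ into $\mathrm{her}(p)$ and taking a subprojection), which is what loses the equality of classes.
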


\begin{proof}
Using \cite[Proposition 2.4]{Rordam:UHFII}, we may replace $p$ by a Murray-von Neumann equivalent projection in $A \cap \{1-c\}^\perp$.
Therefore, without loss of generality, $p \vartriangleleft c$.

Since stable rank one passes to hereditary subalgebras, we may apply \cite[Proposition 2.4 (iv)]{Rordam:UHFII} to obtain a unitary $u \in (A \cap \{1-c\}^\perp)^\sim$ such that
\begin{equation} uau^* \in \her(p). \end{equation}
Therefore, $q = upu^*$ satisfies the conclusion.
\end{proof}

\begin{lemma}
\label{lem:InAx}
Let $X,\alpha$ be as in Theorem \ref{thm:UHFCrossedProd}, let $x \in X$, $h \in \mathrmm C(X)$ and $k \in \mathbb N$.
\begin{enumerate}
\item If $h(\alpha^{-i}(x)) = 0$ for $i=0,\dots,k-1$ then $u^kh \in A_x$.
\item If $h(\alpha^{i}(x)) = 0$ for $i=1,\dots,k$ then $hu^k \in A_x$.
\end{enumerate}
\end{lemma}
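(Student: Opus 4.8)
\textbf{Proof plan for Lemma \ref{lem:InAx}.}
The plan is to prove (i) directly by approximating $h$ within $C_0(X\setminus\{\alpha^{-i}(x):i=0,\dots,k-1\})$ and factoring $u^k h$ through a telescoping product of elements of the form $u\cdot(\text{function vanishing at }x)$, which lie in $A_x$ by definition; then (ii) will follow by taking adjoints. Recall that $A_x = \mathrm C^*(\mathrm C(X)\cup uC_0(X\setminus\{x\}))$, so $A_x$ contains $\mathrm C(X)$, contains $ug$ for every $g\in\mathrm C(X)$ vanishing at $x$, and is norm-closed; these are the only properties I will use.

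First I would treat (i). Since $h$ vanishes at each of the $k$ points $\alpha^{-i}(x)$, $i=0,\dots,k-1$, I can write $h = h_0 h_1\cdots h_{k-1}$ where each $h_i\in\mathrm C(X)$ vanishes in a neighbourhood of $\alpha^{-i}(x)$ and equals (a suitable root of) $h$ away from that point — concretely, take $g\in\mathrm C(X)$ with $g\equiv 1$ off a small neighbourhood of $\{\alpha^{-i}(x)\}$ and $g$ vanishing near each $\alpha^{-i}(x)$, and set $h_i := g^{1/k}h^{1/k}$ on the support where $h\geq 0$; an even cleaner route, avoiding roots, is to approximate $h$ in norm by functions supported off these points and reduce to the case where $h$ itself vanishes on open neighbourhoods $V_i\ni\alpha^{-i}(x)$. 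Given such an approximant, choose $g_i\in\mathrm C(X)$ with $g_i(\alpha^{-i}(x))=0$, $g_i$ supported in $V_i$ near $\alpha^{-i}(x)$, $g_i\equiv 1$ off a slightly larger set, and $\prod_i g_i$ invertible on $\mathrm{supp}\,h$ (possible since the $V_i$ can be taken pairwise disjoint and disjoint from $\mathrm{supp}\,h$ — wait, rather: arrange $\prod_i(g_i\circ\alpha^{\,\cdot})$ equals $1$ on $\mathrm{supp}\,h$). The key algebraic identity is the covariance relation $f u = u\,(f\circ\alpha^{-1})$, equivalently $u^k f = (f\circ\alpha^{k})u^k$ and $u\,f = (f\circ\alpha^{-1})\,u$, which lets me rewrite
\begin{equation}
u^k h \;=\; \bigl(u\,(h_{k-1}\circ\alpha^{k-1})\bigr)\bigl(u\,(h_{k-2}\circ\alpha^{k-2})\bigr)\cdots\bigl(u\,(h_0\circ\alpha^{0})\bigr)\cdot h',
\end{equation}
after relabelling, where each factor $u\,(h_j\circ\alpha^{j})$ has the property that $h_j\circ\alpha^{j}$ vanishes at $x$ (since $h_j$ vanishes at $\alpha^{-j}(x)$), hence lies in $uC_0(X\setminus\{x\})\subset A_x$, and the leftover $h'\in\mathrm C(X)\subset A_x$. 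The precise bookkeeping of which translate vanishes where is the one place requiring care, but it is forced: pushing each $u$ past the remaining functions converts vanishing of $h$ at $\alpha^{-i}(x)$ into vanishing of the appropriate translate at $x$.

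For (ii), I would simply take adjoints: $(hu^k)^* = u^{-k}\bar h = (u^*)^k\bar h$. The hypothesis $h(\alpha^i(x))=0$ for $i=1,\dots,k$ says $\bar h$ vanishes at $\alpha^i(x)$ for $i=1,\dots,k$; applying the covariance relation with $u^*$ in place of $u$ (note $u^* f = (f\circ\alpha)u^*$), the same telescoping argument as in (i) — now using that $A_x$ contains $u^*\,C_0(X\setminus\{x\})$, which holds because $u^*g = u^*g u\, u^* = (g\circ\alpha^{-1})\cdots$; more directly, $A_x\ni (ug')^* = \bar{g'}u^*$ for $g'$ vanishing at $x$, and one checks $u^*C_0(X\setminus\{x\})\subseteq A_x$ by writing $u^*g = \overline{(\,\bar g\,)}u^* $ and using that $\bar g$ vanishes at $x$ — shows $(hu^k)^*\in A_x$, hence $hu^k\in A_x$ since $A_x$ is a $*$-algebra. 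Alternatively one deduces (ii) from (i) applied to the homeomorphism $\alpha^{-1}$, under which $u$ is replaced by $u^*$ and the points $\alpha^i(x)$ by $\alpha^{-i}(x)$.

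\textbf{Main obstacle.} The only real difficulty is the indexing in the telescoping factorization: one must verify that after moving the $k$ copies of $u$ successively to the left (or right), the cumulative translation applied to the $i$-th factor is exactly $\alpha^{i}$ (resp.\ $\alpha^{-i}$), so that the vanishing hypothesis on $h$ at $\alpha^{-i}(x)$ (resp.\ $\alpha^i(x)$) lands precisely as vanishing at $x$ of that factor. This is a routine but error-prone computation with the covariance relation, and it is where I would be most careful; everything else (density of the subalgebra, closedness of $A_x$, the $*$-algebra structure) is immediate from the definition.
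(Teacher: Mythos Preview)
Your plan for (i) is correct and is essentially the paper's approach: approximate $h$ so that it vanishes on neighbourhoods of $\alpha^{-i}(x)$, $i=0,\dots,k-1$, then factor $u^kh$ through products of elements of the form $u\cdot(\text{function vanishing at }x)$. The paper's execution is cleaner than your factorization $h=h_0\cdots h_{k-1}$: it chooses a single $e\in C_0(X\setminus\{x\})$ with $(e\circ\alpha^i)h=h$ for $i=0,\dots,k-1$, and then observes $u^kh=(ue)^kh\in A_x$ in one line. Do watch the covariance relation: the paper's convention is $u^*fu=f\circ\alpha$, giving $fu=u(f\circ\alpha)$ and $uf=(f\circ\alpha^{-1})u$; your stated identities ``$fu=u(f\circ\alpha^{-1})$'' and ``$u^kf=(f\circ\alpha^k)u^k$'' have the direction of $\alpha$ reversed. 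This is precisely the indexing hazard you flagged as the main obstacle.

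For (ii), your adjoint route has a genuine gap. You assert $u^*C_0(X\setminus\{x\})\subseteq A_x$, but the justification ``$u^*g=\overline{(\bar g)}u^*$'' is false: $u^*$ and $g$ do not commute. What follows from the definition is $C_0(X\setminus\{x\})\,u^*\subset A_x$ (adjoints of generators), which is a different set. Your alternative, ``apply (i) to $\alpha^{-1}$'', also does not work as stated, since $A_x$ is built from $\alpha$, not $\alpha^{-1}$, and the two subalgebras differ. The paper bypasses all of this with a one-line reduction: by covariance $hu^k=u^k(h\circ\alpha^k)$, and the hypothesis $h(\alpha^i(x))=0$ for $i=1,\dots,k$ becomes $(h\circ\alpha^k)(\alpha^{-j}(x))=0$ for $j=0,\dots,k-1$, so (i) applies directly to $h\circ\alpha^k$.
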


\begin{proof}
(i):
It suffices to show that $u^kh$ is approximately contained in $A_x$.
Therefore, perturbing $h$, we may assume that $h$ is zero in a neighbourhood of $\alpha^{-i}(x)$ for each $i=0,\dots,k-1$.
Then there exists $e \in C_0(X \setminus \{x\})$ such that
\begin{equation} (e \circ \alpha^i)h=h \end{equation}
for $i=0,\dots,k-1$.
Thus,
\begin{align}
\notag
u^kh &= u^k(e \circ \alpha^{k-1}) \cdots (e \circ \alpha)eh \\
&= (ue)^kh \in A_x.
\end{align}

(ii): Note that $hu^k = u^k(h \circ \alpha^k)$.
In view of this, (ii) follows directly from (i).
\end{proof}

\begin{lemma}
\label{lem:CPProjs}
Let $X,\alpha$ be as in Theorem \ref{thm:UHFCrossedProd}, let $x \in X$, and let $U$ be an infinite dimensional UHF algebra.
Let $\mathcal G \subset C(X)$ be a finite set, let $\eta > 0$, and let $k \in \mathbb N$.
Then there exist $m \in \mathbb N$ with $k < m$, a finite set $\mathcal H \subset C(X)$, and orthogonal projections 
\begin{equation}
p_{-k}, p_{-k+1},\dots,p_{m-1},p_m \in A_x \otimes U \cap \mathcal H'
\end{equation}
such that:

(i) $\mathcal G \subset_\eta \mathcal H$;

(ii) $1-p_0 \in \her(C_0(X \setminus \{x\}))$;

(iii) for $j \neq 0$, $p_{j} \in \her(C_0(X \setminus \{x\}))$;

(iv) for $j=-k,\dots,m-1$, 
\begin{equation} p_{j+1} = u p_j u^*. \end{equation}

(v) for $f \in \mathcal H$ and $j=-k,\dots,m$, there exists $\lambda_{f,j} \in \mathbb C$ such that
\begin{equation} fp_j = \lambda_{f,j}p_j; \end{equation}

(vi) for $f \in \mathcal H$ and $j=0,\dots,k$, 
\begin{equation} \lambda_{f,-j} \approx_\eta \lambda_{f,m-j}.
\end{equation}
\end{lemma}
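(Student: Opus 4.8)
The plan is to build the projections $p_{-k},\dots,p_m$ as (Rokhlin-type) towers inside $A_x \otimes U$, obtained by cutting down the canonical projection associated to a first-return decomposition of $X$ over a small neighbourhood of $x$, and then to use Berg's technique together with Lemma \ref{lem:ProjBetween} to adjust the ``edge'' projections $p_{-k},\dots,p_m$ so that they satisfy the eigenvalue-matching condition (vi). First I would recall the standard picture (due to Q.\ Lin; see \cite[Theorem 4.1]{Phillips:arsh}): choosing a closed neighbourhood $V$ of $x$ with $\mathrm{int}(V)\ni x$ small enough, the first-return time of $\alpha$ to $V$ takes finitely many values, giving a partition of $X$ into finitely many ``Rokhlin towers'' of heights $n_1 < \dots < n_s$; the characteristic functions of these towers give, after the usual smoothing, a family of projections in $A_x$ that are permuted by $\mathrm{Ad}\,u$ except at the top and bottom levels (where the tower ``wraps around'' through $V$). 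Cutting down by a suitable projection in $U$ and shrinking $V$, we can arrange that all towers have height at least $k+m+1$ for whatever $m$ we eventually need, that $\mathcal G$ is approximately constant on each level of each tower (this gives (i) and (v), with $\mathcal H$ the set of these level-values together with $\mathcal G$), and that $p_0$ is the projection corresponding to the base level $V$, whence (ii) and (iii). Property (iv) is automatic from the tower structure as long as we stay strictly between the bottom level $-k$ and the top level $m$.

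The content is then the matching condition (vi). Here I would fix $m$ to be (roughly) the maximal tower height, so that $p_{-k},\dots,p_0$ sit near the bottom of the towers and $p_{m-k},\dots,p_m$ near the top; the eigenvalue $\lambda_{f,-j}$ is the value of $f$ on the $j$-th level above the base, while $\lambda_{f,m-j}$ is the value of $f$ on the $j$-th level below the top. Using minimality and the freeness of $\alpha$ near $x$, by shrinking $V$ we can force the orbit of $x$ to return near $\alpha^{-j}(x)$ close to the top of the towers, so that the ``top $j$-th level'' and the ``bottom $j$-th level'' of each tower are within $\eta$ of each other in $X$; this makes $\lambda_{f,-j}\approx_\eta \lambda_{f,m-j}$ for $f\in\mathcal H$. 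Concretely: by minimality choose $N$ with $\{x,\alpha(x),\dots,\alpha^N(x)\}$ $\eta$-dense, shrink $V$ so that the first return time exceeds $N + k$, and use Berg's technique to redefine the projections on the top $k+1$ levels of each tower so that they are genuine translates (under $\mathrm{Ad}\,u$) of the bottom $k+1$ levels — this is where Lemma \ref{lem:ProjBetween} is used, to slide a projection of the correct Cuntz class into the hereditary subalgebra cut out by the relevant level functions while remaining orthogonal to the neighbouring projections and commuting with $\mathcal H$ (after enlarging $\mathcal H$). Since $A_x$ has stable rank one (it is a simple unital AH algebra, or one may cite the relevant result of Q.\ Lin / Phillips), and $U$ is UHF, $A_x \otimes U$ has stable rank one, so Lemma \ref{lem:ProjBetween} applies.

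The main obstacle I anticipate is making the ``wrap-around'' levels genuinely compatible: in the naive tower picture, $u p_{m-1} u^*$ is \emph{not} a projection in $\mathrm{her}(C_0(X\setminus\{x\}))$ (the top level maps into $V$, hitting $x$), so properties (iii) and (iv) at the top index $m$ fail unless one does Berg's trick — peeling off the top $k+1$ levels, replacing them by a controlled homotopy that connects the projection at level $m-k-1$ to a translate of the bottom, at the cost of the eigenvalue perturbation $\eta$ in (vi). Getting the bookkeeping right so that after this surgery \emph{all} of (ii)--(vi) hold simultaneously (in particular that the new $p_j$'s still commute with an enlarged finite set $\mathcal H$ refining $\mathcal G$, and that $p_j \in \mathrm{her}(C_0(X\setminus\{x\}))$ for $j\neq 0$ even at the edges) is the delicate part; everything else is a routine, if somewhat lengthy, first-return/Rokhlin-tower construction. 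I would structure the write-up as: (1) set up towers and the finite set $\mathcal H$; (2) produce the honest part $p_{-k},\dots,p_{m-k-1}$ by translation; (3) apply Berg's technique plus Lemma \ref{lem:ProjBetween} to fill in $p_{m-k},\dots,p_m$; (4) verify (i)--(vi).
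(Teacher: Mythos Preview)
Your plan substantially overcomplicates the argument and rests on a misreading of what the lemma requires. There is no wrap-around condition in the statement: (iv) only runs from $j=-k$ to $j=m-1$, so $p_m$ need not be related to $p_{-k}$ by conjugation, and no Berg-type surgery is needed \emph{here} (Berg's technique enters only in the next lemma, Lemma~\ref{lem:UHFCrossedProdSpecific}). Likewise, $m$ is not forced on you by any tower structure---you are free to choose it.

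The paper's proof is far shorter. By minimality, pick $m>k$ so that $d(\alpha^{m-j}(x),\alpha^{-j}(x))<\delta$ for $j=0,\dots,k$, where $\delta$ is small enough that each $g\in\mathcal G$ varies by at most $\eta$ on $\delta$-balls; this single recurrence choice is exactly what yields (vi). Then take a neighbourhood $V$ of $x$ with $\alpha^{-k}(V),\dots,\alpha^m(V)$ disjoint, perturb each $g\in\mathcal G$ to a function constant on each $\alpha^j(V)$ (this produces $\mathcal H$ and gives (i) and (v)), and construct \emph{one} projection $p_0\in A_x\otimes U$ with $g_0\vartriangleleft p_0\vartriangleleft h_0$ for suitable $g_0,h_0\in C_0(V)_+$ equal to $1$ near $x$. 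This sandwiching is the actual role of Lemma~\ref{lem:ProjBetween} (together with strict comparison in the simple $\mathcal Z$-stable algebra $A_x\otimes U$): it forces simultaneously $1-p_0\in\her(C_0(X\setminus\{x\}))$ (from $g_0\vartriangleleft p_0$) and $p_0\in\her(C_0(V))$ (from $p_0\vartriangleleft h_0$), giving (ii) and (iii). Finally set $p_j:=u^jp_0u^{-j}$; (iv) is then a tautology, orthogonality follows from disjointness of the $\alpha^j(V)$, and membership of each $p_j$ in $A_x\otimes U$ is checked via Lemma~\ref{lem:InAx} (with a short extra argument for $j=1$, using $g_0\vartriangleleft p_0$ to split off the part of $p_0$ vanishing at $x$).

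Your first-return picture, with $m$ taken to be a maximal tower height, runs into genuine trouble: different towers have different heights, so ``level $m-j$'' is not uniformly defined across towers, and first return to $V$ does not by itself force $\alpha^{m-j}(y)$ close to $\alpha^{-j}(y)$ pointwise---only recurrence of the specific orbit of $x$ does that. The remedy is simply to drop the tower viewpoint for this lemma and build a single orbit segment of small projections as above.
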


\begin{proof}
The crux of this argument is the use of Berg's techniques, inspired by the proof of \cite[Lemma 4.2]{LinPhillips:CrProd}.

Fix a compatible metric $d$ on $X$ and let $\dl > 0$ be such that, for $x,y \in X$, if $d(x,y) < \dl$ then
\begin{equation}
\label{eq:CPProjsdlDef}
 |f(x)-f(y)| < \eta, \text{ for all }f \in \mathcal G. \end{equation}
By minimality, there exists $m > k$ be such that
\begin{equation}
\label{eq:CPProjsmDef}
d(\alpha^{m-j}(x),\alpha^{-j}(x)) < \dl \end{equation}
for all $j=0,\dots,k$.

Let $V$ be a neighbourhood of $x$ such that
\begin{equation}
\label{eq:CPProjsVTranslates}
 \alpha^{-k}(V),\alpha^{-k+1}(V),\dots,\alpha^m(V) \end{equation}
are disjoint.
By possibly shrinking $V$, each function $f \in \mathcal G$ can be perturbed by at most $\eta$ to a function $f'$ which is constant on each set in \eqref{eq:CPProjsVTranslates}.
Set
\begin{equation} \mathcal H := \{f' \mid f \in \mathcal G\}, \end{equation}
and we see that (i) holds.
For $f \in \mathcal H$ and $j=-k,\dots,m$, set $\lambda_{f,j}$ equal to the complex number satisfying
\begin{equation}
\label{eq:CPProjslambdaDef}
f|_{\alpha^j(V)} \equiv \lambda_{f,j}. \end{equation}

It follows from \eqref{eq:CPProjsdlDef} and \eqref{eq:CPProjsmDef} that
\begin{equation}
\label{eq:CPProjslambdaClose}
|\lambda_{f,-j}-\lambda_{f,m-j}| < \eta
\end{equation}
for all $f \in \mathcal H$, $j=0,\dots,k$, establishing (vi).

Let $h_0 \in C_0(V)_+$ be a function that is constant equal to $1$ in a neighbourhood of $x$.
Since $(A_x \otimes U) \cap \{1-h_0\}^\perp$ is a non-zero full hereditary subalgebra of the UHF-stable, unital algebra $A_x \otimes U$, there exists a non-zero projection
\begin{equation} p_0' \in (A_x \otimes U) \cap \{1-h_0\}^\perp. \end{equation}
Since $\alpha$ is minimal, we may find $g_0',g_0 \in C(X)_+ \cap \{1-h_0\}^\perp$ such that
\begin{equation} d_\tau(g_0') < \tau(p_0')\text{ for all }\tau \in T(A_x), \end{equation}
$g_0 \vartriangleleft g_0'$, and $g_0$ is constantly equal to $1$ in a neighbourhood of $x$.
By strict comparison, it follows that $[g_0'] \leq [p_0']$ in the Cuntz semigroup.
Therefore, by Lemma \ref{lem:ProjBetween}, there exists a projection
\begin{align}
p_0 &\in A_x \otimes U
\end{align}
such that
\begin{align}
\label{eq:CPProjsg0p0h0}
g_0 &\vartriangleleft p_0 \vartriangleleft h_0.
\end{align}
(We can now forget $g_0'$ and $p_0'$.)

For $j = -k,\dots,m$, set
\begin{align}
\label{eq:CPProjsp0Def}
p_j &:= u^j p_0 u^{-j}.
\end{align}

Thus, we have defined projections $p_{-k},\dots,p_m \in A \otimes U$, and it is quite clearly the case that (iii)--(v) hold.
From (v), it follows that each $p_j$ commutes with $\mathcal H$.
Point (ii) follows since
\begin{equation} 0 \leq 1-p_0 \leq 1-g_0 \in C_0(X \setminus \{x\}). \end{equation}
The projections are orthogonal by disjointness of $(\alpha^j(V))_{j=-k,\dots,m}$.

Let us show that $p_j \in A_x \otimes U$ for $j=-k,\dots,m$.
For $j=0$, this is true by our choice of $p_0$.

For $j=-k,\dots,0$,
\begin{equation} p_j = u^j p_0 u^{-j} = (h_0u^{-j})^*p_0(h_0u^{-j}) \in A_x \otimes U \end{equation}
by Lemma \ref{lem:InAx} (ii).

$j=1$ is a special case; here, we note that since $h_0-g_0$ vanishes in a neighbourhood of $x$, and by \eqref{eq:CPProjsg0p0h0}, it follows that there exists $e \in C_0(X \setminus \{x\})_+$ such that $p_0-g_0 \vartriangleleft e$.
Therefore,
\begin{align}
\notag
p_1 &= up_0u^* \\
&= ue(p_0-g_0)(ue)^* + ug_0u^* \in A_x \otimes U.
\end{align}

Now, for $j=2,\dots,m$,
\begin{equation} p_j = u^{j-1}p_1 u^{-(j-1)} = (u^{j-1}(1-h_0)) p_1 (u^{j-1}(1-h_0))^* \in A_x \otimes U \end{equation}
by Lemma \ref{lem:InAx} (i).
\end{proof}

\begin{lemma}
\label{lem:NilPIPath}
Let $A$ be a $\mathrmm C^*$-algebra.
There exists a uniformly continuous map
\begin{align}
 \gamma\colon \{v \in A \mid v \text{ is a partial isometry and } v^2=0\} \times [0,1] &\to A
\end{align}
such that, for each $v$, the assignment $t \mapsto \gamma(v,t)$ defines a path of projections in $\mathrmm C^*(v)$, from $vv^*$ to $v^*v$.
Also, for any unitary $u \in A$,
\begin{equation} u\gamma(v,t)u^* = \gamma(uvu^*,t). \end{equation}
\end{lemma}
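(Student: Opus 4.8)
The plan is to construct $\gamma$ explicitly using a standard rotation trick, exploiting the internal structure of $\mathrm C^*(v)$ for a square-zero partial isometry. First I would observe that if $v$ is a partial isometry with $v^2=0$, then $p:=vv^*$ and $q:=v^*v$ are orthogonal projections (since $v^2=0$ forces $qp = v^*vvv^* = 0$), and $v$ implements a partial isometry from $\mathrm{her}(q)$ to $\mathrm{her}(p)$. The element $w := v + v^*$ is a self-adjoint partial isometry with $w^2 = vv^* + v^*v = p+q$, so $w$ is a symmetry on the corner $(p+q)A(p+q)$. Inside this corner, $\{p, q, v, v^*\}$ behave like matrix units for a copy of $M_2$ (tensored with a commutative algebra, but that does not matter), and the obvious rotation path
\begin{equation}
\gamma(v,t) := \cos^2\!\left(\tfrac{\pi t}{2}\right) p + \sin^2\!\left(\tfrac{\pi t}{2}\right) q + \cos\!\left(\tfrac{\pi t}{2}\right)\sin\!\left(\tfrac{\pi t}{2}\right)(v + v^*)
\end{equation}
should do the job. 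One checks directly that $\gamma(v,0) = p = vv^*$, $\gamma(v,1) = q = v^*v$, and that each $\gamma(v,t)$ is a projection: this is a routine computation using $p^2=p$, $q^2=q$, $pq=qp=0$, $pv = v$, $vq = v$, $qv = 0 = vp$, $v^2 = 0$, $(v^*)^2=0$, $vv^*=p$, $v^*v=q$. Clearly $\gamma(v,t) \in \mathrm C^*(v)$ since $p = \lim (vv^*)$ is in there (actually $p, q \in \mathrm C^*(v)$ because $v$ is a partial isometry, hence $v^*v$ and $vv^*$ lie in $\mathrm C^*(v)$ as they are continuous functions — indeed polynomials — applied appropriately; more carefully, $p$ and $q$ are the range and source projections and belong to $\mathrm C^*(v)$ by the usual argument that $v^*v$ is a projection so its spectrum is $\subseteq\{0,1\}$ and $q = v^*v \in \mathrm C^*(v)$, $p = vv^* \in \mathrm C^*(v)$).

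Next I would verify the two remaining properties. For equivariance under a unitary $u \in A$: since $uvu^*$ is again a square-zero partial isometry with $(uvu^*)(uvu^*)^* = u(vv^*)u^* = upu^*$ and similarly $(uvu^*)^*(uvu^*) = uqu^*$, substituting into the formula gives $u\gamma(v,t)u^* = \gamma(uvu^*,t)$ immediately, because conjugation by $u$ is a $*$-homomorphism and the formula is built from $*$-algebraic operations on $v$. For uniform continuity of $\gamma$ on $\{v : v \text{ partial isometry}, v^2=0\} \times [0,1]$: the formula expresses $\gamma(v,t)$ as a fixed $*$-polynomial (with coefficients that are continuous functions of $t$, uniformly bounded on $[0,1]$) in $v$ and $v^*$; since all such $v$ are contractions ($\|v\| \le 1$), the polynomial map $v \mapsto \gamma(v,t)$ is Lipschitz on the unit ball with a constant independent of $t$, and the $t$-dependence is through the uniformly continuous functions $\cos^2(\pi t/2)$, $\sin^2(\pi t/2)$, $\cos(\pi t/2)\sin(\pi t/2)$. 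Combining these gives joint uniform continuity.

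The only mild subtlety — and what I would treat as the ``main obstacle,'' though it is minor — is making sure the projection identity $\gamma(v,t)^2 = \gamma(v,t)$ and self-adjointness hold for \emph{every} square-zero partial isometry, not just those in a matrix algebra; this is handled cleanly by noting that the relations listed above ($p, q$ orthogonal projections, $pv=vq=v$, $qv=vp=0$, $v^2=0$, $vv^*=p$, $v^*v=q$) are exactly the defining relations, and the computation $\gamma(v,t)^2 = \gamma(v,t)$ uses only these relations, so it is an identity in the universal $\mathrm C^*$-algebra on a square-zero partial isometry and hence holds in every representation. I would present the relations, state the formula, and indicate that expanding $\gamma(v,t)^2$ and collecting terms (using $\cos^2\theta\sin^2\theta + \cos^2\theta\sin^2\theta = $ the cross terms, etc.) yields $\gamma(v,t)$; the endpoint and equivariance claims are then immediate, and uniform continuity follows from the polynomial form together with $\|v\|\le 1$.
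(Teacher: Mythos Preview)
Your approach is correct and is essentially the same as the paper's: both write down an explicit rank-one formula in the $M_2$-corner generated by $v$, namely $\gamma(v,t)=c(t)^2\,vv^*+c(t)s(t)(v+v^*)+s(t)^2\,v^*v$ for suitable functions $c,s$, and then read off the endpoint, equivariance, and uniform continuity properties directly. The only difference is the choice of coefficients: the paper uses $c(t)=1-t$, $s(t)=t$, whereas you use $c(t)=\cos(\pi t/2)$, $s(t)=\sin(\pi t/2)$; your choice has the virtue that $c(t)^2+s(t)^2\equiv 1$, so the idempotent identity $\gamma(v,t)^2=\gamma(v,t)$ genuinely holds for all $t$, which is exactly what the lemma requires.
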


\begin{proof}
Simply set
\begin{equation} \gamma(v,t) := t^2v^*v + t(1-t)(v+v^*) + (1-t)^2vv^*. \end{equation}
In matrix form, this is
\begin{equation}
\left(\begin{array}{cc}
t^2 & t(1-t) \\
t(1-t) & (1-t)^2
\end{array}\right).
\end{equation}
The desired properties are easy to verify.
\end{proof}

\begin{lemma}
\label{lem:UniversalCircle}
The universal $\mathrmm C^*$-algebra generated by partial isometries $w_1,\dots,w_m$ such that
\begin{align}
\notag
w_i^*w_i &= w_{i+1}w_{i+1}^*,\quad i=1,\dots,m-1, \\
\notag
w_{m}^*w_{m} &= w_1w_1^*, \quad \text{and} \\
\label{eq:UniversalCircleRels}
w_i^*w_j&=0, i \neq j
\end{align}
is isomorphic to $\mathrmm C(\mathbb T) \otimes M_m$, via an isomorphism sending $w_i$ to $1 \otimes e_{i,i+1}$ for $i=1,\dots,m-1$ and $w_m$ to $z \otimes e_{m,1}$ where $z \in \mathrmm C(\mathbb T)$, is a generating unitary.
\end{lemma}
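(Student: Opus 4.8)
The plan is to show that the universal $\mathrm C^*$-algebra $A$ with this presentation is of the form $M_m(D)$ for a corner $D$ of $A$ that is generated, as a unital $\mathrm C^*$-algebra, by a single unitary, and then to identify $D$ with $\mathrm C(\mathbb T)$ by comparing spectra via the evident quotient map onto $\mathrm C(\mathbb T)\otimes M_m$. First I would extract the internal structure of $A$. Put $f_i := w_iw_i^*$. The orthogonality relations give $f_if_j = w_i(w_i^*w_j)w_j^* = 0$ for $i\neq j$, so the $f_i$ are mutually orthogonal projections; and since $w_i = f_iw_i$ and, using $w_i^*w_i = w_{i+1}w_{i+1}^*$ cyclically, also $w_i = w_if_{i+1}$ (with $f_{m+1}:=f_1$), the projection $\sum_i f_i$ acts as a unit on the generators, whence $\sum_i f_i = 1_A$. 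Setting $e_{ii} := f_i$, $e_{i,i+1} := w_i$ for $i=1,\dots,m-1$, $e_{ij} := w_iw_{i+1}\cdots w_{j-1}$ for $i<j$, and $e_{ji} := e_{ij}^*$, one checks from $w_iw_i^*w_i = w_i$, $w_i^*w_i = w_{i+1}w_{i+1}^*$ and the orthogonality relations that $\{e_{ij}\}_{i,j=1}^m$ is a system of matrix units with $\sum_i e_{ii} = 1_A$; here the key auxiliary facts are that $w_aw_b = 0$ unless $b=a+1$ (since $f_{a+1}f_b=0$) and telescoping cancellations such as $w_i^*w_iw_{i+1} = w_{i+1}$. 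By the standard identification, this yields an isomorphism $A\cong M_m(D)$ with $D := e_{11}Ae_{11}$, via $a\mapsto (e_{1i}ae_{j1})_{i,j}$.

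Next I would analyse $u := w_1w_2\cdots w_m = e_{1m}w_m$. A short computation using the matrix-unit relations gives $u^*u = uu^* = f_1$, so $u$ is a unitary of the unital corner $D$, and $e_{m1}u = w_m$. Since $w_i = e_{i,i+1}$ for $i<m$, the algebra $A$ is generated by $\{e_{ij}\}\cup\{u\}$; transporting this through $A\cong M_m(D)$ shows that $D$ is generated, as a unital $\mathrm C^*$-algebra, by the unitary $u$, so $D\cong \mathrm C(\Lambda)$ with $\Lambda := \mathrm{sp}_D(u)\subseteq\mathbb T$. To see that $\Lambda = \mathbb T$, observe that the elements $W_i := 1\otimes e_{i,i+1}$ ($i<m$) and $W_m := z\otimes e_{m,1}$ of $\mathrm C(\mathbb T)\otimes M_m$ satisfy the defining relations (a direct check), so there is a unital ${}^*$-homomorphism $\pi\colon A\to\mathrm C(\mathbb T)\otimes M_m$ with $\pi(w_i) = W_i$; it is surjective because its image contains $1\otimes M_m$ and $\pi(u) = (1\otimes e_{1m})(z\otimes e_{m,1}) = z\otimes e_{11}$. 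Restricting $\pi$ to $D$ gives a unital ${}^*$-homomorphism $D\to (1\otimes e_{11})(\mathrm C(\mathbb T)\otimes M_m)(1\otimes e_{11})\cong\mathrm C(\mathbb T)$ sending $u$ to $z$, so $\mathbb T = \mathrm{sp}(z)\subseteq\mathrm{sp}_D(u) = \Lambda$; as $u$ is unitary, $\Lambda = \mathbb T$ and $D\cong\mathrm C(\mathbb T)$ with $u\leftrightarrow z$. Since $\pi$ carries $e_{ij}$ to $1\otimes e_{ij}$ and restricts to an isomorphism on the $(1,1)$-corner, $\pi$ is itself an isomorphism $A\xrightarrow{\ \sim\ }\mathrm C(\mathbb T)\otimes M_m$ with $\pi(w_i) = W_i$, which is precisely the claimed statement.

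The only genuine work lies in the first step: checking, purely from the three families of relations, that the elements $e_{ij}$ really do form a system of matrix units — in particular the orthogonality $e_{ij}e_{kl} = 0$ for $j\neq k$ and the telescoping identities that collapse the products $e_{ij}^*e_{il}$. Everything after that is formal manipulation.
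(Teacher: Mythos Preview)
Your argument is correct and close in spirit to the paper's, but the packaging differs in a way worth noting. Both proofs begin by observing that the $w_i$ (for $i<m$) already generate a full system of matrix units with $\sum w_iw_i^*=1_A$, and that the ``extra'' generator $w_m$ contributes a single unitary. You make this literal by writing $A\cong M_m(D)$ with $D=e_{11}Ae_{11}$ generated by the corner unitary $u=w_1\cdots w_m$, and then invoke Gelfand duality on $D$ together with the evident surjection $\pi$ to pin down $\mathrm{sp}(u)=\mathbb T$. The paper instead forms the \emph{central} unitary $\sum_i w_iw_{i+1}\cdots w_mw_1\cdots w_{i-1}$ (which, in your picture, is just $u\otimes 1_m$) and argues that every irreducible representation sends it to a scalar in $\mathbb T$, so every irreducible factors through $\mathrm{ev}_z\circ\pi$. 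Your route is slightly more constructive (it builds the isomorphism rather than checking injectivity via irreducibles), while the paper's route avoids the explicit matrix-unit bookkeeping; the underlying content is the same.
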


\begin{proof}
Let $A$ be the universal $\mathrmm C^*$-algebra generated by the $w_i$ satisfying the relations \eqref{eq:UniversalCircleRels}.
It is easy to see that the described map, call it $\pi$, is a surjective homomorphism.
To see that $\pi$ is injective, one derives from the relations that

(i) $\sum_{i=1}^m w_i^*w_i$ is a unit for $A$;

(ii) $u:= \sum_{i=1}^m w_iw_{i+1}\cdots w_mw_1 \cdots w_{i-1}$ is a unitary in the centre of $A$, so that under any irreducible representation, the image of $u$ is a scalar of modulus $1$;

(iii) $\{w_i \mid i=0,\dots,m-1\}$ generates a unital copy of $M_m$ in $A$ (therefore also in any irreducible representation); and

(iv) $w_m=w_{m-1}^*\cdots w_1^* u$.

From this, one sees that every irreducible representation of $A$ is equivalent to $\ev_z \circ \pi$ for some point $z \in \mathbb T$, and therefore, $\pi$ is injective.
\end{proof}

\begin{lemma}
\label{lem:UHFCrossedProdSpecific}
Let $X,\alpha$ be as in Theorem \ref{thm:UHFCrossedProd}, let $x \in X$, and let $U$ be an infinite dimensional UHF algebra.
Let $\mathcal C$ denote the class of all $\mathrmm C^*$-algebras of the form
\begin{equation} p(A_x \otimes U)p \oplus \mathrmm C(\mathbb T, M_k), \end{equation}
where $p \in A_x \otimes U$ is a projection and $k \in \mathbb N$.

Then $(\mathrmm C(X) \rtimes_\alpha \mathbb Z) \otimes U$ is locally $\mathcal C$.
\end{lemma}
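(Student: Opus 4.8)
The plan is to use the projections produced by Lemma \ref{lem:CPProjs} to split a large finite-dimensional piece of $(\mathrmm C(X)\rtimes_\alpha\mathbb Z)\otimes U$ off as a circle-matrix algebra, via Berg's technique, leaving behind a corner of $A_x\otimes U$. So fix a finite set $\mathcal F\subset(\mathrmm C(X)\rtimes_\alpha\mathbb Z)\otimes U$ and $\e>0$. Since $\mathrmm C(X)$ and the canonical unitary $u$ (and $U$) generate, I may assume $\mathcal F$ consists of elements of the form $f\otimes b$ and $u\otimes b$ with $f\in\mathrmm C(X)$, $b\in U$; in particular it suffices to approximate a finite set $\mathcal G\subset\mathrmm C(X)$ together with $u$ (tensored with finitely many elements of $U$). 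Apply Lemma \ref{lem:CPProjs} with this $\mathcal G$, a small tolerance $\eta$, and some $k$ to be chosen large (relative to $1/\e$); this yields $m>k$, a finite set $\mathcal H\subset\mathrmm C(X)$ with $\mathcal G\subset_\eta\mathcal H$, and orthogonal projections $p_{-k},\dots,p_m\in A_x\otimes U$ commuting with $\mathcal H$, with $p_{j+1}=up_ju^*$, $1-p_0\in\her(C_0(X\setminus\{x\}))$, $p_j\in\her(C_0(X\setminus\{x\}))$ for $j\neq 0$, and the eigenvalue matching $\lambda_{f,-j}\approx_\eta\lambda_{f,m-j}$.

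Next I would run Berg's argument. Set $e:=\sum_{j=-k}^{m}p_j$ (a projection, since the $p_j$ are orthogonal), and $e':=1-e$. The partial isometry $v:=\sum_{j=-k}^{m-1}up_j\,(=\sum_{j=-k}^{m-1}p_{j+1})$ implements the shift $p_j\mapsto p_{j+1}$ for $j=-k,\dots,m-1$; the "leftover'' partial isometry $w:=up_m$ has source projection $p_{m+1}=up_mu^*$, which need not be among the $p_j$. The key move is to use the matching $\lambda_{f,-j}\approx\lambda_{f,m-j}$ to rotate $p_{m+1}$ back onto $p_{-k+k}=p_0$... more precisely, one uses the corner $\her(p_{-k}\vee\dots\vee p_{-1}\vee\dots)$ to build, out of the $p_j$ for $j=-k,\dots,-1$ together with $p_{m-k},\dots,p_m$, a path/rotation (via Lemma \ref{lem:NilPIPath}) turning the open end of the tower into a circle: one splices the final block to the initial block up to an error governed by $\eta$ and $1/k$, obtaining partial isometries $w_1,\dots,w_N$ inside $e\big((\mathrmm C(X)\rtimes_\alpha\mathbb Z)\otimes U\big)e$ satisfying the relations \eqref{eq:UniversalCircleRels} (for appropriate $N$), hence by Lemma \ref{lem:UniversalCircle} generating a copy of $\mathrmm C(\mathbb T)\otimes M_N$ inside $e(\cdots)e$. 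On this circle-matrix algebra the elements of $\mathcal H$ (which are constant $\lambda_{f,j}$ on each $p_j$) and $u$ are approximated to within $O(\eta+1/k)$ by elements of the form $h\otimes(\text{matrix units})$ with $h\in\mathrmm C(\mathbb T)$.

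Then I would handle the complementary corner $e'\big((\mathrmm C(X)\rtimes_\alpha\mathbb Z)\otimes U\big)e'$. Since $1-p_0\in\her(C_0(X\setminus\{x\}))$ and $p_j\in\her(C_0(X\setminus\{x\}))$ for $j\neq0$, the projection $e'=1-\sum p_j$ lies in $\her(C_0(X\setminus\{x\}))$, and in fact $e'$ commutes with $\mathrmm C(X)$ and $e'u p_j=0=p_j u e'$ in the relevant range, so $e'$ "cuts down'' the crossed product to a piece of $A_x\otimes U$: using Lemma \ref{lem:InAx}, $e'(f\otimes b)e'\in A_x\otimes U$ for $f\in\mathrmm C(X)$, and $e' (u\otimes b) e' = e'u e'\otimes b$, where $e'u$, being $u$ cut down by a projection in $\her(C_0(X\setminus\{x\}))$ on both sides with the appropriate vanishing, lies in $A_x$ by Lemma \ref{lem:InAx}. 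Hence $e'\big((\mathrmm C(X)\rtimes_\alpha\mathbb Z)\otimes U\big)e'\subseteq e'(A_x\otimes U)e'$, which is a corner of $A_x\otimes U$. Combining, every element of $\mathcal F$ is approximated to within $O(\e)$ by an element of $e'(A_x\otimes U)e'\oplus(\mathrmm C(\mathbb T)\otimes M_N)$, a subalgebra of $(\mathrmm C(X)\rtimes_\alpha\mathbb Z)\otimes U$ belonging to $\mathcal C$; choosing $\eta$ small and $k$ large makes the error less than $\e$. I expect the main obstacle to be making the Berg rotation precise: producing genuine partial isometries $w_1,\dots,w_N$ (rather than approximate ones) satisfying the exact relations \eqref{eq:UniversalCircleRels} inside $e(\cdots)e$, with the off-diagonal "twist'' at the seam absorbed into a single $\mathrmm C(\mathbb T)$-coordinate, while simultaneously controlling how well $u$ and $\mathcal H$ are approximated — this is exactly the delicate book-keeping of Berg's technique as in \cite[Lemma 4.2]{LinPhillips:CrProd}, and getting the orthogonality $e\perp$ (the relevant part of) $e'$ and the commutation with $\mathcal H$ to hold on the nose requires the careful choice of $V$ and the eigenvalue data from Lemma \ref{lem:CPProjs}.
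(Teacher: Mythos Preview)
Your outline follows the paper's approach closely (Berg's technique, Lemma~\ref{lem:CPProjs}, splitting into a circle-matrix summand and a corner of $A_x\otimes U$), but there are two genuine gaps.

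\textbf{The missing $K$-theory step.} To perform the Berg rotation you need a partial isometry $v_0$ with $v_0v_0^*=p_0$ and $v_0^*v_0=p_m$ \emph{lying in $A_x\otimes U$}. The obvious candidate $u^m p_0$ does not work: $p_0$ does not vanish at $x$ (indeed $g_0\vartriangleleft p_0$ with $g_0(x)=1$), so Lemma~\ref{lem:InAx} does not apply and $u^mp_0\notin A_x\otimes U$. The paper obtains $v_0$ by invoking that the inclusion $A_x\hookrightarrow \mathrmm C(X)\rtimes_\alpha\mathbb Z$ induces an order-isomorphism on $K_0$ \cite[Theorem~4.1(3)]{Phillips:arsh}: since $p_0\sim p_m$ in $A\otimes U$ and $A_x\otimes U$ has cancellation, one gets $p_0\sim p_m$ inside $A_x\otimes U$. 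Your sketch never produces such a $v_0$, and without it Lemma~\ref{lem:NilPIPath} has nothing to act on; the projections $\tilde p_{-j}:=\gamma(v_j,j/k)$ with $v_j:=u^{-j}v_0u^j$ are exactly what make the seam close up.

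\textbf{The cutting projection.} Your cutting projection $e=\sum_{j=-k}^m p_j$ is not the right one. The circle algebra $C$ has unit $1_C=\sum_{j=0}^k\tilde p_{-j}+\sum_{j=1}^{m-k-1}p_j$, which is strictly smaller than $e$ (each $\tilde p_{-j}$ is a rank-one piece of the rank-two block $p_{-j}+p_{m-j}$). So the decomposition is $1_C\oplus(1-1_C)$, not $e\oplus e'$, and one must verify $1-1_C\in A_x\otimes U$ and $u(1-1_C)\in A_x\otimes U$ directly; the latter again uses $1-1_C\in\her(C_0(X\setminus\{x\}))$. Relatedly, your assertion that $e'\bigl((\mathrmm C(X)\rtimes_\alpha\mathbb Z)\otimes U\bigr)e'\subseteq e'(A_x\otimes U)e'$ is stronger than what is needed or proved: one only shows that the \emph{specific elements} $f(1-1_C)$ and $u(1-1_C)$ for $f\in\mathcal H$ lie in $A_x\otimes U$, not the whole corner.
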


\begin{proof}
This proof uses ideas from \cite[Lemma 4.2]{LinPhillips:CrProd}.

Let $\mathcal F \subset (\mathrmm C(X) \rtimes_\alpha \mathbb Z) \otimes U$ be a finite subset to be approximated and let $\e > 0$ be a specified tolerance.
By approximating within $(\mathrmm C(X) \rtimes_\alpha \mathbb Z) \otimes M$ for some matrix subalgebra $M$ of $U$, and then observing that it suffices to approximate the matrix entries in $(\mathrmm C(X) \rtimes_\alpha \mathbb Z) \otimes (U \cap M')$, we see that without loss of generality, $\mathcal F \subset (\mathrmm C(X) \rtimes_\alpha \mathbb Z) \otimes 1_U$, which we hereby identify with $\mathrmm C(X) \rtimes_\alpha \mathbb Z$.
Without loss of generality again, $\mathcal F = \{u\} \cup \mathcal G$ where $\mathcal G \subset \mathrmm C(X)$.

Let $k \in \mathbb N$ be such that, for the map $\gamma$ defined in Lemma \ref{lem:NilPIPath} (for $A=(\mathrmm C(X) \rtimes_\alpha \mathbb Z) \otimes U$), if $|t-t'|<1/k$ then 
\begin{equation}
\label{eq:UHFCrossedProdSpecifickDef}
 \|\gamma(v,t)-\gamma(v,t')\| < \e/16.
\end{equation}

Apply Lemma \ref{lem:CPProjs} with $\eta:=\e/16$, to obtain $m, \mathcal H$, and $p_{-k},\dots,p_m \in A_x \otimes U$ satisfying the conclusions of that lemma.

Note that $p_m=u^m p_0 u^{-m} \sim p_0$ in $A \otimes U$.
Since the embedding of $A_x \otimes U$ in $A \otimes U$ induces an isomorphism between ordered $K_0$-groups \cite[Theorem 4.1 (3)]{Phillips:arsh}, and since $A_x \otimes U$ has cancellation of projections, there exists $v_0 \in A_x \otimes U$ such that
\begin{equation} p_0 = v_0v_0^* \quad \text{and} \quad v_0^*v_0 = p_m. \end{equation}

For $j=k,\dots,1$, set 
\begin{equation} v_j := u^{-j}v_0u^j = u^{-j}h_0v_0h_mu^j. \end{equation}
By Lemma \ref{lem:InAx} $v_j \in A_x \otimes U$.
Also, an easy calculation yields
\begin{equation} p_{-j} = v_jv_j^* \quad \text{and} \quad v_0^*v_0 = p_{m-j}. \end{equation}
Using $\gamma$ from Lemma \ref{lem:NilPIPath}, set
\begin{equation} \tilde p_{-j} := \gamma(v_j,j/k) \in (A_x \otimes U) \cap \her(p_{-j}+p_{m-j}). \end{equation}

From the definition,
\begin{equation}
\label{eq:UHFCrossedProdSpecifictildepEndpts}
\tilde p_0=p_0, \quad \tilde p_{-k} = p_{m-k},
\end{equation}
and for each $j=1,\dots,k$,
\begin{eqnarray}
\label{eq:UHFCrossedProdSpecificApproxConj}
\notag
\|u\tilde p_{-j}u^* - \tilde p_{-j+1}\| &=& \|u\gamma(v_{-j},j/k)u^* - \gamma(v_{-j+1},(j-1)/k)\| \\
\notag
&=& \|\gamma(uv_{-j}u^*,j/k) - \gamma(v_{-j+1},(j-1)/k)\| \\
\notag
&=& \|\gamma(v_{-j+1},j/k) - \gamma(v_{-j+1},(j-1)/k)\| \\
&\stackrel{\eqref{eq:UHFCrossedProdSpecifickDef}}<& \e/16,
\end{eqnarray}
and, therefore, there exists $w_{-j} \in A \otimes U$ such that
\begin{equation}
\label{eq:UHFCrossedProdSpecificwjDef1}
 w_{-j}^*w_{-j} = \tilde p_{-j+1}, \quad w_{-j}w_{-j}^* = \tilde p_{-j}, \quad\text{and}\quad \|w_{-j}-\tilde p_{-j}u^*\| < \e/4. \end{equation}
For $j=0,\dots,m-k-1$, set
\begin{equation}
\label{eq:UHFCrossedProdSpecificwjDef2}
 w_j := p_ju^* \in A \otimes U. \end{equation}

Now, set
\begin{equation} C := \mathrmm C^*(\{w_{-k},\dots,w_{m-k-1}\}) \subseteq A \otimes U. \end{equation}
To complete the proof, we shall show that:
\begin{enumerate}
\item $C \cong \mathrmm C(\mathbb T, M_m)$;
\item $1_C \in A_x \otimes U$ and therefore $1_{A\otimes U}-1_C \in A_x \otimes U$;
\item $\|[1_C,\mathcal F]\| < \e/2$;
\item $\mathcal F(1_{A\otimes U}-1_C) \subset A_x \otimes U$; and
\item $\mathcal F1_C \approx_{\e/2} C$.
\end{enumerate}

(i):
Lemma \ref{lem:UniversalCircle} shows that there is a surjective map from $\mathrmm C(\mathbb T) \otimes M_m$ to $C$.
Moreover, since $u1_C$ has non-zero $K_1$-class (in $1_\mathrmm C((\mathrmm C(X) \rtimes_\alpha \mathbb Z) \otimes U)1_C$) and is approximately contained in $C$ (as will be shown in (v)), $K_1(C) \neq 0$, and, therefore, this map must be surjective.

(ii):
We have already seen that $\tilde p_{-k},\dots,\tilde p_0, p_1,\dots,p_{m-k-1} \in A_x$.
It is not hard to see that the sum of these projections is $1_C$.

(iii):
For $g \in \mathcal G$, let $f \in \mathcal H$ be such that $g \approx_\eta f$.
We have 
\begin{align}
\notag
[1_C,f] &= \left[ \sum_{j=0}^k \tilde p_{-j} + \sum_{j=1}^{m-k} p_j, f \right] \\
\label{eq:UHFCrossedProdSpecific1CCommuteA}
&= \sum_{j=0}^k [\tilde p_{-j},f] + \sum_{j=1}^{m-k-1} [p_j,f].
\end{align}
From Lemma \ref{lem:CPProjs}, each $p_j$ commutes with $f$, so that the second term vanishes.
By Lemma \ref{lem:CPProjs} (v) and (vi), and since $\tilde p_{-j}$ is in the hereditary subalgebra generated by $p_{-j}+p_{m-j}$, we see that for $f \in \mathcal F'$,
\begin{equation}
\label{eq:UHFCrossedProdSpecific1CCommuteB}
\|[\tilde p_{-j},f]\| < 2\eta. \end{equation}
Also, since $p_{-j}+p_{m-j}$ commutes with $f$, $[\tilde p_{-j},f]$ is in the hereditary subalgebra generated by $p_{-j}+p_{m-j}$, and therefore, the first sum in \eqref{eq:UHFCrossedProdSpecific1CCommuteA} is orthogonal.
Hence,
\begin{eqnarray}
\|[1_C,f]\| &=& \max_{j=0,\dots,k} \|[\tilde p_{-j},f]\| \\
&\stackrel{\eqref{eq:UHFCrossedProdSpecific1CCommuteB}}<& 2\eta,
\end{eqnarray}
and thus $\|[1_C,g]\| < 4\eta < \e/2$.

Next, we compute
\begin{eqnarray}
\notag
&& u1_Cu^* - 1_C \\
\notag
&=& \sum_{j=0}^k (u\tilde p_{-j}u^*-\tilde p_{-j}) + \sum_{j=1}^{m-k-1} (up_ju^*-p_j) \\
\notag
&\stackrel{\eqref{eq:UHFCrossedProdSpecifictildepEndpts}}=& \sum_{j=1}^k (u\tilde p_{-j}u^*-\tilde p_{-j+1}) + up_0u^* - p_{m-k} + \sum_{j=1}^{m-k-1} (up_ju^*-p_j) \\
\notag
&=& \sum_{j=1}^k (u\tilde p_{-j}u^*-\tilde p_{-j+1}) + \sum_{j=0}^{m-k-1} (up_ju^*-p_{j+1}) \\
&\stackrel{\text{Lemma \ref{lem:CPProjs} (iv)}}=& \sum_{j=1}^k (u\tilde p_{-j}u^* - \tilde p_{-j+1}).
\end{eqnarray}
Note that $\tilde p_{-j}$ is in the hereditary subalgebra generated by $p_{-j}+p_{m-j}$, so that by Lemma \ref{lem:CPProjs} (iv), $u\tilde p_{-j}u^*$ is in the hereditary subalgebra generated by $p_{-j+1}+p_{m-j+1}$, and, therefore, this last sum is orthogonal.
Consequently, we obtain
\begin{eqnarray}
\|u1_Cu^* - 1_C\| &=& \max_{j=1,\dots,k} \|u\tilde p_{-j}u^* - \tilde p_{-j+1}\| \\
\notag
&\stackrel{\eqref{eq:UHFCrossedProdSpecificApproxConj}}<& \e/16,
\end{eqnarray}
as required.

(iv):
Since $(1-1_C) \in A_x \otimes U$, it follows that $\mathcal G(1-1_C) \subset A_x \otimes U$.
The non-trivial part is showing $u(1-1_C) \in A_x \otimes U$.

For $j=1,\dots,m$, since $\tilde p_{-j} \in \her(p_{-j}+p_{m-j})$ and by Lemma \ref{lem:CPProjs} (iii), we see that $\tilde p_{-j} \in \her(C_0(X \setminus \{x\})$.
Using this and Lemma \ref{lem:CPProjs} (ii) and (iii), we find that
\begin{equation} 1-1_C = 1-p_0 + \sum_{j=1}^k \tilde p_{-j} + \sum_{j=1}^{m-k-1} p_j \in \her(C_0(X \setminus \{x\})). \end{equation}
Therefore, for $\dl > 0$, there exists $e \in C_0(X \setminus \{x\})$ such that $1-1_C \approx_\dl e(1-1_C)$.
Hence,
\begin{equation} u(1-1_C) \approx_{\dl} ue(1-1_C) \in A_x \otimes U. \end{equation}
Since $\dl$ is arbitrary, $u(1-1_C) \in A_x \otimes U$ as required.

(v):
For $g \in \mathcal G$, let $f \in \mathcal H$ be such that $g \approx_\eta f$.
We shall show that 
\begin{equation} g1_C \approx_{\e/2} \sum_{j=1}^{k} \lambda_{f,-j} \tilde p_{-j} + \sum_{j=0}^{m-k-1} \lambda_{f,j} p_j. \end{equation}
Of course, it suffices to show this with $f$ in place of $g$ and $\eta$ in place of $\e/2$ (since $\eta < \e/2$).

We have
\begin{eqnarray}
\notag
&& \hspace*{-20mm} f1_C - \left(\sum_{j=1}^{k} \lambda_{f,-j} \tilde p_{-j} + \sum_{j=0}^{m-k-1} \lambda_{f,j} p_j\right) \\
\notag
&=& \sum_{j=1}^{k} (f - \lambda_{f,-j}) \tilde p_{-j} + \sum_{j=0}^{m-k-1} (f-\lambda_{f,j})p_j \\
\label{eq:UHFCrossedProdSpecificvStep}
&\stackrel{\text{Lemma \ref{lem:CPProjs} (v)}}=& \sum_{j=1}^{k} (f - \lambda_{f,-j}) \tilde p_{-j}
\end{eqnarray}
Since $\tilde p_{-j} \in \her(p_{-j}+p_{m-j})$, by Lemma \ref{lem:CPProjs} (v) and (vi),
\begin{equation}
\|(f-\lambda_{f,-j})\tilde p_{-j}\| < \eta.
\end{equation}
Moreover since $f$ commutes with $p_{-j}+p_{m-j}$, it follows that $(f-\lambda_{f,-j})\tilde p_{-j} \in \her(p_{-j}+p_{m-j})$, whence the sum in \eqref{eq:UHFCrossedProdSpecificvStep} is an orthogonal sum, so that
\begin{eqnarray}
\notag
&& \hspace*{-20mm} \|f1_C - \left(\sum_{j=1}^{k} \lambda_{f,-j} \tilde p_{-j} + \sum_{j=0}^{m-k-1} \lambda_{f,j} p_j\right)\| \\
\notag
&=& \max_{j=1,\dots,k} \|(f-\lambda_{f,-j}) \tilde p_{-j}\| \\
&<&\eta.
\end{eqnarray}

Finally, we have
\begin{eqnarray}
&& \hspace*{-10mm} \| u1_C - (w_{-k}^* + \cdots + w_{m-k-1}^*) \| \\
\notag
&\stackrel{\eqref{eq:UHFCrossedProdSpecificwjDef2}}=& \| (u\tilde p_{-k} - w_{-k}^*) + \cdots + (u\tilde p_{-1} - w_{-1}^*)\| \\
\notag
&\leq&
\| \sum_{j \text{ even}} (u\tilde p_{-j} - w_{-j}^*) \| + 
\| \sum_{j \text{ odd}} (u\tilde p_{-j} - w_{-j}^*) \|.
\end{eqnarray}
Now, note that $p_{-j+1}+p_{m-j+1}$ (respectively, $p_{-j}+p_{m-j}$) acts as the unit on $u\tilde p_{-j} - w_{-j}^*$ on the left (respectively, right).
Therefore, we can see that each of the two sums in the last inequality above is an orthogonal sum, and we continue,
\begin{eqnarray}
\notag
\| u1_C - (w_{-k}^* + \cdots + w_{m-k-1}^*) \|
&\leq&
2\max_{j=1,\dots,k} \|(u\tilde p_{-k} - w_{-k}^*) \| \\
&\stackrel{\eqref{eq:UHFCrossedProdSpecificwjDef1}}{\leq}&
\e/2,
\end{eqnarray}
as required.
\end{proof}

\begin{proof}[Proof of Theorem \ref{thm:UHFCrossedProd}]
In the context of Lemma \ref{lem:UHFCrossedProdSpecific}, $A_x$ is locally subhomogeneous, and therefore so also is any unital corner of $A_x$
(any subhomogeneous approximation that approximately contains a projection $p$ will exactly contain a projection $p'$ which is near to $p$).
Thus, it follows from Lemma \ref{lem:UHFCrossedProdSpecific} that $(\mathrmm C(X) \rtimes_\alpha \mathbb Z) \otimes U$ is locally subhomogeneous.
\end{proof}

\begin{cor}
\label{cor:CrProddr}
Let $X$ be an infinite compact metrizable space, let $\alpha\colon X \to X$ be a minimal homeomorphism, and let $U$ be an infinite dimensional UHF algebra.
Then
\begin{align}
\dr \left((\mathrmm C(X) \rtimes_\alpha \mathbb Z) \otimes U\right) &\leq 2 \quad \text{and} \\
\notag
\dr \left((\mathrmm C(X) \rtimes_\alpha \mathbb Z) \otimes \mathcal Z\right) &\leq 5.
\end{align}
In particular, if $\alpha$ has mean dimension zero then $\dr(\mathrmm C(X) \rtimes_\alpha \mathbb Z) \leq 5$.
\end{cor}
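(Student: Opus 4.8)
The proof assembles results already established. Write $A := \mathrmm C(X) \rtimes_\alpha \mathbb Z$; this is a separable nuclear $\mathrmm C^*$-algebra.

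First I would establish the bound for the UHF-stabilisation. By Theorem \ref{thm:UHFCrossedProd}, $A \otimes U$ is locally subhomogeneous. Since $U$ is an infinite-dimensional UHF algebra, it absorbs the Jiang--Su algebra tensorially, $U \cong U \otimes \mathcal Z$, and hence $A \otimes U \cong (A \otimes U) \otimes \mathcal Z$ is $\mathcal Z$-stable. Corollary \ref{cor:MainThmA} (i.e.\ Theorem \ref{thm:MainThmA}) now gives $\dr(A \otimes U) \leq 2$. The same argument shows $\dr(A \otimes V) \leq 2$ for \emph{every} infinite-dimensional UHF algebra $V$.

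Next I would pass from UHF-stabilisation to $\mathcal Z$-stabilisation. Choose infinite-dimensional UHF algebras $M_{\mathfrak p}$ and $M_{\mathfrak q}$ with relatively prime supernatural numbers $\mathfrak p, \mathfrak q$. By the previous step, $\dr(A \otimes M_{\mathfrak p}) \leq 2$ and $\dr(A \otimes M_{\mathfrak q}) \leq 2$, and so \cite[Lemma 3.1]{Carrion:RFproducts}---which bounds the decomposition rank of $A \otimes \mathcal Z$ in terms of the decomposition ranks of these two UHF-stabilisations---yields $\dr(A \otimes \mathcal Z) \leq 2 \cdot 2 + 1 = 5$. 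Finally, if $\alpha$ has mean dimension zero then $A$ is $\mathcal Z$-stable by \cite{ElliottNiu:MeanDim0}, so $A \cong A \otimes \mathcal Z$ and the preceding inequality reads $\dr(A) \leq 5$.

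At this point there is no genuine obstacle: the analytic content is entirely contained in Theorem \ref{thm:UHFCrossedProd}, Corollary \ref{cor:MainThmA}, and the two cited results. The only matters requiring attention are bookkeeping: one must take the auxiliary UHF algebras to be of infinite type (so that Theorem \ref{thm:UHFCrossedProd} applies, and so that they absorb $\mathcal Z$) and with coprime supernatural numbers (so that \cite[Lemma 3.1]{Carrion:RFproducts} applies), and one must read off the correct constant, $2n+1$ with $n=2$, from that lemma. One could alternatively invoke Lin's classification result, which makes $A \otimes \mathcal Z$ approximately subhomogeneous and hence---by Theorem \ref{thm:MainThmA} again---of decomposition rank at most $2$; but the point of the present argument is to avoid such classification machinery.
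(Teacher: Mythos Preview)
Your argument is correct and matches the paper's approach. The only imprecision is your description of \cite[Lemma 3.1]{Carrion:RFproducts}: that lemma is a continuous-field estimate (for a $\mathrmm C([0,1])$-algebra whose fibres all have decomposition rank at most $n$, the total algebra has decomposition rank at most $2n+1$), not a direct statement about $\mathcal Z$-stabilisation versus UHF-stabilisation. The paper therefore makes explicit the bridge you have suppressed: by \cite[Theorem 3.4]{RordamWinter:Z}, $A \otimes \mathcal Z$ is an inductive limit of copies of $A \otimes \mathcal Z_{2^\infty,3^\infty}$, and the latter is a $\mathrmm C([0,1])$-algebra whose fibres are all of the form $A \otimes V$ for infinite-dimensional UHF $V$; since you have already shown $\dr(A \otimes V) \leq 2$ for every such $V$, Carri\'on's lemma then gives the bound $5$. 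Your proof contains exactly this content, just with the R\o rdam--Winter step folded into the citation.
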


\begin{remark}
\label{rmk:LinCrossedClass}
In fact, a classification of $\mathcal Z$-stabilized crossed products implies that $(C(X) \rtimes_\alpha \mathbb Z) \otimes \mathcal Z$ is approximately subhomogeneous has decomposition rank at most two.
See Corollary \ref{cor:CrProdClass} and its proof below.
\end{remark}

\begin{proof}
The first inequality follows directly from Theorems \ref{thm:UHFCrossedProd} and \ref{thm:MainThmA}, upon noting that $U$ is itself $\mathcal Z$-stable.
For the second inequality, note that by \cite[Theorem 3.4]{RordamWinter:Z}, $(\mathrmm C(X) \rtimes_\alpha \mathbb Z) \otimes \mathcal Z$ is an inductive limit of copies of the $\mathrmm C^*$-algebra
\begin{equation} (\mathrmm C(X) \rtimes_\alpha \mathbb Z) \otimes \mathcal Z_{2^\infty,3^\infty}, \end{equation}
which is a continuous field over $[0,1]$, whose fibres are each equal to $\mathrmm C(X) \rtimes_\alpha \mathbb Z \otimes U$ for a UHF algebra $U$.
Therefore, the second line follows from the first line and \cite[Lemma 3.1]{Carrion:RFproducts}.
The last statement follows from the main result of \cite{ElliottNiu:MeanDim0}, which is that when $\alpha$ has mean dimension zero, the crossed product is $\mathcal Z$-stable.
\end{proof}

\begin{proof}[Proof of Corollary \ref{cor:CrProdClass}]
Let $\mathcal C$ denote the class of simple, separable, unital, nuclear, infinite dimensional $\mathrm C^*$-algebras with finite decomposition rank which satisfy the Universal Coefficient Theorem.
The main result of \cite{EGLN:Class2} says that, given $A,B \in \mathcal C$,
\begin{equation} A \cong B \quad \text{if and only if} \quad \mathrm{Ell}(A) \cong \mathrm{Ell}(B). \end{equation}
(This result makes heavy use of the classification result \cite{GongLinNiu}, and builds on the kep step taken in \cite{EN:Class1}.)

By \cite{RosenbergSchochet:UCT}, the $\mathrm C^*$-algebra $A$ in the statement of Corollary \ref{cor:CrProdClass} satisfies the Universal Coefficient Theorem, and by Corollary \ref{cor:CrProddr}, it has finite decomposition rank; thus $A \in \mathcal C$.
Also $B \in \mathcal C$ either by the same argument or by assumption.
Hence $A \cong B$ if and only if $\mathrm{Ell}(A) \cong \mathrm{Ell}(B)$, proving the last part of Corollary \ref{cor:CrProdClass}.

Here is the argument showing that $A$ is an inductive limit of subhomogeneous $\mathrmm C^*$-algebras.
By \cite{Elliott:ashrange}, there exists a simple separable unital $\mathrmm C^*$-algebra $B$ which is an inductive limit of subhomogeneous $\mathrmm C^*$-algebras of topological dimension at most two, satisfying $\mathrm{Ell}(B) \cong \mathrmm{Ell}(A)$.
It follows by \cite{Winter:drSH} and Proposition \ref{prop:drLocal}, $\dr(B) \leq 2$, and therefore $B \in \mathcal C$.
Consequently, $A \cong B$ which shows that $A$ is also an inductive limit of subhomogeneous $\mathrmm C^*$-algebras.
\end{proof}

\begin{remark}
Prior to the far-reaching classification result of \cite{EGLN:Class2} used in the above proof, Lin showed, using Corollary \ref{cor:CrProddr}, that $\mathcal Z$-stabilized minimal $\mathbb Z$-crossed products are classifiable (in the sense of \cite{GongLinNiu}) \cite{Lin:CrossedClass}.
\end{remark}

As mentioned earlier, the Putnam algebra $A_x$ associated to a minimal crossed product has been known for some time to be approximately subhomogeneous, simple, separable, and unital.
It is a consequence of Theorem \ref{thm:MainThmA} that these $\mathrm C^*$-algebras are classifiable, i.e., satisfy the hypotheses of the classification result in \cite{EGLN:Class2}.
The following is a natural question.

\begin{question}
Let $B$ be a simple separable unital $\mathrmm C^*$-algebra with finite decomposition rank and which satisfies the Universal Coefficient Theorem.
Does there exist a compact Hausdorff space $X$, a minimal homeomorphism $\alpha:X \to X$, and a point $x \in X$ such that $B \cong A_x$?
(Here, $A_x$ is the Putnam subalgebra of $C(X) \rtimes_\alpha \mathbb Z$, as defined in \eqref{eq:PutnamSugalgDef}.)
\end{question}

\newcommand{\cstar}{$\mathrmm C^*$}

\end{document}